\newtheorem{theorem}{Theorem}[section]
\newtheorem{corollary}[theorem]{Corollary}
\newtheorem{proposition}[theorem]{Proposition}
\newtheorem{definition}[theorem]{Definition}
\newtheorem{lemma}[theorem]{Lemma}
\newtheorem{remark}[theorem]{Remark}
\numberwithin{equation}{section}
\newcommand\CC {{\mathbb C}}
\newcommand\KK {{\mathbb K}}
\newcommand\NN {{\mathbb N}}
\newcommand\QQ {{\mathbb Q}}
\newcommand\RR {{\mathbb R}}
\newcommand\TT {{\mathbb T}}
\newcommand\ZZ {{\mathbb Z}}
\newcommand\sltwor{{\rm SL(2,\RR)}}
\newcommand\veech{{\rm SL}}
\newcommand\bad{{\rm Bad}}
\newcommand\sys{{\rm Sys^{sc} }}
\newcommand\cyl{{\rm Sys^{cyl} }}
\newcommand\area{{\rm Area }}
\newcommand\leb{{\rm Leb }}
\newcommand\re{{\rm Re }}
\newcommand\im{{\rm Im }}
\newcommand\hol {{\rm Hol}}
\newcommand\cB{{\mathcal{B}  }}
\newcommand\cC{{\mathcal{C}  }}
\newcommand\cG{{\mathcal{G}  }}
\newcommand\cH{{\mathcal{H}  }}
\newcommand\cI{{\mathcal{I}  }}
\newcommand\cK{{\mathcal{K}  }}
\newcommand\cM{{\mathcal{M}  }}
\newcommand\cR{{\mathcal{R}  }}
\newcommand\cT{{\mathcal{T}  }}
\newcommand\cU{{\mathcal{U}  }}
\newcommand\cV{{\mathcal{V}  }}
\newcommand\cW{{\mathcal{W}  }}
\begin{document}

\title{Diophantine approximations for translation surfaces and planar resonant sets}

\author{Luca Marchese, Rodrigo Trevi\~{n}o, Steffen Weil}

\address{Luca Marchese, Universit\'e Paris 13, Sorbonne Paris Cit\'e,
LAGA, UMR 7539, 99 Avenue Jean-Baptiste Cl\'ement, 93430 Villetaneuse, France.}

\email{marchese@math.univ-paris13.fr}

\address{Rodrigo Trevi\~{n}o. Courant Institute of Mathematical Sciences, New York University}

\email{rodrigo@math.nyu.edu}

\address{Steffen Weil, School of Mathematical Sciences, Tel Aviv University, Tel Aviv 69978, Israel.}

\email{steffen.weil@math.uzh.ch}



\begin{abstract}
We consider Teichm\"uller geodesics in strata of translation surfaces. We prove lower and upper bounds for the Hausdorff dimension of the set of parameters generating a geodesic bounded in some compact part of the stratum. Then we compute the dimension of those parameters generating geodesics that make excursions to infinity at a prescribed rate. Finally we compute the dimension of the set of directions in a rational billiard having fast recurrence, which corresponds to a dynamical version of a classical result of Jarn\'ik and Besicovich. Our main tool are planar resonant sets arising from a given translation surface, that is the countable set of directions of its saddle connections or of its closed geodesics, filtered according to length. In an abstract setting, and assuming specific metric properties on a general planar resonant set, we prove a dichotomy for the Hausdorff measure of the set of directions which are well approximable by directions in the resonant set, and we give an estimate on the dimension of the set of badly approximable directions. Then we prove that the resonant sets arising from a translation surface satisfy the required metric properties.
\end{abstract}

\maketitle

\tableofcontents

\section{Introduction}

In this paper we consider a \emph{translation surface} $X$ and we measure the distortion of its flat geometry when we apply the \emph{Teichm\"uller geodesic flow} $g_t$ to the surface $X$ in a given direction $\theta$. In \S~\ref{SecBoundedGeodesicsModuliSpaceIntro} we give estimates on the Hausdorff dimension of the set of directions $\theta$ for which the geometry has uniformly bounded distortion, which is equivalent to saying that $(g_tr_\theta\cdot X)_{t>0}$ is contained in some compact subset of the parameter space with prescribed size. In \S~\ref{SecUnboundedGeodesicsModuliSpaceIntro} we consider directions $\theta$ for which the the flat geometry has unbounded distortion, that is $(g_tr_\theta\cdot X)_{t>0}$ has unbounded excursions to the non compact part of the parameter space, and we state a dichotomy for the Hausdorff measure of the set of directions for which the rate of excursions is prescribed, generalizing some classical results of Jarn\'ik, Besicovich and Khinchin. It's well known that translation surfaces are closely related to \emph{rational billiards}, thus in \S~\ref{SecRecurrenceBilliardsIntro} we consider the \emph{billiard flow} generated by a given direction $\theta$ on a rational polygon $Q$, and we compute the Hausdorff dimension of the set of those $\theta$ for which the \emph{recurrence rate} of the billiard flow has a given value in $(0,1)$. The value of the recurrence rate represents somehow a phase-space counterpart of the rate of excursions in parameter space. All the dynamical properties described above are consequences of specific diophantine conditions. In \S~\ref{SecIntroResonantSet} we describe the relevant diophantine conditions in the abstract setting of \emph{planar resonant sets}, then in \S~\ref{SecIntroHolonomyResonantSets} we state results on translation surfaces which ensures that the abstract diophantine conditions are satisfied on a given surface $X$.

\medskip

A \emph{translation surface} is a genus $g$ closed surface $X$ with a flat metric and a finite set $\Sigma$ of conical singularities $p_1,\dots,p_r$, the angle at each $p_i$ being an integer multiple of $2\pi$. An equivalent definition of translation surface $X$ is the datum $(S,w)$, where $S$ is a compact Riemann surface and $w$ is a holomorphic 1-form on $S$ having a zero at each $p_i$. The relation $k_1+\dots+k_r=2g-2$ holds, where $k_1,\dots,k_r$ are the orders of the zeroes of $w$. In particular the \emph{total multiplicity} at conical singularities of $X$ is the positive integer
$$
m:=2g-2+\sharp(\Sigma).
$$
Any translation surface can be obtained as quotient space $X=P/\sim$ of a suitable polygon $P$ in the complex plane $\CC$ via an equivalence relation $\sim$ on the boundary $\partial P$. More precisely, we assume that boundary $\partial P$ is the union of $2d\geq 4$ segments which come in pairs and are denoted  
$
(\zeta_1,\zeta'_1),\dots,(\zeta_d,\zeta'_d)
$, 
and that there exist complex numbers $z_1,\dots,z_d$ in $\CC$ such that for any $i=1,\dots,d$ the boundary segments $\zeta_i$ and $\zeta'_i$ have the same direction and length of $z_i$, and the opposite orientation induced by the interior of $P$ (that is any $\zeta_i$ touches the interior of $P$ from the opposite side as $\zeta_i'$). The relation $\sim$ is defined on the boundary $\partial P$ identifying for any $i=1,\dots,d$ the sides $\zeta_i$ et $\zeta'_i$ by a translation. This induces identifications of the vertices of $P$, which correspond to conical singularities. The initial polygon $P$ is not necessarily connected, but we assume that this is true for the quotient space $X$. The form $dz$ on $\CC$ projects to the holomorphic 1-form $w$ of $X$. Any surface arising from this construction is a translation surface, the simplest examples being flat tori, which all arise from euclidian parallelograms identifying opposite sides.

A stratum $\cH=\cH(k_1,\dots,k_r)$ is the set of translation surfaces $X$ whose corresponding holomorphic one-form $w$ has $r$ zeros with orders $k_1,\dots,k_r$, where $k_1+\dots+k_r=2g-2$. It is an affine orbifold with complex dimension $2g+r-1$, where affine coordinates around any element $X\in\cH$ are given by the complex numbers $z_1,\dots,z_d$ introduced above, possibly modulo some linear equations with coefficients in $\QQ$. Any stratum admits an action of $\sltwor$, indeed for any translation surface $X=(S,w)$ and any element $G\in\sltwor$ a new translation surface
$
G\cdot X=(G_\ast S,G_\ast w)
$
is defined, where the 1-form $G_\ast w$ is the composition of $w$ with $G$, and $G_\ast S$ is the complex atlas for which $G_\ast w$ is holomorphic. If $X$ is represented as a polygon $P/\sim$ with identified sides then $G\cdot X$ corresponds to the affine image $G\cdot P$ of $P$ with sides pasted according to the same identifications as in $P$. Indeed affine maps preserve parallelism and ratios between lengths. The stabilizer $\veech(X)$ of a translation surface $X$ under this action is known as the \emph{Veech group} of $X$, which is always a discrete subgroup of $\sltwor$. Those surfaces $X$ such that $\veech(X)$ is a lattice in $\sltwor$ are called \emph{Veech surfaces}.

Any $G\in\sltwor$ preserves the euclidian area form
$
dx\wedge dy=i/2dz\wedge d\bar{z}
$
on the plane; therefore we have
$
\area\big(G\cdot X\big)=\area(X)
$,
where for $X=(S,w)$ we set
$$
\area(X):=\frac{i}{2}\int_Xdw\wedge d\bar{w}.
$$
It follows that $\sltwor$ acts on the real sub-orbifold $\cH^{(1)}$ of $\cH$, defined as the set of those translation surfaces $X$ with $\area(X)=1$. It is well-known that $X$ is a Veech surface if and only if its orbit $\cM:=\sltwor\cdot X$ is closed in $\cH^{(1)}$, and in this case $\cM$ is isometric to $\sltwor/\veech(X)$. Relevant subgroups actions are the diagonal group $g_t$, the group of rotations $r_\theta$ and the \emph{horocyclic flow} $u_s$, which are given respectively by
$$
g_t:=
\begin{pmatrix}
e^{t} & 0 \\
0 & e^{-t}
\end{pmatrix}
\textrm{ ; }
u_s:=
\begin{pmatrix}
1 & s \\
0 & 1
\end{pmatrix}
\textrm{ ; }
r_\theta:=
\begin{pmatrix}
\cos\theta & -\sin\theta \\
\sin\theta & \cos\theta
\end{pmatrix}.
$$
The action of the diagonal group $g_t$ is also known as \emph{Teichm\"uller flow}, and corresponds to the geodesic flow for the Teichm\"uller metric, and we refer to $g_t$ orbits as Teichm\"uller geodesics. We refer the reader to \cite{ForniMatheus} and \cite{ZorichFlat} for an exhaustive introduction to translation surfaces and Teichm\"uller dynamics.

\subsection{Bounded geodesics in moduli space}
\label{SecBoundedGeodesicsModuliSpaceIntro}

We identify the complex plane with $\RR^2$. Any segment $\gamma$ of a geodesic for the flat metric of $X$ has a development in the complex plane, also said \emph{holonomy vector}, denoted by $\hol(\gamma,X)\in\RR^2$ and defined by 
$$
\hol(\gamma,X):=\int_\gamma w,
$$ 
where $w$ is the holomorphic one form of $X$. Any such segment $\gamma$ is a geodesic segment also on the surface $G\cdot X$ for any $G\in\sltwor$, and we denote by $\hol(\gamma,G\cdot X)$ its holonomy vector with respect to the surface $G\cdot X$. By definition we have
$$
\hol(\gamma,G\cdot X)=G\big(\hol(\gamma,X)\big).
$$
The length of $\gamma$ on the surface $G\cdot X$ is $|\hol(\gamma,G\cdot X)|$, where $|\cdot|$ denotes the euclidian metric on $\RR^2$.

A \emph{saddle connection} of $X$ is a segment $\gamma$ of a geodesic for the flat metric connecting two conical singularities $p_i$ and $p_j$ and not containing other conical singularities in its interior. The \emph{systole} $\sys(X)$ of $X$ is the length $|\hol(\gamma,X)|$ of the shortest saddle connection $\gamma$ of $X$. According to the \emph{Mumford criterion}, for any fixed $\epsilon>0$ the set of those $X\in\cH^{(1)}$ such that $\sys(X)\geq\epsilon$ is a compact subset of the stratum. 

The set of directions on a translation surface $X$ corresponds to the interval $[-\pi/2,\pi/2[$, with the endpoints identified. The directions
$
\theta\in[-\pi/2,\pi/2[
$
giving rise to positive geodesics whose limit set is contained in $\cK_\epsilon$ are the elements of the set
$$
\bad^{dyn}(X,\epsilon):=
\left\{
\theta
\textrm{ ; }
\liminf_{t\to+\infty}
\sys(g_t r_\theta X) \geq \epsilon
\right\}.
$$
One can consider also the set of all bounded directions
$
\bad^{dyn}(X):=\bigcup_{\epsilon>0}\bad^{dyn}(X,\epsilon)
$.
Although it is a set with zero Lebesgue measure, Kleinbock and Weiss showed it to be \emph{thick}, that is its intersection with any subinterval of $[-\pi/2,\pi/2[$ has full Hausdorff dimension (see \cite{KleinbockWeiss}). Later, Cheung, Chaika and Masur \cite{chaikacheungmasur} improved this result by showing that $\bad^{dyn}(X)$ is an \emph{absolute winning set} for the \emph{absolute Schmidt game} (see \cite{McMullen}), which implies thickness, among other qualitative properties. We also refer to the work of Hubert, Marchese and Ulcigrai \cite{HubertMarcheseUlcigrai}, who studied the \emph{Lagrange spectrum} over the set of bounded directions. Theorem \ref{thmDynamicalJarnickInequality} below develops a quantitative version of the qualitative result in \cite{KleinbockWeiss}, that is thickness. More precisely it establishes non-trivial upper and lower bound for the Hausdorff dimension of $\bad^{dyn}(X,\epsilon)$ in terms of the parameter $\epsilon$. Note that via the \emph{
 Dani correspondence}, in the case of flat tori we obtain similar inequalities as in the classical work of Jarn\'ik on the set of badly approximable numbers (see \cite{jarnik}). Further \emph{Jarn\'ik-type inequalities} are established by Weil in \cite{WeilSteffen}, which is the main source for the techniques used in the proof of Theorem \ref{thmDynamicalJarnickInequality}. 

Fix a translation surface $X$ with $\area(X)=1$ and let $\cH$ be its stratum. Recall that we denote by $m$ the total multiplicity at conical singularities of a translation surface $X$. If $X$ is a Veech surface, let $\cM:=\sltwor\cdot X$ be its closed orbit under the action of $\sltwor$. For any subset $E\subset[-\pi/2,\pi/2[$ let $\dim(E)$ be its Hausdorff dimension.

\begin{theorem}
\label{thmDynamicalJarnickInequality}
There exist positive constants $\epsilon_0$, $c_u$, $c_l$ and $0<\beta\leq 1$, depending only on the integer $m$, such that for any $\epsilon$ with $0<\epsilon< \min\{\epsilon_0,\sys(X)\}$ we have
$$
1-c_l\cdot\frac{\epsilon^{\beta}}{|\log\epsilon|}
\leq
\dim\big(\bad^{dyn}(X,\epsilon)\big)
\leq
1-c_u \cdot\frac{\epsilon^2}{|\log\epsilon|}.
$$
In particular the explicit form of $\beta$ is 
$$
\beta=\frac{1}{3m-1}.
$$
Moreover, if $X$ is a Veech surface, the same inequality holds with $\beta=1$ and with some $\epsilon_0$  which can be chosen uniformly on $\cM$.
\end{theorem}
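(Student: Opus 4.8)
The plan is to reduce the dynamical statement to a Diophantine statement about the planar resonant set $\Lambda(X)$ consisting of the directions of saddle connections of $X$, filtered by length, via the standard correspondence between excursions of $(g_t r_\theta \cdot X)_{t>0}$ into the thin part of $\cH^{(1)}$ and the approximability of $\theta$ by directions in $\Lambda(X)$. Concretely, a saddle connection $\gamma$ with $\hol(\gamma,X)$ of length $\ell$ making angle $\phi_\gamma$ with the vertical causes $g_t r_\theta \cdot X$ to have systole roughly $\ell\, e^{-t}\sqrt{1 + e^{4t}\sin^2(\theta-\phi_\gamma)}$ minimized around $e^{-2t}\asymp \ell^{-1}$; thus $\liminf_t \sys(g_t r_\theta \cdot X) \geq \epsilon$ holds, up to universal constants depending on $m$, exactly when $|\theta - \phi_\gamma| \geq c\,\epsilon^2/\ell^2$ for every saddle connection $\gamma$ with $\ell$ large (and a bounded number of exceptions for small $\ell$, which do not affect the dimension). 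Hence $\bad^{dyn}(X,\epsilon)$ is, up to constants, the set of $\epsilon^2$-badly approximable directions for $\Lambda(X)$, and the problem becomes: estimate $\dim$ of the set of points avoiding a $\delta$-neighborhood (with $\delta \asymp \epsilon^2$, scaled by the filtration length) of a resonant set with known counting asymptotics.

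The key inputs I would invoke are, first, the metric properties of the holonomy resonant sets established later in the paper (quadratic growth of the number of saddle connections of length $\leq R$, i.e. $\#\{\gamma : \ell_\gamma \leq R\} \asymp R^2$ with constants controlled by $m$, together with a separation/spacing estimate for nearby directions coming from the fact that two saddle connections of comparable length subtend an angle bounded below), and second, the abstract Jarník-type dimension bounds for badly approximable sets on a planar resonant set satisfying these properties — precisely the abstract theorem advertised in \S~\ref{SecIntroResonantSet}. For the lower bound one builds a Cantor subset of $\bad^{dyn}(X,\epsilon)$ by, at each scale, deleting the $\asymp \epsilon^2$-neighborhoods of the resonant directions active at that scale; the density/counting estimate gives that a definite proportion of each surviving interval remains, and a mass-distribution / Borel–Cantelli argument yields $\dim \geq 1 - C\epsilon^\beta/|\log\epsilon|$ with $\beta = 1/(3m-1)$, the exponent $3m-1$ entering through the interplay between the ambient dimension $d = 2g+r-1$ (controlled by $m$) of the stratum and the rate at which the forbidden neighborhoods accumulate. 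For the upper bound one covers $\bad^{dyn}(X,\epsilon)$ by the intervals that survive deleting resonant neighborhoods only up to a finite depth, and the quadratic counting lower bound forces each such cover to lose a definite $\epsilon^2/|\log\epsilon|$ fraction of dimension, giving $\dim \leq 1 - c_u \epsilon^2/|\log\epsilon|$.

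For the Veech case, the orbit $\cM = \sltwor \cdot X \cong \sltwor/\veech(X)$ is a closed hyperbolic orbifold of finite volume, so one has uniform control over the cusp geometry: the resonant set of saddle connection directions satisfies the required metric properties \emph{with uniform constants} over all $X' \in \cM$, because the systole and counting constants vary only through the (finitely many, $\sltwor$-orbit-equivalent) cusps. This upgrades $\beta$ from $1/(3m-1)$ to $1$ — the sharp exponent, matching Jarník's classical result for $\sltwoz$ — because in the lattice case the resonant directions at scale $R$ are essentially the $\veech(X)$-orbit of a finite set, hence honestly distributed like Farey fractions, removing the slack that the exponent $3m-1$ was compensating for; and $\epsilon_0$ can be taken uniform since it depends only on the injectivity radius / cusp width data of $\cM$.

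The main obstacle I expect is the lower bound in the non-Veech case: without a lattice Veech group one has no exact self-similar structure on $\Lambda(X)$, so the Cantor construction must be driven purely by the quadratic counting asymptotics and the angular separation estimate, and one must control the \emph{overlaps} between forbidden neighborhoods coming from saddle connections of genuinely different lengths (a saddle connection of length $\ell$ forbids an interval of size $\asymp \epsilon^2/\ell^2$, and summing these over the quadratically-many saddle connections up to length $R$ gives total forbidden measure $\asymp \epsilon^2 \log R$, which is exactly the source of the $|\log\epsilon|$ in the denominator). Making this summation rigorous — ensuring that after deleting all forbidden intervals up to each dyadic scale a definite proportion survives, uniformly in the scale — is the technical heart, and it is precisely what the abstract framework of \S~\ref{SecIntroResonantSet} is designed to package; the exponent $\beta = 1/(3m-1)$ is what comes out of optimizing the scale-by-scale survival estimate against the dimension $d$ of the stratum.
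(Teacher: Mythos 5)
Your high-level architecture matches the paper's: reduce $\bad^{dyn}(X,\epsilon)$ to $\bad(\cR^{sc},\sim\epsilon)$ via the Dani-type correspondence (Lemma \ref{LemmaDaniCorrespondenceFlat} and Corollary \ref{CorollaryInclusionsNotionBad}), then invoke an abstract Jarn\'ik inequality built from a Cantor construction for the lower bound and a covering argument for the upper bound. However, there is a real gap in the lower bound: you propose to drive the Cantor construction ``purely by the quadratic counting asymptotics and the angular separation estimate,'' but counting alone is not enough. As the paper remarks after Theorem \ref{TheoremMetricResultsHolonomyResonantSets}, for a generic $X$ the set $\cR^{sc}$ does \emph{not} have isotropic quadratic growth, so one cannot uniformly control the density of forbidden neighborhoods at every small scale by counting. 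The actual engine is the Minsky--Weiss non-divergence theorem for horocycles (Theorem \ref{TheoremMinskyWeiss}), whose $(C,\beta)$-good estimate yields the $(\epsilon,\tau)$-decaying property of $\cR^{sc}$ with $\tau = M\epsilon^\beta$ (Proposition \ref{PropDecaying}); the Cantor construction then deletes only a $\tau$-fraction at each scale. Your explanation that $\beta = 1/(3m-1)$ ``enters through the ambient dimension $2g+r-1$ of the stratum'' is also off: the paper traces it to the combinatorics of flat triangulations — a triangulation has $3m$ edges, so at most $3m-1$ saddle connections can be mutually disjoint in their interiors and simultaneously short, and this bound is precisely what sets the exponent in Minsky--Weiss.

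Similarly, for the upper bound you suggest the quadratic counting lower bound forces a deletion of a definite fraction at each scale, but what the paper actually uses is the $(\epsilon,U,\tau)$-Dirichlet property of $\cR^{sc}$ (Proposition \ref{PropDirichletUbiquity}), derived from a Dirichlet theorem for translation surfaces (Proposition \ref{propdirichletvorobets}, based on Vorobets); this guarantees that a fraction $\tau \sim \epsilon^2$ of every subinterval is covered by forbidden neighborhoods, driving the covering argument. For Veech surfaces your intuition is in the right direction, but the specific mechanism (Lemma \ref{LemmaSparseCorverVeech}) is that on a Veech surface there is a uniform $r_0 > 0$, depending only on $\cM$, below which at most $4g-4$ (all parallel) saddle connections are short; this is what lets the Minsky--Weiss argument run with the optimal $\beta = 1$ (Corollary \ref{CorollaryMinskyWeissVeech}).
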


It is natural to ask whether one can get $\beta=2$ in the lower bound in Theorem \ref{thmDynamicalJarnickInequality}, at least for any Veech surface. We refer to \S~\ref{SecCommentJarnikInequality} for some comments on this question.

\subsection{Unbounded geodesics in moduli space}
\label{SecUnboundedGeodesicsModuliSpaceIntro}

In this paper we also consider geodesics having excursions to the non-compact part of strata at a prescribed rate. The estimates that we prove follow from Theorem \ref{thmabstractkhinchinjarnik} below, which establishes a rather general dichotomy for the size of the set of directions satisfying a given diophantine condition. Unfortunately, while Theorem \ref{thmabstractkhinchinjarnik} admits a very general statement, its dynamical consequences cannot be explicitly stated in full generality. We have first a result on the generic behavior in $\theta$, namely Theorem \ref{thmDynamicalKhinchin} below, which generalizes a previous result of one of the authors (see \cite{MarcheseAsymptoticLaws}). Most of the ideas in the proof of Theorem \ref{thmDynamicalKhinchin} were introduced in \cite{ChaikaHomogeneousApproximationsTranslSurf}.

\begin{theorem}
\label{thmDynamicalKhinchin}
Let $X$ be any translation surface and let $\varphi:\RR^+\to\RR^+$ be a decreasing monotone function.
\begin{enumerate}
\item
If
$
\int_0^\infty \varphi(t)dt
$
converges as $t\to+\infty$, then for almost any $\theta$ we have
$$
\lim_{t\to\infty}
\frac{\sys(g_tr_\theta X)}{\sqrt{\varphi(t)}}=+\infty.
$$
\item
If
$
\int_0^\infty \varphi(t)dt
$
diverges as $t\to+\infty$, then for almost any $\theta$ we have
$$
\liminf_{t\to\infty}
\frac{\sys(g_tr_\theta X)}{\sqrt{\varphi(t)}}=0.
$$
\end{enumerate}
\end{theorem}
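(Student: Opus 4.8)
The plan is to reduce both statements to a Borel–Cantelli argument on the countable set of saddle connections of $X$, which is exactly the planar resonant set attached to $X$. Fix $X$ and let $\{\gamma_n\}$ enumerate its saddle connections, with holonomy vectors $v_n=\hol(\gamma_n,X)\in\RR^2$. The key geometric observation is that $\sys(g_tr_\theta X)$ is small precisely when some saddle connection $\gamma$ becomes short under $g_tr_\theta$; writing $v=\hol(\gamma,X)=(|v|\cos\alpha,|v|\sin\alpha)$ in polar form, one has $|g_tr_\theta v|^2=e^{2t}|v|^2\cos^2(\theta+\alpha)+e^{-2t}|v|^2\sin^2(\theta+\alpha)$, so this quantity drops below a threshold $\delta$ roughly when $|v|\le e^t$ and the angular distance $|\theta+\alpha|$ is at most of order $\delta e^{-t}/|v|$ (one also needs $e^{-t}|v|\le\delta$, i.e. $|v|\le\delta e^t$, so the relevant $v_n$ are those with $|v_n|\asymp e^t$). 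Thus for a fixed time $t$, the set of $\theta$ with $\sys(g_tr_\theta X)\le\sqrt{\varphi(t)}$ is (up to constants) a union of arcs around the directions of saddle connections of length $\asymp e^t$, each arc having length $\asymp\sqrt{\varphi(t)}\cdot e^{-t}/|v_n|\asymp\sqrt{\varphi(t)}\,e^{-2t}$.

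First, for the convergence case, I would discretize time, $t\in\NN$, and use the quadratic growth of the number of saddle connections: the number of $\gamma_n$ with $|v_n|\in[e^t,e^{t+1}]$ is $O(e^{2t})$ by the Masur–Veech quadratic upper bound (valid for every translation surface, not just generic ones — this is the classical counting estimate, e.g. Masur's theorem). Hence the total measure of the bad set at time $t$ is $O(e^{2t}\cdot\sqrt{\varphi(t)}\,e^{-2t})=O(\sqrt{\varphi(t)})$. This is summable if $\varphi$ is summable, but we actually need more: we need that for almost every $\theta$, $\sys(g_tr_\theta X)/\sqrt{\varphi(t)}\to\infty$, i.e. for every $A$, eventually $\sys>A\sqrt{\varphi(t)}$. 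Summability of $\sqrt{\varphi(t)}$ does not follow from summability of $\varphi(t)$, so here one must be more careful: replace the single threshold by a sequence and use that $\varphi$ is monotone decreasing, so $\varphi(t)\le \frac1{t}\int_0^t\varphi\le \frac{C}{t}$, which gives $\sqrt{\varphi(t)}=O(t^{-1/2})$ — still not summable. The correct route, following \cite{ChaikaHomogeneousApproximationsTranslSurf} and \cite{MarcheseAsymptoticLaws}, is to bound the measure of the bad set on the \emph{continuous} time interval $[T,T+1]$ all at once, using that a saddle connection short at some $t\in[T,T+1]$ is short at the single instant where its direction is exactly vertical, and the angular width over the whole interval telescopes to $O(\sqrt{\varphi(T)})$ again but now summed over dyadic-in-length families — the upshot is the Hardy–Littlewood-type estimate $\int_0^\infty\varphi(t)\,dt<\infty\Rightarrow\sum_k(\text{measure at scale }k)<\infty$, after regrouping by $\lceil\int_0^t\varphi\rceil$ rather than by $t$. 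Then Borel–Cantelli gives the conclusion.

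For the divergence case I would run the second Borel–Cantelli lemma, which requires a quasi-independence (almost-disjointness) estimate on the arcs at different times, together with a lower bound $\gg e^{2t}$ on the number of saddle connections of length $\asymp e^t$. The lower bound on the count holds for every translation surface because of the existence of at least one cylinder, hence an arithmetic progression of parallel saddle connections, giving $\gg e^{2t}$ holonomy vectors of length $\le e^t$ with well-spread directions (this is the step where one uses that on \emph{every} surface there are "enough" saddle connections; for generic surfaces it is Masur's lower bound, but the cylinder argument works unconditionally). The quasi-independence comes from the fact that arcs at time $t$ have length $\asymp\sqrt{\varphi(t)}e^{-2t}$ while their centers (directions of length-$e^t$ saddle connections) are $\asymp e^{-2t}$-separated, so at a fixed scale the arcs are genuinely disjoint and the measure is $\asymp\min\{1,\sqrt{\varphi(t)}\}$; the interaction between different scales $t,t'$ is controlled because a long saddle connection at scale $t'$ only meets an arc at scale $t<t'$ in a controlled way — here the divergence hypothesis $\int\varphi=\infty$ guarantees $\sum\sqrt{\varphi}$-type sums over the regrouped scales diverge, so the Borel–Cantelli sum diverges and infinitely many arcs are hit. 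I expect the \textbf{main obstacle} to be precisely this quasi-independence estimate for the divergence part: one must show that the events "$\gamma_n$ is short at time near $t_n$" for a carefully chosen subsequence are pairwise quasi-independent, which requires separating the contribution of saddle connections of comparable length (handled by the spacing coming from cylinders) from those of very different lengths (handled by the Masur–Veech upper bound again), and then checking that the resulting quasi-independence constant is uniform — this is the technical heart, and it is essentially the content of the arguments imported from \cite{ChaikaHomogeneousApproximationsTranslSurf}.
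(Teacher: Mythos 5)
Your sketch points in the right general direction, but two concrete problems would derail a write-up. The measure estimate in the convergence part is off: you count saddle connections of length $\asymp e^t$ and get total measure $O(\sqrt{\varphi(t)})$, but the vertical component $e^{-t}|\im(\theta,\gamma)|\le\sqrt{\varphi(t)}$ forces $|\gamma|\lesssim\sqrt{\varphi(t)}\,e^t$, and summing the arc widths $\sqrt{\varphi(t)}\,e^{-t}/|\gamma|$ over dyadic shells $|\gamma|\in[2^k,2^{k+1}]$ with the Masur count $O(2^{2k})$ gives total measure $O(\varphi(t))$, not $O(\sqrt{\varphi(t)})$. That is already summable whenever $\int\varphi<\infty$, so the Hardy--Littlewood-style ``regrouping by $\lceil\int_0^t\varphi\rceil$'' you gesture at is both vague and unnecessary; the correct device is the change of variable $\psi(s)=s^{-2}\varphi(\ln s/a)$, for which $\sum n\psi(n)$ and $\int\varphi$ converge or diverge together (Lemma~\ref{LemChangeVariableLogLaws}), and the upgrade from $\liminf\ge1$ to $\to\infty$ (resp.\ from $\le\sqrt{2}$ to $=0$) is a countable intersection over $\varphi_n:=n\varphi$ (resp.\ $\varphi/n$).

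The more serious gap is in the divergence direction. You never control the instant $t(\theta,\gamma)$ at which $\gamma$ realizes the systole. Membership in $W(\cR,\psi)$ gives infinitely many $\gamma$ with $|\re(\theta,\gamma)|<|\gamma|\psi(|\gamma|)$, but if $|\re(\theta,\gamma)|$ happens to be extremely small then $t(\theta,\gamma)=\tfrac12\log(|\im|/|\re|)$ can greatly exceed $\log|\gamma|$, and $\varphi(t(\theta,\gamma))$ may be far smaller than what you can read off $\psi(|\gamma|)$; the comparison $\sys(g_{t}r_\theta X)^2\lesssim\varphi(t)$ at $t=t(\theta,\gamma)$ simply does not follow without an a priori bound. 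The paper fixes this by a bootstrap: first apply the convergence case with $\varphi(t)=t^{-1.02}$ to get, for a.e.\ $\theta$ and all long $\gamma$, $|\re(\theta,\gamma)|>|\gamma|^{-1.02}$, hence $|\gamma|\ge e^{at(\theta,\gamma)}$ with $a\approx 0.98$, and only then run the change of variable with that $a$ together with the monotonicity of $s\mapsto s^2\psi(s)$. Separately, the quasi-independence you flag as the ``technical heart'' requires isotropic quadratic growth and ubiquity, and these fail for $\cR^{sc}$ on a surface with dense $\sltwor$-orbit (see the Remark after Theorem~\ref{TheoremMetricResultsHolonomyResonantSets}); one must work with $\cR^{cyl}$ (cylinders of area $>1/m$), using Vorobets's theorem for the Dirichlet/ubiquity input, and then descend to $\cR^{sc}$ via $W(\cR^{cyl},\psi)\subset W(\cR^{sc},\psi)$. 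Your proposed source of well-spread directions---an arithmetic progression of holonomy vectors inside a single cylinder---gives many lengths but a single direction, so it does not spread arcs across $[-\pi/2,\pi/2[$ and cannot supply ubiquity.
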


In particular, considering the one parameter family of functions
$
\varphi_\epsilon(t):=t^{-(1+\epsilon)}
$ 
and applying both parts of the Theorem, it follows that for almost every $\theta$ we have
\begin{equation}
\label{eqGeneriLogarithmicLaw}
\limsup_{t\to\infty}
\frac{-\log\sys(g_tr_\theta\cdot X)}{\log t}=
\frac{1}{2}.
\end{equation}

Equation \eqref{eqGeneriLogarithmicLaw} above gives the asymptotic maximal size of 
$
-\log\sys(g_tr_\theta\cdot X)
$
along the geodesic in the generic direction $\theta$, and it is inspired by logarithmic laws for geodesics obtained by D. Sullivan and H. Masur, respectively for the case of non-compact hyperbolic manifolds (see \cite{SullivanLogLaw}) and of the moduli space of Riemann surfaces (see \cite{MasurLogLaw}). In \cite{ChaikaTrevino} one can find details on the comparison between the result in \cite{MasurLogLaw} and other analogue logarithmic laws measuring the degeneration of the flat geometry of $g_tr_\theta\cdot X$. Subsets of directions $\theta$ having asymptotic rate for the maximal excursion bigger than in Equation \eqref{eqGeneriLogarithmicLaw} have zero Lebesgue measure, but they can be measured by general \emph{Hausdorff measures} $H^f$ via Theorem \ref{thmabstractkhinchinjarnik} and parts (3) and (4) of Theorem \ref{TheoremMetricResultsHolonomyResonantSets} below, plus an elementary observation corresponding to Equation \eqref{EqInclusionResonantSets}. In particular, for a fixed real number $\alpha$ with $0<\alpha<1$ consider the subset of $[-\pi/2,\pi/2[$ defined by 
$$
S_X(\alpha):=
\left\{
\theta
\textrm{ ; }
\limsup_{t\to\infty}
\frac{-\log\sys(g_tr_\theta\cdot X)-\alpha t}{\log t}
=\frac{1}{2}.
\right\}.
$$
Inspired by the classical Jarn\'ik-Besicovich Theorem on the dimension of the set of real numbers with given diophantine exponent, we develop Theorem \ref{thmLogLaws(Diophantine)} below, which is a version of Jarn\'ik-Besicovich result for the geodesic flow in moduli space. Actually, a natural dynamical behavior corresponding to Jarn\'ik-Besicovich Theorem would be
$$
\limsup_{t\to\infty}
\frac{-\log\sys(g_tr_\theta\cdot X)}{t}=\alpha.
$$
The finer asymptotic that we consider is an adaptation to the geodesic flow on the moduli space of translation surfaces of estimates developed in \S~3.1 of \cite{velaniubiquity}.

\begin{theorem}
\label{thmLogLaws(Diophantine)}
Let $X$ be any translation surface. For any $\alpha\in(0,1)$ we have
$$
\dim\big(S_X(\alpha)\big)= 1-\alpha
\quad
\textrm{ and }
\quad
H^{1-\alpha}\big(S_X(\alpha)\big)=+\infty.
$$
\end{theorem}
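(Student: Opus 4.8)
The plan is to translate membership in $S_X(\alpha)$ into a Diophantine approximation condition for the \emph{holonomy resonant set} of $X$ --- the countable set of directions $\theta_\gamma$ of saddle connections $\gamma$, weighted by the lengths $\ell_\gamma=|\hol(\gamma,X)|$ --- and then to read the Hausdorff measure off the abstract dichotomy of Theorem~\ref{thmabstractkhinchinjarnik}, whose hypotheses for this particular resonant set are supplied by parts (3) and (4) of Theorem~\ref{TheoremMetricResultsHolonomyResonantSets}. For an approximating function $\varphi$ write $W(\varphi)$ for the set of directions $\theta$ with $\|\theta-\theta_\gamma\|<\varphi(\ell_\gamma)$ for infinitely many saddle connections $\gamma$.

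First I would set up the Dani-type dictionary. For a saddle connection $\gamma$ with $\delta:=\|\theta-\theta_\gamma\|$ small, the function $t\mapsto-\log|g_tr_\theta\hol(\gamma,X)|$ equals, up to a bounded additive error, the ``tent'' $t\mapsto\min\big(t,\ \log(1/\delta)-t\big)-\log\ell_\gamma$, so it reaches its maximum $\tfrac12\log(1/\delta)-\log\ell_\gamma$ at $t_\gamma:=\tfrac12\log(1/\delta)$. Since $-\log\sys(g_tr_\theta X)$ is the maximum of these profiles over all $\gamma$, and since only finitely many saddle connections have bounded length (so that $t_\gamma\to\infty$ forces $\ell_\gamma\to\infty$), one obtains
\[
\limsup_{t\to\infty}\frac{-\log\sys(g_tr_\theta X)-\alpha t}{\log t}
=\limsup_{\ell_\gamma\to\infty}\frac{(1-\alpha)\,t_\gamma-\log\ell_\gamma}{\log t_\gamma},
\]
which is the elementary observation underlying Equation~\eqref{EqInclusionResonantSets}. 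Substituting $t_\gamma=\tfrac12\log(1/\delta)$ and solving $(1-\alpha)t_\gamma-\log\ell_\gamma=\tfrac12\log t_\gamma$ shows that the critical approximating function is
\[
\psi(\ell)=\ell^{-\frac{2}{1-\alpha}}(\log\ell)^{-\frac{1}{1-\alpha}},
\]
and a routine estimate using the displayed identity gives the sandwich
\[
W(\psi)\ \setminus\ \bigcup_{n\ge1}W(\psi_n^{-})\ \subseteq\ S_X(\alpha)\ \subseteq\ W(\psi_\eta^{+})\qquad(\eta>0),
\]
where $\psi_n^{-}(\ell)=\psi(\ell)(\log\ell)^{-1/n}$ and $\psi_\eta^{+}(\ell)=\ell^{-2/(1-\alpha)}(\log\ell)^{-(1-\eta)/(1-\alpha)}$. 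Removing $\bigcup_nW(\psi_n^{-})$ is precisely what caps the $\limsup$ at $\tfrac12$; the inclusion into $W(\psi_\eta^{+})$ uses only the ``easy'' half of the tent estimate.

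Next I would feed this into the abstract machinery. Parts (3) and (4) of Theorem~\ref{TheoremMetricResultsHolonomyResonantSets} guarantee that the holonomy resonant set of $X$ has the ubiquity/spreading properties and the two-sided quadratic count $cL^2\le\#\{\gamma:\ell_\gamma\le L\}\le CL^2$ required by Theorem~\ref{thmabstractkhinchinjarnik}. Applying that theorem with the Hausdorff function $f(r)=r^{1-\alpha}$, the dichotomy is governed by $\sum_\gamma f\big(\psi(\ell_\gamma)\big)\asymp\sum_\gamma\ell_\gamma^{-2}(\log\ell_\gamma)^{-1}$, which diverges: grouping the $\gamma$ dyadically in length and using the lower bound $cL^2$ turns it into a harmonic series, whence $H^{1-\alpha}\big(W(\psi)\big)=+\infty$. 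For each $n$ the corresponding series for $\psi_n^{-}$ is $\asymp\sum_\gamma\ell_\gamma^{-2}(\log\ell_\gamma)^{-1-(1-\alpha)/n}$, which converges, so $H^{1-\alpha}\big(W(\psi_n^{-})\big)=0$ and therefore $H^{1-\alpha}\big(\bigcup_nW(\psi_n^{-})\big)=0$. Finally, for every $s>1-\alpha$ the series $\sum_\gamma\psi_\eta^{+}(\ell_\gamma)^s\asymp\sum_\gamma\ell_\gamma^{-2s/(1-\alpha)}(\log\ell_\gamma)^{-(1-\eta)s/(1-\alpha)}$ converges (the exponent of $\ell_\gamma$ is $<-2$), so covering $W(\psi_\eta^{+})$ by the intervals of radius $\psi_\eta^{+}(\ell_\gamma)$ around the $\theta_\gamma$ gives $H^s\big(W(\psi_\eta^{+})\big)=0$, i.e.\ $\dim W(\psi_\eta^{+})\le s$. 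Combining with the sandwich: $\dim S_X(\alpha)\le\dim W(\psi_\eta^{+})\le s$ for all $s>1-\alpha$, hence $\dim S_X(\alpha)\le1-\alpha$; and $H^{1-\alpha}\big(S_X(\alpha)\big)\ge H^{1-\alpha}\big(W(\psi)\big)-H^{1-\alpha}\big(\bigcup_nW(\psi_n^{-})\big)=+\infty$, which also forces $\dim S_X(\alpha)\ge1-\alpha$.

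The main obstacle is the sandwich in the second paragraph, that is the ``hard'' half of the Dani-type correspondence. When $\sys(g_tr_\theta X)$ is small one must single out an ``active'' saddle connection and control both its length and its angular distance to $\theta$ in terms of $t$ and $\sys(g_tr_\theta X)$, uniformly over all saddle connections rather than just the shortest one; and one must pass from the $\limsup$ over the continuum of times $t$ to the $\limsup$ over the discrete set $\{t_\gamma\}$ without an intermediate time spoiling the estimate --- this is exactly where the precise tent profile of an excursion is needed. One must also check that deleting the ($H^{1-\alpha}$-null) over-approximable set $\bigcup_nW(\psi_n^{-})$ simultaneously caps the $\limsup$ at $\tfrac12$ and still leaves infinitely many approximations $\gamma$ in the correct window $\psi_n^{-}(\ell_\gamma)\le\|\theta-\theta_\gamma\|<\psi(\ell_\gamma)$, so that the lower bound $\limsup\ge\tfrac12$ is preserved. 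The logarithmic secondary factors in $\psi$ and $\psi_n^{\pm}$ are dictated by the need to place the critical series exactly at the convergence/divergence borderline, following the scheme of \S~3.1 of \cite{velaniubiquity}.
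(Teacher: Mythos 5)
Your proposal follows essentially the same route as the paper's \S~\ref{SecProofGenLogLaws}: you identify the same critical approximation function $\psi(\ell)=\ell^{-2/(1-\alpha)}(\log\ell)^{-1/(1-\alpha)}$ (this is $\psi_{\tau,0}$ with $\tau=2/(1-\alpha)$), use the same nested family $\psi_{\tau,\epsilon}$, and feed it into Theorem~\ref{thmabstractkhinchinjarnik} via Theorem~\ref{ThmKhinchinJarnickTranslationSurfaces}. However, the pivot of your argument — the displayed ``identity'' relating the $\limsup$ over continuous time $t$ to a $\limsup$ over the discrete peak-times $t_\gamma$ — is not an identity, and it is not what Equation~\eqref{EqInclusionResonantSets} says. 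That equation records the inclusions $W(\cR^{cyl},\psi)\subset W(\cR^{sc},\psi)$ and $\bad(\cR^{sc},\epsilon)\subset\bad(\cR^{cyl},\epsilon)$, whose role here is purely to transfer the divergence conclusion from $\cR^{cyl}$ (where parts (3) and (4) of Theorem~\ref{TheoremMetricResultsHolonomyResonantSets} hold — they do \emph{not} hold for $\cR^{sc}$) to $\cR^{sc}$; it says nothing about the tent profile. The genuine content you need is precisely the pair of one-sided implications proved as Lemma~\ref{LemLenghtTimeLowLaws}, Corollary~\ref{CoroLenghtTimeLogLaw}, Proposition~\ref{PropLogLawsDivergent} and Proposition~\ref{PropLogLawsConvergent}: one direction uses the single active saddle connection realizing the systole at time $t$ and bounds its length from the inequality $|\hol(\gamma,g_tr_\theta X)|\ge e^{-t}|\gamma|$, the other evaluates the excursion at $t=t(\theta,\gamma)$ for a $\gamma$ witnessing $\theta\in W(\tau)$ and uses $\theta\notin W(\tau,\epsilon)$ to control $t(\theta,\gamma)$ in terms of $\log|\gamma|$. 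Your one-liner would not survive scrutiny as an equality (for fixed $\gamma$ the max over $t$ of $(\mathrm{tent}_\gamma(t)-\alpha t)/\log t$ need not occur at $t_\gamma$, and the two one-sided bounds have different error structures), and the paper devotes the bulk of the proof to carrying out these estimates carefully. You correctly flag this as the main obstacle, but it should be stated as two separate one-sided propositions rather than an identity, and your citation of Equation~\eqref{EqInclusionResonantSets} for it should be removed.

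Two smaller points. First, Theorem~\ref{thmabstractkhinchinjarnik} does not require a two-sided quadratic count $cL^2\le\#\{\gamma:\ell_\gamma\le L\}\le CL^2$; it requires quadratic growth from above together with ubiquity (which replaces any lower-bound-on-count hypothesis), plus isotropic quadratic growth for the divergent case. Second, since isotropic quadratic growth can \emph{fail} for $\cR^{sc}$ (see the remark following Theorem~\ref{TheoremMetricResultsHolonomyResonantSets}), the divergent half must be run on $\cR^{cyl}$ and then pushed to $\cR^{sc}$ via Equation~\eqref{EqInclusionResonantSets}; this is exactly what Theorem~\ref{ThmKhinchinJarnickTranslationSurfaces} packages for you, so you should invoke it rather than Theorem~\ref{thmabstractkhinchinjarnik} directly on the saddle-connection resonant set.
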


\subsection{Recurrence in a rational billiard}
\label{SecRecurrenceBilliardsIntro}

Let $Q$ be a \emph{rational polygon}, that is a polygon in the plane whose angles are rational multiples of $\pi$. The linear part of reflections at the sides of $Q$ generate a finite group of linear isometries of the plane, so that any direction $\theta$ belongs to a finite equivalence class $[\theta]$, which is the orbit of $\theta$ under the action of reflections at sides of $Q$. For any class of directions $[\theta]$, the \emph{billiard flow} $\widehat{\phi}_{[\theta]}$ is well defined. A classical \emph{unfolding construction} of the rational polygon $Q$ defines a translation surface $X=X(Q)$, and for any class $[\theta]$ on $Q$ we have a well defined directional flow $\phi_\theta^t$ on $X$. Fix a class $[\theta]$ of directions on the rational polygon $Q$. The diophantine conditions developed in this paper have a relation with the \emph{recurrence rate function} 
$
\omega_{[\theta]}:Q\to[0,+\infty]
$, 
defined on points $p\in Q$ by  
$$
\omega_{[\theta]}(p):=
\liminf_{r\to0}\frac{\log\big(R_{[\theta]}(p,r)\big)}{-\log r},
$$
where for any $r>0$ the quantity
$
R_{[\theta]}(p,r):=\inf\{t>r\textrm{ ; }|\phi^t_{[\theta]}(p)-p|<r\}
$
denotes the \emph{return time} of $p$ at scale $r$. It is possible to see that $\omega_{[\theta]}(p)$ is defined for all those $p$ whose billiard trajectory never ends in a corner of $Q$, more details can be found in \S~\ref{SecRecurrenceRateFunction}. The function $p\mapsto\omega_{[\theta]}(p)$ is obviously invariant under the billiard flow $\phi_{[\theta]}$. Therefore, when $\phi_{[\theta]}$ is uniquely ergodic, $\omega_{[\theta]}(p)$ is constant for almost any $p\in Q$. By a theorem of Masur (see \cite{MasurUniqErgDir}), the Hausdorff dimension $\lambda=\lambda(Q)$ of the set of directions $\theta$ on $Q$ such that $\phi_{[\theta]}$ is not uniquely ergodic satisfies $0\leq \lambda\leq 1/2$. Fix $\tau\geq 2$ and define the set
$$
S_\tau:=
\left\{\theta\quad;\quad
\phi_{[\theta]}\textrm{ is uniquely ergodic and }
\omega_{[\theta]}(p)=\frac{1}{\tau-1}
\textrm{ for a.e. }p\in Q
\right\}.
$$

In a related setting (see \cite{KimMarmi}), D. H. Kim and S. Marmi prove that for almost any \emph{interval exchange transformation} $T$ the almost everywhere constant value of the recurrence rate function is equal to one. Theorem \ref{ThmRecurrenceBilliads} below is a counterpart of Theorem \ref{thmLogLaws(Diophantine)} for the dynamics of the billiard flow on a rational polygon. Closely related results appear in \cite{KimMarcheseMarmi}.

\begin{theorem}
\label{ThmRecurrenceBilliads}
Let $Q$ be a rational billiard and let $0\leq \lambda\leq 1/2$ be the dimension of the set of non uniquely ergodic directions on $Q$. Then for any $\tau$ with $2\leq\tau<2/\lambda$ we have 
$$
\dim\big(S_\tau\big)=\frac{2}{\tau}.
$$
\end{theorem}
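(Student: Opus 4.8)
The plan is to reduce Theorem \ref{ThmRecurrenceBilliads} to the abstract dichotomy of Theorem \ref{thmabstractkhinchinjarnik} together with the metric estimates for holonomy resonant sets of Theorem \ref{TheoremMetricResultsHolonomyResonantSets}, following the same route that yields Theorem \ref{thmLogLaws(Diophantine)}, by turning the recurrence rate $\omega_{[\theta]}$ into a Diophantine condition on $\theta$ relative to the planar resonant set of \emph{cylinder directions} of $X(Q)$. First I would unfold $Q$ into a translation surface $X=X(Q)$ on which the billiard flow $\widehat\phi_{[\theta]}$ lifts to the directional flow $\phi_\theta^t$. Since $X\to Q$ is a finite branched covering with bounded geometry, the recurrence rate function computed on $Q$ coincides with the one of $\phi_\theta^t$, unique ergodicity is preserved under this correspondence, and by Masur the set of non uniquely ergodic directions of $X$ has Hausdorff dimension $\lambda$.

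The core step is the following dictionary. For a uniquely ergodic direction $\theta$, the flow--invariant function $p\mapsto\omega_{[\theta]}(p)$ is a.e.\ equal to the constant $1/(\sigma(\theta)-1)$, where $\sigma(\theta)$ is the exponent of approximation of $\theta$ by cylinder directions, namely the $\limsup$ of $-\log|\theta-\theta_\gamma|/\log\ell_\gamma$ over closed geodesics $\gamma$ of circumference $\ell_\gamma$ and direction $\theta_\gamma$ realizing the best approximation of $\theta$ at their scale. The mechanism is exactly the one for linear flows on the torus: a cylinder of circumference $\ell$ in direction $\theta_\gamma$ with $|\theta-\theta_\gamma|$ small forces the orbit of a.e.\ $p$ to return $r$--close to itself in time of order $\ell$ for every scale $r$ roughly between $\ell|\theta-\theta_\gamma|$ and $\ell^{-1}$, so that $\log R_{[\theta]}(p,r)/(-\log r)$ drops to the value $1/(\sigma-1)$ at the bottom scale $r\approx\ell|\theta-\theta_\gamma|\approx\ell^{1-\sigma}$; a matching lower bound on return times away from cylinder directions, valid in the uniquely ergodic regime (in the spirit of Boshernitzan's criterion and of the quantitative recurrence estimates underlying the work of Kim--Marmi and of Marchese--Kim--Marmi), shows that these are the worst scales and therefore compute the $\liminf$. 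Consequently $S_\tau=\{\theta\,;\,\phi_{[\theta]}\text{ uniquely ergodic and }\sigma(\theta)=\tau\}$. Up to the precise logarithmic normalisation, $\{\sigma(\theta)=\tau\}$ is the cylinder--version of the set $S_X(1-2/\tau)$ of Theorem \ref{thmLogLaws(Diophantine)}, the link being $\sys(g_tr_\theta X)\approx\ell^{\,1-\sigma/2}$ at time $t\approx(\sigma/2)\log\ell$ along each excursion.

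Now I would apply Theorem \ref{thmabstractkhinchinjarnik} to the cylinder resonant set of $X$, which satisfies the required metric properties by Theorem \ref{TheoremMetricResultsHolonomyResonantSets}: running the argument of Theorem \ref{thmLogLaws(Diophantine)} with $\alpha=1-2/\tau\in[0,1)$, the set $E_\tau:=\{\theta\,;\,\sigma(\theta)=\tau\}$ satisfies $\dim(E_\tau)=1-\alpha=2/\tau$ and $H^{2/\tau}(E_\tau)=+\infty$. The latter is the divergence half of the dichotomy, since $E_\tau$ contains the cylinder analogue of $S_X(\alpha)$, which already carries infinite $H^{1-\alpha}$--measure; the upper bound comes from the convergence half applied to the sets of $\theta$ with $|\theta-\theta_\gamma|<\ell_\gamma^{-\tau'}$ for infinitely many $\gamma$, letting $\tau'\uparrow\tau$. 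Finally, by Step 2 we have $S_\tau=E_\tau\cap\{\theta\text{ uniquely ergodic}\}$, and the discarded set of non uniquely ergodic directions has dimension at most $\lambda$, which is strictly smaller than $2/\tau$ exactly because $\tau<2/\lambda$; hence it is $H^{2/\tau}$--null, so $H^{2/\tau}(S_\tau)=H^{2/\tau}(E_\tau)=+\infty$, giving $\dim(S_\tau)\ge2/\tau$, while $\dim(S_\tau)\le\dim(E_\tau)=2/\tau$ since $S_\tau\subseteq E_\tau$. This yields $\dim(S_\tau)=2/\tau$.

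The main obstacle is the dictionary of Step 2, and within it the lower bound on return times: exhibiting fast returns near a well--approximating cylinder is soft, but proving that in a uniquely ergodic direction the orbit of a.e.\ $p$ cannot return to scale $r$ in time much below $r^{-1/(\sigma(\theta)-1)}$, and isolating the \emph{exact} a.e.\ value of $\omega_{[\theta]}(p)$ rather than merely a dimension--stable inequality, is what forces one to use the sharp logarithmic correction built into $S_X(\alpha)$ together with the infinite--Hausdorff--measure half of Theorem \ref{thmabstractkhinchinjarnik}.
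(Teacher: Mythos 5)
Your overall strategy — unfold to $X(Q)$, translate the recurrence rate into a Diophantine condition on $\theta$ relative to the cylinder/saddle connection resonant sets, invoke the abstract Khinchin–Jarn\'ik dichotomy of Theorem~\ref{thmabstractkhinchinjarnik} via Theorem~\ref{TheoremMetricResultsHolonomyResonantSets}, and then subtract the NUE directions using $\lambda < 2/\tau$ — is exactly the paper's route. The end-game (showing the auxiliary Diophantinely-defined set has $H^{2/\tau}$-measure $+\infty$, discarding $\textrm{NUE}(X)$ and the supersets $\cW^{sc}(\eta')$, $\eta'>\tau$) is also correct.

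However, your central ``dictionary'' is wrong as stated, and the error is exactly at the spot you flagged as the obstacle. You posit a single cylinder-approximation exponent $\sigma(\theta)$ and claim the pointwise equality $\omega_{[\theta]}(p)=1/(\sigma(\theta)-1)$ a.e.\ for every uniquely ergodic $\theta$, with the lower bound on return times coming from being ``away from \emph{cylinder} directions''. This is not available and is likely false. What the paper actually proves is a two-sided sandwich involving \emph{two different resonant sets}: close cylinder directions produce fast returns (Lemma~\ref{LemRecBilliard(UpperBound)}), giving $\omega_\theta\leq 1/(\eta-1)$ when $\theta\in\cW^{cyl}(\eta)$; while a fast return at scale $r$ produces a short \emph{saddle connection} $\gamma$ with $|\re(\theta,\gamma)|\leq r$ and $|\im(\theta,\gamma)|<R_\theta(p,r)$ (Lemma~\ref{lem1relationnontypes}, which you do not identify), giving $\omega_\theta(p)<1/(\eta-1)$ for some $p$ $\Rightarrow$ $\theta\in\cW^{sc}(\eta)$ (Lemma~\ref{lem2relationnontypes}). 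The resulting control is $1/(\sigma^{sc}(\theta)-1)\leq\omega_\theta\leq 1/(\sigma^{cyl}(\theta)-1)$, and these agree only when the two exponents coincide. The dimension computation goes through precisely because one works on the auxiliary set $\cW^{cyl}(\tau)\setminus\bigcup_{\eta'>\tau}\cW^{sc}(\eta')$ where both exponents equal $\tau$, and this set already carries full dimension $2/\tau$ and infinite $H^{2/\tau}$-measure. Your framing hides this asymmetry and therefore does not quite close the lower bound on return times. Separately, invoking the log-corrected sets $S_X(\alpha)$ of Theorem~\ref{thmLogLaws(Diophantine)} is an unnecessary detour: here the pure power-law approximation functions $\psi_\tau(r)=r^{-\tau}$ suffice, since one only needs the dimension (not the infinite-$H^{1-\alpha}$ refinement in the logarithmic scale), and it is cleaner to obtain infinite $H^{2/\tau}$-measure directly from the divergence half of Theorem~\ref{thmabstractkhinchinjarnik} applied to $\cW^{cyl}(\tau)$ combined with the vanishing of $H^{2/\tau}$ on $\bigcup_{\eta'>\tau}\cW^{sc}(\eta')$ and on $\textrm{NUE}(X)$.
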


The same result obviously holds for linear flows $\phi_\theta$ on a translation surface $X$. In \cite{CheungHubertMasur}, Y. Cheung, P. Hubert and H. Masur find polygons $Q$ for which $\lambda=0$, so that Theorem \ref{ThmRecurrenceBilliads} applies for any $\tau\geq 2$.

\subsection{Diophantine approximations for planar resonant sets}
\label{SecIntroResonantSet}

We consider diophantine conditions in terms of approximations of a given direction in $\RR^2$ by the directions of a countable set of vectors. Such approach is naturally formalized in polar coordinates, via the notion of \emph{planar resonant set}. We parametrize the set of lines in $\RR^2$ passing through the origin by the angle $\theta\in[-\pi/2,\pi/2[$ that they form with the vertical. Intuitively a \emph{planar resonant set} corresponds to a countable family of vectors $v\in\RR^2$, and for a given direction $\theta$ one considers those directions $\theta_v$ of vectors $v$ in the countable family such that the distance $|\theta-\theta_v|$ is small, compared to the length $|v|$ of $v$. Formal definitions are given below. Denote by $B(\theta,r)$ the open subinterval of $[-\pi/2,\pi/2[$ with length $2r$ centered at $\theta$. For any measurable subset $E\subset[-\pi/2,\pi/2[$ denote by $|E|$ its Lebesgue measure.

\medskip

A \emph{planar resonant set} is the datum $(\cR,l)$, where $\cR$ is a countable subset 
$
\cR\subset[-\pi/2,\pi/2[
$ 
and $l:\cR\to\RR_+$ is a positive function, such that for any $L>0$ the set
$
\{\theta\in\cR\textrm{ ; }l(\theta)<L\}
$
is finite. Given a real number $K>1$, we often consider the partition of $\cR$ into subsets
\begin{eqnarray*}
&&
\cR(K,n):=
\{\theta\in\cR
\textrm{ ; }
K^{n-1}<l(\theta)\leq K^n \}
\textrm{ for }
n\geq 1,
\\
&&
\cR(K,0):=
\{\theta\in\cR
\textrm{ ; }
l(\theta)\leq 1\}.
\end{eqnarray*}

An \emph{approximation function} is a decreasing function  $\psi:\RR_+\to\RR_+$. The set of directions in $[-\pi/2,\pi/2[$ which are \emph{well approximable} by elements in $\cR$ with respect to $\psi$ is
$$
W(\cR,\psi):=
\bigcap_{L>0}
\bigcup_{l(\theta)>L}
B\bigg(\theta,\psi\big(l(\theta)\big)\bigg).
$$
Given  $\epsilon>0$, the set of points in $[-\pi/2,\pi/2[$ which are \emph{$\epsilon$-badly approximable} with respect to $\cR$ is
$$
\bad(\cR,\epsilon):=
\left[\frac{-\pi}{2},\frac{\pi}{2}\right[
\setminus
\bigcap_{L>0}
\bigcup_{l(\theta)>L}
B\left(\theta,\frac{\epsilon^2}{l(\theta)^2}\right).
$$

In the following we consider subintervals $I\subset[-\pi/2,\pi/2[$ and we refer to them simply as \emph{intervals}. We introduce the following metric properties for planar resonant sets.

\begin{definition}
\label{DefMetricPropertiesResonantSets}
Let $(\cR,l)$ be a planar resonant set.
\begin{description}
\item[QG]
The set $(\cR,l)$ has \emph{quadratic growth} if there exists a constant $M>0$ such that for any $R>0$ we have
\begin{equation}\label{EqDefQuadraticGrowth}
\sharp
\{\theta\in\cR
\textrm{ ; }
l(\theta)\leq R\}\leq M\cdot R^2.
\end{equation}
\item[IQG]
The set $(\cR,l)$ has \emph{isotropic quadratic growth} if there exists a constant $M>0$ such that for any interval $I$ and any $R>0$ with $R^2|I|\geq1$ we have
\begin{equation}\label{EqDefIsotropicQuadraticGrowth}
\sharp
\{\theta\in I\cap\cR
\textrm{ ; }
l(\theta)\leq R\}\leq M\cdot |I|\cdot R^2.
\end{equation}
\item[U]
The set $(\cR,l)$ satisfies \emph{ubiquity}, if for any $K>1$ which is big enough there exist $c_1>0$, $c_2>0$ and $a>0$ with 
$
\displaystyle{\frac{a}{c_1}=o(K^2)}
$ 
such that for any $n$ and any interval $I$ with
$$
|I|\geq \frac{c_2}{K^n}
$$
we have
\begin{equation}
\label{EqDefUbiquity}
\left|
I
\cap
\bigcup_{l(\theta)\leq K^n} 
B\left(\theta,\frac{a}{K^{2n}}\right)\right|
\geq c_1 |I|.
\end{equation}
\item[DIR]
The set $(\cR,l)$ satisfies the \emph{$(\epsilon,U,\tau)$-Dirichlet property} for $\epsilon>0$, $U>0$ and $1<\tau<0$ if there exist some $L_0>0$ such that for any $L\geq L_0$ and any interval $I$ with $|I|\geq 2U/L^2$ we have
\begin{equation}
\label{EqDefDirichletUbiquity}
\left|
I\cap
\bigcup_{l(\theta)\leq L}
B\left(\theta,\frac{\epsilon^2}{2l(\theta)^2}\right)
\right|
\geq
\tau|I|.
\end{equation}
\item[DEC]
Fix $0<\epsilon<1$ and $0<\tau<1$ and set $K:=1/\epsilon$. The set $(\cR,l)$ is \emph{$(\epsilon,\tau)$-decaying} if for any $n\geq 1$ and any interval $I$ with
\begin{equation}\label{EqDefDecayingAssumption}
|I|=\frac{1}{K^{2n}}
\quad \textrm{   and   } \quad
I
\cap
\bigcup_{j=0}^{n-1}
\bigcup_{\theta\in\cR(K,j)}
B\left(\theta,\frac{\epsilon^2}{l(\theta)\cdot K^j}\right)
=\emptyset
\end{equation}
we have
\begin{equation}\label{EqDefDecayingConclusion}
\left|
I\cap
\bigcup_{\theta\in\cR(K,n)}
B\left(\theta,\frac{2\epsilon^2}{l(\theta)\cdot K^n}\right)
\right|
\leq \tau \cdot |I|.
\end{equation}
Moreover there exists an interval $I_0$ satisfying Condition \eqref{EqDefDecayingAssumption} for $n=1$.
\end{description}
\end{definition}

\begin{remark}
The notion of ubiquity has already been deployed in several other works, starting from \cite{BeresnevichDickinsonVelani}. Here condition $a/c_1=o(K^2)$ is a technical assumption adapted to our simplified proof of Theorem \ref{thmabstractkhinchinjarnik} in the setting of planar resonant sets. In related settings, the upper bound of the Hausdorff dimension of badly approximable sets is proved with a property which is derived from some version of Dirichlet Theorem, that was first called \emph{Dirichlet property} in \cite{WeilSteffen}. Dirichlet property and Ubiquity are quite similar, indeed for translation surfaces they both follow from Proposition \ref{propdirichletvorobets}. We give two separate abstract definitions because ubiquity is a qualitative property, stated in terms of constants which do not appear in Theorem \ref{thmabstractkhinchinjarnik} below, while the constants in Dirichlet property also appear in the upper bound in Theorem \ref{TheoremAbstractJarnickInequality}. Finally, the name for $(\epsilon,\tau)$-Decaying was chosen because it states a property which is similar to that of \emph{absolutely decaying measures}, which were introduced in \cite{KleinbockLinderstraussWeiss} and proved to be a valuable concept for establishing lower bounds on Hausdorff-dimension of badly approximable sets (see also \S~3.2 and \S~6.5 in \cite{minskyweiss}).
\end{remark}

A \emph{dimension function} is a continuous increasing function $f:\RR_+\to\RR_+$ such that either $f(r)/r$ is decreasing with $\lim_{r\rightarrow0}f(r)/r=\infty$, like for example $f(r)=r^s$ with $0<s<1$, or $f$ is the identity $f(r)=r$. For a fixed subset $E\subset[-\pi/2,\pi/2[$ and for $\rho>0$, a {\em $ \rho $-cover of $E$} is a countable collection $\{B_i\} $ of intervals $B_i$ with length $|B_i|\leq\rho$ for each $i$ such that $E \subset \bigcup_i B_i $. Such a cover exists for every $\rho>0$. For a dimension function $f$ define
$$
H^f_\rho(E):=
\inf\sum_i f\big(|B_i|\big),
$$
where the infimum is taken over all $\rho$-covers of $E$. The {\it Hausdorff $f$-measure} $H^f(E)$ of $E$ with respect to the dimension function $f$ is defined by
$$
H^{f} (E) :=
\lim_{\rho\to0} H^f_\rho(E)
\; = \;
\sup_{\rho>0}H^f_\rho(E).
$$
For the dimension function $f(r)=r^s$ with $0<s\leq 1$, the measure $H^f $ is the usual $s$-dimensional Hausdorff measure $H^s$, which coincides with the Lebesgue measure of $[-\pi/2,\pi/2[$ for $s=1$. The Hausdorff dimension $\dim E$ of a set $E$ is defined by
$$
\dim \, F \, := \,
\inf \left\{ s : H^{s} (E) =0 \right\} =
\sup \left\{ s : H^{s} (E) = \infty \right\}.
$$

In terms of the metric properties introduced in Definition \ref{DefMetricPropertiesResonantSets} we establish the following two results on diophantine approximations for planar resonant sets.

\begin{theorem}
[Abstract Khinchin-Jarn\'ik, after \cite{velanimasstransfer}]
\label{thmabstractkhinchinjarnik}
Consider a planar resonant set $(\cR,l)$ with quadratic growth, an approximation function $\psi$ and a dimension function $f$ such that the function $l\mapsto lf\circ\psi(l)$ for $l>0$ is decreasing monotone.
\begin{enumerate}
\item
If
$
\displaystyle{\sum_{n=1}^{\infty}nf\big(\psi(n)\big)<\infty}
$
then we have
$
\displaystyle{H^f\big(W(\cR,\psi)\big)=0}
$.
\item
If
$
\displaystyle{\sum_{n=1}^{\infty}n f\big(\psi(n)\big)=\infty}
$
and if moreover $(\cR,l)$ is ubiquitous and has isotropic quadratic growth, then we have
$
\displaystyle{H^f\big(W(\cR,\psi)\big)=H^f\big([-\pi/2,\pi/2[\big)}
$.
\end{enumerate}
\end{theorem}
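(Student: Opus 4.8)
The plan is to follow the classical Khinchin--Jarn\'ik strategy, adapted to the planar resonant setting along the lines of the mass transference principle of Beresnevich--Velani, but here packaged through the combinatorial structure of the sets $\cR(K,n)$.

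\textbf{Convergence case.} First I would prove part (1), which is the easy direction and needs only quadratic growth. Cover $W(\cR,\psi)$ using its very definition: for every $L>0$ one has $W(\cR,\psi)\subset\bigcup_{l(\theta)>L}B(\theta,\psi(l(\theta)))$. Group the $\theta$'s dyadically according to the partition $\cR=\bigsqcup_n\cR(K,n)$ with, say, $K=2$, so that on $\cR(K,n)$ one has $l(\theta)\asymp K^n$ and, by quadratic growth, $\sharp\cR(K,n)\le M K^{2n}$. Since $\psi$ is decreasing, $B(\theta,\psi(l(\theta)))$ has radius at most $\psi(K^{n-1})$, so each $\cR(K,n)$ contributes at most $MK^{2n}\cdot f(2\psi(K^{n-1}))$ to the $H^f$-premeasure. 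Using monotonicity of $l\mapsto lf\circ\psi(l)$ (which controls $f\circ\psi$ along the geometric scale) together with a Cauchy-condensation comparison, $\sum_n K^{2n}f(\psi(K^{n-1}))$ is comparable to $\sum_m m f(\psi(m))$, which converges by hypothesis. Letting $L\to\infty$ (equivalently summing only the tail $n\ge N$) forces $H^f(W(\cR,\psi))=0$. The only mild technical point is making the dyadic-to-integer comparison of the two series rigorous, which is a standard condensation argument using that $f$ is increasing and $lf\circ\psi(l)$ is decreasing.

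\textbf{Divergence case.} For part (2) I would build a Cantor-like subset of $W(\cR,\psi)$ carrying positive, in fact maximal, $H^f$-measure, using ubiquity to guarantee at each stage that the resonant balls fill a definite proportion of every not-too-small interval. Fix a large $K$ for which ubiquity holds with constants $c_1,c_2,a$ and $a/c_1=o(K^2)$. Inside any starting interval $I_0$, inductively construct a nested family of intervals: at level $n$ one has a collection of intervals of length $\asymp K^{-2n}$ that are well separated and each contained in some $B(\theta,\psi(l(\theta)))$ with $\theta\in\cR(K,n)$; the key is that, by the divergence of $\sum m f(\psi(m))$, infinitely many scales $n$ are ``productive'' in the sense that $K^{2n}f(\psi(K^n))$ is not summable, and along those scales ubiquity provides enough resonant balls inside each surviving interval to continue the construction while keeping the $H^f$-mass from collapsing. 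Isotropic quadratic growth is what prevents over-concentration: it gives an upper bound $\sharp\{\theta\in I\cap\cR:l(\theta)<R\}\le M|I|R^2$ that lets one discard only a controlled fraction of each interval so that the balls one keeps are genuinely disjoint and comparable in size. One then defines a mass distribution on the limiting Cantor set by distributing mass $\asymp f(\text{length})$ over the children at each level, verifies the Frostman/mass-distribution condition $\mu(B(x,r))\lesssim f(r)$ for all small $r$ by checking it separately in the regimes $K^{-2(n+1)}\le r\le K^{-2n}$, and concludes via the mass distribution principle that $H^f$ of the Cantor set, hence of $W(\cR,\psi)$, is positive; a standard local-to-global argument (the construction can be run inside any subinterval) upgrades positivity to $H^f(W(\cR,\psi))=H^f([-\pi/2,\pi/2[)$.

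\textbf{Main obstacle.} The hard part will be the bookkeeping in the divergence case: reconciling the ``productive scales'' coming from the divergent series with the ubiquity hypothesis, which only asserts a lower bound on the measure covered by resonant balls at scale $K^{-2n}$ rather than at the genuinely needed scale $\psi(K^n)\le K^{-2n}$. Bridging this gap is exactly where the monotonicity assumption on $l\mapsto lf\circ\psi(l)$ and the technical condition $a/c_1=o(K^2)$ enter: one has to shrink the ubiquitous balls from radius $a/K^{2n}$ down to radius $\psi(K^n)$ while retaining enough of them, and then account for the loss of $H^f$-mass incurred, showing it is summable-free along the productive scales. Controlling this loss uniformly, and simultaneously ensuring the selected balls stay pairwise disjoint (using IQG), is the crux; everything else is the now-standard Cantor construction plus mass distribution principle, essentially as in \cite{velanimasstransfer} and \cite{BeresnevichDickinsonVelani}.
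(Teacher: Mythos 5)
Your outline matches the paper's proof essentially step for step: the convergence case is the same dyadic-condensation covering argument using only quadratic growth, and the divergence case is the same Cantor-set-plus-mass-distribution construction driven by ubiquity (to produce well-separated resonant points at each scale) together with isotropic quadratic growth (to prevent over-concentration and keep the balls disjoint), and you correctly locate where the hypotheses $a/c_1=o(K^2)$ and the monotonicity of $l\mapsto lf\circ\psi(l)$ enter. One small imprecision in your final step: running the Cantor construction inside every subinterval (a density-point argument) is how the paper handles the Lebesgue case $f(r)=r$, but to get $H^f(W(\cR,\psi))=\infty$ when $f(r)/r\to\infty$ the paper instead exploits a free constant in the local construction (their $C$, equivalently $\eta$) that can be made arbitrarily large, so that the mass-transference lower bound $H^f(\KK_\eta)\ge\Delta/\eta$ blows up; simply knowing $H^f(W\cap I)>0$ on every subinterval $I$ would not by itself yield $H^f=\infty$.
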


\begin{theorem}
\label{TheoremAbstractJarnickInequality}
Consider a planar resonant set $(\cR,l)$.
\begin{enumerate}
\item
If $(\cR,l)$ satisfies the $(\epsilon,U,\tau)$-Dirichlet property for $\epsilon>0$, $U\geq0$ and $1<\tau<0$ then we have $$
\dim\big(\bad(\cR,\epsilon)\big)
\leq
1-
\frac
{|\log(1-\tau)|}
{|\log(\epsilon^2/(8U))|}.
$$
\item
If $(\cR,l)$ is $(\epsilon,\tau)$-decaying with $\tau<1-\epsilon^{4/3}$, we have
$$
\dim
\big(\bad(\cR,\epsilon)\big)
\geq
1-\frac{|\log(1-\tau-\epsilon^{4/3})|}{4/3|\log(\epsilon)|}.
$$
\end{enumerate}
\end{theorem}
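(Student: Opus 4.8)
The plan is to prove the two inequalities of Theorem \ref{TheoremAbstractJarnickInequality} by separate self-similar Cantor set constructions inside $[-\pi/2,\pi/2[$, working at the geometric scale $K^{-2n}$ where $K=1/\epsilon$ in both cases. For the upper bound in part (1), I would first note that for $L$ large, $\bad(\cR,\epsilon)$ is contained in the complement of $\bigcup_{l(\theta)\leq L}B(\theta,\epsilon^2/(2l(\theta)^2))$ together with the tail, so it suffices to cover the ``surviving'' set efficiently at each stage. Apply the $(\epsilon,U,\tau)$-Dirichlet property with $L_k := (2U)^{1/2}(\epsilon^2/(8U))^{-k/2}$ (chosen so that an interval of length $\epsilon^2/(8U) \cdot |I_{k-1}|$ still has length $\geq 2U/L_k^2$): starting from an interval $I_0$ of length $\geq 2U/L_0^2$, at each step every surviving interval $I$ of the current generation has at least a $\tau$-proportion of its length covered by the resonant balls $B(\theta, \epsilon^2/(2l(\theta)^2))$ with $l(\theta)\leq L_k$, and each such ball has length $\leq \epsilon^2/L_{k-1}^2 = (\epsilon^2/(8U))\cdot (2U/L_{k-1}^2)$, so we may remove from $I$ a collection of disjoint subintervals of this size whose union has measure $\geq \tau|I|/2$ (a Vitali-type argument gives the factor $1/2$, and explains the $8U$ rather than $4U$), leaving a set of measure $\leq (1-\tau/2)|I|$ to be covered by intervals of the next scale $\epsilon^2/(8U)\cdot|I|$. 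Iterating, the $k$-th generation cover consists of at most $\prod (1-\tau/2)/(\epsilon^2/(8U))$-many intervals, no wait — one tracks the number $N_k$ of generation-$k$ intervals of length $\rho_k = (\epsilon^2/(8U))^k|I_0|$, with $N_k \rho_k \leq (1-\tau/2)^k|I_0|$, hence $N_k \leq (1-\tau/2)^k (\epsilon^2/(8U))^{-k}$, and $\sum_k N_k \rho_k^s < \infty$ as soon as $s > 1 - |\log(1-\tau/2)|/|\log(\epsilon^2/(8U))|$; absorbing the constant $1/2$ into the statement (or noting $|\log(1-\tau)|\le 2|\log(1-\tau/2)|$ with an adjusted constant) yields the claimed bound.

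For the lower bound in part (2), I would build a Cantor subset of $\bad(\cR,\epsilon)$ by the standard technique for absolutely decaying measures. Start from the interval $I_0$ furnished by the last sentence of property DEC. Inductively, given a generation-$n$ interval $I$ of length $K^{-2n}$ that avoids all the resonant balls $B(\theta,\epsilon^2/(l(\theta)K^j))$ for $\theta\in\cR(K,j)$, $j\leq n-1$ (the DEC hypothesis \eqref{EqDefDecayingAssumption}), partition $I$ into $K^2$ subintervals of length $K^{-2(n+1)}$; by the DEC conclusion \eqref{EqDefDecayingConclusion} the ``dangerous'' set $I\cap\bigcup_{\theta\in\cR(K,n)}B(\theta,2\epsilon^2/(l(\theta)K^n))$ has measure $\leq\tau|I|$, and since each dangerous ball has radius $\leq 2\epsilon^2/K^{2n} = 2 K^{-2n} = 2K^2\cdot K^{-2(n+1)}$ it can intersect at most a bounded number of the little subintervals; so a subinterval lying entirely outside the dangerous set — and there are at least $(1-\tau)K^2$ minus a bounded correction, giving $\geq (1-\tau-\epsilon^{4/3})K^2$ of them after absorbing the ``bounded correction'' into the $\epsilon^{4/3}$ term — survives to generation $n+1$. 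One checks that points in the resulting Cantor set $C=\bigcap_n C_n$ lie in $\bad(\cR,\epsilon)$: any $\theta'\in C$ is $\geq K^{-2n}$-far from every resonant direction $\theta$ with $l(\theta)\in(K^{n-1},K^n]$, hence $|\theta'-\theta| \geq K^{-2n}\geq \epsilon^2/l(\theta)^2$ up to the constant hidden in passing from $2\epsilon^2/(l(\theta)K^n)$ to $\epsilon^2/l(\theta)^2$, which is exactly the $\bad(\cR,\epsilon)$ condition. The mass distribution principle applied to the natural measure on $C$ — which gives each generation-$n$ interval mass $\leq \prod_{j<n}((1-\tau-\epsilon^{4/3})K^2)^{-1}$ while its length is $K^{-2n}$ — then bounds $\dim(C)$ from below by $1 - |\log(1-\tau-\epsilon^{4/3})|/(2\log K) = 1 - |\log(1-\tau-\epsilon^{4/3})|/(2|\log\epsilon|)$. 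The extra factor $4/3$ versus $2$ in the denominator of the stated bound comes from the looseness introduced when converting the factor-$2$ discrepancies between the radii in \eqref{EqDefDecayingAssumption}, \eqref{EqDefDecayingConclusion} and the radius $\epsilon^2/l(\theta)^2$ defining $\bad(\cR,\epsilon)$, and from the room needed to absorb the counting correction; I would make this precise by carrying a parameter and optimizing at the end.

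The main obstacle I anticipate is the bookkeeping in the lower bound: one must simultaneously control (a) how many of the $K^2$ children of a surviving interval avoid the generation-$n$ dangerous set — which requires turning the measure estimate $\tau|I|$ into a \emph{counting} estimate, and that is where the hypothesis that each dangerous ball is not too large (radius comparable to the parent's length, up to the factor $K^2$) is essential and where a constant like $\epsilon^{4/3}$ must be spent; and (b) the discrepancy between the ball radii appearing in DEC and the defining radius $\epsilon^2/l(\theta)^2$ of $\bad(\cR,\epsilon)$, since DEC is phrased with radii $\epsilon^2/(l(\theta)K^j)$ which on the block $\cR(K,j)$ lie between $\epsilon^2/(l(\theta)\cdot l(\theta))= \epsilon^2/l(\theta)^2$ and $K\epsilon^2/l(\theta)^2$; ensuring the Cantor set genuinely avoids \emph{all} the $\bad$-balls, not merely the DEC-balls, and that the construction does not collapse (i.e. $(1-\tau-\epsilon^{4/3})K^2>1$, which needs $\tau<1-\epsilon^{4/3}$, exactly the hypothesis), is the delicate point. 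The upper bound is more routine: the only subtlety there is the Vitali covering step that produces disjoint resonant subintervals covering a definite fraction of each surviving piece, which costs the factor of $2$ converting $4U$ into $8U$.
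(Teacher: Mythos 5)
Your overall strategy matches the paper's (a Cantor set construction for the lower bound, a nested sequence of covers driven by the Dirichlet property for the upper bound), but in both parts the execution has a genuine error at precisely the points you flag as ``delicate.''

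\textbf{Lower bound.} You claim the Cantor set you build lies in $\bad(\cR,\epsilon)$, and you justify this by asserting that the DEC radii $\epsilon^2/(l(\theta)K^j)$ on the block $\cR(K,j)$ lie between $\epsilon^2/l(\theta)^2$ and $K\epsilon^2/l(\theta)^2$. That inequality is backwards: for $\theta\in\cR(K,j)$ one has $K^{j-1}<l(\theta)\le K^j$, so $K^j\ge l(\theta)$ and therefore
$$
\frac{\epsilon^3}{l(\theta)^2}<\frac{\epsilon^2}{l(\theta)K^j}\le\frac{\epsilon^2}{l(\theta)^2},
$$
i.e.\ the DEC balls are \emph{smaller} than the $\bad(\cR,\epsilon)$ balls, by a factor that can be as bad as $K=1/\epsilon$. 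Avoiding all the DEC balls only guarantees membership in $\bad(\cR,\epsilon^{3/2})$, not in $\bad(\cR,\epsilon)$. This is not a constant to be ``absorbed'' or ``optimized away'': the factor-$4/3$ in the denominator of the stated bound is exactly the trace of this loss. The paper's argument first proves $\dim\big(\bad(\cR,\epsilon^{3/2})\big)\ge 1-|\log(1-\tau-\epsilon^2)|/\big(2|\log\epsilon|\big)$ (note $\epsilon^2$, not $\epsilon^{4/3}$, at this stage; the intermediate child count is $\ge(1-\tau-\epsilon^2)K^2$), and then substitutes $\epsilon\mapsto\epsilon^{2/3}$ to convert $2|\log\epsilon|$ into $\tfrac{4}{3}|\log\epsilon|$ and $\epsilon^2$ into $\epsilon^{4/3}$. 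Without this two-step re-parametrization your argument, even if the bookkeeping were fixed, would establish a bound for the wrong set.

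\textbf{Upper bound.} Your Vitali step costs you the factor of $2$ in the \emph{numerator}: you end up with $\sharp\cK(n|I)\le(1-\tau/2)K$ rather than $(1-\tau)K$, hence a dimension bound $1-|\log(1-\tau/2)|/|\log(\epsilon^2/(8U))|$. Since $|\log(1-\tau/2)|<|\log(1-\tau)|$, this is strictly weaker than the claimed bound $1-|\log(1-\tau)|/|\log(\epsilon^2/(8U))|$, and the inequality $|\log(1-\tau)|\le 2|\log(1-\tau/2)|$ you invoke runs in the useless direction. The paper avoids Vitali entirely: it tiles each generation-$(n-1)$ interval into exactly $K$ equal children of length $\pi/K^n$, chooses $L_n$ so that this length equals $\epsilon^2/(2L_n^2)$, and then observes that any child which merely \emph{intersects} one of the half-radius Dirichlet balls $B(\theta,\epsilon^2/(2l(\theta)^2))$ with $l(\theta)\le L_n$ is automatically \emph{contained} in the full ball $B(\theta,\epsilon^2/l(\theta)^2)$ and hence misses $\bad(\cR,\epsilon)$. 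Discarding those children therefore costs nothing, the Dirichlet inequality directly gives at most $(1-\tau)K$ survivors, and one lands on the sharper $|\log(\epsilon^2/(4U))|$ in the denominator (the $8U$ in the theorem statement is already a looser restatement of what the proof actually delivers). The ``subtlety'' you located in the covering step is a real obstacle for the route you chose, and the paper's half-radius/full-ball trick is the device that dissolves it.
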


\emph{Note:} Condition $\tau<1-\epsilon^{4/3}$ is a technical assumption. Later on it will be trivially satisfied since for us $\tau=O(\epsilon^\beta)$ for some $\beta>0$.

\subsection{Planar resonant sets of translation surfaces}
\label{SecIntroHolonomyResonantSets}

Let $X$ be a translation surface with $\area(X)=1$ and let $m$ be the total multiplicity at conical singularities of $X$, that is
$$
m=2g-2+\sharp(\Sigma).
$$

If $\gamma$ is a saddle connection of $X$, denote by $\theta_\gamma$ its direction. It is well known that for a given direction $\theta$ there exist at most $4g-4$ saddle connections $\gamma_i$ such that 
$
\theta_{\gamma_i}=\theta
$
for any $i$. For a direction $\theta=\theta_\gamma$ of a saddle connection $\gamma$ let $\gamma^{min}(\theta)$ be the saddle connection parallel to $\gamma$ with minimal length. Define the planar resonant set $\cR^{sc}$ and the length function $l^{sc}:\cR^{sc}\to\RR_+$ by
\begin{eqnarray*}
&&
\cR^{sc}:=\{\theta=\theta_\gamma
\textrm{ ; }
\gamma
\textrm{ saddle connection of }
X\}
\\
&&
l^{sc}(\theta):=|\gamma^{min}(\theta)|.
\end{eqnarray*}

We consider also \emph{closed geodesics} $\sigma$ of $X$, and we denote $\theta_\sigma$ the direction of any such $\sigma$. Given any closed geodesic $\sigma$, there exists a family of closed geodesics which are parallel to $\sigma$ with the same length and the same orientation. A \emph{cylinder} for $X$ is a connected open set $C_\sigma$ foliated by such a family of parallel closed geodesics and maximal with this property. By maximality, it follows that the boundary of a cylinder $C_\sigma$ around a closed geodesic $\sigma$ is union of saddle connections parallel to $\sigma$. Any cylinder $C_\sigma\subset X$ defines a holonomy vector
$
\hol(C_\sigma) =\int_\sigma w
$,
which is also denoted by $\hol(\sigma)$. We need to restrict to cylinders whose euclidian area is bounded from below by a positive constant. Let $\theta$ be a direction such that there a closed geodesic $\sigma$ in direction 
$
\theta_\sigma=\theta
$ 
whose cylinder satisfies 
$
\area(C_\sigma)>1/m
$. 
Such $\sigma$ is not unique. If  
$
\{\sigma_1,\dots,\sigma_j\}
$ 
is the family of all parallel geodesics in direction $\theta$ with $\area(C_{\sigma_j})>1/m$, we denote by $\sigma^{min}(\theta)$ the shortest element in the family 
$
\{\sigma_1,\dots,\sigma_j\}
$. 
Finally, we define the planar resonant set $\cR^{cyl}$ and the length function 
$
l^{cyl}:\cR^{cyl}\to\RR_+
$ 
by
\begin{eqnarray*}
&&
\cR^{cyl}:=
\left\{\theta=\theta_\sigma
\textrm{ ; }
\sigma
\textrm{ closed geodesic for }
X
\textrm{ with }
\area(C_\sigma)>\frac{1}{m}
\right\}
\\
&&
l^{cyl}(\theta):=|\sigma^{min}(\theta)|.
\end{eqnarray*}
In this second case, in order to state results in the sharpest form, let us define the quantity
$$
\cyl(X)=\min\{l^{cyl}(\theta)\textrm{ ; }\theta\in\cR^{cyl}\}.
$$
In the following, when there is not risk of ambiguity, we will denote both $l^{sc}$ and $l^{cyl}$ simply by $l$. 

\medskip

For the sets $\cR^{sc}$ and $\cR^{cyl}$ we will obtain the diophantine condition stated in Theorem \ref{thmabstractkhinchinjarnik} and Theorem \ref{TheoremAbstractJarnickInequality}, provided that the required assumptions are satisfied, which is ensured by Theorem \ref{TheoremMetricResultsHolonomyResonantSets} below. In order to obtain all the consequences of the three statements combined, consider a direction $\theta\in\cR^{cyl}$ and let 
$
\sigma=\sigma^{min}(\theta)
$, 
so that $\theta_\sigma=\theta$, then let $C_\sigma$ be the corresponding cylinder. The boundary of 
$C_\sigma$ is union of saddle connections $\gamma$ in direction $\theta_\sigma$ with 
$
|\gamma|\leq |\sigma|
$. 
Therefore we have $\cR^{cyl}\subset\cR^{sc}$, moreover if $\iota:\cR^{cyl}\to\cR^{sc}$ denotes the inclusion, then for any $\theta\in\cR^{cyl}$ we have 
\begin{equation}
\label{EquationLengthSaddleConnectionsAndClosedGeodesics}
l^{sc}\big(\iota(\theta)\big)\leq l^{cyl}(\theta).
\end{equation}

It follows that for any approximation function $\psi$ and any $\epsilon>0$ we have
\begin{equation}
\label{EqInclusionResonantSets}
W(\cR^{cyl},\psi)\subset W(\cR^{sc},\psi)
\quad
\textrm{ and }
\quad
\bad(\cR^{sc},\epsilon)\subset \bad(\cR^{cyl},\epsilon).
\end{equation}

The quadratic growth for resonant sets arising from translation surfaces is established by a well-known result of Masur (see \cite{MasurQuadraticGrowth}). In a refined version by Eskin and Masur, namely  Theorem 5.4 in \cite{EskinMasurAsymptoticFormulas}, it is proved that for any translation surface $X$ with $\area(X)=1$ there exists a constant $M>1$ such that for any $L>1$ we have
$$
\frac{\sharp\{\theta_\sigma\in\cR^{cyl};l^{cyl}(\theta_\sigma)\leq L\}}{L^2}
\leq
\frac{\sharp\{\theta_\gamma\in\cR^{sc};l^{sc}(\theta_\gamma)\leq L\}}{L^2}<M.
$$
Moreover, given any compact subset $\cK\subset\cH^{(1)}$, the constant $M=M(X)$ can be chosen uniformly for all $X\in\cK$. In this paper, using previous results of Vorobets \cite{vorobets1}, Chaika \cite{ChaikaHomogeneousApproximationsTranslSurf} and Minsky-Weiss \cite{minskyweiss}, we prove further properties of holonomy resonant sets.

\begin{theorem}
\label{TheoremMetricResultsHolonomyResonantSets}
Let $X$ be a translation surface with $\area(X)=1$ and let $m$ be the total multiplicity at conical singularities of $X$.
\begin{enumerate}
\item
There are positive constants $M>1$, $r_0>0$ and $0<\beta\leq 1$, depending only on $m$, such that for any $\epsilon$ with 
$
0<\epsilon<\min\{r_0,\sys(X)\}
$ 
the set $\cR^{sc}$ is $(\epsilon,\tau)$-decaying with $\tau=M\cdot\epsilon^\beta$. In particular we have
$$
\beta=\frac{1}{3m-1}.
$$
Moreover, if $X$ is a Veech surface, the same result holds with $\beta=1$ and $r_0$ depending only on the closed orbit $\cM=\sltwor\cdot X$ of $X$.
\item
For any $\epsilon$ with $0<\epsilon<1$ the set $\cR^{sc}$ satisfies $(U,\epsilon,\tau)$-Dirichlet property in terms of the constants
$$
U=\frac{12}{m^2\epsilon^2}
\quad
\textrm{ and }
\quad
\tau=
\frac{m\epsilon^2}{\sqrt{48}}.
$$
\item
The set $\cR^{cyl}$ has isotropic quadratic growth in terms of the constant
$$
M:=m(m+1).
$$
\item
The set $\cR^{cyl}$ satisfies ubiquity. In particular, for any 
$
\displaystyle{K\geq\frac{\sqrt{2}}{\cyl(X)}\cdot 2^{2^{4m+1}}}
$, 
Equation \eqref{EqDefUbiquity} is satisfied with constants 
$$
c_1:=\frac{1}{2}
\quad
\textrm{ , }
\quad
c_2:=\frac{K}{2m\cyl(X)}
\quad
\textrm{ , }
\quad
a:=\sqrt{3}K.
$$
\end{enumerate}
\end{theorem}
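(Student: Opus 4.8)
We prove the four items separately; each one reduces to a known quantitative fact about the flat geometry of a translation surface --- the existence of short saddle connections in every direction, and the counting of cylinders of large area --- after which it remains to rephrase the geometric input in the combinatorial language of Definition~\ref{DefMetricPropertiesResonantSets} and to keep track of the constants.

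For the Dirichlet property (item~2), and, with further input, for ubiquity (item~4), the basic ingredient is the quantitative Dirichlet theorem for translation surfaces, Proposition~\ref{propdirichletvorobets}: for every direction $\phi$ and every scale it produces a saddle connection $\gamma$ whose direction $\theta_\gamma$ is within a controlled distance of $\phi$, with an explicit trade-off between $l(\theta_\gamma)$ and $|\phi-\theta_\gamma|$ fixed by the normalization $\area(X)=1$ and by $m$. Concretely one applies $g_T r_\phi$ with $e^{2T}$ of order $L^2$ and uses the area bound to produce a saddle connection which is short on $g_T r_\phi\cdot X$; pulling it back through $g_T$ turns shortness into an inequality of the shape $|\phi-\theta_\gamma|\le \epsilon^2/(2\,l(\theta_\gamma)^2)$ valid in a suitable range of lengths. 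As $\phi$ runs over an interval $I$ the balls $B(\theta_\gamma,\epsilon^2/(2\,l(\theta_\gamma)^2))$ cover $I$; a Besicovich-type selection, whose overlap is bounded using the quadratic growth of $\cR^{sc}$ recalled above, upgrades this to $\left|I\cap\bigcup_{l(\theta)\le L}B(\theta,\epsilon^2/(2l(\theta)^2))\right|\ge\tau|I|$, and a careful accounting of the factors produces the stated $U=12/(m^2\epsilon^2)$ and $\tau=m\epsilon^2/\sqrt{48}$. For item~4 one feeds in, on top of Proposition~\ref{propdirichletvorobets}, the quantitative counting and spatial equidistribution of cylinders of large area due to Vorobets~\cite{vorobets1} --- which is what forces the tower threshold $K\ge\frac{\sqrt2}{\cyl(X)}2^{2^{4m+1}}$ --- together with the estimates of Chaika~\cite{ChaikaHomogeneousApproximationsTranslSurf}; past that threshold the cylinders with area greater than $1/m$ and $l(\theta)\le K^n$ are numerous and spread out enough that the union of the neighbourhoods $B(\theta,\sqrt3 K/K^{2n})$ covers at least half of every interval longer than $c_2 K^{-n}$.

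Item~3, isotropic quadratic growth of $\cR^{cyl}$ with the explicit constant $m(m+1)$, I would obtain by a direct geometric count combined with a renormalization. On any surface of area one there can be only boundedly many cylinders of area greater than $1/m$ whose core has length at most $\Lambda$: at most $m-1$ parallel ones in a single direction (they are disjoint and each has area greater than $1/m$), and few directions can carry such a cylinder, since the shortest closed geodesic in a direction $\theta\in\cR^{cyl}$ has length $l(\theta)\le\Lambda$, the cylinder around it has a boundary saddle connection of length at most $l(\theta)$, and these saddle connections are counted by quadratic growth; combining the two estimates gives a non-isotropic bound $c_m\Lambda^2$ with $c_m$ depending only on $m$, uniform over area-one surfaces. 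To localize this to an interval $I$ with $R^2|I|\ge1$ one may assume $|I|$ small (otherwise the non-isotropic bound already suffices after adjusting the constant) and, after rotating so that $I$ is centred at the horizontal direction, apply $g_{-t}$ with $e^{2t}$ of order $1/|I|$: this stretches $I$ to an interval of bounded length and turns the cylinders in question into cylinders of area greater than $1/m$ on $g_{-t}\cdot X$ of core length $O(R\sqrt{|I|})$, so the non-isotropic bound on $g_{-t}\cdot X$ bounds their number by $O(R^2|I|)$; optimizing the constants yields $m(m+1)$.

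The remaining, and in my view principal, obstacle is item~1, the $(\epsilon,\tau)$-decaying property of $\cR^{sc}$. A crude count is useless here: at scale $K^{-2n}$ the number of saddle connections of length at most $K^n$ meeting a generic interval of that size can be of order $K^{2n}$, so the total mass of their neighbourhoods would swamp $\tau|I|$. One must instead exploit that the interval $I$ in \eqref{EqDefDecayingAssumption} has survived the previous stages of the construction, which forces the relevant saddle connections to be well separated; quantifying this separation is exactly what the machinery of absolutely decaying measures, in the form developed by Minsky and Weiss~\cite{minskyweiss}, supplies, and transporting their estimate along the circle of directions $r_\theta$ and through the renormalization $g_t$ --- while tracking how the decay exponent degrades under these changes of variables --- is what yields \eqref{EqDefDecayingConclusion} with $\tau=M\epsilon^\beta$ and the stated $\beta=1/(3m-1)$; the base interval $I_0$ exists because not all nearby directions can already lie within $\epsilon^2/(l(\theta)K)$ of a short saddle connection. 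For a Veech surface the saddle connections fall into finitely many parabolic families governed by the lattice Veech group $\veech(X)$, so the relevant neighbourhoods can be computed rather than merely estimated: this sharpens the exponent to $\beta=1$ and, by compactness of the closed orbit $\cM=\sltwor\cdot X$ together with continuity of the relevant counting functions, makes the threshold $r_0$ uniform over $\cM$. Getting this exponent right in general, and the uniform sharp statement in the Veech case, is where most of the work lies; the constant-chasing in items~2 and~4 is a secondary difficulty.
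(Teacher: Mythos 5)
Your identification of the key inputs is correct: Proposition~\ref{propdirichletvorobets} together with Vorobets' cylinder bound for items~2 and~4, Minsky--Weiss non-divergence for item~1, and a parabolic/lattice argument in the Veech case. But the sketch diverges from the paper's argument in two places and glosses over the hardest step of the remaining one.

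For item~3 the paper does not pass through quadratic growth of saddle connections or a $g_{-t}$--renormalization. It uses the elementary Lemma~\ref{lemmaIsotropicGrowthCylinders}: if two closed geodesics $\sigma,\sigma'$ (both with $\area(C_\sigma),\area(C_{\sigma'})>1/m$ and lengths $\le L$) have $|\theta_\sigma-\theta_{\sigma'}|<1/(mL^2)$, then $\sigma'$ cannot cross $C_\sigma$, so the cylinders are disjoint; since an area-one surface carries at most $m$ disjoint cylinders of area $>1/m$, every subinterval $J$ with $|J|\le 1/(mL^2)$ contains at most $m$ directions of $\cR^{cyl}$ with $l\le L$. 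Covering $I$ by at most $(m+1)L^2|I|$ such $J$'s gives the constant $m(m+1)$ directly and uniformly in $X$. Your renormalization route needs a non-isotropic bound with a constant depending only on $m$ and uniform over \emph{all} area-one surfaces in the stratum (since $g_{-t}X$ escapes every compact set), and the way you propose to obtain it --- through quadratic growth for $\cR^{sc}$ --- only yields a constant $M(X)$ which is uniform over compacta, not over the whole stratum. The disjointness argument is what rescues this, and once you have it, the renormalization is superfluous.

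For item~2, the paper does not invoke a Besicovitch covering selection. Instead it covers $I$ by the Dirichlet balls $B(\theta_\gamma,\sqrt3/(m\,l(\theta_\gamma)L))$, introduces a rescaling factor $r(\theta_\gamma)=\epsilon^2mL/(\sqrt{12}\,l(\theta_\gamma))$ comparing these to the target balls $B(\theta_\gamma,\epsilon^2/(2l(\theta_\gamma)^2))$, and splits into the case where more than half of $I$ is covered by balls with $\theta_\gamma\in I$ (where $r(\theta_\gamma)\ge m\epsilon^2/\sqrt{12}$ forces the conclusion), versus the complementary case where a single $\theta_\gamma\notin I$ covers a quarter of $I$ and the hypothesis $|I|\ge 2U/L^2$ forces $r(\theta_\gamma)>1$. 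No overlap bound or quadratic growth is needed.

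For item~1 your plan is structurally right, but the essential difficulty you should not leave implicit is the verification of the \emph{hypothesis} of the Minsky--Weiss estimate (Theorem~\ref{TheoremMinskyWeiss}), namely $\sup_{\alpha\in J}|\hol(\gamma,u_{-\alpha}\cdot X)|\ge\rho$ for every saddle connection $\gamma$. The paper's Lemmas~\ref{LemmaInitiationDecaying} and~\ref{LemmaRecursionDecaying} show precisely how Condition~\eqref{EqDefDecayingAssumption} on $I$ entails this lower bound for the renormalized surface $G_{n+1}u_{-\alpha}X$: for $\gamma$ with $|\im(\gamma)|\le K^{n-1}/\sqrt2$ the exclusion $I\cap B(\theta_\gamma,\epsilon^2/(l(\theta_\gamma)K^j))=\emptyset$ forces $|\alpha-\alpha_\gamma|$ to be large, and for $|\im(\gamma)|>K^{n-1}/\sqrt2$ the interval length $|J|=K^{-2n}$ alone suffices. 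Saying that Minsky--Weiss ``supplies'' the separation is not quite accurate: one must extract the separation from the inductive hypothesis before Minsky--Weiss can be applied, and that extraction --- together with the $\tan/\arctan$ bookkeeping of Lemma~\ref{LemmaDistortionArctan} --- is where the exponent $\beta=1/(3m-1)$ is inherited and where the Veech sharpening to $\beta=1$ (via Corollary~\ref{CorollaryMinskyWeissVeech} and the uniform $r_0$ from Lemma~\ref{LemmaSparseCorverVeech}) enters.
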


\begin{remark}
\label{RemarkImmersionResonantSets}
Point (4) in Theorem~\ref{TheoremMetricResultsHolonomyResonantSets} and Equation~\eqref{EquationLengthSaddleConnectionsAndClosedGeodesics} implies directly that 
$\cR^{sc}$ satisfies ubiquity with the same constants as $\cR^{cyl}$. On the other hand, according to Lemma~\ref{LemmaIsotropicQuadraticGrowthFails} in Appendix \S~\ref{SectionIsotropicQuadraticGrowthFails} of this paper, if $X$ is a surface with $\sltwor$-orbit dense in $\cH^{(1)}$, then the set $\cR^{sc}(X)$ does not have isotropic quadratic growth. Moreover, with constructions appearing in \S~5.3 in \cite{athreyachaika}, it is possible to see that for such a surface isotropic quadratic growth fails also for the set of directions $\theta_\sigma$ of \emph{all} closed geodesics $\sigma$, i.e. directions of closed geodesics around any cylinder $C_\sigma$, without any positive lower bound on 
$\area(C_\sigma)$. After the preprint of this paper was available online, closely related results on counting the number of saddle connections in angular sectors where obtained in \cite{Dozier}.
\end{remark}

Theorem \ref{ThmJarnikInequalityTranslSurf} and Theorem \ref{ThmKhinchinJarnickTranslationSurfaces} respectively in \S~\ref{BoundedGeodesicsModuliSpaceProofs} and in \S~\ref{ChapterExcursionsGeodesicsParameterSpace} give explicit statements of some consequences of Theorem \ref{TheoremMetricResultsHolonomyResonantSets} above and of the abstract Theorems \ref{thmabstractkhinchinjarnik} and \ref{TheoremAbstractJarnickInequality}.

\subsection{Further comments and questions}

\subsubsection{Sharpest lower bound in Theorem \ref{thmDynamicalJarnickInequality}}
\label{SecCommentJarnikInequality}

Let $\TT^2:=\RR^2/\ZZ^2$ be the standard torus and $\bad(\epsilon)$ be the set of those $\alpha\in\RR$ such that $q|q\alpha-p|\geq\epsilon^2$ for all but finitely many $p/q$, one can see that  
$$
\bad^{dyn}(\TT^2,\sqrt{2}\cdot\epsilon)=\bad(\epsilon).
$$ 
In \cite{jarnik}, Jarn\'ik gave the first estimates on the dimension of $\bad(\epsilon)$. In \cite{Kurzweil}, Kurzweil proves that for any $\epsilon>0$ small enough, we have 
$$
1-\frac{99}{100}\cdot\epsilon^2
\leq
\dim\big(\bad(\epsilon)\big)
\leq
1-\frac{1}{4}\cdot\epsilon^2.
$$
In \cite{Hensley}, Hensley gives the asymptotic for $\dim\big(\bad(\epsilon)\big)$ up to the term of order $\epsilon^4$. In our case, at least for Veech surfaces, it would be interesting to determine if the lower bound in Theorem \ref{thmDynamicalJarnickInequality} can be improved to get $\beta=2$, as it happens for the very special surface $X=\TT^2$. Nevertheless the gap between the exponent in lower and upper bound does not seem to be removable with our techniques. Recently, in \cite{Simmons}, Simmons computed the first order asymptotic of the dimension of uniformly badly approximable matrices, showing that in this case there is no gap between the exponent in lower and upper bound. This was not evident in previous estimates by Weil in \cite{WeilSteffen} and by Broderick and Kleinbock in \cite{BroderickKleinbock}, even in the extremal case of minimal dimension, where matrices (or vectors, in case of \cite{WeilSteffen}) coincide with real numbers. While the techniques used in \cite{BroderickKleinbock} and in \cite{WeilSteffen} have a counterpart for translation surfaces, namely quantitative non-divergence and Schmidt games, it is not evident that the same is true for the ideas introduced in \cite{Simmons}.

\subsubsection{Limits of the general approach}

Theorem \ref{thmDynamicalKhinchin} and Theorem \ref{thmLogLaws(Diophantine)} are consequences of the metric properties for the resonant sets $\cR^{sc}$ and $\cR^{cyl}$ stated in Theorem \ref{TheoremMetricResultsHolonomyResonantSets} and of the general Theorem \ref{thmabstractkhinchinjarnik}. Although these results can be applied to any pair of approximation function $\psi$ and dimension function $f$ such that $f\circ\psi$ is not increasing, a dynamical estimate for the excursions of $-\log\sys(g_tr_\theta\cdot X)$ requires an explicit choice of $\psi$ and $f$. This is because a comparison between $\sys(g_tr_\theta\cdot X)$ and a given function of time $\psi(t)$ passes through a comparison between the length of a saddle connection $\gamma$ on the surface $X$ and the instant $t=t(\theta,\gamma)$ when such $\gamma$ becomes short on the deformed surface $g_tr_\theta\cdot X$. See \S~\ref{ChapterExcursionsGeodesicsParameterSpace}.

\subsubsection{Unique ergodicity and diophantine type}

Let
$
\lambda:=\dim\big(\textrm{NUE}(X)\big)
$ 
be the dimension of the set of directions $\theta$ on the surface $X$ such that the flow $\phi_\theta$ is not uniquely ergodic. For $\tau\geq 2$ let 
$
\cW(\tau):=
W\big(\cR^{cyl},\psi_\tau\big)
\setminus
\bigcup_{\tau'>\tau}
W\big(\cR^{sc},\psi_{\tau'}\big)
$, 
where $\psi_\tau$ denotes the approximation function $\psi_\tau(r):=r^\tau$. It is easy to see that  $\dim\big(\cW(\tau)\big)=2/\tau$. In order to remove the assumption $\tau<2/\lambda$ in Theorem \ref{ThmRecurrenceBilliads} (see \S~\ref{SecEndProofRecurrenceBilliards}) we ask if we have the strict inequality
$$
\dim
\big(\cW(\tau)\cap\textrm{NUE}(X)
\big)
<\frac{2}{\tau}?
$$
For $\tau=2$ the answer is affirmative and corresponds to the well known fact that 
$$
\dim\big(NUE(X)\big)\leq 1/2<1=\dim\big(\cW(2)\big).
$$

\subsection{Contents of this paper}

In \S~\ref{ChapterProofAbstractKhinchinJarnik} we prove Theorem \ref{thmabstractkhinchinjarnik}. The convergent case follows from a very simple covering argument, which we give in \S~\ref{SecProofConvergentCaseKhinchinJarnik}. In divergent case, Lebesgue and general Hausdorff measure $H^f$ are considered separately. The first case in treated in \S~\ref{SecProofDivergentCaseKhinchinJarnikLebesgue} using Lebesgue density points. The second case is more involved: some general techniques related to \emph{mass transference} are resumed in \S~\ref{SecMassDistrubution}, proofs are completed in \S~\ref{SecProofDivergentAbstractKhinchinJarnik}.

In \S~\ref{SecProofAbstractJarnikInequality} we prove Theorem \ref{TheoremAbstractJarnickInequality}. In \S~\ref{SecProofLowerBoundAbstractJarnikInequality} we prove the lower bound using Decaying property and the general tools from \S~\ref{SecMassDistrubution}. In \S~\ref{SecProofUpperBoundTheoremAbstactJarnikInequality} we prove the upper bound with a covering argument build using Dirichlet property.

In \S~\ref{ChapterProofMetricPropertiesHolonomyResonantSets} we prove Theorem \ref{TheoremMetricResultsHolonomyResonantSets}. The main tools are a Dirichlet Theorem for translation surfaces, namely Proposition \ref{propdirichletvorobets}, and a version of Margulis' \emph{non-divergence of horocycles} adapted to translation surfaces, namely Theorem \ref{TheoremMinskyWeiss}, which is due to Minsky-Weiss. 

In \S~\ref{BoundedGeodesicsModuliSpaceProofs} we prove Theorem \ref{thmDynamicalJarnickInequality}. As an intermediate step, we state and prove a version of the same result for $\bad(\cR^{sc},\epsilon)$, that is Theorem \ref{ThmJarnikInequalityTranslSurf}.

In \S~\ref{ChapterExcursionsGeodesicsParameterSpace} we prove Theorem \ref{thmDynamicalKhinchin} and Theorem \ref{thmLogLaws(Diophantine)}. As an intermediate step we state and prove a version of the abstract Theorem \ref{thmabstractkhinchinjarnik} for the sets $W(\cR^{sc},\psi)$ and $W(\cR^{cyl},\psi)$, namely Theorem \ref{ThmKhinchinJarnickTranslationSurfaces}.

In \S~\ref{ChapterRecurrencePhaseSpace} we prove Theorem \ref{ThmRecurrenceBilliads}.

In Appendix \S~\ref{SectionProofMinskyWeissForVeech} we give the proof of Corollary \ref{CorollaryMinskyWeissVeech}, which is a sharper version of Theorem \ref{TheoremMinskyWeiss} for the specific case of Veech surfaces. 

In Appendix \S~\ref{SectionIsotropicQuadraticGrowthFails} we prove that isotropic quadratic growth of number of saddle connections fails for a generic surface $X$.

\subsection{Acknowledgements}

We are grateful to the anonymous referee for comments and suggestions. We are also grateful to Jon Chaika, Barak Weiss and Sanju Velani for discussions inspiring this work, and to Dong Han Kim and Stefano Marmi for discussions on the rate of recurrence in rational billiards. Part of this work was completed during visits at Tel Aviv University and the Max Planck Institute in Bonn, and we thanks these institutions for their hospitality. Steffen Weil is supported by ERC starter grant DLGAPS 279893. Rodrigo Trevi\~no was partially supported by the NSF under Award No. DMS-1204008, BSF Grant 2010428, and ERC Starting Grant DLGAPS 279893.

\section{Hausdorff measure of $W(\cR,\psi)$: proof of Theorem \ref{thmabstractkhinchinjarnik}}
\label{ChapterProofAbstractKhinchinJarnik}

In this section we prove Theorem \ref{thmabstractkhinchinjarnik}. Some of the constructions developed here, that is the content of \S~\ref{SecMassDistrubution}, will be used also in the next section in the proof of Theorem \ref{TheoremAbstractJarnickInequality}. Statement (1) in Theorem \ref{thmabstractkhinchinjarnik}, that is the case when the series 
$
\sum_{n=1}^{\infty}nf\big(\psi(n)\big)
$ 
converges, is proved in \S~\ref{SecProofConvergentCaseKhinchinJarnik}. Statement (2), that is when the series
$
\sum_{n=1}^{\infty}nf\big(\psi(n)\big)
$ 
diverges, requires a more specific analysis. The case of Lebesgue measure is rather simple and is treated in \S~\ref{SecProofDivergentCaseKhinchinJarnikLebesgue} using Lebesgue density points. The case of Hausdorff measure is more involved: general tools are developed in \S~\ref{SecMassDistrubution}, then  the proof is completed in \S~\ref{SecProofDivergentAbstractKhinchinJarnik}. In all this section 
$
f:\RR_+\to\RR_+
$ 
is a dimension function and $\psi:\RR_+\to\RR_+$ is a positive function such that $l\mapsto lf\circ\psi(l)$ is decreasing monotone. Recall that for us intervals are always considered as subintervals $I\subset[-\pi/2,\pi/2[$. 

\subsection{Separation properties for planar resonant sets}
\label{SecSeparationPropertiesResonantSets}

In this subsection we develop separation properties for a given planar resonant set $(\cR,l)$ which satisfies ubiquity and has isotropic quadratic growth as in Definition \ref{DefMetricPropertiesResonantSets}, that is such that there exist a constant $M>1$, and for any $K>1$ big enough constants $c_1=c_1(K)>0$, $c_2=c_2(K)>0$ and $a=a(K)>0$ with $a/c_1=o(K^2)$ such that for any integer $n$ and any interval $I$ we have
\begin{eqnarray*}
&&
\left|
I
\cap
\bigcup_{j=1}^n 
\bigcup_{\theta\in\cR(K,j)} 
B\left(\theta,\frac{a}{K^{2n}}\right)\right|
\geq c_1 |I|
\quad
\textrm{ provided that }
\quad
|I|>\frac{c_2}{K^{n}}
\\
&&
\sharp
\left\{
\theta\in \cR\cap I
\textrm{ ; }
l(\theta)<K^n
\right\}
<
M\cdot |I|\cdot K^{2n}
\quad
\textrm{ provided that }
\quad
|I|>\frac{1}{K^{2n}}.
\end{eqnarray*}

Observe that since $a/c_1=o(K^2)$ then, modulo increasing $K>1$, we can choose a constant $b=b(K)$ with $b\geq a$ such that 
\begin{equation}
\label{EquationConditionConstantSeparationProperties(1)}
\frac{c_1}{8(a+b)}>\frac{M}{K^2}.
\end{equation}
For example, one can choose $a=b$. We use different names to stress that the two constants $a$ and $b$ play a different role. Once $K>1$ and $a=a(K),c_2=c_2(K)$ are fixed, observe that there exists $n_0=n_0(K)$ such that for any $n\geq n_0$ we have 
\begin{equation}
\label{EquationConditionConstantSeparationProperties(3)}
K^n\geq \frac{4a}{c_2}.
\end{equation}

For any $n$ and any interval $I$ introduce the set of directions
\begin{equation}
\label{EquationConditionConstantSeparationProperties(4)}
\cR(n,I):=
\left\{
\theta\in\cR(K,n)
\textrm{ ; }
B\big(\theta,\frac{a}{K^{2n}}\big)\subset I
\right\}.
\end{equation}

For a fixed $\epsilon>0$ we say that a subset $\cT\subset[-\pi/2,\pi/2[$ is \emph{$\epsilon$-separated} if $|\theta-\theta'|>\epsilon$ for any pair of different points $\theta$ and $\theta'$ of $\cT$. Such a set is necessarily finite. 

\begin{proposition}
\label{PropositionSeparatedPointsInsideInterval}
Let $\cR$ be a planar resonant set satisfying ubiquity and isotropic quadratic growth in terms of the constants above. Assume that Equation~\eqref{EquationConditionConstantSeparationProperties(1)} is satisfied. Assume that $n$ is big enough so that Equation~\eqref{EquationConditionConstantSeparationProperties(3)} is satisfied. Then for any interval $I$ such that 
$$
|I|>2\cdot\frac{c_2}{K^{n}}.
$$ 
there exists a
$
\displaystyle{\frac{b}{K^{2n}}}
$-separated
subset
$
\cT(n,I)\subset\cR(n,I)
$
with cardinality
$$
\sharp\cT(n,I)\geq
\frac{c_1}{8(a+b)}\cdot|I|\cdot K^{2n}.
$$
\end{proposition}

\begin{proof}
Let $I'\subset I$ be the subinterval of maximal size such that we have the implication
$$
B\big(\theta,\frac{a}{K^{2n}}\big)\cap I'\not=\emptyset
\quad
\Rightarrow
\quad
B\big(\theta,\frac{a}{K^{2n}}\big)\subset I.
$$ 
The definition of $I'$ implies 
$
|I'|\geq |I|-4a\cdot K^{-2n}
$. 
Since $|I|>2c_2\cdot K^{-n}$ then Equation~\eqref{EquationConditionConstantSeparationProperties(3)} implies $|I|>8a\cdot K^{-2n}$ and thus 
$$
|I'|\geq 
|I|-\frac{4a}{K^{2n}}\geq
\frac{|I|}{2}.
$$
In particular we have $|I'|>c_2\cdot K^{-n}$, so that we can apply ubiquity to $I'$. Consider the set
$$
\cU(n,I'):=
\left\{
\theta\in\cR
\textrm{ ; }
l(\theta)\leq K^n
\textrm{ ; }
B\big(\theta,\frac{a}{K^{2n}}\big)\cap I'\not=\emptyset
\right\}.
$$
We show that $\cU(n,I')$ contains a $bK^{-2n}$-separated subset $\cU^{sep}(n,I')$ with cardinality at least 
$
c_1\cdot|I|K^{2n}/4(a+b)
$. 
Fix $N\in\NN$ and suppose that $\theta_1,\dots,\theta_N$ are
$bK^{-2n}$-separated points of $\cU(n,I')$ and that $N$ is maximal with such property. It follows that for any $\theta\in \cU(n,I')$ there exists some $j$ with $1\leq j\leq N$ such that 
$
|\theta-\theta_j|<bK^{-2n}
$. 
Ubiquity of $\cR$ implies the claim observing that
$$
\frac{c_1|I|}{2}\leq c_1|I'|<
\bigg|
\bigcup_{\theta\in \cU(n,I')}
B\big(\theta,\frac{a}{K^{2n}}\big)\cap I'
\bigg|
\leq
2N\frac{a+b}{K^{2n}}.
$$
Moreover, since $\cU(n,I')\subset I$, then isotropic quadratic growth implies 
$$
\sharp 
\left\{\theta\in\cU(n,I')\textrm{ ; }l(\theta)\leq K^{n-1}\right\}
\leq
M|I|K^{2(n-1)}.
$$ 
Set 
$$
\cT(n,I):=\{\theta\in\cU^{sep}(n,I')\textrm{ ; }K^{n-1}<l(\theta)\leq K^{n}\}.
$$
We have $\cT(n,I)\subset\cR(n,I)$ and $\cT(n,I)$ is $bK^{-2n}$-separated by construction. Moreover the estimates above and Condition~\eqref{EquationConditionConstantSeparationProperties(1)} imply 
\begin{align*}
&
\sharp\cT(n,I)=
\sharp\cU^{sep}(n,I)-
\sharp\left\{\theta\in\cU(n,I')\textrm{ ; }l(\theta)\leq K^{n-1}\right\}
\geq
\\
&
\frac{c_1\cdot|I|}{4(a+b)}K^{2n}-M\cdot|I|K^{2(n-1)}=
\left(
\frac{c_1}{4(a+b)}-\frac{M}{K^2}
\right)
\cdot 
|I|K^{2n}
\geq
\frac{c_1}{8(a+b)}
\cdot 
|I|K^{2n}.
\end{align*}
\end{proof}

Recall that $a/c_1=o(K^2)$. Modulo taking $K$ bigger, and arguing as for Equaiton~\eqref{EquationConditionConstantSeparationProperties(1)}, assume that we have 
$
\displaystyle{\frac{c_1}{16(a+b)}> \frac{3M}{K^2}}
$ 
strictly. Then consider a constant $\delta>0$ small enough compared to $b$ in order to satisfy the condition
\begin{equation}
\label{EquationConditionConstantSeparationProperties(2)}
\frac{c_1}{16(a+b)}
\geq
\frac{\delta}{b}+\frac{3M}{K^2}.
\end{equation}
Observe that the condition above implies 
$
\displaystyle{\frac{c_1}{16(a+b)}>\frac{\delta}{b}}
$ 
and since $c_1<1$ then we must have also $\delta<1$.

\begin{corollary}
\label{cor1ss2abstractkhinchinjarnick}
Consider $n\in\NN$ which satisfies Equation~\eqref{EquationConditionConstantSeparationProperties(3)} and an interval $I$ such that $|I|>2c_2\cdot K^{-n}$. Let $\cI:=\bigcup_{i=1}^{N}I_i$ be the union of $N$ subintervals $I_i$ of $I$ such that $|\cI|<\delta|I|$ and $N<M|I|K^{2(n-1)}$. Then the
$
\displaystyle{\frac{b}{K^{2n}}}
$-separated
set $\cT(n,I)$ in Proposition \ref{PropositionSeparatedPointsInsideInterval} contains at least
$
\displaystyle{\frac{c_1\cdot|I|}{16(a+b)}K^{2n}}
$
points $\theta$ such that
$$
\cI\cap B\left(\theta,\frac{b}{K^{2n}}\right)=\emptyset.
$$
\end{corollary}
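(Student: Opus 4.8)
The plan is to start from the $bK^{-2n}$-separated set $\cT(n,I)\subset\cR(n,I)$ produced by Proposition \ref{prop1ss2abstractkhinchinjarnick}, which has cardinality at least $\frac{c_1}{4(a+b)}|I|K^{2n}$, and to discard the points $\theta$ for which the ``forbidden'' set $\cI$ comes within distance $bK^{-2n}$, i.e. for which $\cI\cap B(\theta,bK^{-2n})\neq\emptyset$. The goal is to bound the number of discarded points by a quantity strictly smaller than the gap between $\frac{c_1}{4(a+b)}|I|K^{2n}$ and the target $\frac{c_1}{8(a+b)}|I|K^{2n}$, using exactly the constant $\delta$ chosen just before the statement via the inequality $\frac{c_1}{4(a+b)}\geq\frac{c_1}{8(a+b)}+\frac{\delta}{b}+\frac{3M}{K^2}$. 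There are two distinct mechanisms by which $\cI$ can reach a point $\theta\in\cT(n,I)$: either $\theta$ is swallowed by a ``fat'' interval $I_i$ (one long enough that a single $I_i$ can catch many separated points), or $\theta$ is near an endpoint of some $I_i$. I would split the count along exactly these two lines.

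First, for the contribution of the interior of the $I_i$'s: the points of $\cT(n,I)$ that lie inside the $bK^{-2n}$-neighborhood of $\cI$ and are actually covered by fattening $\cI$ by $bK^{-2n}$ on each side fill a set of total length at most $|\cI| + 2N\cdot bK^{-2n} < \delta|I| + 2N b K^{-2n}$. Since $\cT(n,I)$ is $bK^{-2n}$-separated, the number of its points in any set of length $\ell$ is at most $\ell K^{2n}/b + 1$; summing, and using $N < M|I|K^{2(n-1)}$, the number of points lost this way is at most roughly $\frac{\delta}{b}|I|K^{2n} + 2N + N \leq \frac{\delta}{b}|I|K^{2n} + 3M|I|K^{2(n-1)} = \bigl(\frac{\delta}{b} + \frac{3M}{K^2}\bigr)|I|K^{2n}$. (Here I am folding the ``endpoint'' contribution into the same bound: each of the $N$ intervals $I_i$, once fattened by $bK^{-2n}$, only enlarges the relevant covered length by $2bK^{-2n}$, already accounted for above; the crude ``$+1$ per interval'' term is what produces the $N$ and hence the $3M/K^2$ piece.) Subtracting this from $\frac{c_1}{4(a+b)}|I|K^{2n}$ and invoking the displayed choice of $\delta$ leaves at least $\frac{c_1}{8(a+b)}|I|K^{2n}$ points $\theta\in\cT(n,I)$ with $\cI\cap B(\theta,bK^{-2n})=\emptyset$, as desired.

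The main obstacle I anticipate is purely bookkeeping: getting the counting of how a $bK^{-2n}$-separated set meets an $bK^{-2n}$-neighborhood of a union of $N$ intervals tight enough that the ``$+1$ per interval'' slack really is absorbed by the $3M/K^2$ term rather than something larger — in other words, making sure the three error terms $\frac{\delta}{b}|I|K^{2n}$, and the two $O(N)=O(M|I|K^{2(n-1)})$ terms, combine into precisely $\bigl(\frac{\delta}{b}+\frac{3M}{K^2}\bigr)|I|K^{2n}$ and not a worse constant. One has to be slightly careful that the separation of $\cT(n,I)$ is used correctly (a separated set can have at most one point per half-open interval of length $bK^{-2n}$), and that the fattening of $\cI$ does not double-count overlaps between the $I_i$'s — but overlaps only help, since they shrink $|\cI \text{ fattened}|$. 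No new ideas beyond Proposition \ref{prop1ss2abstractkhinchinjarnick} and the arithmetic of the constant $\delta$ are needed.
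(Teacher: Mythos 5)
Your proposal is correct and matches the paper's proof essentially verbatim: both count the $\rho$-separated points (with $\rho=bK^{-2n}$) that can lie in the $\rho$-neighborhood of $\cI$, arrive at the same bound $|\cI|/\rho + 3N \leq (\delta/b + 3M/K^2)|I|K^{2n}$, and subtract this from the cardinality bound of Proposition~\ref{prop1ss2abstractkhinchinjarnick} using the displayed choice of $\delta$. The slight bookkeeping difference (you fatten $\cI$ by $\rho$ and count separated points in the result, while the paper counts in $\cI$ and then adds two endpoint points per interval) produces the identical estimate.
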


\begin{proof}
Set $\rho:=bK^{-2n}$ and observe that any subinterval $I_i$ contains at most $|I_i|/\rho+1$ points which are $\rho$-separated, so that the union $\cI$ contains at most $|\cI|/\rho+N$ points which are $\rho$-separated. Then the $\rho$-neighborhood of $\cI$ contains at most
$$
\frac{|\cI|}{\rho}+3N
\leq
\left(\frac{\delta}{b}+\frac{3M}{K^2}\right)|I|\cdot K^{2n}
$$
points which are $\rho$-separated. The Corollary follows observing that Proposition \ref{PropositionSeparatedPointsInsideInterval} and Condition~\eqref{EquationConditionConstantSeparationProperties(2)} imply
\begin{align*}
&
\sharp\cT(n,I)-
\sharp\left\{\theta\in\cT(n,I)\textrm{ ; }B(\theta,bK^{-2n})\cap \cI\not=\emptyset\right\}
\geq
\\
&
\left(\frac{c_1}{8(a+b)}-\frac{\delta}{b}-\frac{3M}{K^2}\right)|I|\cdot K^{2n}
\geq
\frac{c_1}{16(a+b)}\cdot|I|\cdot K^{2n}.
\end{align*}
\end{proof}

\subsection{Proof of convergent case in Theorem \ref{thmabstractkhinchinjarnik}} 
\label{SecProofConvergentCaseKhinchinJarnik}

The proof follows from a simple covering argument, that we give below for the sake of completeness.

\medskip

Fix $\epsilon>0$ and $\rho>0$. Since $\psi(l)\to0$ and $l\mapsto lf\circ\psi(l)$ is decreasing monotone for $l\to\infty$, for any $N$ big enough we obtain a $\rho$-covering of $W(\cR,\psi)$ by taking the union
$$
\bigcup_{n=N}^{\infty}
\bigcup_{\theta\in\cR(K,n)}
B\big(\theta,\psi(K^{n-1})\big).
$$
The summability of
$
\sum_{n=1}^{\infty}nf\big(\psi(n)\big)
$
is equivalent to the summability of
$
\sum_{n=1}^{\infty}K^{2n}f\big(\psi(K^n)\big)
$,
thus modulo increasing $N$ one also has
$
\sum_{n=N}^{\infty}K^{2n}f\big(\psi(K^{n-1})\big)<\epsilon
$.
Hence
$$
H^f\big(W(\cR,\psi)\big)<2\epsilon,
$$
and since $\epsilon$ is arbitrarily small we get
$
H^f\big(W(\cR,\psi)\big)=0
$.

\subsection{Proof of divergent case in Theorem \ref{thmabstractkhinchinjarnik} for Lebesgue measure}
\label{SecProofDivergentCaseKhinchinJarnikLebesgue}

We closely follow the argument of \cite{boshernitzanchaika}, pages 7 and 8. In the proof, an interval $I$ is fixed once and for all, around some Lebesgue density point. It is possible to see that in such situation the argument only uses ubiquity and quadratic growth for the resonant set $\cR$, but not isotropic quadratic growth (see \cite{boshernitzanchaika} for details). Our proof assumes isotropic quadratic growth in order to stay in the setting developed in \S~\ref{SecSeparationPropertiesResonantSets}. Isotropic quadratic growth will be strictly necessary in the case of Hausdorff measure, where the construction of some Cantor set will require to consider intervals at smaller and smaller scale.

\medskip

Let $\cR$ be a planar resonant set satisfying ubiquity and isotropic quadratic growth. Let $M$, $K$, $a$, $c_1$ and $c_2$ be constants as in Definition~\ref{DefMetricPropertiesResonantSets}. As in \S~\ref{SecSeparationPropertiesResonantSets}, increase $K$ if necessary and introduce constants $b$ and $\delta$ such that Equation~\eqref{EquationConditionConstantSeparationProperties(2)} is satisfied. 
 
Observe that if $\psi'(l)\leq\psi(l)$ for any $l>0$ then we have
$
W(\cR,\psi')\subset W(\cR,\psi)
$.
Hence it is enough to prove the statement for an approximating sequence satisfying
$$
\psi(l)=\min
\left\{
\psi(K^n),\frac{b}{K^{2n}}
\right\}
\textrm{ for }
K^{n-1}< l\leq K^n.
$$

Fix an interval $I$. Let $N$ be an integer such that any $n\geq N$ satisfies Equation~\eqref{EquationConditionConstantSeparationProperties(3)} and moreover we have also 
$
|I|>2c_2\cdot K^{-n}
$, 
so that Proposition~\ref{PropositionSeparatedPointsInsideInterval} and Corollary~\ref{cor1ss2abstractkhinchinjarnick} can be applied. Then fix any $m>N$ and, recalling the sets 
$\cR(n,I)$ defined in Equation~\eqref{EquationConditionConstantSeparationProperties(4)}, consider the set
$$
\cI(N,m):=
I\cap\bigcup_{n=N}^{m}\bigcup_{\theta\in\cR(n,I)}
B\big(\theta,\psi(K^n)\big).
$$

\begin{lemma}\label{lem1ss3abstractkhinchinjarnick}
There exists $m>N$ such that
$
|\cI(N,m)|\geq\delta|I|
$.
\end{lemma}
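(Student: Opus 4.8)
The plan is to argue by contradiction, following the scheme of \cite{boshernitzanchaika}. Suppose that, for the given $N$, we had $|\cI(N,m)|<\delta|I|$ for \emph{every} $m>N$, where $\delta>0$ is the constant fixed just before Corollary \ref{cor1ss2abstractkhinchinjarnick}. Since the interval $I$ is fixed throughout this subsection, I would first choose $n_0$ so large that for all $n\geq n_0$ one has simultaneously $|I|>c_2 K^{-n}$ (so that Proposition \ref{prop1ss2abstractkhinchinjarnick} and Corollary \ref{cor1ss2abstractkhinchinjarnick} apply at scale $n$), $2aK^{-2n}<|I|$, and $K^{2n}|I|\geq1$; the latter two ensure that isotropic quadratic growth, applied at scale $n$ to the $aK^{-2n}$-enlargement $\widetilde I_n$ of $I$, yields $\sharp\cR(n,I)<M|\widetilde I_n|K^{2n}<2M|I|K^{2n}$. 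I also record that, after the normalization of $\psi$ made at the beginning of this subsection, $\psi(l(\theta))=\psi(K^n)\leq bK^{-2n}$ for $\theta\in\cR(K,n)$, and that the normalized series $\sum_n K^{2n}\psi(K^n)=\sum_n\min\{K^{2n}\psi_{\mathrm{old}}(K^n),b\}$ still diverges: either infinitely many terms equal $b$, or else its tail coincides with that of $\sum_n K^{2n}\psi_{\mathrm{old}}(K^n)$, which diverges because it is comparable to $\sum_n n\psi_{\mathrm{old}}(n)$ and $f(r)=r$ in the Lebesgue case.

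Next I would build inductively a lacunary sequence of scales $n_1<n_2<\cdots$ with $n_1=\max\{N,n_0\}$ and $n_{k+1}=n_k+3$, together with an increasing sequence of finite unions of intervals $\cG_1\subset\cG_2\subset\cdots$ with $\cG_k\subset\cI(N,n_{k-1})$ and $\cG_k$ consisting of at most $\sum_{i<k}\sharp\cR(n_i,I)$ intervals. Start from $\cG_1=\emptyset$. Given $\cG_k$ and $n_k$: by the contradiction hypothesis $|\cG_k|\leq|\cI(N,n_{k-1})|<\delta|I|$, and summing the bound $\sharp\cR(n_i,I)<2M|I|K^{2n_i}$ over $i<k$ (a geometric series of ratio $K^{-6}$) shows that $\cG_k$ has fewer than $\frac{2M}{1-K^{-6}}|I|K^{2n_{k-1}}<M|I|K^{2(n_k-1)}$ intervals, the last inequality using $n_k-1=n_{k-1}+2$ and $K$ large. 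Thus Corollary \ref{cor1ss2abstractkhinchinjarnick} applies at scale $n_k$ with obstacle $\cI:=\cG_k$, producing a $bK^{-2n_k}$-separated subset $\cS_k\subset\cT(n_k,I)$ with $\sharp\cS_k\geq\frac{c_1}{8(a+b)}|I|K^{2n_k}$ and $B(\theta,bK^{-2n_k})\cap\cG_k=\emptyset$ for every $\theta\in\cS_k$. After discarding the $O(1)$ points of $\cS_k$ within $bK^{-2n_k}$ of $\partial I$, each remaining $\theta$ satisfies $B(\theta,\psi(K^{n_k}))\subset I\setminus\cG_k$ (since $\psi(K^{n_k})\leq bK^{-2n_k}$), and the separation forces these balls to have covering multiplicity at most $2$; hence their union has measure at least $c_3|I|K^{2n_k}\psi(K^{n_k})$ for a fixed $c_3>0$. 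Letting $\cG_{k+1}$ be the union of $\cG_k$ with these balls, one has $\cG_{k+1}\subset\cI(N,n_k)$ and
$$
|\cG_{k+1}|\ \geq\ |\cG_k|+c_3\,|I|\,K^{2n_k}\,\psi(K^{n_k}).
$$

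Iterating this inequality gives $|\cI(N,n_k)|\geq|\cG_{k+1}|\geq c_3|I|\sum_{i=1}^{k}K^{2n_i}\psi(K^{n_i})$. Because the gaps $n_{i+1}-n_i$ equal $3$ and $n\mapsto\psi(K^n)$ is non-increasing, grouping the indices $n\in[n_i,n_{i+1})$ gives $\sum_n K^{2n}\psi(K^n)\leq 3K^{4}\sum_i K^{2n_i}\psi(K^{n_i})$, so the lacunary series on the right diverges together with $\sum_n K^{2n}\psi(K^n)$. Therefore $|\cI(N,n_k)|\to\infty$ as $k\to\infty$, contradicting $|\cI(N,m)|\leq|I|$ for all $m$. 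Hence there must exist $m>N$ with $|\cI(N,m)|\geq\delta|I|$, which is the assertion of Lemma \ref{lem1ss3abstractkhinchinjarnick}.

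I expect the main obstacle to be precisely this combinatorial bookkeeping: the hypothesis of Corollary \ref{cor1ss2abstractkhinchinjarnick} requires the already-covered ``obstacle'' set to be a union of fewer than $M|I|K^{2(n-1)}$ intervals, while taking the obstacle to be all of $\cI(N,n-1)$ would accumulate roughly $2M|I|K^{2(n-1)}$ intervals and overshoot the bound by a constant factor. Passing to a lacunary sequence of scales, so that only intervals from the chosen scales are accumulated, repairs this at the harmless cost of replacing the full series $\sum_n n f(\psi(n))$ by a lacunary sub-series, which still diverges. (One may note, as in the remark preceding the lemma, that because $I$ is fixed this argument really only uses ubiquity in an essential way — isotropic quadratic growth enters only through the crude interval count $\sharp\cR(n,I)<2M|I|K^{2n}$ and can be avoided with more care.) The remaining ingredients — the covering-multiplicity estimate for the separated balls, the $O(1)$ boundary loss near $\partial I$, and the elementary series comparisons — are routine.
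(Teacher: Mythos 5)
Your argument is correct and reaches the conclusion, but it departs from the paper's proof in how it handles the interval‑count hypothesis of Corollary \ref{cor1ss2abstractkhinchinjarnick}. The paper's proof takes the obstacle set $\cI:=\cI(N,m-1)$ directly at \emph{every} scale $m$, bounds its number of constituent intervals by $\sharp\{\theta:l(\theta)\leq K^{m-1}\}<MK^{2(m-1)}$ using ordinary (not isotropic) quadratic growth, and disposes of the missing factor $|I|$ with the parenthetical remark that $I$ is fixed once and for all, so one may enlarge $K$ (and shrink $\delta$) to absorb $M/(|I|K^{2})$ into the constant constraint preceding Corollary \ref{cor1ss2abstractkhinchinjarnick}; the resulting recursion $|\cI(N,m)|\geq|\cI(N,m-1)|+\frac{c_1}{8(a+b)}|I|K^{2m}\psi(K^m)$ then runs over \emph{all} $m$ and one sums the full series. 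You instead pass to a lacunary subsequence $n_k$ with gaps of $3$, accumulate intervals only from those scales so that a geometric‑series count keeps the obstacle under $M|I|K^{2(n_k-1)}$, and sum only the lacunary sub‑series (arguing separately that it still diverges). Your route is valid and makes the constant verification fully explicit, at two costs the paper avoids: it is more elaborate, and it invokes isotropic quadratic growth for the local count $\sharp\cR(n,I)<2M|I|K^{2n}$, whereas the paper points out (in the sentence preceding the lemma) that isotropic quadratic growth is \emph{not} needed here precisely because $I$ is fixed. You flag this yourself in the closing parenthesis, so the proof is fine — just note that the paper's ``choose $K$ larger'' remark, which you anticipate as the main obstacle, is in fact the intended fix, and it is simpler than the lacunary workaround. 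Two minor points worth tightening in your write‑up: the covering‑multiplicity‑at‑most‑$2$ estimate and the $O(1)$ boundary loss are implicit in the paper's one‑line assertion that the increment is $\frac{c_1}{8(a+b)}|I|K^{2m}\psi(K^m)$ (rather than twice that), and your verification of the divergence of the lacunary sub‑series should also note that the normalized $\psi(K^n)=\min\{\psi_{\mathrm{old}}(K^n),bK^{-2n}\}$ remains non‑increasing in $n$, which is what justifies the grouping estimate $\sum_{n\in[n_i,n_{i+1})}K^{2n}\psi(K^n)\leq 3K^4K^{2n_i}\psi(K^{n_i})$.
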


\begin{proof}
Fix $m\geq N+1$ and set $\cI:=\cI(N,m-1)$, which is the union of at most $M|I|K^{2(m-1)}$ subintervals of $I$, according to isotropic quadratic growth of $\cR$. If $|\cI|\geq\delta|I|$ then we are done. If $|\cI|<\delta|I|$ then Corollary \ref{cor1ss2abstractkhinchinjarnick} implies that $\cR(m,I)$ contains at least
$
\displaystyle{\frac{c_1\cdot|I|}{16(a+b)}K^{2m}}
$
points $\theta$ which are $bK^{-2m}$-separated and such that
$
B(\theta,bK^{-2m}))\cap\cI=\emptyset
$.
This implies
$
B(\theta,\psi(K^m)\cap\cI=\emptyset
$,
since
$
\psi(K^m)\leq bK^{-2m}
$.
It follows that
$$
\big|\cI(N,m)\big|
\geq
\big|\cI(N,m-1)\big|+
\frac{c_1}{16(a+b)}|I|K^{2m}\psi(K^m).
$$
The Lemma follows from the divergence assumption of
$
\sum K^{2m}\psi(K^m)
$.
\end{proof}

The divergent case in Theorem \ref{thmabstractkhinchinjarnik} for Lebesgue measure follows observing that, according to the Lemma above we have
$
\big|\bigcup_{n=N+1}^\infty\cI(N,m)\big|>\delta|I|
$
for any $N$ big enough, and thus
$$
\big|W(\cR,\psi)\cap I\big|=
\big|
\bigcap_{N\in\NN}
\bigcup_{n=N+1}^\infty
\cI(N,m)
\big|
\geq
\delta|I|.
$$
The estimate above holds for any small interval $I$, therefore the complement of $W(\cR,\psi)$ has no density points, that is $W(\cR,\psi)$ has full measure.

\subsection{Mass distribution $\mu_f$ on the Cantor Set $\KK$}
\label{SecMassDistrubution}

We consider a dimension function with $f(r)/r\to\infty$ for $r\to0$. Given a Cantor set 
$
\KK\subset[-\pi/2,\pi/2[
$ 
we describe a classical construction of a probability measure $\mu_f$ supported on $\KK$ which is somehow natural with respect to the dimension function $f$. For convenience of notation we write simply $\mu$ instead of $\mu_f$.

\medskip

For any positive integer $n$ we define a family $\cK(n)$ of subintervals of $[-\pi/2,\pi/2[$ which are disjoint in their interior. The $n$-th \emph{level} of the Cantor set is given by
$$
\KK(n):=\bigsqcup_{B\in\cK(n)}B.
$$
The families $\cK(n)$ are chosen so that $\KK(n)\subset\KK(n-1)$ for any $n>1$, then the Cantor set is defined by
$
\KK=\bigcap_{n=1}^{\infty}\KK(n)
$.
For any $B\in\cK(1)$ we set
$$
\mu(B):=
\frac{f\big(|B|\big)}
{\sum_{B'\in\cK(1)}f\big(|B'|\big)}
$$
For any $n>1$, any $B_0\in\cK(n-1)$ call $\cK(n,B_0)$ the subfamily of those balls $B\in\cK(n)$ with $B\subset B_0$, then for any $B\in\cK(n,B_0)$ set
$$
\mu(B):=
\frac{f\big(|B|\big)}
{\sum_{B'\in\cK(n,B_0)}f\big(|B'|\big)}\mu(B_0).
$$
The construction of the measure $\mu$ on $\KK$ is completed by the following Lemma, which corresponds to Proposition 1.7 in \cite{falconer}.

\begin{lemma}
\label{lem2ss4abstractkhinchinjarnick}
The function
$
\mu:\bigcup_{n\in\NN}\cK(n)\to\RR_+
$
extends to a Borel probability measure supported on $\KK$ setting
$$
\mu(E)=\mu(E\cap\KK):=
\inf \sum_{B}\mu(B),
$$
where $E$ is any Borel subset of $\RR$ and the $\inf$ is taken over all coverings of $E$ with balls $B$ in $\bigcup_{n\in\NN}\cK(n)$.
\end{lemma}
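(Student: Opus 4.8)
The plan is to recognise the assignment $B\mapsto\mu(B)$ on the basic intervals $\bigcup_{n}\cK(n)$ as a \emph{mass distribution} and to carry out, in our situation, the classical construction behind Proposition~1.7 of \cite{falconer}. Concretely, I would regard the prescribed numbers $\mu(B)$ as a premeasure on the algebra $\cA$ of finite unions of basic intervals, and \emph{define} $\mu$ on an arbitrary Borel set $E$ by the Method~I formula of the statement, $\mu(E):=\inf\sum_{B}f(|B|)$, the infimum running over all countable covers of $E\cap\KK$ by basic intervals (such covers exist since $E\cap\KK\subset\KK(1)$). This $\mu$ is automatically non-negative, monotone, countably subadditive and carried by $\KK$; the content of the lemma is that it is in fact a Borel probability measure agreeing with the prescribed values on the basic intervals.

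Three ingredients are routine. First, the recursive normalisation defining the $\mu(B)$ gives the \emph{consistency identity} $\sum_{B\in\cK(n,B_{0})}\mu(B)=\mu(B_{0})$ for every $B_{0}\in\cK(n_{0})$ and every $n>n_{0}$ --- direct for $n=n_{0}+1$, then by induction --- so the premeasure is finitely additive on $\cA$ and $\sum_{B\in\cK(n)}\mu(B)=1$ for all $n$. Second, because the construction of $\KK$ forces the maximal length of the intervals of $\cK(n)$ to tend to $0$, one can refine any near-optimal cover to a level at which no single basic interval meets two sets $E,F$ of positive distance; splitting the cover accordingly yields $\mu(E\cup F)=\mu(E)+\mu(F)$, so by Carath\'eodory's criterion $\mu$ is a metric outer measure and every Borel set is $\mu$-measurable. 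Third, granting the agreement on basic intervals established below, finite additivity gives $\mu(\KK(n))=\sum_{B\in\cK(n)}\mu(B)=1$ for all $n$, and continuity from above applied to $\KK(n)\downarrow\KK$ gives $\mu(\KK)=1$; hence $\mu$ is a probability measure with $\mu(E)=\mu(E\cap\KK)$ for every Borel $E$.

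The one delicate point --- and the place where compactness really enters --- is to check that the Method~I formula reproduces the prescribed value on each basic interval $B_{0}\in\cK(n_{0})$; a priori an economical cover of $B_{0}\cap\KK$ by very small basic intervals could make the infimum strictly smaller, and the hard part is to rule this out. The argument I would use is the standard one: starting from any countable cover of $B_{0}\cap\KK$ by basic intervals, discard those interior-disjoint from $B_{0}$ and replace those containing $B_{0}$ by $B_{0}$ itself (which only decreases $\sum f(|B|)$, by monotonicity of $f$), so that all covering intervals lie in $B_{0}$; then use compactness of $B_{0}\cap\KK$ and closedness of the basic intervals to pass to a finite subcover, and use that any two basic intervals are interior-disjoint or nested to reorganise the finite family into a single ``section'' of the subtree of basic intervals below $B_{0}$; the consistency identity, together with the normalisation relations between $f$ and the families $\cK(n)$ that are built into the construction of $\KK$, then bounds $\sum f(|B|)$ below by the value assigned to $B_{0}$, while the tautological covers $\cK(n,B_{0})$ provide the matching upper bound. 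This compactness--nesting--consistency bookkeeping is exactly Proposition~1.7 of \cite{falconer}, which I would reproduce essentially verbatim, the only difference being the cosmetic one of measuring intervals by angular length.
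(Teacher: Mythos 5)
Your overall strategy is exactly the one behind Falconer's Proposition~1.7 --- consistent premeasure on the nested basic intervals, Method~I extension, metric outer measure criterion for Borel measurability, compactness/nesting reorganisation for agreement on the basic intervals --- and it is the argument the paper silently delegates to (no proof is given beyond the citation). But your write-up takes the displayed formula at face value, and that formula contains a misprint: $\mu(E)=\inf\sum_{B}f\bigl(|B|\bigr)$ should read $\mu(E)=\inf\sum_{B}\mu(B)$. The $f$-version does not depend on the prescribed mass distribution at all and cannot \emph{extend} it. Concretely, for $B_{0}\in\cK(n_{0})$ the prescribed value $\mu(B_{0})$ is a product of normalisation ratios, whereas the ``tautological cover'' $\cK(n,B_{0})$ you invoke as a matching upper bound gives $\sum_{B\in\cK(n,B_{0})}f\bigl(|B|\bigr)$, which is neither $\mu(B_{0})$ nor $f\bigl(|B_{0}|\bigr)$; there is no ``normalisation relation built into the construction of $\KK$'' that makes these coincide, so the sandwich you describe in the last paragraph does not close.

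The fix is small and your own opening sentence already commits you to it: you say you regard the prescribed $\mu(B)$ as a premeasure on $\cA$ and then run Method~I, which means the outer measure must be $\inf\sum_{B}\mu(B)$. Replace $f\bigl(|B|\bigr)$ by $\mu(B)$ throughout; then the consistency identity $\sum_{B\in\cK(n,B_{0})}\mu(B)=\mu(B_{0})$, compactness of $B_{0}\cap\KK$, and the interior-disjoint-or-nested property of basic intervals give both inequalities directly, and the ``delicate point'' you flagged evaporates. (The way the lemma is used two displays later in the paper --- bounding $\mu(I)$ by $\sum_{B'\in\cK(n+1,I)}\mu(B')$, with each $\mu(B')$ carrying its normalisation ratio --- confirms that this is the intended reading.) Everything else in your outline, including the observation that $\max_{B\in\cK(n)}|B|\to0$ is what makes $\mu$ a metric outer measure, is correct.
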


The following Lemma gives a classical method to obtain lower bounds for $H^f$ of a set $\KK$. A version for the Hausdorff measures $H^s$ corresponding to the dimension function $f_s(x)=x^s$ with $0<s<1$ can be found in \S~4.2 in \cite{falconer}.

\begin{lemma}
[Mass Transference principle]
\label{lem1ss4abstractkhinchinjarnick}
Let $\mu$ be a probability measure supported on a subset $\KK$ of $\RR$. Suppose that there are constants $\eta>0$ and $\rho_0>0$ such that
\begin{equation}
\label{eqmeasureoncantorset}
\mu(B)\leq \frac{f(|B|)}{\eta}
\end{equation}
for any ball with radius $\rho<\rho_0$. Then we have
$
H^f(E)\geq\eta\cdot\mu(E)
$
for any subset $E$ of $\KK$.
\end{lemma}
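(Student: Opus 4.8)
The plan is to prove the Mass Transference principle (Lemma~\ref{lem1ss4abstractkhinchinjarnick}) by a direct covering argument, exactly as in the classical case for $H^s$ treated in Falconer's book. First I would fix a subset $E\subset\KK$ and an arbitrary $\rho$-cover $\{B_i\}$ of $E$ with $\rho<\rho_0$; without loss of generality each $B_i$ can be taken to be an interval (a ball in $\RR$) with $|B_i|<\rho_0$, since replacing each covering set by a slightly larger open interval changes $\sum_i f(|B_i|)$ by an arbitrarily small amount, using continuity of $f$. Then, since the $B_i$ cover $E$, monotonicity and subadditivity of the measure $\mu$ give
$$
\mu(E)\leq\mu\Big(\bigcup_i B_i\Big)\leq\sum_i\mu(B_i).
$$

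Next I would apply the hypothesis \eqref{eqmeasureoncantorset}: for each $i$, since $B_i$ is a ball of radius less than $\rho_0$, we have $\mu(B_i)\leq f(|B_i|)/\eta$. Substituting this into the previous inequality yields
$$
\mu(E)\leq\frac{1}{\eta}\sum_i f\big(|B_i|\big),
$$
hence $\sum_i f(|B_i|)\geq\eta\,\mu(E)$ for every $\rho$-cover with $\rho<\rho_0$. Taking the infimum over all such covers gives $H^f_\rho(E)\geq\eta\,\mu(E)$ for all $\rho<\rho_0$, and then letting $\rho\to0$ gives $H^f(E)\geq\eta\,\mu(E)$, which is the claim.

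I do not expect any serious obstacle here; the statement is the standard mass distribution principle and the argument is essentially three lines. The only point requiring a small amount of care is the reduction to covers by genuine intervals (balls) so that hypothesis \eqref{eqmeasureoncantorset} applies termwise: a general $\rho$-cover by measurable sets $B_i$ can be enlarged to a cover by open intervals $\widetilde B_i\supset B_i$ with $|\widetilde B_i|$ arbitrarily close to $|B_i|$, and continuity of the dimension function $f$ ensures $\sum_i f(|\widetilde B_i|)$ stays within any prescribed additive error of $\sum_i f(|B_i|)$; letting that error tend to zero recovers the inequality for the original cover. One should also note that the hypothesis is phrased for balls of radius $\rho<\rho_0$ whereas covers are controlled by diameter $|B_i|\leq\rho$, but since a ball of radius $r$ has diameter $2r$ this only affects the constant $\rho_0$ by a harmless factor of $2$ and can be absorbed at the outset. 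With these routine remarks in place, the chain of inequalities above completes the proof.
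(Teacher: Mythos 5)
Your argument is the same three-line covering argument the paper gives: cover $E$ by balls of small diameter, apply the hypothesis termwise, sum, and take the infimum over covers and then the limit $\rho\to 0$. The extra remarks about passing from general covers to interval covers and about the radius--versus--diameter constant are harmless refinements the paper omits; the core of the proof is identical.
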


\begin{proof}
For any $\rho$-cover $\{B_i\}$ of $E$ with $\rho<\rho_0$ we have
$$
\mu(E)=
\mu\big(\bigcup B_i\big)\leq
\sum\mu(B_i)\leq
\eta^{-1}\sum f\big(|B_i|\big).
$$
\end{proof}

\begin{remark}
Fix $n\geq1$ and an interval $B_0\in\cK(n-1)$, where $B_0:=[-\pi/2,\pi/2[$ for $n=1$. Observe that any subinterval $B\in\cK(B_0,n)$ satisfies Condition \eqref{eqmeasureoncantorset} if and only if
\begin{equation}
\label{eqmeasureoncantorset(alternative)}
\sum_{B'\in\cK(n,B_0)}f\big(|B'|\big)>\eta\mu(B_0).
\end{equation}
\end{remark}

The following Lemma follows by an easy computation, which is left to the reader, and gives a criterion to get Condition \eqref{eqmeasureoncantorset} for the intervals in $\bigcup_{n\in\NN}\cK(n)$.

\begin{lemma}
\label{lemmaaereo}
Let $B_0$ be any interval and $\cK$ be a finite family of subintervals $B\subset B_0$ which are pairwise disjoint. Fix constants $0<\delta<1$ and $C>0$. Assume that we have
\begin{equation}
\label{eqlemmaaereo}
\sum_{B\in\cK}|B|>\delta|B_0|.
\end{equation}
and moreover that for any $B\in\cK$ we have also 
$$
\frac{f\big(|B|\big)}{|B|}>\frac{C}{\delta|B_0|}
$$
Then we have 
$
\displaystyle{
\sum_{B\in\cK}f\big(|B|\big)>C
}$.
\end{lemma}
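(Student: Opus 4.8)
The plan is to exploit the concavity-type hypothesis on $f$, which forces $f$ to be "large relative to $r$" on small intervals, combined with the volume lower bound \eqref{eqlemmaaereo}. First I would recall that since $f$ is a dimension function, the quotient $r\mapsto f(r)/r$ is decreasing (if $f(r)=r$ the statement is immediate from \eqref{eqlemmaaereo}, so assume the nontrivial case). The key observation is monotonicity: for every $B\in\cK$ we have $|B|\leq |B_0|$, so $f(|B|)/|B|\geq f(|B_0|)/|B_0|$; but we are given the \emph{stronger} pointwise bound $f(|B|)/|B|>C/(\delta|B_0|)$, which is exactly what lets the argument close even though the intervals in $\cK$ need not cover a fixed fraction of $B_0$ with large individual length.

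The main computation is then a one-line chain of inequalities. For each $B\in\cK$ write $f(|B|)=\dfrac{f(|B|)}{|B|}\cdot|B| > \dfrac{C}{\delta|B_0|}\cdot|B|$. Summing over $B\in\cK$ and using the disjointness of the intervals together with \eqref{eqlemmaaereo}, one gets
$$
\sum_{B\in\cK}f\big(|B|\big)
>\frac{C}{\delta|B_0|}\sum_{B\in\cK}|B|
>\frac{C}{\delta|B_0|}\cdot\delta|B_0|
=C,
$$
which is the desired conclusion. No appeal to the finiteness of $\cK$ beyond making the sums well-defined is needed, and pairwise disjointness is used only implicitly (it guarantees $\sum|B|\leq|B_0|$, which is not actually required here, but is harmless).

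I do not expect any genuine obstacle: the statement is essentially a bookkeeping lemma extracting the content of "$f$ convex-like and the $B$'s fill a $\delta$-fraction of $B_0$." The only point that deserves a moment's care is the degenerate case $f(r)=r$, where $f(|B|)/|B|=1$ and the hypothesis reads $1>C/(\delta|B_0|)$, i.e. $C<\delta|B_0|$; then $\sum_{B\in\cK}f(|B|)=\sum|B|>\delta|B_0|>C$ directly, so the conclusion still holds. Thus the lemma follows in all cases, and the whole proof is the displayed three-step estimate above.
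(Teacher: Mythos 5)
Your proof is correct, and since the paper leaves this lemma to the reader (``follows by an easy computation''), the two-line chain $\sum f(|B|) > \frac{C}{\delta|B_0|}\sum|B| > C$ is exactly the intended argument. One small note: the digression about monotonicity of $f(r)/r$ is a red herring and plays no role — as you yourself observe, the displayed three-step estimate alone suffices, and it makes no use of any property of $f$ beyond the hypothesis given, so the lemma really holds for any function $f$ (dimension function or not). The disjointness of the intervals is likewise never used.
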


\subsection{Proof of divergent case in Theorem \ref{thmabstractkhinchinjarnik} for Hausdorff measure}
\label{SecProofDivergentAbstractKhinchinJarnik}

We basically follow \cite{velanimasstransfer}. Consider an approximation function $\psi$ such that
$$
\sum_{n=1}^{\infty} K^{2n}f\big(\psi(K^n)\big)=\infty.
$$

Let $(\cR,l)$ be a planar resonant set satisfying ubiquity and isotropic quadratic growth, in terms of the constants $M$, $K$, $a$, $c_1$ and $c_2$ introduced in Definition \ref{DefMetricPropertiesResonantSets}. Fix constants $b>0$ and $\delta>0$ as in \S~\ref{SecSeparationPropertiesResonantSets} and modulo increasing $K$ assume that Condition~\eqref{EquationConditionConstantSeparationProperties(2)} is satisfied, so that Proposition \ref{PropositionSeparatedPointsInsideInterval} and Corollary \ref{cor1ss2abstractkhinchinjarnick} can be applied. In order to simplify the notation, set
$$
c:=\frac{c_1}{16(a+b)}.
$$
Finally, recall from \S~\ref{SecSeparationPropertiesResonantSets} that for any subinterval 
$
B_0\subset[-\pi/2,\pi/2[
$, 
Equation~\eqref{EquationConditionConstantSeparationProperties(4)} defines $\cR(l,B_0)$ as the set of those $\theta\in\cR(K,l)$ such that
$$
B\left(\theta,\frac{a}{K^{2l}}\right)\subset B_0.
$$

\begin{proposition}
[Local construction of measure $\mu$]
\label{propLocalConstructionKhinchinJarnick}
Fix a subinterval $B_0\subset[-\pi/2,\pi/2[$ and a constant $C>0$. 

There exist positive integers $m(B_0)$ and $l(B_0)$ and a finite family $\cK(B_0)$ of disjoint subintervals $B\subset B_0$ of the form
$
B=B\big(\theta,\psi(K^{l})\big)
$
for some $m(B_0)<l\leq l(B_0)$ and some $\theta\in\cR(l,B_0)$ which are pairwise disjoint and such that
\begin{equation}
\label{eq1LocalConstructionKhinchinJarnick}
\sum_{B\in\cK(B_0)}f\big(|B|\big)>C.
\end{equation}

Moreover, there exists an universal constant $\Delta>1$ not depending on $B_0$ such that for any subinterval $I\subset B_0$, denoting $\cK(B_0,I)$ the family of those balls $B\in\cK(B_0)$ with $B\cap I\not=\emptyset$, we have
\begin{equation}
\label{eq2LocalConstructionKhinchinJarnick}
\sum_{B\in \cK(B_0,I)}f\big(|B|\big)<
\Delta\frac{|I|}{|B|}\sum_{B\in \cK(B_0)}f\big(|B|\big).
\end{equation}
\end{proposition}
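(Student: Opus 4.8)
The plan is to build the family $\cK(B_0)$ level by level inside $B_0$, using Corollary \ref{cor1ss2abstractkhinchinjarnick} to harvest at each scale a large $bK^{-2n}$-separated set of resonant directions that avoid the already-chosen intervals, and to stop once the $f$-mass has exceeded $C$. First I would fix $B_0$ and choose $m=m(B_0)$ large enough that $|B_0|>c_2 K^{-m}$ and that $\psi(K^n)\leq bK^{-2n}$ for all $n>m$ (possible since $\psi(l)\le b/l^2$ may be assumed as in \S\ref{SecProofDivergentCaseKhinchinJarnikLebesgue} after replacing $\psi$ by a smaller comparable function). Then inductively, having chosen finitely many disjoint intervals $B=B(\theta,\psi(K^l))$ at levels $m<l\le n-1$, I would look at the union $\cI$ of those intervals; if already $\sum f(|B|)>C$ we stop. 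Otherwise, by the quadratic growth bound the number of these intervals is $<M|B_0|K^{2(n-1)}$, and by shrinking things I may assume $|\cI|<\delta|B_0|$ (if $|\cI|\ge\delta|B_0|$ one instead uses Lemma \ref{lemmaaereo} with the length estimate to conclude the $f$-sum is already large — this is the case-split that terminates the construction). Applying Corollary \ref{cor1ss2abstractkhinchinjarnick} to $I=B_0$ at level $n$ gives at least $c\cdot|B_0|K^{2n}$ points $\theta\in\cR(n,B_0)$ which are $bK^{-2n}$-separated and whose $b$-neighborhoods miss $\cI$; to each such $\theta$ attach the interval $B(\theta,\psi(K^n))\subset B(\theta,bK^{-2n})$, which is therefore disjoint from all previously chosen intervals and, by separation, from the others at level $n$. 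Add these to $\cK(B_0)$.

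The contribution to $\sum f(|B|)$ from level $n$ is at least $c|B_0|K^{2n}\cdot f(2\psi(K^n))\gg c|B_0|\cdot K^{2n}f(\psi(K^n))$ up to the multiplicative constant coming from $f(2r)\le 2f(r)$ (valid since $f(r)/r$ is decreasing), so the partial sums over $n$ diverge by hypothesis $\sum K^{2n}f(\psi(K^n))=\infty$. Hence after finitely many levels the cumulative $f$-mass exceeds $C$; at that point set $l(B_0)$ to be the last level used and stop, yielding \eqref{eq1LocalConstructionKhinchinJarnick}. Throughout one must track that the total measure used stays $<\delta|B_0|$, i.e. that $\sum_{n} |\{\text{level-}n\text{ intervals}\}|\cdot\psi(K^n)$ does not overshoot before the $f$-mass does; this is arranged by choosing the stopping rule on the $f$-side and noting $K^{2n}\psi(K^n)\le b$, so each level adds at most a controlled proportion to the Lebesgue measure — in fact one runs the recursion as in Lemma \ref{lem1ss3abstractkhinchinjarnick}, which guarantees the length-sum can be kept below $\delta|B_0|$ while still being able to apply Corollary \ref{cor1ss2abstractkhinchinjarnick} at each step.

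For the second assertion \eqref{eq2LocalConstructionKhinchinJarnick}, fix a subinterval $I\subset B_0$ and count, for each level $n$ with $m<n\le l(B_0)$, how many of the chosen level-$n$ intervals meet $I$. Each has length $|B|=2\psi(K^n)\le 2bK^{-2n}$, and distinct centres at level $n$ are $bK^{-2n}$-separated, so the number of level-$n$ intervals intersecting $I$ is at most $(|I|+4bK^{-2n})/(bK^{-2n})\le |I|K^{2n}/b + 5$; comparing with the lower bound $c|B_0|K^{2n}$ on the total number of level-$n$ intervals, the fraction meeting $I$ is $\lesssim \tfrac{1}{bc}\cdot\tfrac{|I|}{|B_0|} + \tfrac{5}{c|B_0|K^{2n}}$. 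Since all level-$n$ intervals have the same length, the level-$n$ part of $\sum_{B\in\cK(B_0,I)}f(|B|)$ is that same fraction times the level-$n$ part of $\sum_{B\in\cK(B_0)}f(|B|)$; summing over $n$ and absorbing the lower-order $5/(c|B_0|K^{2n})$ term into the geometric series gives \eqref{eq2LocalConstructionKhinchinJarnick} with $\Delta$ depending only on $b$, $c$, $K$ (hence universal, independent of $B_0$ and $I$). The main obstacle is the bookkeeping in the construction: ensuring that the Lebesgue mass of the accumulated intervals stays below $\delta|B_0|$ exactly long enough for the $f$-mass to reach $C$, so that Corollary \ref{cor1ss2abstractkhinchinjarnick} remains applicable at every level; everything else is routine estimation using $f(2r)\le 2f(r)$ and the separation of centres.
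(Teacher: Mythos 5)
Your plan follows the paper's argument closely: build $\cK(B_0)$ level by level using Corollary~\ref{cor1ss2abstractkhinchinjarnick} to harvest $bK^{-2n}$-separated centres avoiding the previous levels, stop either because the Lebesgue mass of the accumulated intervals reaches $\delta|B_0|$ (feed into Lemma~\ref{lemmaaereo}) or because the divergence of $\sum K^{2n}f(\psi(K^n))$ pushes the $f$-mass past $C$, and for the second assertion compare per-level contributions to $I$ and to $B_0$ via separation. This is exactly the paper's construction (the paper stops solely on the Lebesgue criterion $\sum|B|>\delta|B_0|$ and lets Lemma~\ref{lemmaaereo} convert that into the $f$-bound; your extra direct check on $\sum f(|B|)>C$ is redundant but harmless).

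One point you should tighten: to invoke Lemma~\ref{lemmaaereo} when $|\cI|\geq\delta|B_0|$, you need $\frac{f(|B|)}{|B|}>\frac{C}{\delta|B_0|}$ for every constructed $B$, and this is not automatic. In your definition of $m(B_0)$ you only require $|B_0|>c_2K^{-m}$ and $\psi(K^n)\leq bK^{-2n}$; you must also enlarge $m(B_0)$ so that $\frac{f(2\psi(K^l))}{2\psi(K^l)}>\frac{C}{\delta|B_0|}$ for all $l>m(B_0)$ (which is possible because $f(r)/r\to\infty$ as $r\to0$ and $\psi(K^l)\to0$). The paper builds this into the definition of $m(B_0)$; without it, Lemma~\ref{lemmaaereo} does not deliver the $f$-mass bound and the case split has a gap. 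The other rough spot, absorbing the additive ``$+5$'' in the per-level count of intervals meeting $I$, is present in the paper as well (its count $\lesssim |I|K^{2l}/b$ silently drops the same $O(1)$ term), so you are in the same position; a clean fix is to observe that once $I$ meets at least two centres at level $l$ the separation forces $|I|\gtrsim bK^{-2l}$, so the additive term is dominated by the main term.
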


\begin{proof}
We give first a sketch of the proof. The first step in the proof is to define the integer $m(B_0)$. Once $m(B_0)$ is defined, for $l>m(B_0)$ we consider families $\cK(B_0,l)$ made of disjoint intervals $B$ of the form  
$
B=B\big(\theta,\psi(K^{l})\big)
$
for $\theta\in\cR(l,B_0)$, so that the sum in Condition \eqref{eq1LocalConstructionKhinchinJarnick} takes the form 
$$
\sum_{j=m(B_0)+1}^l
\sum_{B\in\cK(B_0,j)}f\big(|B|\big).
$$
The Lebesgue measure of such families of intervals is big enough if we have
$$
\sum_{j=m(B_0)+1}^l
\sum_{B\in\cK(B_0,j)}|B|
\geq
\delta |B_0|.
$$ 
If the last condition is satisfied we set $l(B_0):=l$ and Condition \eqref{eq1LocalConstructionKhinchinJarnick} follows from Lemma \ref{lemmaaereo}. Otherwise Corollary \ref{cor1ss2abstractkhinchinjarnick} tells us that there exists an extra family $\cK(B_0,l+1)$ containing at least $c|B_0|K^{2(l+1)}$ intervals which are disjoint from all the previous ones. Adding this $(l+1)$-th term the sum in Condition \eqref{eq1LocalConstructionKhinchinJarnick} increases by 
$$
\sum_{B\in\cK(B_0,l+1)}f\big(|B|\big)
\geq
c|B_0|\cdot K^{2(l+1)}\psi(K^{l+1}).
$$
The latter is the $(l+1)$-th term of a divergent series, thus Condition \eqref{eq1LocalConstructionKhinchinJarnick} is eventually satisfied. Then we define $l(B_0)$ as the last term in the finite sum. The second part of the statement will follow easily. We now start the formal proof of the Proposition. 

\smallskip

\emph{Definition of $m(B_0)$}. In order to apply Proposition \ref{PropositionSeparatedPointsInsideInterval} and Corollary \ref{cor1ss2abstractkhinchinjarnick}, fix $m=m(B_0)$ such that for any $l>m(B_0)$ Condition~\eqref{EquationConditionConstantSeparationProperties(3)} is satisfied and moreover we have
$$
|B_0|\geq \frac{2\cdot c_2}{K^l}.
$$
Moreover recall that $\psi(K^l)\to0$ for $l\to+\infty$ and that $f(r)/r\to\infty$ for $r\to0$. Therefore, modulo increasing $m(B_0)$ we can assume also that for any $l> m(B_0)$ and any interval of the form
$
B=B\big(\theta,\psi(K^{l})\big)
$ 
with $\theta\in\cR(l,B_0)$ we have
$$
\frac{f\big(|B|\big)}{|B|}>
\frac{C}{\delta|B_0|}.
$$

\smallskip

\emph{Definition of $l(B_0)$}. The family $\cK(B_0)$ is defined as union of subfamilies $\cK(B_0,l)$ inductively defined for $l=m+1,\dots,l(B_0)$. The integer $l(B_0)$ is defined as the last step of the construction, when the required properties of $\cK(B_0)$ are satisfied. The inductive procedure is described below.

\smallskip

\emph{Initial step}.
According to Proposition \ref{PropositionSeparatedPointsInsideInterval} there exist a subset
$
\cT(B_0,m+1)\subset\cR(m+1,B_0)
$ 
which is $bK^{-2(m+1)}$-separated and has cardinality
$
\sharp\big(\cT(B_0,m+1)\big)\geq 2c|B_0|K^{2m+2}
$.
Define $\cK(B_0,m+1)$ as the family of balls
$
B\big(\theta,\psi(K^{m+1})\big)
$
for $\theta\in \cT(B_0,m+1)$ and set
$$
\KK(B_0,m+1):=
\bigsqcup_{B\in\cK(B_0,m+1)}B.
$$

\smallskip

\emph{Inductive step.}
Assume inductively that for $l\geq m+1$ the families
$
\cK(B_0,m+1),\dots,\cK(B_0,l)
$
are defined, where for any $j=m+1,\dots,l$ any $B\in\cK(B_0,j)$ is an interval of the form 
$
B=B\big(\theta,\psi(K^l)\big)
$ 
for some $\theta\in\cR(l,B_0)$, and assume inductively also that all the intervals in
$
\bigcup_{j=m+1}^l\cK(B_0,j)
$
are disjoint. Then set
$$
\KK(B_0,l):=
\bigsqcup_{j=m+1}^{l}
\bigsqcup_{B\in\cK(B_0,j)}B.
$$
Since intervals in $\KK(B_0,l)$ are balls centered at points 
$
\theta\in\bigcup_{j=m+1}^l\cR(j,B_0)
$, 
then by isotropic quadratic growth these intervals are at most $M|B_0|K^{2l}$.

\begin{enumerate}
\item
If the family
$
\bigcup_{j=m+1}^l\cK(B_0,j)
$
satisfies Condition \eqref{eqlemmaaereo} then we set $l(B_0):=l$ and
$$
\cK(B_0):=\bigcup_{j=m+1}^l\cK(B_0,j).
$$
Lemma \ref{lemmaaereo} implies that Condition \eqref{eq1LocalConstructionKhinchinJarnick} is satisfied too, and the first part of the Proposition is proved. 
\item
If Condition \eqref{eqlemmaaereo} is not satisfied, observe that $\KK(B_0,l)$ is a union of at most $M|B_0|K^{2l}$ disjoint intervals with $|\KK(B_0,l)|<\delta|B_0|$. Then according to Corollary \ref{cor1ss2abstractkhinchinjarnick} there exists a $bK^{-2(l+1)}$-separated subset $\cT(B_0,l+1)$ of $\cR(l+1,B_0)$ with cardinality
$$
\sharp\big(\cT(B_0,l+1)\big)\geq c|B_0|K^{2(l+1)}
$$
such that for any $\theta\in \cT(B_0,l+1)$ we have
$$
B\big(\theta,bK^{-2(l+1)}\big)
\subset
B_0\setminus\KK(B_0,l).
$$
Then define $\cK(B_0,l+1)$ as the family of balls
$
B\big(\theta,\psi(K^{l+1})\big)
$
for $\theta\in \cT(B_0,l+1)
$
and observe that
$
\bigcup_{j=m+1}^{l+1}\cK(B_0,j)
$
is a family of disjoint balls.
\end{enumerate}

\emph{The inductive procedure eventually stops}.
Repeat the analysis in the inductive step replacing $l$ by $l+1$. Eventually at least one of the following two conditions is satisfied.
\begin{enumerate}
\item
The family
$
\bigcup_{j=m+1}^{l}\cK(B_0,j)
$ 
eventually satisfies Condition \eqref{eqlemmaaereo} and thus also Condition \eqref{eq1LocalConstructionKhinchinJarnick}, as in point (1) of the inductive step. The construction of $\cK(B_0)$ is therefore complete. 
\item
Otherwise the family
$
\bigcup_{j=m+1}^{l}\cK(B_0,j)
$ 
eventually satisfies directly Condition \eqref{eq1LocalConstructionKhinchinJarnick}. Indeed, reasoning as in point (2) of the inductive step, we add an extra subfamily $\cK(B_0,l+1)$. Since each $\cK(B_0,j)$ contains at least $c|B_0|K^{2j}$ subintervals of size $\psi(K^j)$, and since all the intervals in all the families $\cK(B_0,j)$ are mutually disjoint, we have
$$
\sum_{j=m+1}^{l}
\sum_{B\in\cK(B_0,l)}f(|B|)\geq
c|B_0|\cdot
\sum_{j=m+1}^{l}K^{2j} f\big(\psi(K^j)\big),
$$
and Condition \eqref{eq1LocalConstructionKhinchinJarnick} follows from the divergence of
$
\sum_{n=1}^{\infty}K^{2n}f\big(\psi(K^n)\big)
$.
\end{enumerate}
In both cases we obtain a family $\cK(B_0)$ satisfying Equation~\eqref{eq1LocalConstructionKhinchinJarnick}. We define $l(B_0)$ as the first $l\geq m+1$ such that this is true. The first part of the Proposition is proved.

\smallskip

\emph{Second part of the statement.} In order to finish the proof, fix any subinterval $I\subset B_0$. Since
$
\sharp\big(\cT(B_0,l)\big)\geq c|B_0|K^{2l}
$
for any integer $l$ with $m(B_0)< l\leq l(B_0)$, then we have
$$
S(B_0,l):=
\sum_{B\in\cK(B_0,l)}f\big(|B|\big)\geq
cf\big(\psi(K^l)\big)
\cdot|B_0|\cdot K^{2l}.
$$
On the other hand, the points $\theta$ in $\cT(B_0,l)$ are $bK^{-2l}$-separated and thus, denoting $\cK(B_0,I,l)$ the set of those balls $B\in\cK(B_0,l)$ with $B\cap I\not=\emptyset$, we have
$$
S(I,l):=
\sum_{B\in\cK(B_0,I,l)}f\big(|B|\big)<
\frac{1}{b}f\big(\psi(K^l)\big)\cdot|I|\cdot K^{2l}.
$$
Equation \eqref{eq2LocalConstructionKhinchinJarnick} follows with $\Delta:=(bc)^{-1}$ observing that
$$
\sum_{B\in \cK(B_0,I)}f\big(|B|\big)=
\sum_{l=m(B_0)+1}^{l(B_0)}S(I,l)<
\frac{1}{bc}\frac{|I|}{|B_0|}
\sum_{l=m(B_0)+1}^{l(B_0)}S(B_0,l)=
\frac{1}{bc}\frac{|I|}{|B_0|}
\sum_{B\in \cK(B_0)}f\big(|B|\big).
$$
\end{proof}

\subsubsection{Construction of the Cantor set $\KK$ with probability measure $\mu$}

Fix any $\eta>0$. Recall the notation of \S~\ref{SecMassDistrubution}, where for any $n$ we consider a family $\cK(n)$ of disjoint subintervals of $[-\pi/2,\pi/2[$, which defines the $n$-th level $\KK(n)$ of a Cantor set $\KK$, so that $\KK(n+1)\subset\KK(n)$ for any $n$ and that $\KK=\bigcap_{n=1}^\infty\KK(n)$. The inductive construction of the levels $\KK(n)$ of $\KK$ is given below.

\begin{description}
\item[First level] 
set $B_0:=[-\pi/2,\pi/2[$ and $C:=\eta$. Let 
$
\KK(1):=\bigsqcup_{B\in\cK(1)}B
$, 
where 
$$
\cK(1):=\cK\big(B_0=[-\pi/2,\pi/2[,C=\eta\big)
$$ 
is the family of disjoint interval corresponding to the interval $B_0=[-\pi/2,\pi/2[$ and to the constant $C=\eta$ which is constructed in Proposition \ref{propLocalConstructionKhinchinJarnick}. Observe that any interval $B\in\cK(1)$ satisfies Condition \eqref{eqmeasureoncantorset}, according to Equation \eqref{eq1LocalConstructionKhinchinJarnick} and Equation \eqref{eqmeasureoncantorset(alternative)}.
\item[General level]
suppose inductively that the levels $\KK(1),\dots,\KK(n-1)$ are defined, or equivalently the families $\cK(1),\dots,\cK(n-1)$. Fix any $B_0\in\cK(n-1)$, where
$
B_0=B\big(\theta,\psi(K^m)\big)
$
for some $m\geq n$ and some $\theta\in\cR(K,m)$. Set $C:=\eta\mu(B_0)$. Let 
$$
\cK(n,B_0):=\cK\big(B_0,C=\eta\mu(B_0)\big)
$$
be the family of disjoint intervals provided by Proposition \ref{propLocalConstructionKhinchinJarnick} corresponding to the interval $B_0\in\cK(n-1)$ and to the constant $C=\eta\mu(B_0)$. According to Equation \eqref{eq1LocalConstructionKhinchinJarnick} and Equation \eqref{eqmeasureoncantorset(alternative)}, any interval $B\in\cK(n,B_0)$ satisfies Condition \eqref{eqmeasureoncantorset}. Finally define the $n$-th level and family of intervals by
\begin{eqnarray*}
&&
\cK(n):=\bigcup_{B_0\in\cK(n-1)}\cK(B_0,n)
\\
&&
\KK(n):=\bigsqcup_{B_0\in\cK(n-1)}\bigsqcup_{B\in\cK(B_0,n)}B.
\end{eqnarray*}
\end{description}

Observe that for any $n$, the intervals in $\cK(n)$ are pairwise disjoint and any $B\in\cK(n)$ is of the form $B=B\big(\theta,\psi(K^l)\big)$ for some $l\geq n$ and some $\theta\in\cR(K,l)$. In particular we have
$$
\KK
\subset
\bigcap_{n=1}^\infty
\left(
\bigcup_{l\geq n}
\bigcup_{\theta\in\cR(K,l)}
B\big(\theta,\psi(K^l)\big)
\right)
\subset W(\cR,\psi).
$$
Moreover, for any $n$ any interval $B\in\cK(n)$ satisfies Condition \eqref{eqmeasureoncantorset}, that is
$$
\mu(B)<\frac{f\big(|B|\big)}{\eta}.
$$

\subsubsection{End of the proof}

For any $n$, any $B\in\cK(n)$ and any subinterval $I\subset B$, denote by $\cK(n+1,I)$ the set of those balls $B'\in\cK(n+1,B)$ such that
$
B'\cap I\not=\emptyset
$.
Define
$$
\rho_0:=
\min
\left\{|t-t'|
\textrm{ ; }
t\in B\in\cK(1)
\textrm{ ; }
t'\in B'\in\cK(1)
\textrm{ ; }
B\not=B'
\right\},
$$
which is positive since $\KK(1)$ is a finite union of disjoint intervals. Let $\Delta$ be the constant appearing in Equation \eqref{eq2LocalConstructionKhinchinJarnick} in Proposition \ref{propLocalConstructionKhinchinJarnick}. 

\begin{lemma}
For any interval with $|I|<\rho_0$ we have
$$
\mu\big(|I|\big)<\frac{\Delta}{\eta}f\big(|I|\big).
$$
\end{lemma}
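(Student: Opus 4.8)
The statement is the key local estimate: any short interval $I$ satisfies $\mu(I) < (\Delta/\eta) f(|I|)$, which combined with Lemma \ref{lem1ss4abstractkhinchinjarnick} (Mass Transference principle) will yield $H^f(\KK) \geq \eta \cdot \mu(\KK) = \eta > 0$, and since $\KK \subset W(\cR,\psi)$ this gives the divergent case. The idea is a standard "telescoping from the generation that separates the interval" argument: given $I$ with $|I|<\rho_0$, locate the largest generation $n$ such that $I$ is contained in (or meets only) a single interval $B_0 \in \cK(n)$, and then estimate $\mu(I)$ by first using the local quasi-uniformity estimate \eqref{eq2LocalConstructionKhinchinJarnick} inside $B_0$ one more level down, and then using that each $B \in \cK(n)$ already satisfies Condition \eqref{eqmeasureoncantorset}, i.e. $\mu(B) < f(|B|)/\eta$.

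First I would make precise the choice of generation. Since $|I| < \rho_0$, by definition of $\rho_0$ the interval $I$ cannot meet two distinct intervals of $\cK(1)$, so it meets at most one $B \in \cK(1)$; if it meets none then $\mu(I) = 0$ and we are done. Inductively, let $n \geq 1$ be the largest integer such that $I$ is contained in a single interval $B_0 \in \cK(n)$ (this $n$ is finite because the diameters of intervals in $\cK(n)$ tend to $0$, being bounded by $\psi(K^n) \to 0$, while $|I| > 0$ is fixed). Two cases: either $I$ meets only one interval $B_1 \in \cK(n+1,B_0)$, or $I$ meets at least two intervals of $\cK(n+1,B_0)$. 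In the first case I'd replace $B_0$ by $B_1$ and descend; the key point is that eventually we must land in the second case (again by the shrinking of diameters), so WLOG $I$ meets at least two children of $B_0$ in $\cK(n+1)$.

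Now I estimate. By the definition of $\mu$ on children of $B_0$,
\[
\mu(I) \leq \sum_{B \in \cK(n+1,B_0)\,:\,B\cap I\neq\emptyset} \mu(B) = \frac{\mu(B_0)}{\sum_{B' \in \cK(n+1,B_0)} f(|B'|)} \sum_{B\cap I \neq \emptyset} f(|B|).
\]
Apply Equation \eqref{eq2LocalConstructionKhinchinJarnick} of Proposition \ref{propLocalConstructionKhinchinJarnick} to the subinterval $I \subset B_0$: writing $B$ for a generic child and $|B| = \psi(K^l)$ for the relevant level (here one must be slightly careful that \eqref{eq2LocalConstructionKhinchinJarnick} is stated with a single $|B|$ in the denominator — I would invoke it in the form that the sum over children meeting $I$ is at most $\Delta \, (|I|/|B|) \sum_{B'} f(|B'|)$ where $|B|$ is the size of a child meeting $I$, and since $I$ meets at least two children which are $bK^{-2l}$-separated we have $|I| \geq $ (a constant times) $|B|$, equivalently $|I|/|B|$ is bounded below, but we need an upper bound, which is immediate since each such child $B \subset B_0$ has $|B| \leq \psi(K^{n+1})$ and more importantly $I$ meeting two separated children forces the relevant normalization to work out). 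Combining, $\mu(I) \leq \Delta \, \frac{|I|}{|B|} \mu(B_0)$, and then using that $B_0 \in \cK(n)$ satisfies Condition \eqref{eqmeasureoncantorset}, $\mu(B_0) < f(|B_0|)/\eta$, together with $|B_0| \geq |I|$ and the monotonicity of $f$ and of $f(r)/r$, I would bound $\frac{|I|}{|B|}\mu(B_0)$ from above by a constant multiple of $f(|I|)/\eta$. The main obstacle is precisely this last comparison: one must use that $f(r)/r$ is decreasing so that $f(|I|)/|I| \geq f(|B|)/|B|$, hence $f(|B|) \leq f(|I|) \, |B|/|I|$, which cancels the factor $|I|/|B|$ after summing $f(|B|)$ over the children meeting $I$ — i.e. the estimate should really be organized as $\mu(I) \leq \Delta \sum_{B \cap I \neq \emptyset} f(|B|)/\eta \cdot (\text{something}) \leq (\Delta/\eta) f(|I|)$, using that the total $f$-mass of children of $B_0$ meeting $I$ is, by \eqref{eq2LocalConstructionKhinchinJarnick} and the normalization $\sum_{B'}f(|B'|) > \eta\mu(B_0) = C$ from \eqref{eqmeasureoncantorset(alternative)}, comparable to $(|I|/|B_0|)$ times the full $f$-mass, which in turn is controlled by $f(|I|)$ via the dimension-function monotonicity. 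I would write this chain carefully, tracking that all constants coalesce into $\Delta/\eta$.
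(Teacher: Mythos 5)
Your overall plan is the same as the paper's: find a single $B_0\in\cK(n)$ with $I$ (effectively) contained in $B_0$, bound $\mu(I)$ by the $f$-mass of the children of $B_0$ meeting $I$, use \eqref{eq2LocalConstructionKhinchinJarnick} to control that mass by $|I|/|B_0|$, then feed in the decreasing monotonicity of $f(r)/r$ and the level bound $\mu(B_0)<f(|B_0|)/\eta$ from Condition \eqref{eqmeasureoncantorset}. However, there is a genuine gap in your handling of \eqref{eq2LocalConstructionKhinchinJarnick}. The quantity $|B|$ in its right-hand side is (despite the unfortunate clash with the summation variable) the length of the \emph{parent} $B_0$, not of a child: this is unambiguous from the last display in the proof of Proposition~\ref{propLocalConstructionKhinchinJarnick}, where the factor reads $\frac{1}{bc}\frac{|I|}{|B_0|}$ and $\Delta=(bc)^{-1}$. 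Your reading --- ``$|B|$ is the size of a child meeting $I$'' --- is both ill-defined (children of $B_0$ come in different sizes $2\psi(K^{l})$ for different levels $l$) and fatal to the estimate: from $\mu(I)\leq\Delta\,\tfrac{|I|}{|B_{\mathrm{child}}|}\mu(B_0)$ one cannot deduce $\mu(I)<\tfrac{\Delta}{\eta}f(|I|)$, since $|B_{\mathrm{child}}|$ can be far smaller than $|I|$. Your attempted reorganization via the normalization $\sum f(|B'|)>\eta\mu(B_0)$ is left vague at the crucial point and cannot close as written, because that normalization bounds the total child $f$-mass only from \emph{below}.

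With the correct reading the proof is a clean four-step chain:
$$
\mu(I)\leq\frac{\sum_{B'\in\cK(n+1,I)}f(|B'|)}{\sum_{B'\in\cK(n+1,B_0)}f(|B'|)}\mu(B_0)
<\Delta\frac{|I|}{|B_0|}\mu(B_0)
\leq\Delta\frac{f(|I|)}{f(|B_0|)}\mu(B_0)
<\frac{\Delta}{\eta}f(|I|),
$$
using, in order, the definition of $\mu$, Equation~\eqref{eq2LocalConstructionKhinchinJarnick}, the facts that $|I|\leq|B_0|$ and $f(r)/r$ is nonincreasing, and Condition~\eqref{eqmeasureoncantorset} for $B_0$. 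Your descending/``WLOG $I$ meets two children'' preamble is also unnecessary: by the choice of $\rho_0$, any $I$ with $|I|<\rho_0$ already meets at most one $B_0\in\cK(1)$; if it meets none then $\mu(I)=0$, and otherwise you may replace $I$ by $I\cap B_0$ at no cost, since $\mu$ is supported inside $\KK$ and $f$ is increasing.
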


\begin{proof}
By definition of $\rho_0$, if $|I|<\rho_0$ then there exists $n$ such that $I$ intersects at most one $B\in\cK(n)$. Moreover we can assume that $I\subset B$, since $\mu$ does not give positive measure to subsets $E$ with
$
E\cap\KK(n)=\emptyset
$.
We have
$$
\mu(I)\leq
\frac
{\sum_{B'\in \cK(n+1,I)}f\big(|B'|\big)}
{\sum_{B'\in \cK(n+1,B)}f\big(|B'|\big)}\mu(B)<
\Delta\frac{|I|}{|B|}\mu(B)<
\Delta\frac{f\big(|I|\big)}{f\big(|B|\big)}\mu(B)<
\frac{\Delta}{\eta}f\big(|I|\big),
$$
where the first inequality follows from to the definition of $\mu$ (see Lemma \ref{lem2ss4abstractkhinchinjarnick}), the second follows from Equation \eqref{eq2LocalConstructionKhinchinJarnick} in Proposition \ref{propLocalConstructionKhinchinJarnick}, the third holds because $f(r)/r$ is decreasing monotone (for increasing $r$) and the fourth because any interval
$
B\in\bigcup_{n\in\NN}\cK(n)
$
satisfies Condition \eqref{eqmeasureoncantorset}.
\end{proof}

According to Lemma \ref{lem1ss4abstractkhinchinjarnick} we have
$
\displaystyle{H^f\big(\KK\big)\geq \frac{\eta}{\Delta}}
$. 
For any $\eta>0$ we can define a Cantor set $\KK=\KK_\eta$ with $\KK\subset W(\cR,\psi)$ which satisfies the estimate above. Therefore we have 
$$
H^f\big(W(\cR,\psi)\big)=+\infty.
$$ 
The divergent case of Theorem \ref{thmabstractkhinchinjarnik} for Hausdorff measure is proved. This completes the proof of Theorem \ref{thmabstractkhinchinjarnik}.

\section{Hausdorff dimension of $\bad(\cR,\epsilon)$: proof of Theorem \ref{TheoremAbstractJarnickInequality}}
\label{SecProofAbstractJarnikInequality}

In this section we prove Theorem \ref{TheoremAbstractJarnickInequality}. For any real number $s$ with $0<s<1$, consider the function $f_s:\RR_+\to\RR_+$ defined by $f_s(x)=x^s$. The lower bound for the Hausdorff dimension of $\bad(\cR,\epsilon)$ is proved in \S~\ref{SecProofLowerBoundAbstractJarnikInequality}. The upper bound is proved in \S~\ref{SecProofUpperBoundTheoremAbstactJarnikInequality}.

\subsection{Proof of lower bound}
\label{SecProofLowerBoundAbstractJarnikInequality}

Fix constants $\epsilon>0$ and $\tau>0$ with $\epsilon<1$ and $\tau<1-\epsilon^2$, and set 
$$
\epsilon:=\frac{1}{K}. 
$$
Let $(\cR,l)$ be a planar resonant set, and assume that it is $(\epsilon,\tau)$-decaying. For convenience of notation, for any $n\in\NN$ and any $\delta>0$ set
$$
\Delta(K,n,\delta):=
\bigcup_{\theta\in\cR(K,n)}
B\left(\theta,\frac{\delta}{l(\theta) K^{n}}\right).
$$
According to Definition \ref{DefMetricPropertiesResonantSets}, the $(\epsilon,\tau)$-decaying assumption on $(\cR,l)$ means that for any interval $I$ and any integer $n\geq 1$ satisfying Condition \eqref{EqDefDecayingAssumption}, that is
$$
|I|=\frac{1}{K^{2n}}
\quad\textrm{ and }\quad
I\cap
\bigcup_{j=1}^{n-1}\Delta(K,j,\epsilon^2)=\emptyset
$$
the estimate in Equation \eqref{EqDefDecayingConclusion} is satisfied too, that is
$$
\big|I\cap\Delta(K,n,2\epsilon^2)\big|<\tau|I|,
$$
and moreover there exists an interval $I_0$ satisfying Condition \eqref{EqDefDecayingAssumption} for $n=1$. 

\subsubsection{Construction of a probability measure on a Cantor set}

We apply the constructions of \S~\ref{SecMassDistrubution}. Let $I_0$ be an interval satisfying Condition \eqref{EqDefDecayingAssumption} for $n=1$. Such interval exist by assumption in the definition of $(\epsilon,\tau)$-decaying resonant set. We set $\cK(1):=\{I_0\}$, then for any $n\geq 1$ we define inductively a family $\cK(n)$ of intervals $I_i$ mutually disjoint in their interior and satisfying Condition \eqref{EqDefDecayingAssumption}. Assume that the first $n$ families $\cK(1),\dots,\cK(n)$ are defined and consider any interval $I$ in the family $\cK(n)$, recalling in particular that $|I|=K^{-2n}$. Let $[K^2]$ be the integer part of $K^2$. Consider a family $\big(I_i\big)_{i=1,\dots,[K^2]}$ of subintervals $I_i\subset I$, all of length $|I_i|=|I|\epsilon^2$ for any $i$ and any two of them disjoint in their interior. Such family of subintervals covers $I$ modulo a subset of measure at most $|I|\epsilon^2$. Define the sub-family $\cK(n+1|I)$ of $\big(I_i\big)_{i=1,\dots,[K^2]}$ by 
\begin{equation}
\label{eq1LocalConstructionLowerBoundJarnick}
\cK(n+1|I):=
\left\{I_i
\textrm{ ; }
1\leq i\leq [K^2]
\textrm{ and }
I_i\cap\Delta(K,n,\epsilon^2)=\emptyset
\right\},
\end{equation}
then define the family $\cK(n+1)$ by 
$$
\cK(n+1):=\bigcup_{I\in\cK(n)}\cK(n+1|I).
$$
Define a Cantor set by $\KK=\bigcap_{n=N}^\infty\KK(n)$, where any level is defined by
$
\KK(n):=\bigcup_{I\in\cK(n)}I
$, 
so that $\KK(n+1)\subset\KK(n)$ for any $n$. For any $\theta\in\cR(K,n)$ we have
$
K^{n-1}\leq l(\theta)<K^n
$,
therefore, recalling that $\epsilon=1/K$, we have
$$
B\left(\theta,\frac{\epsilon^3}{l(\theta)^2}\right)
\subset
B\left(\theta,\frac{\epsilon^2}{l(\theta)K^n}\right).
$$
Hence
$$
\KK\subset
I_0\setminus\bigcup_{n=1}^\infty\Delta(K,n,\epsilon^2)
\subset
I_0\setminus\bigcup_{\theta\in\cR}
B\left(\theta,\frac{\epsilon^3}{l(\theta)^2}\right)=
\bad\left(\cR,\epsilon^{3/2}\right)\cap I_0.
$$
Finally, as in \S~\ref{SecMassDistrubution}, a Borel probability measure $\mu$ is defined and supported on $\KK$. We recall that for the intervals in the construction above such measure is defined setting $\mu(I_0):=1$ and, assuming that $\mu(I)$ is defined for any $I$ in $\cK(n)$, setting  
\begin{equation}
\label{EquationLocalConstructionBadLowerBound(Recall2.4)}
\mu\big(|I_i|\big):=
\frac
{f_s\big(|I_i|\big)}
{\sum_{I_j\in\cK(n+1,I)}f_s\big(|I_j|\big)}
\mu\big(|I|\big)
\end{equation}
for any $I_i\in\cK(n+1|I)$. Actually, other than for $I_0$, we will define $\mu(I)$ only for intervals $I$ in $\cK(n)$ with $n\geq N$, where $N$ is a positive integer given by Proposition~\ref{PropLocalConstructionBadLowerBound} below. The estimate on the lower bound in Theorem \ref{TheoremAbstractJarnickInequality} follows from a lower bound for $\dim(\KK)$, which follows itself from the next Proposition.

\begin{proposition}
\label{PropLocalConstructionBadLowerBound}
For any $n\geq 1$ and any interval $I\in\cK(n)$ satisfying Condition \eqref{EqDefDecayingAssumption}, the family $\cK(n+1|I)$ defined in Equation \eqref{eq1LocalConstructionLowerBoundJarnick} has cardinality
$$
\sharp\cK(n+1|I)\geq(1-\tau-\epsilon^2)K^2.
$$
In particular, whenever
\begin{equation}
\label{EqLocalConstructionLowerBoundBadAssumption}
s<1-\frac{|\log(1-\tau-\epsilon^2)|}{2|\log\epsilon|}
\end{equation}
for any $n\geq1$ and any $I\in\cK(n)$ we have
\begin{equation}
\label{EqLocalConstructionLowerBoundBadConclusion}
\sum_{I_i \in \cK(n+1|I)}
f_s\big(|I_i|\big)\geq f_s\big(|I|\big).
\end{equation}
Finally, there exists $N\geq2$ such that for any $s$ as above, for any $n\geq N$ and any interval $I\in\cK(n)$ Equation \eqref{eqmeasureoncantorset} is satisfied with $\eta=1$, that is
$$
\mu\big(I\big)
\leq
f_s\big(|I|\big).
$$
\end{proposition}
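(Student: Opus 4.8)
The plan is to prove the three assertions in order, all following from the $(\epsilon,\tau)$-decaying property applied to intervals of $\cK(n)$ that satisfy Condition \eqref{EqDefDecayingAssumption}.

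\textbf{Step 1: Cardinality bound.} First I would fix $n\geq 1$ and $I\in\cK(n)$ satisfying Condition \eqref{EqDefDecayingAssumption}, so $|I|=K^{-2n}$ and $I$ is disjoint from $\bigcup_{j=1}^{n-1}\Delta(K,j,\epsilon^2)$. By hypothesis we may apply the $(\epsilon,\tau)$-decaying conclusion \eqref{EqDefDecayingConclusion}, which bounds $|I\cap\Delta(K,n,2\epsilon^2)|\leq\tau|I|$; since $\Delta(K,n,\epsilon^2)\subset\Delta(K,n,2\epsilon^2)$, a fortiori $|I\cap\Delta(K,n,\epsilon^2)|\leq\tau|I|$. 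Now among the $[K^2]$ subintervals $I_i$ of length $|I|\epsilon^2=|I|/K^2$ tiling $I$ (up to a set of measure $\leq|I|\epsilon^2$), the ones \emph{excluded} from $\cK(n+1|I)$ are exactly those meeting $\Delta(K,n,\epsilon^2)$; the union of these excluded intervals has measure at most $|I\cap\Delta(K,n,\epsilon^2)|+(\text{boundary overlap})$. The cleanest accounting: the number of excluded $I_i$ is at most $\tau|I|/(|I|\epsilon^2)+1 = \tau K^2 + 1$, while the total number of $I_i$ constructed is at least $K^2-1$ (from $[K^2]\geq K^2-1$), so $\sharp\cK(n+1|I)\geq (K^2-1)-(\tau K^2+1)$; absorbing the additive constants into the $\epsilon^2 K^2$ slack (valid for $K$ large, i.e.\ $\epsilon$ small) gives the stated $\sharp\cK(n+1|I)\geq(1-\tau-\epsilon^2)K^2$. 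I expect the only delicate point here is bookkeeping the $\pm1$ additive errors from the integer part $[K^2]$ and the partial tiling, and confirming they are dominated by the $\epsilon^2 K^2$ margin.

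\textbf{Step 2: The sum estimate.} Each $I_i\in\cK(n+1|I)$ has $|I_i|=\epsilon^2|I|=|I|/K^2$, so $f_s(|I_i|)=(\epsilon^2|I|)^s=\epsilon^{2s}|I|^s$. Hence
$$
\sum_{I_i\in\cK(n+1|I)}f_s(|I_i|)\geq(1-\tau-\epsilon^2)K^2\cdot\epsilon^{2s}|I|^s
=(1-\tau-\epsilon^2)\,\epsilon^{2s-2}\,f_s(|I|).
$$
This is $\geq f_s(|I|)$ precisely when $(1-\tau-\epsilon^2)\epsilon^{2s-2}\geq 1$, i.e.\ $(2s-2)\log\epsilon\geq\log(1-\tau-\epsilon^2)$; since $\log\epsilon<0$ and $\log(1-\tau-\epsilon^2)<0$, dividing flips the inequality to $2-2s\geq|\log(1-\tau-\epsilon^2)|/|\log\epsilon|$, which is exactly Condition \eqref{EqLocalConstructionLowerBoundBadAssumption} rearranged. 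So \eqref{EqLocalConstructionLowerBoundBadConclusion} follows directly.

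\textbf{Step 3: The measure bound with $\eta=1$.} Finally I would verify $\mu(I)\leq f_s(|I|)$ for $n\geq N$ via the alternative criterion \eqref{eqmeasureoncantorset(alternative)}: it suffices that $\sum_{I_i\in\cK(n+1|I)}f_s(|I_i|)\geq\mu(I)$ at each stage, which by induction (starting from $\mu(I_0)=1\leq f_s(|I_0|)^{?}$---here one needs $|I_0|^s\geq1$ or rather to start the induction at level $N$ where $\mu$ is first normalized) propagates down. The role of $N$: we only need $\mu(I)\leq f_s(|I|)$ for $I\in\cK(n)$ with $n\geq N$, and the point of choosing $N$ large is to guarantee $f_s(|I_0'|)\geq\mu(I_0')$ at the base level $N$ — concretely, since $|I|=K^{-2n}\to0$ and $f_s(r)/r\to\infty$ (as $0<s<1$), for $n\geq N$ the intervals are small enough that the accumulated measure from the top $N$ levels is outweighed; combined with \eqref{EqLocalConstructionLowerBoundBadConclusion} the inductive step $\mu(I_i)=\frac{f_s(|I_i|)}{\sum_j f_s(|I_j|)}\mu(I)\leq\frac{f_s(|I_i|)}{f_s(|I|)}\mu(I)\leq f_s(|I_i|)$ closes. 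The \textbf{main obstacle} I anticipate is Step 1 — specifically making the passage from the measure bound $|I\cap\Delta(K,n,\epsilon^2)|\leq\tau|I|$ to a clean \emph{counting} bound on excluded subintervals, since an excluded $I_i$ need only \emph{touch} $\Delta$, not be contained in it, so one must argue that the number of subintervals meeting a set of measure $\tau|I|$ and tiling $I$ at scale $\epsilon^2|I|$ is at most $\tau K^2$ plus a bounded error, and then check this error is swallowed by the $\epsilon^2 K^2$ term given the standing assumption that $\epsilon$ is small (equivalently $K$ large).
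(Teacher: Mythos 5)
Your Steps 2 and 3 are essentially the paper's argument (the arithmetic turning the cardinality bound into \eqref{EqLocalConstructionLowerBoundBadConclusion}, and the two-phase strategy for the measure bound: a separate definition of $\mu$ at level $N$ via normalization, then induction using \eqref{EqLocalConstructionLowerBoundBadConclusion}). Step 3 is sketchy where the paper makes the base case precise via Lemma \ref{lemmaaereo}, but the outline is correct.

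The real issue is Step 1, and you yourself flag it as the main obstacle, but the resolution you propose does not work. You want to go from $|I\cap\Delta(K,n,\epsilon^2)|\leq\tau|I|$ to ``at most $\tau K^2$ plus a bounded error'' of the $I_i$ are excluded. That counting estimate is false in general: an $I_i$ is excluded as soon as it merely \emph{touches} $\Delta(K,n,\epsilon^2)$, and $\Delta(K,n,\epsilon^2)\cap I$ could a priori consist of very many short components, each grazing a distinct $I_i$, so the number of excluded $I_i$ is bounded by $\tau K^2$ plus the number of components, which is not $O(1)$. Hoping that this error term is absorbed by the $\epsilon^2 K^2$ slack is therefore unjustified.

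The paper's argument closes this gap with a structural observation, not bookkeeping. Every interval $B\bigl(\theta,\epsilon^2/(l(\theta)K^n)\bigr)$ appearing in $\Delta(K,n,\epsilon^2)$ has radius at least $\epsilon^{2(n+1)}$ (because $l(\theta)K^n\leq K^{2n}$ for $\theta\in\cR(K,n)$), which is exactly $|I_i|$. So if $I_i$ touches $\Delta(K,n,\epsilon^2)$ it is automatically \emph{contained} in $\Delta(K,n,2\epsilon^2)$. One then bounds not the count of excluded $I_i$ by the measure of $\Delta(K,n,\epsilon^2)$, but rather the \emph{total measure} of the excluded $I_i$ by $|I\cap\Delta(K,n,2\epsilon^2)|\leq\tau|I|$, which is precisely what the decaying hypothesis \eqref{EqDefDecayingConclusion} (applied with the factor $2\epsilon^2$, not $\epsilon^2$) is designed to give. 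The residual untiled sliver of $I$ contributes $\leq|I|/K^2=\epsilon^2|I|$, and this accounts for the $\epsilon^2$ in $(1-\tau-\epsilon^2)K^2$. The key idea you are missing is (i) comparing the scale of the $I_i$ with the \emph{minimum} scale of the intervals forming $\Delta(K,n,\epsilon^2)$ to upgrade ``touches $\Delta(K,n,\epsilon^2)$'' to ``contained in $\Delta(K,n,2\epsilon^2)$,'' and (ii) realizing that the decaying property is stated with the doubled radius precisely so that this containment argument works. Without that, Step 1 does not go through.
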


\begin{proof}
Observe that every subinterval $I_i$ of $I$ has length 
$
\epsilon^2|I|=\epsilon^{2(n+1)}
$ 
and any interval in $\Delta(K,n,\epsilon^2)$ has length at least $2\epsilon^{2(n+1)}$. Therefore any $I_i$ such that
$
I_i\cap\Delta(K,n,\epsilon^2)\not=\emptyset
$
must be contained in $\Delta(K,n,2\epsilon^2)$. Since $\cR$ is an $(\epsilon,\tau)$-decaying resonant set and by assumption $I$ satisfies Condition \eqref{EqDefDecayingAssumption}, we have
\begin{eqnarray*}
&&
\left(1 - \frac{\sharp\cK(n+1|I)}{K^2}\right)\lvert I \rvert
\leq
\left|
\bigcup_{I_i\cap\Delta(K,n,\epsilon^2)\not=\emptyset}I_i
\right|
+\frac{|I|}{K^2}
\leq
\\
&&
\lvert I \cap \Delta(K,n,2\epsilon^2) \rvert
+\frac{|I|}{K^2}
\leq
(\tau+\epsilon^2)\cdot|I|
\end{eqnarray*}
and hence $\sharp\cK(n+1|I)\geq (1-\tau-\epsilon^2)K^2$. According to this last estimate, Equation \eqref{EqLocalConstructionLowerBoundBadConclusion} follows directly from Condition \eqref{EqLocalConstructionLowerBoundBadAssumption} with a simple computation, recalling that $f_s(|I|)=|I|^s$ and observing that
$$
\sum_{I_i\in\cK(n+1|I)} f_s\big(|I_i|\big)
\geq
(1-\tau-\epsilon^2)K^2\left(\frac{|I|}{K^2}\right)^s
=
(1-\tau-\epsilon^2)\cdot\epsilon^{2(s-1)}|I|^s.
$$

Finally, fix $s$ satisfying Condition \eqref{EqLocalConstructionLowerBoundBadAssumption} and observe that such condition is equivalent to 
$
(1-\tau-\epsilon^2)K^{2(1-s)}>1
$. 
Therefore there exists $N\geq 2$ such that 
$$
(1-\tau-\epsilon^2)^{N-1}K^{2(N-1)(1-s)}
>
\frac{1}{|I_0|^s}.
$$
We proved yet that the family $\cK(N)$ contains at least 
$
(1-\tau-\epsilon^2)^{N-1}K^{2(N-1)}
$ 
intervals $I_i\subset I_0$, each of size $|I_i|=|I_0|K^{-2(N-1)}=K^{-2N}$, hence
$$
\sum_{I_i\in\cK(N)}|I_i|\geq \delta|I_0|
\quad
\textrm{ where }
\quad
\delta:=(1-\tau-\epsilon^2)^{N-1}
$$ 
According to our choice of $N$, for any $I_i\in\cK(N)$ we have
$$
\frac{f_s\big(|I_i|\big)}{|I_i|}=
\left(\frac{|I_0|}{K^{2(N-1)}}\right)^{s-1}
\geq
\frac{1}{(1-\tau-\epsilon^2)^{N-1}|I_0|}=
\frac{\mu(I_0)}{\delta|I_0|}.
$$ 
Let $\mu$ be the mass distribution defined by Equation~\eqref{EquationLocalConstructionBadLowerBound(Recall2.4)}. Equation~\eqref{eqmeasureoncantorset(alternative)} and Lemma \ref{lemmaaereo} imply 
$
\mu\big(I_i\big)<f_s\big(|I_i|\big)
$, 
that is Equation \eqref{eqmeasureoncantorset} is satisfied by any interval $I_i$ in $\cK(N)$. We prove by induction that the same is true for any $n\geq N$, and this will complete the proof of the Proposition. Consider any $n\geq N$ and any interval $I$ in the family $\cK(n)$, and assume that 
$
\mu\big(I\big)<f_s\big(|I|\big)
$. 
For any $I_i\in \cK(n+1,I)$ we have
$$
\mu\big(|I_i|\big)=
\frac
{f_s\big(|I_i|\big)}
{\sum_{I_j\in\cK(n+1,I)}f_s\big(|I_j|\big)}
\mu\big(|I|\big)
\leq
\frac
{f_s\big(|I_i|\big)}
{\sum_{I_j\in\cK(n+1,I)}f_s\big(|I_j|\big)}
f_s\big(|I|\big)
\leq
f_s\big(|I|\big),
$$
where the equality corresponds to the definition of $\mu$, the first inequality corresponds to the inductive assumption and the last inequality follows from Condition  \eqref{EqLocalConstructionLowerBoundBadConclusion}. 
\end{proof}

\subsubsection{End of the proof}

Here we finish the proof of the lower bound in Theorem \ref{TheoremAbstractJarnickInequality}. Consider $s$ satisfying Condition \eqref{EqLocalConstructionLowerBoundBadAssumption}. According to Proposition \ref{PropLocalConstructionBadLowerBound}, Equation~\eqref{eqmeasureoncantorset} is satisfied with 
$\eta=1$ for any $n\geq N$ and any interval $I\in\cK(n)$, where $N$ is the integer in the last part of the Proposition. We will deduce here that Equation~\eqref{eqmeasureoncantorset} is satisfied for any interval $J$ with length $|J|\leq K^{-2N}$ with 
$$
\eta:=\frac{1}{2K^{2s}}.
$$
Consider any such interval $J$ with $J\cap\KK\not=\emptyset$, that is $J\cap\KK(n)\not=\emptyset$ for any $n\geq N$. Let $m\geq N$ be the unique integer such that $K^{-2(m+1)}<|J|\leq K^{-2m}$. Since $|J|\leq K^{-2m}$, then there are at most two intervals $I_1$ and $I_2$ in the family $\cK(m)$ such that $J\cap I_i\not=\emptyset$ for $i=1,2$. We have $\mu(I)\leq\mu(I_1)+\mu(I_2)$, because $\mu$ does not charge sets disjoint to $\KK(m)$. Therefore
$$
\mu(J)
\leq
\mu(I_1)+\mu(I_2)
\leq
f_s\big(|I_1|\big)+f_s\big(|I_2|\big)=
\frac{2K^{2s}}{K^{2s(m+1)}}\leq
2K^{2s}f_s\big(|J|\big)=
\frac{f_s\big(|J|\big)}{\eta},
$$
where the second inequality follows from the last part of Proposition \ref{PropLocalConstructionBadLowerBound}. According to Lemma \ref{lem1ss4abstractkhinchinjarnick} the last inequality implies $H^s(\KK)\geq\eta$ for any $s$ satisfying Condition \eqref{EqLocalConstructionLowerBoundBadAssumption}, therefore
$$
\dim(\KK)
\geq
1-\frac{|\log(1-\tau-\epsilon^2)|}{2|\log(\epsilon)|}.
$$
The lower bound in Theorem \ref{TheoremAbstractJarnickInequality} follows recalling that 
$
\KK\subset\bad(\cR,\epsilon^{3/2})
$ 
by replacing $\epsilon$ by $\epsilon^{3/2}$ in the last estimate.

\subsection{Proof of upper bound}
\label{SecProofUpperBoundTheoremAbstactJarnikInequality}

Fix constants $\epsilon,U,\tau$ with $0<\epsilon<1$, $0<\tau<1$ and $U>1$ and let $(\cR,l)$ be a resonant set satisfying $(\epsilon,U,\tau)$-Dirichlet property. Set 
$$
K:=\frac{4U}{\epsilon^2}.
$$ 
Up to choosing a slightly bigger $U>0$, assume that $K\in \NN$. Recall from Definition \ref{DefMetricPropertiesResonantSets} that $(\epsilon,U,\tau)$-Dirichlet property for $(\cR,l)$ means that there exists some $L_0>0$ such that for any $L\geq L_0$ and any interval $I\subset[-\pi/2,\pi/2[$ with $|I|\geq 2U/L^2$ Equation \eqref{EqDefDirichletUbiquity} is satisfied, that is we have
$$
\left|
I\cap
\bigcup_{l(\theta)\leq L}
B\left(\theta,\frac{\epsilon^2}{2l(\theta)^2}\right)
\right|
\geq
\tau|I|.
$$

\subsubsection{A sequence of coverings}

In order to prove the upper bound in Theorem \ref{TheoremAbstractJarnickInequality}, we fix some positive integer $N$ and define a sequence of coverings $\big(\cC(n)\big)_{n\geq N}$ for $\bad(\cR,\epsilon)$ satisfying the properties below.
\begin{enumerate}
\item
For any $n\geq N$ we have
$$
\bad(\cR,\epsilon)
\subset
\bigcup_{I\in\cC(n)}I.
$$
\item
Any interval $I$ in $\cC(n)$ has length $|I|=\pi\cdot K^{-n}$.
\item
The covering $\cC(n)$ contains at most $(1-\tau)^{n-N} K^{n}$ intervals.
\end{enumerate}

The upper bound follows from the construction of such sequence of coverings, indeed we have
$$
H^s\big(\bad(\cR,\epsilon)\big)
=
\lim_{\delta\to0} 
H^s_\delta\big(\bad(\epsilon)\big)
\leq
\liminf_{n\to\infty}
\sharp \cC(n)\cdot \left(\frac{\pi}{K^n}\right)^s,
$$
where $\sharp\cC(n)$ denotes the number of intervals in the covering $\cC(n)$. According to property (3) above we have
$$
H^s\big(\bad(\cR,\epsilon)\big)
\leq
\liminf_{n\to\infty}
(1-\tau)^{n-N} K^{n}\cdot \left(\frac{\pi}{K^n}\right)^s=
\pi^s
\liminf_{n\to\infty}
\left(
(1-\tau)^{1-N/n}\cdot K^{1-s}
\right)^n.
$$
Therefore 
$
H^s\big(\bad(\cR,\epsilon)\big)<+\infty
$
whenever
$$
(1-\tau)\cdot K^{1-s}<1
\Leftrightarrow
s>
1+\frac{\log(1-\tau)}{\log(K)}
=
1-\frac{|\log(1-\tau)|}{\log(4U/\epsilon^2)}.
$$

\subsubsection{End of the proof}

Here we give the definition of the coverings $\cC(n)$ satisfying the properties (1), (2) and (3) as above. For any $n\in\NN$ let $L_n>0$ be the real number satisfying the relation
$$
\frac{\pi}{K^n}=\frac{\epsilon^2}{2L_n^2}.
$$
Consider the parameter $L_0$ in the definition of Dirichlet property, then let $N$ be the positive integer such that $L_n\geq L_0$ for any $n\geq N$. Observe that with this choice of $L_n$, and recalling that $K=4U/\epsilon^{2}$, we have
$$
\frac{\pi}{K^{n-1}}=\frac{2U}{L_n^2}
$$

For $n=N$ subdivide the interval $[-\pi/2,\pi/2[$ into $K^N$ intervals of length $\pi\cdot K^{-N}$ and define $\cC(N)$ as the family of all these intervals. Such cover obviously satisfies the properties $(1)$, $(2)$ and $(3)$ above. Consider $n>N$ and assume that the families $\cC(i)$ are defined for $i=N,\dots,n-1$. Fix any interval $I$ in $\cC(n-1)$ of length $|I|=\pi/K^{n-1}$. Subdivide $I$ into $K$ intervals $I_1,\dots,I_K$ mutually disjoint in their interior and all of equal length $|I_i|=|I|/K$ for any $i$. Define $\cK(n|I)$ as the family of those intervals $I_i$ which are disjoint to all intervals
$
B\big(\theta,\epsilon^2/2l(\theta)^2\big)
$
with $l(\theta)\leq L_n $, that is
$$
\cK(n|I):=
\left\{
I_i
\textrm{ ; }
I_i\cap
\bigcup_{l(\theta)\leq L_n}
B\left(\theta,
\frac{1}{2}
\frac{\epsilon^2}{l(\theta)^2}\right)
=\emptyset
\right\}.
$$
Then set
$$
\cC(n):=\bigcup_{I\in\cC(n-1)}\cK(n|I).
$$

\begin{proposition}
\label{PropLocalConstructionBadUpperBound}
For any interval $I$ in the cover $\cC(n-1)$ the family $\cK(n|I)$ has cardinality
$$
\sharp\cK(n|I)<(1-\tau)K.
$$
Moreover
$$
\bad(\cR,\epsilon)\cap I
\subset
\bigcup_{I_i\in \cK(n|I)} I_i.
$$
\end{proposition}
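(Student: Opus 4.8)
The plan is to establish the two assertions separately: the cardinality bound by a measure--counting argument that feeds off the Dirichlet property, and the inclusion by a direct triangle--inequality estimate calibrated to the constants of the construction.

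\emph{Cardinality.} Write $\Delta_n:=\bigcup_{l(\theta)\le L_n}B\big(\theta,\epsilon^2/(2l(\theta)^2)\big)$. By definition the intervals of $\cK(n|I)$ are pairwise disjoint in their interiors, each of length $|I|/K$, and each is disjoint from $\Delta_n$; hence their union is contained in $I\setminus\Delta_n$ and
$$
\sharp\cK(n|I)\cdot\frac{|I|}{K}\le|I|-|I\cap\Delta_n|.
$$
I would then apply the $(\epsilon,U,\tau)$--Dirichlet property with $L=L_n$. Its hypotheses hold: $L_n\ge L_0$ since $n>N$ and $L_n$ is increasing, and $|I|=\pi/K^{n-1}=2U/L_n^2$ by the relation recorded in the construction, so $|I|\ge 2U/L_n^2$. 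Therefore $|I\cap\Delta_n|\ge\tau|I|$, whence $\sharp\cK(n|I)\le(1-\tau)K$; the strict inequality follows with a little extra care --- $I\cap\Delta_n$ is an open set of measure at least $\tau|I|$ covered entirely by the removed subintervals --- but in any case only ``$\le$'' is needed for the inductive count $\sharp\cC(n)\le(1-\tau)^{n-N}K^n$.

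\emph{Inclusion.} Suppose $\theta'\in\bad(\cR,\epsilon)\cap I$; since the $I_i$ partition $I$ it lies in some $I_i$, and if that $I_i$ were removed then $I_i$ would meet $B\big(\theta_0,\epsilon^2/(2l(\theta_0)^2)\big)$ for some $\theta_0\in\cR$ with $l(\theta_0)\le L_n$. Choosing $x\in I_i$ with $|x-\theta_0|<\epsilon^2/(2l(\theta_0)^2)$ and using $|I_i|=\pi/K^n=\epsilon^2/(2L_n^2)$,
$$
|\theta'-\theta_0|\le|\theta'-x|+|x-\theta_0|<\frac{\epsilon^2}{2L_n^2}+\frac{\epsilon^2}{2l(\theta_0)^2}\le\frac{\epsilon^2}{l(\theta_0)^2},
$$
the last step because $l(\theta_0)\le L_n$. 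Thus $\theta'\in B\big(\theta_0,\epsilon^2/l(\theta_0)^2\big)$, which contradicts $\theta'\in\bad(\cR,\epsilon)$. Hence $\bad(\cR,\epsilon)\cap I\subset\bigcup_{I_i\in\cK(n|I)}I_i$.

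\emph{Where the difficulty lies.} The calculation is short; the content is the precise matching of constants. The factor $1/2$ in the radii of the Dirichlet balls is exactly what lets the mesh $|I_i|=\pi/K^n=\epsilon^2/(2L_n^2)$ be swallowed in the displayed estimate, and $K=4U/\epsilon^2$ is chosen so that $|I|=2U/L_n^2$ and the Dirichlet property applies at every level $n>N$. The one genuinely delicate point is the role played by the definition of $\bad(\cR,\epsilon)$: read as a $\limsup$--complement, a single scale $n$ does not by itself forbid $\theta'$ from being close to a \emph{short} resonant direction $\theta_0$, so one must also use that $I\in\cC(n-1)$ is disjoint from $\Delta_{n-1}$ --- which forces $l(\theta_0)>L_{n-1}$ --- and run the construction from a base level past the fixed finite threshold in the definition of $\bad$ attached to $\theta'$, so that $\theta_0$ is genuinely long and the contradiction is with the definition of $\bad$; the set $\bad(\cR,\epsilon)$ is then recovered as a countable increasing union of such sets, each obeying the same counting bound, which suffices for the dimension estimate.
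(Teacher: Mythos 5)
Your two main steps coincide with the paper's proof: the cardinality bound is obtained from disjointness of the $I_i$ from $\Delta_n$ together with the Dirichlet property applied at $L=L_n$ with $|I|=\pi/K^{n-1}=2U/L_n^2$, and the inclusion is the mesh--radius estimate $|I_i|=\pi/K^n=\epsilon^2/(2L_n^2)\leq\epsilon^2/(2l(\theta_0)^2)$ which shows a removed $I_i$ is absorbed in $B\big(\theta_0,\epsilon^2/l(\theta_0)^2\big)$ for some $\theta_0$ with $l(\theta_0)\leq L_n$. (Like you, the paper actually derives only ``$\leq(1-\tau)K$'' for the cardinality; the strict inequality in the statement is cosmetic.)

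Your closing paragraph puts a finger on something the paper genuinely suppresses: the paper asserts ``it is evident that $I\cap\bad(\cR,\epsilon)\subset\cB(I,\epsilon)$'', but this is not literally true, since $\bad(\cR,\epsilon)$ is a $\limsup$--complement and only constrains $\theta'$ against \emph{long} resonant directions --- a point $\theta'\in\bad(\cR,\epsilon)$ can sit inside $B\big(\theta_0,\epsilon^2/l(\theta_0)^2\big)$ for finitely many short $\theta_0$. Your observation that disjointness of $I\in\cC(n-1)$ from $\Delta_{n-1}$ forces the offending $\theta_0$ to satisfy $l(\theta_0)>L_{n-1}$ is correct for $n\geq N+2$. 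However, the proposed repair of ``running the construction from a base level past the threshold of $\theta'$'' does not close the gap: at the very first subdivision, from the untrimmed $\cC(N)$ to $\cC(N+1)$, one discards every $I_i$ meeting $\Delta_{N+1}=\bigcup_{l(\theta)\leq L_{N+1}}B\big(\theta,\epsilon^2/(2l(\theta)^2)\big)$, which involves all short $\theta$ regardless of how large $N$ is chosen, so a point $\theta'\in\bad(\cR,\epsilon)$ lying in one of those small balls is already lost at this step. In fact, for any base level, the covering $\cC(n)$ covers only the smaller set of directions avoiding \emph{every} ball $B\big(\theta,\epsilon^2/l(\theta)^2\big)$; extending the bound to all of $\bad(\cR,\epsilon)$ additionally requires estimating $\bad(\cR,\epsilon)$ inside each obstacle ball $B\big(\theta_0,\epsilon^2/l(\theta_0)^2\big)$, say by rerunning the covering construction locally. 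So your critique is well founded and points at a real lacuna shared with the paper, but the exhaustion you sketch leaves that residue untreated.
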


\begin{proof}
Consider any $\theta \in \cR$ with $l(\theta) \leq L_n$. The second claim follows observing that for every interval $I_i$ in $\cK(n|I)$ we have
$$
|I_i|=\frac{\pi}{K^n}\leq\frac{\epsilon^2}{2l(\theta)^2}
$$
Hence, every $I_i$ intersecting some interval
$
B\big(\theta,\epsilon^2/2l(\theta)^2\big)
$
with $\theta\in\cR$ and $l(\theta)<L_n$ is contained in $B\big(\theta,\epsilon^2/l(\theta)^2\big)$. Therefore $\cK(n|I)$ is a covering of the set
$$
\cB(I,\epsilon):=
I
\setminus
\bigcup_{l(\theta)\leq L_n}
B\left(\theta,\frac{\epsilon^2}{l(\theta)^2}\right)
$$
and it is evident that 
$
I\cap\bad(\cR,\epsilon)\subset\cB(I,\epsilon)
$. 
Moreover, $I$ satisfies the assumption in the definition of Dirichlet Property for $L=L_n$, indeed we have
$$
|I|=\frac{\pi}{K^{n-1}}=\frac{2U}{L_n^2}.
$$ 
Therefore Dirichlet property for $(\cR,l)$ implies
$$
\left(1 - \frac{\sharp\cK(n|I)}{K}\right)\cdot |I|
=
|I|-\sum_{I_j \in \cK(n|I)}|I_j|
\geq 	
\left|
I\cap
\bigcup_{l(\theta)\leq L_n}
B\left(\theta,\frac{\epsilon^2}{2l(\theta)^2}\right)
\right|
\geq \tau \cdot |I|,
$$
showing that
$
\sharp\cK(n|I) \leq  (1-\tau)K
$
and finishing the proof.
\end{proof}

Property (1) holds for $\cC(n)$ because it holds for $\cC(n-1)$ by inductive assumption and moreover according to the second part of  Proposition \ref{PropLocalConstructionBadUpperBound} we have
$$
\bad(\cR,\epsilon)
\subset
\bigcup_{I \in \cC_{n-1}}I \cap \bad(\cR,\epsilon)
\subset
\bigcup_{I\in\cC_{n-1}}\bigcup_{I_i\in\cK(n|I)}I_i
=
\bigcup_{I \in \cC(n)}I.
$$
Property (2) holds for $\cC(n)$ because it holds for $\cC(n-1)$ by inductive assumption and moreover for any $I\in\cC(n-1)$ and any $I_i\in\cK(n|I)$ we have $|I_i|=|I|/K$. Property (3) holds for $\cC(n)$ because it holds for $\cC(n-1)$ by inductive assumption and moreover, according to the first part of Proposition \ref{PropLocalConstructionBadUpperBound}, we have
\begin{eqnarray*}
&&
\sharp\cC(n)
=
\sum_{I\in\cC(n-1)}\sharp\cK(n|I)
\leq
\sharp\cC(n-1)\cdot(1-\tau)K
\leq
\\
&&
(1-\tau)^{n-1-N}K^{n-1}\cdot(1-\tau)K
=
(1-\tau)^{n-N} K^{n}.
\end{eqnarray*}
The upper bound in Theorem \ref{TheoremAbstractJarnickInequality} is proved.

\section{Planar resonant sets of a translation surface: proof of Theorem \ref{TheoremMetricResultsHolonomyResonantSets}}
\label{ChapterProofMetricPropertiesHolonomyResonantSets}

Fix a translation surface $X$ in some stratum $\cH$ and let $\Sigma$ be the set of its conical singularities $p_1,\dots,p_r$. Let $m$ be the sum of the orders at all conical singularities, that is
$$
m:=2g-2+\sharp\big(\Sigma\big).
$$
In this section we consider the resonant sets $\cR^{sc}$ and $\cR^{cyl}$ defined in \S~\ref{SecIntroHolonomyResonantSets} and we prove Theorem \ref{TheoremMetricResultsHolonomyResonantSets}. Statements (1) and (2) in the Theorem concern the set $\cR^{sc}$. Statement (1) corresponds to Propositions \ref{PropDecaying} and \ref{PropDecayingVeech}, Statement (2) corresponds to Proposition \ref{PropDirichletUbiquity}. Statements (3) and (4) in the Theorem concern the set $\cR^{cyl}$ and they correspond respectively to Proposition \ref{PropIsotropicQuadGrowth} and to Proposition \ref{propUbiquity}.

\subsection{Upper bound for systole and shortest cylinder}

Most of the constants appearing in the metric properties in Theorem \ref{TheoremMetricResultsHolonomyResonantSets} are expressed in terms of the positive integer $m$, which depends only on the stratum $\cH$ of the translation surface $X$. It will be useful to introduce the following constants
$$
S_0:=\frac{\sqrt{2}}{\sqrt{m\sqrt{3}}}
\quad
\textrm{ and }
\quad
T_0:=2^{2^{4m}}.
$$

For us a \emph{flat triangulation} of a translation surface $X$ is a triangulation of $X$ whose vertices are the conical points in $\Sigma$, whose edges are saddle connections and whose triangles do not contain other points of $\Sigma$. The number $v$, $e$ and $t$ respectively of vertices, edges and triangles in such triangulation are topological invariants, and are given by $v=\sharp(\Sigma)$, $e=3m$ and $t=2m$ (see \cite{KerkoffMasurSmillie}). In \cite{BoissyGeninska} it is proved that for any stratum $\cH$ the surface $X_0$ for which $\sys(X_0)$ is maximal admits a flat triangulation whose triangles are all equilateral triangles with side's length $\sys(X_0)$. It follows that for any $X$ in $\cH$ we have
$$
\sys(X)\leq \sys(X_0)=S_0.
$$

Moreover, in Theorem 1.3 in \cite{vorobets1} it is proved that any surface in $\cH$ has closed geodesic $\sigma$ with length $|\sigma|\leq T_0$ and whose cylinder $C_\sigma$ satisfies $\area(C_\sigma)>1/m$. Therefore for any $X$ in $\cH$ we have
$$
\cyl(X)\leq T_0.
$$

Finally, the constant $S_0$ has a second geometrical interpretation, related to Theorem 6.3 in \cite{minskyweiss}. Indeed $3m$ is the maximal number of saddle connections $\gamma_1,\dots,\gamma_{3m}$ on a surface $X$ which are mutually disjoint in their interior, because such a set of saddle connections necessarily gives a flat triangulation of $X$. Therefore $S_0$ is also the smallest bound such that any saddle connection $\gamma_1,\dots,\gamma_{3m}$ in a flat triangulation of $X$ has length $|\gamma_i|\leq S_0$ for any $i=1,\dots,3m$. Equivalently, on a translation surface $X$ there are at most $3m-1$ saddle connections which are mutually disjoint in their interior and all strictly shorter than $S_0$. This motivates the form of the constant $\beta$ appearing in Theorem 6.3 in \cite{minskyweiss}, which is the same as in Proposition \ref{PropDecaying} and is given by
$$
\beta:=\frac{1}{3m-1}.
$$
On the other hand, when $X$ is a Veech surface, we can find a bound $r_0>0$ depending only on the orbit $\sltwor\cdot X$ such that we never have two non-parallel saddle connections shorter than $r_0$ (see Lemma \ref{LemmaSparseCorverVeech}). This explains heuristically why for Veech surfaces we have the better version of decaying, namely Proposition \ref{PropDecayingVeech}, where $\beta=1$.

\subsection{Dirichlet Theorem}
\label{SecDirichletTheorem}

According to classical Dirichlet's Theorem, for any real number $\alpha$ and for any $Q>1$ there exists a rational number $p/q$ with $q\leq Q$ such that 
$$
\left|\alpha-\frac{p}{q}\right|\leq\frac{1}{qQ}.
$$
We develop a version of Dirichlet's Theorem for the resonant sets $\cR^{sc}$ and $\cR^{cyl}$. In particular, for $\cR^{cyl}$ we use a nontrivial result due to Vorobets, namely Theorem 1.3 in \cite{vorobets1}.

\begin{proposition}
\label{propdirichletvorobets}
Let $X$ be any translation surface and $\theta$ be any direction on $X$. 
\begin{enumerate}
\item
For any 
$
\displaystyle{L>\frac{\sqrt{2}S_0^2}{\sys(X)}}
$ 
there exists $\theta_\gamma\in\cR^{sc}$ with $l(\theta_\gamma)\leq L$ such that
$$
|\theta-\theta_\gamma|\leq\frac{\sqrt{2}S^2_0}{l\big(\theta_\gamma\big)L}=
\frac{\sqrt{8}}{m\sqrt{3}}\cdot\frac{1}{l\big(\theta_\gamma\big)L}.
$$
\item
For any 
$
\displaystyle{L>\frac{\sqrt{2}T_0^2}{\cyl(X)}}
$ 
there exists $\theta_\sigma\in\cR^{cyl}$ with $l(\theta_\sigma)\leq L$ such that
$$
|\theta-\theta_\sigma|\leq\frac{\sqrt{2}T_0^2}{l\big(\theta_\sigma\big)L}.
$$
\end{enumerate}
\end{proposition}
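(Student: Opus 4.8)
The plan is to deform $X$ by the Teichm\"uller geodesic flow in the direction $\theta$ and exploit a universal bound on a short geometric feature of the deformed surface, then transport that feature back to $X$. Set $Y_t := g_t r_\theta X$. Since $r_\theta$ carries the direction $\theta$ onto the vertical, which is the contracting axis of $g_t$, a saddle connection of $X$ whose direction is close to $\theta$ becomes short on $Y_t$ for $t$ large, and --- more to the point --- a short object of $Y_t$ pulls back to an object of $X$ whose direction is close to $\theta$ and whose length is controlled by $e^t$. I would choose the flow time $t=t(L)$ so that the pulled-back object has length at most $L$, and then read off the angular bound.

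For part (1): by the Boissy--Geninska estimate recalled above, $\sys(Y_t)\le S_0$ for every $t$, so there is a saddle connection $\gamma$ of $Y_t$ with $|\hol(\gamma,Y_t)|\le S_0$; as $\sltwor$ preserves saddle connections, $\theta_\gamma\in\cR^{sc}$. Writing $\hol(\gamma,X)=(a,b)$ in the frame in which the direction $\theta$ is vertical, we have $\hol(\gamma,Y_t)=(e^t a,e^{-t}b)$, hence $|a|\le S_0 e^{-t}$ and $|b|\le S_0 e^t$, so that $l(\theta_\gamma)\le|\hol(\gamma,X)|\le\sqrt{2}\,S_0 e^t$ whenever $t\ge 0$. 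Taking $e^t=L/(\sqrt 2 S_0)$ then forces $l(\theta_\gamma)\le L$, while the hypothesis $L>\sqrt 2 S_0^2/\sys(X)$ gives $S_0 e^{-t}<\sys(X)\le|\hol(\gamma,X)|$, i.e. the transverse component $a$ is the subdominant one. Consequently
$$
\sin|\theta-\theta_\gamma| \;=\; \frac{|a|}{|\hol(\gamma,X)|} \;\le\; \frac{S_0 e^{-t}}{l(\theta_\gamma)} \;=\; \frac{\sqrt 2\,S_0^2}{l(\theta_\gamma)\,L},
$$
and since the right-hand side is $<1$ this pins $|\theta-\theta_\gamma|$ down to the same order; the identity $S_0^2=2/(m\sqrt 3)$ yields the second form of the constant.

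For part (2) I would run exactly the same argument with the systole replaced by the shortest cylinder of large area. Here, instead of Boissy--Geninska, the input is Theorem~1.3 of Vorobets, which furnishes on $Y_t$ a closed geodesic $\sigma$ with $|\hol(\sigma,Y_t)|\le T_0$ whose cylinder $C_\sigma$ has area exceeding $1/m$. Because the $\sltwor$-action preserves the area of a cylinder, the same $\sigma$ cuts out a cylinder of area $>1/m$ on $X$, so $\theta_\sigma\in\cR^{cyl}$ and $|\hol(\sigma,X)|\ge l(\theta_\sigma)\ge\cyl(X)$. Repeating the computation above with $T_0$ and $\cyl(X)$ in the roles of $S_0$ and $\sys(X)$ produces $|\theta-\theta_\sigma|\le\sqrt 2\,T_0^2/(l(\theta_\sigma)\,L)$ for every $L>\sqrt 2\,T_0^2/\cyl(X)$.

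The hard part is really just the second ingredient: part (1) rests only on the soft universal systole bound, but part (2) genuinely needs Vorobets' theorem guaranteeing a uniformly short cylinder of definite area on every surface of the stratum --- without it the resonant set $\cR^{cyl}$ could simply fail to approximate a given direction at all. The rest is bookkeeping with the linear action on holonomy vectors, the calibration $t=t(L)$, and tracking the constants $S_0$ and $T_0$ (together with the elementary passage from $\sin|\theta-\theta_\gamma|$ to $|\theta-\theta_\gamma|$).
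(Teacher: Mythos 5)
Your overall strategy is the paper's: flow by $g_t r_\theta$, invoke the universal bound on the systole (resp.\ shortest large--area cylinder) to get a short saddle connection (resp.\ closed geodesic) on the deformed surface, then pull it back to $X$. The input of Vorobets' theorem for part (2) is identified exactly where it is needed. But the final step, which is where the precise constant is extracted, has a genuine gap.

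You end with $\sin\lvert\theta-\theta_\gamma\rvert\le \sqrt2\,S_0^2/(l(\theta_\gamma)L)$ and assert that ``since the right-hand side is $<1$ this pins $\lvert\theta-\theta_\gamma\rvert$ down to the same order.'' This is not a proof of the stated inequality: since $\sin x<x$ for $x>0$, a bound on $\sin\lvert\theta-\theta_\gamma\rvert$ gives $\lvert\theta-\theta_\gamma\rvert\le\arcsin(c)$, which is \emph{larger} than $c$, not smaller. The inequality that goes the right way is $\lvert x\rvert\le\lvert\tan x\rvert$, so you want to bound $\tan\lvert\theta-\theta_\gamma\rvert=\lvert a\rvert/\lvert b\rvert$, and for that you need a \emph{lower} bound on the component $\lvert b\rvert$ along $\theta$. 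Your argument provides no such bound: ``$\lvert a\rvert<\sys(X)\le\lvert\hol(\gamma,X)\rvert$'' only says the horizontal component is smaller than the full length, which is trivially true and does not yield $\lvert a\rvert\le\lvert b\rvert$. What one actually needs is $\lvert a\rvert^2\le\lvert\hol(\gamma,X)\rvert^2/2$, which forces $\lvert b\rvert\ge \lvert\hol(\gamma,X)\rvert/\sqrt2\ge l(\theta_\gamma)/\sqrt2$ and hence $\tan\lvert\theta-\theta_\gamma\rvert\le\sqrt2\,\lvert a\rvert/l(\theta_\gamma)$.

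Getting $\lvert a\rvert^2\le\lvert\hol(\gamma,X)\rvert^2/2$ from the hypothesis $L>\sqrt2\,S_0^2/\sys(X)$ requires the sharper calibration $e^t=L/S_0$ (giving $\lvert a\rvert\le S_0e^{-t}=S_0^2/L<\sys(X)/\sqrt2$), and to deduce $l(\theta_\gamma)\le L$ with that choice you must use the operator-norm estimate $\lvert\hol(\gamma,X)\rvert=\lvert g_{-t}\hol(\gamma,Y_t)\rvert\le e^t\lvert\hol(\gamma,Y_t)\rvert\le e^tS_0$, rather than the coordinatewise bound $\sqrt2\,S_0e^t$ you used. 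Your looser bound forced the calibration $e^t=L/(\sqrt2\,S_0)$, which gives only $\lvert a\rvert\le\sqrt2\,S_0^2/L<\sys(X)$, a factor of $\sqrt2$ too weak to conclude the ``subdominance'' you implicitly rely on. The same remarks apply verbatim to part (2) with $S_0,\sys$ replaced by $T_0,\cyl$.
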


\begin{proof}
In order to prove the first statement, set 
$$
e^t:=\frac{L}{S_0}\geq\frac{\sqrt{2}S_0}{\sys(X)}\geq\sqrt{2}. 
$$
There is a saddle connection $\gamma$ on the surface $X$ whose length on the surface $g_t r_\theta X$ satisfies 
$$
|\hol(\gamma,g_t r_\theta X)|\leq S_0.
$$
Let $\theta_\gamma$ be the direction of such $\gamma$ on the surface $X$ and let $|\gamma|$ be its length on $X$. We have 
$$
l(\theta_\gamma)\leq
|\gamma|=|\hol(\gamma,r_\theta X)|
\leq
e^t|\hol(\gamma,g_t r_\theta X)|\leq
e^t S_0=L.
$$
Set 
$
(H,V):=\hol(\gamma,g_t r_\theta X)
$. 
We have obviously 
$
|H|\leq |\hol(\gamma,g_t r_\theta X)|\leq S_0
$ 
and thus, since $L>\sqrt{2} S_0^2/\sys(X)$ by assumption, we get 
$$
H^2e^{-2t}\leq \frac{S_0^2}{e^{2t}}\leq
\frac{S_0^4}{L^2}\leq\frac{\sys(X)^2}{2}.
$$
On the other hand
$$
H^2e^{-2t}+V^2e^{2t}=|\hol(\gamma,r_\theta X)|^2=|\hol(\gamma, X)|^2\geq\sys(X)^2.
$$
The last two estimates imply $V^2e^{2t}\geq H^2e^{-2t}$ and therefore 
$
|V|e^t\geq |\gamma|/\sqrt{2}\geq l(\theta_\gamma)/\sqrt{2}
$, 
so that we get finally
$$
\big|\theta-\theta_\gamma\big|<
\big|\tan(\theta-\theta_\gamma)\big|=
\frac{H}{Ve^{2t}}<
\frac{\sqrt{2}S_0}{l(\theta_\gamma)e^t}=
\frac{\sqrt{2}S_0^2}{l(\theta_\gamma)L}.
$$
The second statement follows with the same argument. Replace $S_0$ by $T_0$ and set $e^t:=L/T_0$. Recall that, according to Vorobets Theorem 1.3 in \cite{vorobets1}, any translation surface in the same stratum as $X$ has a closed geodesic $\sigma$ with length $|\sigma|\leq T_0$ and a corresponding cylinder $C_\sigma$ with $\area(C_\sigma)>1/m$. Thus let $\sigma$ be such geodesic for the surface $g_tr_\theta\cdot X$ and repeat the same argument as above replacing $\gamma$ by $\sigma$.
\end{proof}

\subsection{Dirichlet property} 

Statement (2) in Theorem \ref{TheoremMetricResultsHolonomyResonantSets} follows from Proposition \ref{PropDirichletUbiquity} below.

\begin{proposition}
\label{PropDirichletUbiquity}
For any $\epsilon>0$ the resonant set $(\cR^{sc},l)$ satisfies $(\epsilon,U,\tau)$-Dirichlet property with 
$$
U:=\frac{12}{m^2\epsilon^2}
\quad
\textrm{ and }
\quad
\tau:=\frac{m\epsilon^2}{\sqrt{48}},
$$
that is for any 
$
\displaystyle{L\geq\frac{\sqrt{2}S_0^2}{\sys(X)}}
$ 
and any interval $I$ with 
$
\displaystyle{|I|\geq \frac{2U}{L^2}}
$ 
we have
$$
\left|
I\cap
\bigcup_{l(\theta)\leq L}
B\left(\theta,\frac{\epsilon^2}{2l(\theta)^2}\right)
\right|
\geq
\tau|I|
$$
\end{proposition}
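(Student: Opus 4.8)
The proposition is the translation-surface incarnation of the \emph{Dirichlet property}, and I would deduce it from Proposition \ref{propdirichletvorobets}(1) alone. The plan is, given $L\ge\sqrt2 S_0^2/\sys(X)$ and an interval $I$ with $|I|\ge 2U/L^2$, to exhibit inside the union $\bigcup_{l(\theta)\le L}B(\theta,\epsilon^2/(2l(\theta)^2))$ a single ball capturing a proportion $\tau$ of $I$.

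First I would reduce to the extremal case $|I|=2U/L^2$: if $|I|$ is larger, replace $L$ by $\widehat L:=\sqrt{2U/|I|}\le L$, which still satisfies $\widehat L\ge\sqrt2 S_0^2/\sys(X)$ (using $S_0^2=2/(m\sqrt3)$, $\sys(X)\le S_0$ and the hypothesis on $L$ to keep $\widehat L$ admissible), and observe that $\{l(\theta)\le\widehat L\}\subseteq\{l(\theta)\le L\}$. Assuming $|I|=2U/L^2$, let $\theta_0$ be the midpoint of $I$ and apply Proposition \ref{propdirichletvorobets}(1) to $\theta_0$ at parameter $L$: this produces $\theta_\gamma\in\cR^{sc}$ with $l:=l(\theta_\gamma)\le L$ and $d:=|\theta_0-\theta_\gamma|<\sqrt2 S_0^2/(lL)$. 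Write $r:=\epsilon^2/(2l^2)$ for the radius of the ball $B(\theta_\gamma,r)$ appearing in \eqref{EqDefDirichletUbiquity}.

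The heart of the matter is the inequality $|B(\theta_\gamma,r)\cap I|\ge\tau|I|$, which I would prove by comparing $d$, $r$ and $|I|/2$. The choice $U=12/(m^2\epsilon^2)$ is tuned precisely so that $|I|=2U/L^2$, $S_0^4=4/(3m^2)$ and $\sqrt2 S_0^2/L\le\epsilon\sqrt{|I|}\le\tfrac{\epsilon^2}{2l}+\tfrac{l|I|}{2}$ together force $d<r+|I|/2$ (so that $B(\theta_\gamma,r)$ cannot be disjoint from $I$), and more sharply bound $d$ — and, when $r<d$, also $d-r$ — by a small multiple of $|I|$. With this, a short case analysis (the cases being $r\ge d$, where the ball contains $\theta_0$ and hence an interval about $\theta_0$ of length a fixed fraction of $|I|$; and $r<d$, where the ball lies inside or just overshoots an endpoint of $I$) yields in every regime a lower bound for $|B(\theta_\gamma,r)\cap I|$ that is a definite multiple of $|I|$; the worst regime is $l$ close to $L$, where $r$ is only $\epsilon^2/(2L^2)$. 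Taking $\tau$ to be the minimum of these fractions gives $\tau=m\epsilon^2/\sqrt{48}$, in accordance with the identity $U\tau=\sqrt3/m$ that governs the bookkeeping; in the delicate intermediate regime (first approximant too long) one reapplies Proposition \ref{propdirichletvorobets}(1) at a smaller admissible scale to obtain a shorter approximant and recover the bound.

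The main obstacle is exactly that last mechanism. For $\cR^{cyl}$ the companion statement (ubiquity) can be obtained by counting saddle-connection directions inside $I$ via isotropic quadratic growth, but $\cR^{sc}$ \emph{does not} satisfy isotropic quadratic growth (see the Remark after Theorem \ref{TheoremMetricResultsHolonomyResonantSets}); hence no localised count is at our disposal and the entire gain must come from one (or a bounded number of) well-placed Dirichlet approximant(s). Ensuring that such an approximant is simultaneously close enough to the centre of $I$ and carries a ball large enough to meet $I$ in measure at least $\tau|I|$, in all borderline regimes, is what forces the explicit — and, for small $\epsilon$, small — constants $U$ and $\tau$; the book-keeping with $S_0$, $\sys(X)$ and the admissible range of scales is the only genuinely laborious ingredient.
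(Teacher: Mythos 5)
Your plan — reduce to $|I|=2U/L^2$, apply Proposition \ref{propdirichletvorobets}(1) once at the midpoint $\theta_0$ of $I$, and show that the single resulting ball $B(\theta_\gamma,\epsilon^2/(2l(\theta_\gamma)^2))$ already captures a fraction $\tau$ of $I$ — has a genuine gap that cannot be patched by "reapplying Dirichlet at a smaller scale." The Dirichlet theorem gives no control on $l(\theta_\gamma)$ beyond $l(\theta_\gamma)\le L$, and in the worst case $l(\theta_\gamma)$ is comparable to $L$. Then the ball you produce has diameter $2r=\epsilon^2/L^2$, while the required mass is
$$
\tau\,|I|=\frac{m\epsilon^2}{\sqrt{48}}\cdot\frac{2U}{L^2}=\frac{2\sqrt3}{m L^2},
$$
so you would need $\epsilon^2\ge 2\sqrt3/m$, which fails for every $\epsilon$ small and $m$ moderate — precisely the regime the proposition is about. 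Reapplying Dirichlet at a smaller admissible scale $L'$ does not help: $L'$ is bounded below by $\sqrt2\,S_0^2/\sys(X)$, and the new approximant still comes with no upper bound on $l'$ (other than $L'$) and, worse, only a bound $|\theta_0-\theta_{\gamma'}|\le \sqrt2\,S_0^2/(l'L')$ that can exceed $|I|$, so the new ball may miss $I$ entirely. In short, no single well-placed approximant is guaranteed to exist, and the proposition is a statement about the union over all $\theta$ with $l(\theta)\le L$, not about one of them.

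The paper's proof avoids this by using Proposition \ref{propdirichletvorobets}(1) at \emph{every} point of $I$ simultaneously: $I$ is covered by the balls $B(\theta_\gamma,\sqrt3/(m\,l(\theta_\gamma)L))$ over all $\theta_\gamma\in\cR^{sc}$ with $l(\theta_\gamma)\le L$. One then defines a rescaling factor $r(\theta_\gamma)=\epsilon^2 mL/(l(\theta_\gamma)\sqrt{12})$, which is bounded below by $m\epsilon^2/\sqrt{12}$ on the range $l\le L$, and the radius $\epsilon^2/(2l^2)$ equals $\sqrt3\,r(\theta_\gamma)/(m l L)$. The argument splits into two cases: (i) if the cover restricted to balls centred \emph{inside} $I$ already occupies at least $|I|/2$, shrinking each such ball by its rescaling factor (which is $\ge m\epsilon^2/\sqrt{12}$) preserves a proportional amount of measure and yields $\tau|I|$; (ii) otherwise a single ball centred \emph{outside} $I$ penetrates more than $|I|/4$, and the lower bound $|I|\ge 2U/L^2$ forces its rescaling factor above $1$, so $B(\theta_\gamma,\epsilon^2/(2l^2))$ contains the Dirichlet ball and already meets $I$ in measure at least $|I|/4\ge\tau|I|$. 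Your remark about $\cR^{sc}$ not satisfying isotropic quadratic growth is correct, but it does not force a single-ball argument: the paper's proof needs no counting at all, only the Dirichlet covering plus this two-case measure estimate. That covering step — many balls, not one — is the idea missing from your proposal.
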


\begin{proof}
Fix $L$ as in the statement, and for any $\theta_\gamma\in\cR^{sc}$ define the \emph{rescaling factor} $r(\theta_\gamma)$ by
$$
r(\theta_\gamma):=\frac{\epsilon^2mL}{l(\theta_\gamma)\sqrt{12}}.
$$
Observe that $r(\theta_\gamma)\geq m\epsilon^2/\sqrt{12}$ for any $\theta_\gamma$ with $l(\theta_\gamma)\leq L$, and moreover we can have $r(\theta_\gamma)>1$ when $l(\theta_\gamma)$ is much smaller than $L$. 
Let $I$ be an interval as in the statement. According to Proposition \ref{propdirichletvorobets} we have
$$
I\subset
\bigcup_{l(\theta_\gamma)\leq L}
B\left(\theta_\gamma,\frac{\sqrt{3}}{ml(\theta_\gamma)L}\right).
$$
Let $\cR^{sc}(L,I)$ be the set of directions $\theta_\gamma\in\cR^{sc}$ with $\theta_\gamma\in I$ and $l(\theta_\gamma)\leq L$, then define 
$$
\nu(I,L):=
\left|
I\cap
\bigcup_{\theta_\gamma\in\cR^{sc}(L,I)}
B\left(\theta_\gamma,\frac{\sqrt{3}}{ml(\theta_\gamma)L}\right)
\right|.
$$
If $\nu(I,L)\geq |I|/2$ then we have
\begin{eqnarray*}
&&
\left|
I\cap
\bigcup_{l(\theta_\gamma)\leq L}
B\left(\theta_\gamma,\frac{\epsilon^2}{2l(\theta_\gamma)^2}\right)
\right|
\geq
\left|
I\cap
\bigcup_{\theta_\gamma\in\cR^{sc}(L,I)}
B\left(\theta_\gamma,\frac{\epsilon^2}{2l(\theta_\gamma)^2}\right)
\right|=
\\
&&
\left|
I\cap
\bigcup_{\theta_\gamma\in\cR^{sc}(L,I)}
B\left(\theta_\gamma,\frac{r(\theta_\gamma)}{ml(\theta_\gamma)L}\right)
\right|
\geq
\frac{m\epsilon^2}{\sqrt{12}}\nu(I,L)\geq
\frac{m\epsilon^2}{\sqrt{48}}|I|.
\end{eqnarray*}
Otherwise, if $\nu(I,L)<|I|/2$, there must be some $\theta_\gamma\in\cR^{sc}$ with $l(\theta_\gamma)\leq L$ and $\theta_\gamma\not\in I$ such that
$$
\left|
I\cap
B\left(\theta_\gamma,\frac{\sqrt{3}}{ml(\theta_\gamma)L}\right)
\right|
>
\frac{|I|}{4}.
$$
We finish the proof showing that such $\theta_\gamma$ must have rescaling factor $r(\theta_\gamma)>1$. Observe first that since $\theta_\gamma\not\in I$, we must have 
$
\sqrt{3}\cdot\big(ml(\theta_\gamma)L\big)^{-1}>|I|/4
$. 
Moreover we have $|I|\geq 2U/L^2$ by assumption, thus it follows
$$
r(\theta_\gamma)=
\frac{\sqrt{3}}{ml(\theta_\gamma)}\cdot\frac{\epsilon^2m^2L}{\sqrt{12}\sqrt{3}}>
\frac{U}{2L}\cdot\frac{\epsilon^2m^2L}{6}=1.
$$
\end{proof}

\subsection{Isotropic quadratic growth}
\label{SecIsotropicQuadraticGrowth}

Statement (3) in Theorem \ref{TheoremMetricResultsHolonomyResonantSets} follows from Proposition \ref{PropIsotropicQuadGrowth} below.

\begin{lemma}
\label{lemmaIsotropicGrowthCylinders}
Let $\sigma$ be a closed geodesics in $X$ and let $C_\sigma$ be the corresponding cylinder. For any other closed geodesic $\sigma'$ intersecting $C_\sigma$ we have
$$
\big|\theta_\sigma-\theta_{\sigma'}\big|>
\frac{\area(C_\sigma)}{|\sigma|\cdot|\sigma'|}
$$
\end{lemma}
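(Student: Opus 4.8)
The geometric content is that a closed geodesic $\sigma'$ which actually crosses the cylinder $C_\sigma$ must traverse it, and the area swept out by this traversal is bounded below by $\area(C_\sigma)$ while also being bounded above by a product of lengths times the angular defect $|\theta_\sigma-\theta_{\sigma'}|$. The plan is to make this swept-area comparison precise using the flat coordinates on the cylinder.

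First I would set up coordinates. Normalize so that the direction $\theta_\sigma$ is horizontal, so that $C_\sigma$ is isometric to a standard Euclidean cylinder $\RR/|\sigma|\ZZ \times (0,h)$, where $h$ is the height, and $\area(C_\sigma) = h\cdot|\sigma|$. The boundary components of $C_\sigma$ are unions of saddle connections in the horizontal direction. Now take a closed geodesic $\sigma'$ in direction $\theta_{\sigma'}$ that intersects the interior of $C_\sigma$. Since $\sigma'$ is a \emph{closed} geodesic, it is not contained in $C_\sigma$ (whose core curves are the only closed geodesics inside it, and these are horizontal, not in direction $\theta_{\sigma'}$ since $\theta_\sigma\neq\theta_{\sigma'}$); hence any maximal arc $\alpha$ of $\sigma'\cap C_\sigma$ has both endpoints on $\partial C_\sigma$. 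A straight segment in direction $\theta_{\sigma'}$ entering the cylinder must cross from one height to another; the vertical displacement it can achieve while staying inside is at most $h$. On the other hand, the segment $\alpha$ has horizontal displacement at most $|\sigma'|$ in absolute value, because $|\alpha|\le|\sigma'|$. Writing the slope relation, the vertical component of $\alpha$ equals its horizontal component times $|\tan(\theta_\sigma-\theta_{\sigma'})|$ (in these coordinates where $\theta_\sigma$ is horizontal), so
\[
h \;\ge\; (\text{vertical displacement of }\alpha)\;=\;(\text{horizontal displacement of }\alpha)\cdot|\tan(\theta_\sigma-\theta_{\sigma'})|.
\]
This is not quite enough by itself, so the key step is to instead argue via the full crossing: $\sigma'$ must at some point traverse the cylinder completely from one boundary to the other (if it only partially penetrates and turns back, consider that it is a geodesic, so it cannot turn back — a geodesic segment in a flat cylinder that is not horizontal and enters the interior must reach the opposite boundary). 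Thus there is a subsegment of $\sigma'$ of vertical displacement exactly $h$ and horizontal displacement $h/|\tan(\theta_\sigma-\theta_{\sigma'})| = h\cdot|\cot(\theta_\sigma-\theta_{\sigma'})|$; this horizontal displacement is at most $|\sigma'|$. Hence $h\cdot|\cot(\theta_\sigma-\theta_{\sigma'})| \le |\sigma'|$, i.e. $|\tan(\theta_\sigma-\theta_{\sigma'})|\ge h/|\sigma'|$.

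Combining with $h = \area(C_\sigma)/|\sigma|$ gives
\[
|\theta_\sigma-\theta_{\sigma'}|\;\ge\;|\tan(\theta_\sigma-\theta_{\sigma'})|\;\ge\;\frac{h}{|\sigma'|}\;=\;\frac{\area(C_\sigma)}{|\sigma|\cdot|\sigma'|},
\]
where the first inequality uses $|\tan x|\ge|x|$ for $|x|<\pi/2$ (and one notes the relevant angle difference lies in $(-\pi/2,\pi/2)$ since both directions live in $[-\pi/2,\pi/2[$; if the naive difference exceeds this range one replaces it by its representative modulo $\pi$, which only helps). This is exactly the claimed bound, and the inequality is strict because $\sigma'$ crosses the interior of $C_\sigma$, forcing the horizontal displacement of the crossing subsegment to be strictly less than $|\sigma'|$ (the segment also has to realize the turn at the core curve direction of $\sigma'$, so it cannot consist solely of the crossing arc).

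\textbf{Main obstacle.} The delicate point is justifying that a non-horizontal geodesic entering $C_\sigma$ must fully traverse it rather than partially penetrate — equivalently, ruling out that $\sigma'$ enters and exits through the \emph{same} boundary component. In a flat cylinder a straight line either is a core curve or moves monotonically in the height coordinate until it hits a boundary, so this is automatic once one works in the isometric model $\RR/|\sigma|\ZZ\times(0,h)$ and observes that the boundary saddle connections are all horizontal; the only care needed is that $\sigma'$ could re-enter $C_\sigma$ through a different part of the boundary after leaving the chart, but one only needs a single crossing arc, which exists as soon as $\sigma'$ meets the interior at all. Making the strictness claim airtight — distinguishing $\ge$ from $>$ — is the other point requiring a short explicit argument, handled by the observation that $\sigma'$ is closed and hence strictly longer than any single crossing arc.
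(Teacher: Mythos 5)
Your argument has a genuine error in the final trigonometric comparison. You derive $|\tan(\theta_\sigma-\theta_{\sigma'})|\ge h/|\sigma'|$ and then write $|\theta_\sigma-\theta_{\sigma'}|\ge|\tan(\theta_\sigma-\theta_{\sigma'})|$, citing ``$|\tan x|\ge|x|$''. But that fact gives the reverse inequality: $|\tan x|\ge|x|$ says $|\theta_\sigma-\theta_{\sigma'}|\le|\tan(\theta_\sigma-\theta_{\sigma'})|$, so a lower bound on the tangent does \emph{not} yield a lower bound on the angle. (For instance, $\tan 1\approx 1.557\ge 1.5$ while $1<1.5$.) As written the chain of inequalities is broken exactly at the step you need.

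The root cause is that you bounded the \emph{horizontal projection} of the crossing arc by $|\sigma'|$, which is wasteful and produces the cotangent. Bound instead the \emph{length} of the crossing arc $\alpha$ by $|\sigma'|$. Since $\alpha$ traverses the cylinder, its vertical displacement is $h$, so $|\alpha|\cdot|\sin(\theta_\sigma-\theta_{\sigma'})|= h$, giving
$$
|\sin(\theta_\sigma-\theta_{\sigma'})|\;=\;\frac{h}{|\alpha|}\;\ge\;\frac{h}{|\sigma'|}\;=\;\frac{\area(C_\sigma)}{|\sigma|\cdot|\sigma'|}.
$$
Now the trigonometric inequality you need is $|\theta_\sigma-\theta_{\sigma'}|>|\sin(\theta_\sigma-\theta_{\sigma'})|$ for $\theta_\sigma\ne\theta_{\sigma'}$, which is true and goes the right way. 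This is precisely the paper's proof: it expresses the width of $C_\sigma$ as $\area(C_\sigma)/|\sigma|$, notes that $\sigma'$ is not contained in $C_\sigma$ and hence must traverse it, and compares the perpendicular travel $|\sigma'|\cdot|\sin(\theta_\sigma-\theta_{\sigma'})|$ against the width, then finishes with $|x|>|\sin x|$. So your geometric setup is essentially correct and matches the paper; the error is only the substitution of the tangent for the sine (equivalently, projecting to the horizontal rather than measuring arc length), which flips the direction of the last inequality.
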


\begin{proof}
The width of $C_\sigma$ is
$
\area(C_\sigma)/|\sigma|
$.
Since $\sigma$ and $\sigma'$ are not parallel, than $\sigma'$ is not contained in $C_\sigma$, therefore
$$
|\sigma'|\cdot
|\sin\big(\theta_\sigma-\theta_{\sigma'}\big)|>
\frac{\area(C_\sigma)}{|\sigma|}
$$
and the Lemma follows since
$
\big|\theta_\sigma-\theta_{\sigma'}\big|
>
|\sin\big(\theta_\sigma-\theta_{\sigma'}\big)|
$.
\end{proof}

\begin{proposition}
\label{PropIsotropicQuadGrowth}
For any subinterval $I\subset[-\pi/2,\pi/2[$ and any $L>0$ such that $L^2|I|>1$ we have
$$
\sharp
\{\theta\in I\cap\cR^{cyl}(X,L)\}
<
m(m+1)|I|L^2.
$$
\end{proposition}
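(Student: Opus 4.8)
The plan is to chop $I$ into roughly $mL^2|I|$ subintervals of length at most $1/(mL^2)$ and to show, using Lemma~\ref{lemmaIsotropicGrowthCylinders}, that each of them meets at most $m-1$ directions of $\cR^{cyl}(X,L)$; summing then gives the bound. We may assume $m\ge 2$, since for $m=1$ no cylinder has area exceeding $\area(X)=1$, so $\cR^{cyl}=\emptyset$ and there is nothing to prove.

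The key input is a separation estimate. For each $\theta\in I\cap\cR^{cyl}(X,L)$ fix a closed geodesic $\sigma_\theta$ in direction $\theta$ whose cylinder $C_\theta:=C_{\sigma_\theta}$ satisfies $\area(C_\theta)>1/m$ and $|\sigma_\theta|\le L$; this is exactly the data recorded by membership in $\cR^{cyl}(X,L)$. Suppose $\theta\ne\theta'$ both lie in $I\cap\cR^{cyl}(X,L)$ and $C_\theta\cap C_{\theta'}\ne\emptyset$. Choosing a point $p$ in the intersection, $p$ lies on one of the closed geodesics $\sigma$ foliating $C_{\theta'}$; this $\sigma$ has direction $\theta'\ne\theta$, has length $|\sigma|=|\sigma_{\theta'}|$ (all core–parallel geodesics of a cylinder share its circumference), and meets $C_\theta$ at $p$. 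Applying Lemma~\ref{lemmaIsotropicGrowthCylinders} to $\sigma_\theta$ and $\sigma$ yields
$$
|\theta-\theta'|>\frac{\area(C_\theta)}{|\sigma_\theta|\,|\sigma_{\theta'}|}>\frac{1}{mL^2}.
$$
Contrapositively: if $|\theta-\theta'|\le 1/(mL^2)$ then the fat cylinders $C_\theta$ and $C_{\theta'}$ are disjoint.

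Now partition $I$ into $n:=\lceil mL^2|I|\rceil$ half-open subintervals of equal length $|I|/n\le 1/(mL^2)$. Fix one such subinterval $J$: any two distinct directions in $J\cap\cR^{cyl}(X,L)$ are at distance strictly less than $1/(mL^2)$, so by the previous step their cylinders are pairwise disjoint, and since each of these cylinders has area $>1/m$ while $\area(X)=1$, there are at most $m-1$ of them, hence at most $m-1$ directions of $\cR^{cyl}(X,L)$ inside $J$. Summing over the $n$ pieces and using the hypothesis $L^2|I|>1$ — which forces $mL^2|I|>m>m-1$ and hence $\lceil mL^2|I|\rceil\le mL^2|I|+1<\tfrac{m}{m-1}\,mL^2|I|$ — we get
$$
\sharp\{\theta\in I\cap\cR^{cyl}(X,L)\}\le (m-1)\,\lceil mL^2|I|\rceil< m^2L^2|I|< m(m+1)\,|I|L^2,
$$
which is the claimed inequality.

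The substance of the proof is concentrated in the separation estimate of the second paragraph: it is precisely there that the defining restriction $\area(C_\sigma)>1/m$ for $\cR^{cyl}$ is used, and it cannot be removed (without it both the separation and the conclusion fail, cf.\ the remark following Theorem~\ref{TheoremMetricResultsHolonomyResonantSets}). The one subtle point is the choice of witness in the first paragraph — that a single cylinder in direction $\theta$ can be taken to have both area $>1/m$ and core length $\le L$; once that is in hand, the rest is the routine bookkeeping of a one–dimensional packing argument, and the ceiling in $n$ is harmless exactly because $L^2|I|>1$.
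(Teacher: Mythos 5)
Your proof takes the same route as the paper's: partition $I$ into roughly $mL^2|I|$ subintervals of length at most $1/(mL^2)$, use Lemma~\ref{lemmaIsotropicGrowthCylinders} to show that the fat cylinders attached to any two directions of $\cR^{cyl}(X,L)$ in the same piece are disjoint, and then count disjoint sets of area $>1/m$ in a surface of area $1$. You supply one clarifying step that the paper passes over silently: Lemma~\ref{lemmaIsotropicGrowthCylinders} is about a closed geodesic $\sigma'$ intersecting the cylinder $C_\sigma$, not about two cylinders overlapping, and you correctly manufacture the required witness $\sigma'$ by taking the leaf of the foliation of $C_{\theta'}$ through a point $p\in C_\theta\cap C_{\theta'}$, noting its length equals the circumference of $C_{\theta'}$. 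Your per-subinterval bound of $m-1$ is sharper than the paper's $m$, and the use of $\lceil\cdot\rceil$ versus $[\cdot]+1$ is cosmetic; both lead to the same final inequality.

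One point to keep in mind: you assert that membership in $\cR^{cyl}(X,L)$ yields a single $\sigma_\theta$ with both $\area(C_{\sigma_\theta})>1/m$ and $|\sigma_\theta|\le L$, calling this ``exactly the data recorded.'' As $\cR^{cyl}$ and $l$ are literally defined in the paper, those two properties are a priori attached to possibly different geodesics in direction $\theta$ (a fat cylinder's core, and the shortest closed geodesic $\sigma^{min}(\theta)$ respectively). The paper's own proof makes the same silent identification when it bounds the lemma's output by $1/(m\,l(\theta_1)l(\theta_2))$, which requires the fat cylinders' circumferences to actually realize $l(\theta_i)$. So this is a shared reading rather than a defect unique to your argument, but it is worth being aware that the proof effectively interprets $\cR^{cyl}(X,L)$ as ``directions admitting a cylinder of area $>1/m$ with circumference $\le L$.''
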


\begin{proof}
Consider $\theta_1=\theta(\sigma_1)$ and $\theta_1=\theta(\sigma_1)$ in $\cR^{cyl}(X,L)$ be any two directions of closed geodesics $\sigma_1$ and $\sigma_2$, and let $C_1$ and $C_2$ be the corresponding cylinders, so that in particular $\area(C_i)>1/m$ for $i=1,2$. Assume that $\theta_1$ and $\theta_2$ belong to the same interval $J$ of length $|J|\leq 1/(mL^2)$. According to Lemma \ref{lemmaIsotropicGrowthCylinders} the cylinders $C_1$ and $C_2$ are disjoint, indeed the directions $\theta_1$ and $\theta_2$ satisfy
$$
|\theta_1-\theta_2|<
|J|<
\frac{1}{mL^2}<
\frac{1}{ml(\theta_1)l(\theta_2)}.
$$
Since $\area(X)=1$ then $X$ contains at most $m$ disjoint cylinders with area greater that $1/m$, therefore any interval $J$ with length $|J|\leq 1/(mL^2)$ contains at most $m$ directions $\theta_i$ in $\cR^{cyl}(X,L)$. The Proposition follows covering $I$ with
$$
N:=\big[mL^2|I|\big]+1<
mL^2|I|+1<(m+1)L^2|I|
$$
intervals $J_1,\dots,J_N$ with length $|J_j|\leq1/(mL^2)$ for any $j=1,\dots,N$.
\end{proof}

\subsection{Ubiquity}
\label{SecUbiquity}

Statement (4) in Theorem \ref{TheoremMetricResultsHolonomyResonantSets} follows from Proposition \ref{propUbiquity} below. Fix a translation surface $X$ and fix a positive real number $K>1$ such that
$$
K\geq
\frac{\sqrt{2}T_0^2}{\cyl(X)}=
\frac{\sqrt{2}}{\cyl(X)}\cdot 2^{2^{4m+1}}.
$$
According to such assumption, for any positive integer $n\geq1$ we can apply the second statement in Proposition \ref{propdirichletvorobets} for those $\theta_\sigma\in\cR^{cyl}$ such that 
$
l(\theta_\sigma)\leq K^n
$. 
Observe also that, since $\cyl(X)\leq T_0$ for any $X$, we have
$$
K\geq\sqrt{2}T_0=\sqrt{2}\cdot 2^{2^{4m}}>m\sqrt{48}.
$$
This second property will be used in the end of the proof of Proposition \ref{propUbiquity} below. The proposition is due to J. Chaika, and we follow the argument from \cite{ChaikaHomogeneousApproximationsTranslSurf}.

\begin{proposition}
[Chaika]
\label{propUbiquity}
Let $X$ be a translation surface and consider 
$
\displaystyle{K\geq\frac{\sqrt{2}T_0^2}{\cyl(X)}}
$. 
For any positive integer $n\geq 1$ and any interval
$
I\subset[-\pi/2,\pi/2[
$ 
such that
\begin{equation}
\label{EquationPropUbiquity(hypothesis)}
|I|\geq\frac{1}{2m\cyl(X)K^{n-1}}
\end{equation}
we have
\begin{equation}
\label{EquationPropUbiquity(thesis)}
\left|
I\cap
\bigcup_{l(\theta_\sigma)\leq K^n}
B\left(\theta_\sigma,\frac{\sqrt{3}K}{K^{2n}}\right)
\right|
\geq
\frac{|I|}{2}.
\end{equation}
\end{proposition}

The dependence on $K$ of the radius of the balls in Equation~\eqref{EquationPropUbiquity(thesis)} can be reduced to the simplified expression $\sqrt{3}\cdot K^{2n-1}$. We keep the redundant $K$ in the numerator in order make clear the relation with Point (4) in Theorem~\ref{TheoremMetricResultsHolonomyResonantSets}. The assumption in Equation~\eqref{EquationPropUbiquity(hypothesis)} is not explicitly stated in Chaika's statement of Ubiquity, namely Proposition 2 in \cite{ChaikaHomogeneousApproximationsTranslSurf}. It seems to us that the same assumption is implicitly used in the proof of Corollary 3 (at line 3) in \cite{ChaikaHomogeneousApproximationsTranslSurf}. Anyhow a lower bound on the length $|I|$ of the interval in Proposition~\ref{propUbiquity} is obviously necessary, indeed the Proposition fails for any interval $I$ which is contained in the complement of 
$
\bigcup_{l(\theta_\sigma)\leq K^n}
B\left(\theta_\sigma,\sqrt{3}K/K^{2n}\right)
$.

\subsubsection{Preliminary Lemmas}

\begin{lemma}\label{lemmaPreliminaryUbiquityCylinders}
Fix $r>0$ and $0<\epsilon<1$. Let $I$ be any interval in $[-\pi/2,\pi/2[$ with $|I|>r$. For any $\theta\in[-\pi/2,\pi/2[$ we have
$$
|I\cap B(\theta,\epsilon\cdot r)|
\leq
2\epsilon\cdot |I\cap B(\theta,r)|.
$$
\end{lemma}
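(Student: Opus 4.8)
The plan is to prove Lemma \ref{lemmaPreliminaryUbiquityCylinders} by a direct geometric comparison of the two intersections, using only the elementary fact that $B(\theta, \epsilon r)$ and $B(\theta,r)$ are concentric subintervals of $[-\pi/2,\pi/2[$ with $\epsilon < 1$. The trivial case is when $I \cap B(\theta,r) = \emptyset$: then also $I \cap B(\theta,\epsilon r) = \emptyset$ since $B(\theta,\epsilon r) \subset B(\theta,r)$, and the inequality holds with both sides zero. So I would reduce to the case where $I$ meets $B(\theta,r)$, and the content is to bound the measure of the smaller concentric piece by $2\epsilon$ times the measure of the larger concentric piece, uniformly over the position of $I$.

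The key observation is that among all intervals $I$ with $|I| > r$ that meet $B(\theta, r)$, the ratio $|I \cap B(\theta,\epsilon r)| / |I \cap B(\theta, r)|$ is maximized when $I$ is positioned so that it contains as much of $B(\theta,\epsilon r)$ as possible while containing as little of $B(\theta,r)$ as possible. Concretely: first, $|I \cap B(\theta,\epsilon r)| \leq |B(\theta,\epsilon r)| = 2\epsilon r$. Second, if $I$ meets $B(\theta,\epsilon r)$ at all, then because $|I| > r$ the interval $I$ must contain an endpoint-neighbourhood of $B(\theta, \epsilon r)$ inside $B(\theta,r)$: more precisely, any point of $I \cap B(\theta,\epsilon r)$ lies within distance $\epsilon r < r$ of $\theta$, and since $I$ has length exceeding $r$, the part of $I$ lying in $B(\theta,r)$ has length at least $\min\{|I|, r\} \geq r$ on the relevant side — one checks that $|I \cap B(\theta,r)| \geq r$. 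Combining, $|I \cap B(\theta,\epsilon r)| \leq 2\epsilon r \leq 2\epsilon \cdot |I \cap B(\theta,r)|$. If instead $I$ does not meet $B(\theta,\epsilon r)$, the left side is zero and the inequality is immediate.

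The one point requiring a little care — and the main (minor) obstacle — is justifying the lower bound $|I \cap B(\theta,r)| \geq r$ whenever $I \cap B(\theta,\epsilon r) \neq \emptyset$. Here I would argue as follows: pick $x \in I \cap B(\theta,\epsilon r)$, so $|x-\theta| < \epsilon r$. Since $|I| > r$, at least one of the two sub-intervals of $I$ to the left and to the right of $x$ has length greater than $r/2$; in fact, writing $I = (x - \ell_-, x + \ell_+)$ with $\ell_- + \ell_+ > r$, the portion of $I$ within distance $r$ of $\theta$ contains the sub-interval of $I$ centred near $\theta$ of total length at least $\min\{\ell_-+\ell_+, 2r - 2|x-\theta|\} \geq \min\{r, 2r - 2\epsilon r\} \geq r$, using $\epsilon < 1$ so that $2r - 2\epsilon r > 0$ and in fact $2(1-\epsilon) r$; a slightly cruder but cleaner route is simply to note $B(\theta, \epsilon r) \subset B(\theta, r)$ and that $I$, having length $> r > 2\epsilon r$, when it meets the smaller ball must cross into the annular region $B(\theta,r)\setminus B(\theta,\epsilon r)$ on at least one side by a length at least $\tfrac{r}{2}(1-\epsilon)$, plus it retains the full $2\epsilon r$ of the inner ball it meets; optimizing these elementary estimates yields $|I\cap B(\theta,r)| \geq r$ after absorbing constants, which suffices since the claimed inequality has the comfortable factor $2$. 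Once this is in place the lemma follows by the two-line chain of inequalities above, and I would present it in that order: dispose of the empty-intersection cases, establish $|I \cap B(\theta, r)| \geq r$, then conclude via $|I \cap B(\theta,\epsilon r)| \leq 2\epsilon r$.
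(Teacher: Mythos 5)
Your plan has a genuine gap: the central intermediate claim, that $|I \cap B(\theta, r)| \geq r$ whenever $I \cap B(\theta, \epsilon r) \neq \emptyset$, is false once $\theta \notin I$. Take $r = 1$, $\theta = 0$, $\epsilon = 0.9$ and $I = (0.5,\,2)$. Then $|I| = 1.5 > r$, and $I$ meets $B(0,0.9) = (-0.9,0.9)$ in $(0.5,0.9)$, so the hypothesis is in force; but $I \cap B(0,1) = (0.5,1)$ has length $0.5 < r$. Your two-step chain
$$
|I\cap B(\theta,\epsilon r)| \;\leq\; 2\epsilon r \;\leq\; 2\epsilon\,|I\cap B(\theta,r)|
$$
fails at the second inequality here ($1.8 \not\leq 0.9$), even though the lemma's conclusion itself ($0.4 \leq 0.9$) is fine. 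The side estimates you try to lean on break in the same regime: $\min\{r,\,2r - 2\epsilon r\} \geq r$ requires $\epsilon \leq 1/2$, as does your assertion $r > 2\epsilon r$, so "absorbing constants" cannot rescue the argument for $\epsilon$ close to $1$. Structurally, when $\theta \notin I$ the best one can say along your lines is $|I\cap B(\theta, r)| > (1-\epsilon)r$ together with $|I\cap B(\theta,\epsilon r)| < \epsilon r$ (only half the small ball is reachable from outside), yielding a ratio bound of $\epsilon/(1-\epsilon)$, which exceeds $2\epsilon$ for $\epsilon > 1/2$.

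The paper avoids this by splitting on whether $\theta \in I$. When $\theta \in I$ your argument is exactly right and complete: $|I\cap B(\theta,\epsilon r)| \leq 2\epsilon r$ and, since $I \ni \theta$ has length $> r$, also $|I\cap B(\theta,r)| \geq r$. When $\theta \notin I$ the paper instead uses the exact identity (valid whenever the left side is nonzero)
$$
|I\cap B(\theta,\epsilon r)| \;=\; \epsilon r - \bigl(r - |I\cap B(\theta,r)|\bigr),
$$
notes that $r - |I\cap B(\theta,r)| > 0$ in this case, and applies $\epsilon < 1$ to conclude $|I\cap B(\theta,\epsilon r)| < \epsilon\,|I\cap B(\theta,r)|$ — even stronger than the $2\epsilon$ claimed. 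To repair your proof you must add exactly this second case; the single lower bound $|I\cap B(\theta,r)| \geq r$ cannot be made to cover all positions of $I$.
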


\begin{proof}
If $\theta\in I$ then we have
$
|I\cap B(\theta,\epsilon\cdot r)|\leq 2\epsilon\cdot r
$
and
$
|I\cap B(\theta,r)|\geq r
$,
thus the statement follows. If $\theta\not\in I$ then $r>|I\cap B(\theta,r)|$ thus, since $0<\epsilon<1$, we have
$$
\epsilon\big(r-|I\cap B(\theta,r)|\big)
<r-|I\cap B(\theta,r)|,
$$
which is equivalent to
$$
|I\cap B(\theta,\epsilon\cdot r)|=
\epsilon\cdot r -\big(r-|I\cap B(\theta,r)|\big)<
\epsilon\cdot|I\cap B(\theta,r)|.
$$
\end{proof}

\begin{lemma}
\label{lemmaUbiquityCylinders}
Consider $\epsilon$ with 
$
\displaystyle{0<\epsilon <\frac{1}{2m}}
$, 
a positive integer $n$ and a subinterval $I\subset [-\pi/2,\pi/2[$ such that
$$
|I|\geq \frac{1}{2m\cyl(X)K^n}.
$$
We have
$$
\left|
I\cap
\bigcup_{l(\theta_\sigma)\leq K^n}
B\left(
\theta,
\frac{\epsilon}{l(\theta_\sigma)K^n}
\right)
\right|
<
2m^2\epsilon\cdot |I|.
$$
\end{lemma}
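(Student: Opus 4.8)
The plan is to bound the measure of the union from above by the total length of the balls composing it, and then to control that total length by splitting the resonant directions into dyadic length-scales and feeding in the isotropic quadratic growth bound of Proposition \ref{PropIsotropicQuadGrowth}.

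First I would localise. Since $l(\theta_\sigma)\ge\cyl(X)$ for every $\theta_\sigma\in\cR^{cyl}$, each ball occurring in the union has radius at most $\epsilon/(\cyl(X)K^n)$, and the hypotheses $\epsilon<1/(2m)$ together with $|I|\ge (2m\cyl(X)K^n)^{-1}$ make this at most $2m\epsilon|I|<|I|$. Hence a ball meeting $I$ is centred in the interval $I'$ obtained by thickening $I$ by $2m\epsilon|I|$ on each side, so that $|I'|\le(1+4m\epsilon)|I|<3|I|$, and it is enough to bound
$$
\left|I\cap\bigcup_{l(\theta_\sigma)\le K^n}B\!\left(\theta_\sigma,\tfrac{\epsilon}{l(\theta_\sigma)K^n}\right)\right|
\le
\sum_{\substack{\theta_\sigma\in I'\\ l(\theta_\sigma)\le K^n}}\frac{2\epsilon}{l(\theta_\sigma)K^n}
=\frac{2\epsilon}{K^n}\sum_{\substack{\theta_\sigma\in I'\\ l(\theta_\sigma)\le K^n}}\frac{1}{l(\theta_\sigma)} .
$$
The heart of the matter is then the estimate $\sum_{\theta_\sigma\in I',\,l(\theta_\sigma)\le K^n}1/l(\theta_\sigma)=O(m^2|I|K^n)$. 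I would prove it by introducing the counting function $N(L):=\#\{\theta_\sigma\in I' : l(\theta_\sigma)\le L\}$ and writing the sum as a Stieltjes integral,
$$
\sum_{\substack{\theta_\sigma\in I'\\ l(\theta_\sigma)\le K^n}}\frac1{l(\theta_\sigma)}=\frac{N(K^n)}{K^n}+\int_{\cyl(X)}^{K^n}\frac{N(L)}{L^{2}}\,dL .
$$
Isotropic quadratic growth for $\cR^{cyl}$ (Proposition \ref{PropIsotropicQuadGrowth}, or the sharper statement obtained in its proof that any interval of length $(mL^{2})^{-1}$ meets $\cR^{cyl}(X,L)$ in at most $m$ points) yields $N(L)\le m(m+1)|I'|L^{2}$ as soon as $L^{2}|I'|\ge1$ and, after applying the same bound on an auxiliary interval of length of order $L^{-2}$, the uniform bound $N(L)=O(m)$ in the complementary range $\cyl(X)\le L=O(|I'|^{-1/2})$.

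Substituting these, the boundary term and the part of the integral over $\{L^{2}|I'|\ge1\}$ each contribute $O(m^{2}|I'|K^n)$ — here it is essential that this is a genuine integral of $N(L)/L^{2}$, which is of size $O(m^{2}|I'|)$ on that range, so that the scales form a geometric series dominated by the top scale $L\sim K^n$ and no spurious factor $K$ appears — while the remaining low-scale part contributes $O(m/\cyl(X))$, which is reabsorbed using $m^{2}|I'|K^n\ge m^{2}|I|K^n\ge m/(2\cyl(X))$, i.e. the lower bound on $|I|$ once more. Multiplying through by $2\epsilon/K^n$ and using $|I'|<3|I|$ produces a bound of the shape $Cm^{2}\epsilon|I|$.

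The delicate point is to drive $C$ down to the claimed value $2m^{2}$; this forces one to be economical at every step and, in particular, to use that the balls in the union are almost disjoint. Indeed, since $\epsilon<1/(2m)$, two of them can overlap only if the area-$(>1/m)$ cylinders determined by their centres are disjoint, and $X$ carries at most $m$ such pairwise disjoint cylinders, so every point of $I$ lies in at most $m$ of the balls and the passage to $\sum|B_\sigma|$ costs at most a controlled factor. A second, more technical, ingredient — exactly as in the proof of Proposition \ref{PropIsotropicQuadGrowth} — is that Lemma \ref{lemmaIsotropicGrowthCylinders} has to be invoked for the cylinder realising the length $l(\theta_\sigma)$, so that the separation estimate $|\theta_\sigma-\theta_{\sigma'}|>\bigl(m\,l(\theta_\sigma)\,l(\theta_{\sigma'})\bigr)^{-1}$ is available for directions in a common dyadic block; this is the common source both of the counting bound and of the near-disjointness used above.
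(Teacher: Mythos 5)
Your plan is a genuinely different route from the paper's, and it would deliver the right order of magnitude $O(m^2\epsilon|I|)$ but not the stated constant $2m^2$, and the mechanism you invoke to recover the constant does not do what you want.

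The paper's proof never sums lengths of balls or counts directions by dyadic scale. Instead it observes, via Lemma~\ref{lemmaIsotropicGrowthCylinders}, that for any fixed $\theta_0\in I$ at most $m$ of the \emph{enlarged} balls $B'_\sigma:=B\bigl(\theta_\sigma,\tfrac{1}{2m\,l(\theta_\sigma)K^n}\bigr)$ can contain $\theta_0$ (two such balls overlapping at $\theta_0$ force the underlying cylinders to be disjoint, and there are at most $m$ pairwise disjoint cylinders of area $>1/m$). Integrating the overlap function over $I$ gives $\sum_\sigma|I\cap B'_\sigma|\leq m|I|$. It then shrinks each $B'_\sigma$ to $B_\sigma:=B\bigl(\theta_\sigma,\tfrac{\epsilon}{l(\theta_\sigma)K^n}\bigr)$ via Lemma~\ref{lemmaPreliminaryUbiquityCylinders}, which says $|I\cap B(\theta,\epsilon r)|\leq 2\epsilon\,|I\cap B(\theta,r)|$ whenever $|I|>r$; the hypothesis $|I|\geq (2m\cyl(X)K^n)^{-1}$ is used precisely to check $|I|>1/(2m\,l(\theta_\sigma)K^n)$. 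The factor $2m\epsilon$ from the shrinking multiplied by the factor $m$ from the overlap produces exactly $2m^2\epsilon$.

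Your Stieltjes argument is structurally sound, but lossy. Both the boundary term $N(K^n)/K^n$ and the high-scale integral each contribute a full $m(m+1)|I'|K^n$, the localisation costs $|I'|<3|I|$, and the low-scale part costs \emph{one more power of $m$}: there $N(L)$ is only $O(m^2)$, not $O(m)$ as you claim, because an interval of length $|I'|<L^{-2}$ may still require up to $m$ sub-intervals of length $(mL^2)^{-1}$ to cover it; absorbing the resulting $m^2/\cyl(X)$ into $m^?\,|I|K^n$ via $|I|\geq(2m\cyl(X)K^n)^{-1}$ then gives $m^3|I|K^n$, not $m^2|I|K^n$. So you end up with a constant well above $2m^2$, possibly with the wrong power of $m$.

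The proposed fix is the real gap. Bounded overlap multiplicity $m$ for the balls $B_\sigma$ gives, by Fubini, $\sum_\sigma|I\cap B_\sigma|\leq m|I|$, which is missing the $\epsilon$ entirely; and the observation that the passage from $|I\cap\bigcup B_\sigma|$ to $\sum|B_\sigma|$ ``costs at most a factor $m$'' is a \emph{lower} bound on the union, not an improvement of the upper bound you need. The $\epsilon$ in the target can only come from a scaling comparison between two concentric balls, which is exactly what Lemma~\ref{lemmaPreliminaryUbiquityCylinders} provides and your argument never invokes. To reach $2m^2\epsilon|I|$ you would have to abandon the summation of ball lengths and instead apply the overlap bound to the $B'_\sigma$ and then scale down, which is the paper's proof.
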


\begin{proof}
Fix any direction $\theta_0\in I$. Fix $n$. Consider $\theta_1$ and $\theta_2$ in $\cR^{cyl}$ be any two directions of closed geodesics $\sigma_1$ and $\sigma_2$ with $l(\theta_i)\leq K^n$ for $i=1,2$, and let $C_1$ and $C_2$ be the corresponding cylinders, so that in particular $\area(C_i)>1/m$ for $i=1,2$. Assume that we have
$$
|\theta_0-\theta_i|<
\frac{1}{2mK^nl(\theta_i)}
\textrm{ for }
i=1,2.
$$
According to Lemma \ref{lemmaIsotropicGrowthCylinders} the cylinders $C_1$ and $C_2$ are disjoint, indeed the directions $\theta_1$ and $\theta_2$ satisfy
$$
|\theta_1-\theta_2|<
\frac{1}{mK^n\min\{l(\theta_1),l(\theta_2)\}}.
$$
There exist at most $m$ disjoint cylinders with area greater that $1/m$, therefore there exist at most $m$ directions $\theta_i$ of closed geodesics $\sigma_i$ such that
$$
|\theta_0-\theta_i|<
\frac{1}{2mK^nl(\theta_i)}
\textrm{ for }
i=1,\dots,m.
$$
We get
$$
\sum_{l(\theta_\sigma)\leq K^n}
\bigg|
I\cap
B\bigg(
\theta,
\frac{1}{2ml(\theta_\sigma)K^n}
\bigg)
\bigg|
\leq
m\cdot\big|I\big|.
$$
According to our assumption we have $2m\epsilon<1$ and 
$
\displaystyle{|I|>\frac{1}{2ml(\theta_\sigma)K^n}}
$ 
for any $\theta_\sigma\in\cR^{cyl}$, thus the statement follows from the previous estimate and from Lemma \ref{lemmaPreliminaryUbiquityCylinders}, observing that
\begin{eqnarray*}
&&
\bigg|
I\cap
\bigcup_{l(\theta_\sigma)\leq K^n}
B\bigg(
\theta,
\frac{\epsilon}{l\big(\theta_\sigma\big)K^n}
\bigg)
\bigg|
<
\sum_{l(\theta_\sigma)\leq K^n}
\bigg|
I\cap
B\bigg(
\theta,\frac{\epsilon}{l(\theta_\sigma)K^n}
\bigg)
\bigg|\leq
\\
&&
2m\epsilon\cdot
\sum_{l(\theta_\sigma)\leq K^n}
\left|
I\cap
B\left(
\theta,\frac{1}{2ml(\theta_\sigma)K^n}
\right)
\right|
\leq
2m^2\epsilon\cdot |I|.
\end{eqnarray*}
\end{proof}

\subsubsection{Proof of Proposition \ref{propUbiquity}}

Fix any $n\geq 1$. Recall that according to the second statement in Proposition \ref{propdirichletvorobets} we have
$$
I\subset
\bigcup_{l(\theta_\sigma)\leq K^n}
B\left(\theta_\sigma,\frac{\sqrt{3}}{ml(\theta_\sigma)K^n}\right).
$$
Moreover, recalling that $K\geq m\sqrt{48}$ and applying Lemma \ref{lemmaUbiquityCylinders} with 
$
\displaystyle{\epsilon:=\frac{\sqrt{3}}{mK}<\frac{1}{2m}}
$, 
we get
$$
\left|
I\cap
\bigcup_{l(\theta_\sigma)\leq K^{n-1}}
B\left(\theta_\sigma,\frac{\sqrt{3}}{mK^nl(\theta_\sigma)}\right)
\right|
<
2m^2\epsilon|I|=
\frac{m\sqrt{12}}{K}|I|.
$$
Therefore, recalling that $\cR^{cyl}(K,n)$ is the set of those $\theta_\sigma\in\cR^{cyl}$ such that 
$
K^{n-1}<l(\theta_\sigma)\leq K^n
$, 
we have
\begin{eqnarray*}
&&
\left|
I\cap
\bigcup_{l(\theta_\sigma)\leq K^n}
B\left(\theta_\sigma,\frac{\sqrt{3}K}{K^{2n}}\right)
\right|
\geq
\left|
I\cap
\bigcup_{\theta_\sigma\in\cR^{cyl}(K,n)}
B\left(\theta_\sigma,\frac{\sqrt{3}K}{K^{2n}}\right)
\right|\geq
\\
&&
\left|
I\cap
\bigcup_{\theta_\sigma\in\cR^{cyl}(K,n)}
B\left(\theta_\sigma,\frac{\sqrt{3}}{mK^{n}l(\theta_\sigma)}\right)
\right|\geq
\\
&&
|I|-
\left|
I\cap
\bigcup_{l(\theta_\sigma)\leq K^{n-1}}
B\left(\theta_\sigma,\frac{\sqrt{3}}{mK^{n}l(\theta_\sigma)}\right)
\right|\geq
|I|-
\frac{m\sqrt{12}}{K}|I|\geq\frac{|I|}{2}.
\end{eqnarray*}
Proposition \ref{propUbiquity} is proved.

\subsection{Decaying}
\label{SecDecaying}

Statement (1) in Theorem \ref{TheoremMetricResultsHolonomyResonantSets} corresponds to Proposition \ref{PropDecaying} and Proposition \ref{PropDecayingVeech} below, whose proof is the subject of this section. Let $X$ be a translation surface with $\area(X)=1$. Recall that we set
$$
\beta:=\frac{1}{3m-1}.
$$

\begin{proposition}
\label{PropDecaying}
There are positive constants $M=M(m)$ and $r_0=r_0(m)$ depending only on $m$ such that for any 
$\epsilon$ with 
$
0<\epsilon<\min\{r_0,\sys(X)\}
$ 
the resonant set $\cR^{sc}$ is $(\epsilon,\tau)$-decaying with 
$$
\tau=M\cdot \epsilon^\beta.
$$
In other words, setting $K:=1/\epsilon$, the following holds. For any $n\geq 1$ and any interval $I$ satisfying Condition \eqref{EqDefDecayingAssumption}, that is
$$
|I|=\frac{1}{K^{2n}}
\quad\textrm{ and }\quad
I
\cap
\bigcup_{j=0}^{n-1}
\bigcup_{\theta_\gamma\in\cR^{sc}(K,j)}
B\left(\theta_\gamma,\frac{\epsilon^2}{l(\theta_\gamma)\cdot K^j}\right)
=\emptyset
$$
we have
$$
\left|
I\cap
\bigcup_{\theta_\gamma\in\cR^{sc}(K,n)}
B\left(\theta_\gamma,\frac{2\epsilon^2}{l(\theta_\gamma)\cdot K^n}\right)
\right|
<
M\cdot \epsilon^\beta\cdot |I|.
$$
Moreover there exist at least 
$
(1-M\epsilon^\beta)K^2
$ 
intervals $I_i\subset[-\pi/2,\pi/2[$ which are mutually disjoint in their interior and satisfy Condition \eqref{EqDefDecayingAssumption} for $n=1$. 
\end{proposition}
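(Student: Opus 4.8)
The plan is to prove the $(\epsilon,\tau)$-decaying property by estimating the measure of the $\epsilon^2/(l(\theta_\gamma)K^n)$-neighbourhood of directions of saddle connections whose minimal parallel representative has length in the dyadic window $(K^{n-1},K^n]$, conditional on the interval $I$ not being covered at the previous scales. The starting point is the Dirichlet Theorem for $\cR^{sc}$, namely Proposition \ref{propdirichletvorobets}(1), applied on the surface obtained by rescaling $I$ to unit size. First I would apply the linear map $G = g_t r_{\theta_I}$ sending the midpoint $\theta_I$ of $I$ to the vertical direction and with $e^{2t}$ chosen so that $I$ becomes an interval of fixed size: since $|I| = K^{-2n}$, this is $e^t \sim K^n$. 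After this renormalization, the saddle connections $\gamma$ with $\theta_\gamma \in \cR^{sc}(K,n)$ whose neighbourhood we want to control become the saddle connections of $G\cdot X$ that are \emph{short} (of length $\lesssim \epsilon$, up to absolute constants, because the original length $l(\theta_\gamma) \asymp K^n = e^t$ while the angular width $\epsilon^2/(l(\theta_\gamma)K^n)\asymp \epsilon^2/K^{2n}$ becomes $\asymp\epsilon^2$ after renormalizing angles by $e^{2t}\asymp K^{2n}$). The emptiness hypothesis at scales $j<n$ translates, via the same renormalization, into the statement that $G\cdot X$ has \emph{no} saddle connection of length much shorter than $\epsilon$ except possibly in a nearly vertical direction — this is exactly the kind of configuration controlled by Minsky--Weiss's non-divergence estimate (Theorem 6.3 of \cite{minskyweiss}, quoted in the excerpt), which is where the exponent $\beta = 1/(3m-1)$ enters, coming from the bound that at most $3m-1$ saddle connections can be mutually disjoint and all shorter than $S_0$.

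The core estimate is thus: on the renormalized interval $J$ of fixed length, the set of directions $\phi$ such that $G\cdot X$ rotated by $r_\phi$ has a saddle connection shorter than $c\epsilon$ in a direction within $\epsilon^2$ of vertical, has measure $\leq M\epsilon^\beta |J|$. I would obtain this by combining two ingredients: (i) the quantitative non-divergence statement of Minsky--Weiss, which says that the horocycle-type family $\{u_s \cdot G\cdot X\}$ spends at most a $M\epsilon^\beta$-proportion of its time in the region where the systole is below $\epsilon$, once one knows $G\cdot X$ itself is not too deep in the cusp (guaranteed by the scale-$j$ emptiness hypotheses and the choice $\epsilon < \sys(X)$); and (ii) a covering argument converting "directions of short saddle connections" into "times when the horocycle orbit is short". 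Undoing the renormalization $G^{-1}$ then turns the measure bound on $J$ back into the bound $M\epsilon^\beta|I|$ on $I$, because $G$ acts on the relevant interval by an affine map of bounded distortion. The factor $2$ in $2\epsilon^2/(l(\theta_\gamma)K^n)$ versus $\epsilon^2$ in the hypothesis is harmless: it only changes the constant $c$ in front of $\epsilon$, hence the constant $M$.

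For the last sentence — the existence of at least $(1-M\epsilon^\beta)K^2$ pairwise-interior-disjoint intervals $I_i$ of length $K^{-2}$ satisfying Condition \eqref{EqDefDecayingAssumption} for $n=1$ — I would argue directly. Partition $[-\pi/2,\pi/2[$ into the $[K^2]$ consecutive intervals of length $\pi K^{-2}$ (adjusting constants; the statement as written uses $K^{-2}$, and one absorbs the $\pi$ into $M$). Condition \eqref{EqDefDecayingAssumption} for $n=1$ requires $I_i \cap \bigcup_{\theta_\gamma\in\cR^{sc}(K,0)} B(\theta_\gamma, \epsilon^2/l(\theta_\gamma)) = \emptyset$, i.e. $I_i$ must avoid the neighbourhoods of directions of saddle connections of length $\leq 1$ on $X$. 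Since $\epsilon < \sys(X)$, there are only finitely many such saddle connections — in fact their number is bounded in terms of $\area(X)=1$ by quadratic growth (Eskin--Masur, quoted in the excerpt), giving at most $M_0$ of them with an absolute $M_0$ — and each contributes a neighbourhood of length $\leq 2\epsilon^2/\sys(X) \leq 2\epsilon$, so in total they can meet at most $O(\epsilon K^2) = O(\epsilon^\beta K^2)$ of the little intervals (using $\beta \leq 1$). Discarding those, at least $(1-M\epsilon^\beta)K^2$ intervals remain, each satisfying Condition \eqref{EqDefDecayingAssumption} for $n=1$.

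\textbf{Main obstacle.} The step I expect to be delicate is the faithful passage between the dynamical non-divergence estimate of Minsky--Weiss and the combinatorial statement about neighbourhoods of resonant directions: one must check that the hypothesis "$I$ is uncovered at all scales $j<n$" is strong enough to place the renormalized surface $G\cdot X$ in the region where the Minsky--Weiss estimate applies with the \emph{uniform} constants depending only on $m$ (not on $X$), and one must track the exact relationship between the angular width $\epsilon^2/(l(\theta_\gamma)K^n)$, the window length $l(\theta_\gamma)\in(K^{n-1},K^n]$, and the "time a saddle connection stays short" to be sure the exponents match. Getting the sharper version with $\beta=1$ for Veech surfaces, Proposition \ref{PropDecayingVeech}, requires replacing the Minsky--Weiss input by the Veech-specific Corollary \ref{CorollaryMinskyWeissVeech} and using Lemma \ref{LemmaSparseCorverVeech} to rule out two short non-parallel saddle connections, but the structure of the argument is the same.
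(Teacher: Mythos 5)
Your overall strategy is the right one and close in spirit to the paper's: renormalize so that the $n$-th scale neighbourhood of resonant directions becomes a set of times along a unit-length horocycle/rotation segment where the systole is $\lesssim\epsilon$, then invoke the Minsky--Weiss quantitative non-divergence estimate (Theorem~\ref{TheoremMinskyWeiss}), using the emptiness at scales $j<n$ to verify its hypothesis $\sup_J\|L_\gamma\|\geq\rho$ with a uniform $\rho$. The paper implements this more cleanly by working in the slope variable $\alpha=\tan\theta$ and with the pure diagonal matrix $G_\lambda=g_{\lambda\log K}$ composed with $u_{-\alpha}$ (Lemmas~\ref{LemmaLengthHorocycle}--\ref{LemmaRecursionDecaying}), so that $\{G_{n+1}u_{-\alpha}X:\alpha\in J\}$ is literally a horocycle segment of renormalized length $K^2$ through $G_{n+1}X$; the passage back to angles is a bounded-distortion bound (Lemma~\ref{LemmaDistortionArctan}). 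Your use of $g_t r_{\theta_I}$ plus a vaguely described ``covering argument converting directions of short saddle connections into times when the horocycle orbit is short'' is morally the same but you leave the actual conversion unspecified; this is fixable, just not spelled out.

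The genuine gap is in your argument for the last clause, the existence of $(1-M\epsilon^\beta)K^2$ admissible initial intervals with $M=M(m)$. You bound the number of saddle connections of length $\leq 1$ on $X$ by an ``absolute $M_0$'' coming from Eskin--Masur quadratic growth, but the quadratic growth constant is $M_0=M_0(X)$: the paper explicitly notes it is uniform only on compact subsets of the stratum, and it is genuinely unbounded as $X$ degenerates (a surface with a long thin cylinder of area $\leq 1$ carries arbitrarily many saddle connections of length $\leq 1$). Your count therefore yields constants $M$ and $r_0$ depending on $X$, not only on $m$, which is strictly weaker than the statement of Proposition~\ref{PropDecaying}. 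The paper avoids this by running the Minsky--Weiss estimate for the initialization as well (Lemma~\ref{LemmaInitiationDecaying}): the hypothesis $\epsilon<\sys(X)$ is exactly what ensures $\sup_{\alpha\in[-1,1]}\|\hol(\gamma,G_1u_{-\alpha}X)\|\geq 1/\sqrt{2}$ for every $\gamma$, and then the Minsky--Weiss constant $C$ depends only on the stratum, giving a uniform $M(m)$. You would need to replace your quadratic-growth count with this argument to close the gap.
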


\subsubsection{Decaying for a Veech surface}

\begin{proposition}
\label{PropDecayingVeech}
Let $X$ be a Veech surface. Then the result in Proposition \ref{PropDecaying} holds with $\beta=1$ and with $r_0$ that can be chosen uniformly on the closed orbit $\cM:=\sltwor\cdot X$ of $X$.
\end{proposition}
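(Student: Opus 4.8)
The plan is to rerun the proof of Proposition \ref{PropDecaying} essentially word for word, replacing its one substantial input — the Minsky--Weiss non-divergence estimate Theorem \ref{TheoremMinskyWeiss}, whose gain exponent is $\beta=1/(3m-1)$ — by the Veech-specific sharpening Corollary \ref{CorollaryMinskyWeissVeech}, which furnishes the same quantitative non-divergence statement with exponent $\beta=1$ and with all constants depending only on the closed orbit $\cM=\sltwor\cdot X$. The single place where $\cM$, rather than merely the stratum $\cH$, enters is the input Lemma \ref{LemmaSparseCorverVeech}: there is a constant $r_0=r_0(\cM)>0$ such that no surface in $\cM$ carries two non-parallel saddle connections both of length $<r_0$. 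Accordingly, the $r_0$ in the statement is taken to be the minimum of the $m$-dependent threshold coming from Proposition \ref{PropDecaying} and this orbit-dependent threshold of Lemma \ref{LemmaSparseCorverVeech}.

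Concretely: fix $\epsilon<\min\{r_0,\sys(X)\}$, set $K=1/\epsilon$, fix $n\geq1$ and an interval $I$ with $|I|=K^{-2n}$ satisfying the avoidance hypothesis \eqref{EqDefDecayingAssumption}. As in the proof of Proposition \ref{PropDecaying}, the Dirichlet-type Proposition \ref{propdirichletvorobets} lets one translate membership of a direction $\theta\in I$ in the set of \eqref{EqDefDecayingConclusion} into a dynamical condition: the Teichm\"uller geodesic $g_tr_\theta X$ enters the $c\epsilon$-thin part of $\cH$ (with $c=c(m)$) for some $t$ in a window around $n\log K$, whereas the avoidance hypothesis forces $g_sr_\theta X$ to stay in the $c\epsilon$-thick part for all smaller $s$. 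Rescaling $I$ to unit length by $g_{n\log K}$ turns the required estimate into the statement that a unit-length piece of horocycle sitting in the $c\epsilon$-thick part visits, one contraction step $g_{\log K}$ later, the $c\epsilon$-thin part on a subset of measure at most $M\epsilon$ of its length. Here Lemma \ref{LemmaSparseCorverVeech} is exactly what makes $\beta=1$ available: along such a horocycle segment all saddle connections that can become $c\epsilon$-short are parallel to a single fixed direction, so the relevant systole potential is, up to a bounded multiplicative error, one nonnegative quadratic in the horocycle parameter, and the non-divergence gain is linear instead of $1/(3m-1)$. This is precisely the content of Corollary \ref{CorollaryMinskyWeissVeech}, and it yields the bound $M\epsilon\cdot|I|$ with $M=M(m)$, i.e.\ \eqref{EqDefDecayingConclusion} with $\tau=M\epsilon$. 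The remaining claim — existence of at least $(1-M\epsilon)K^2$ disjoint intervals satisfying \eqref{EqDefDecayingAssumption} for $n=1$ — is obtained exactly as in Proposition \ref{PropDecaying}, by cutting $[-\pi/2,\pi/2[$ into $K^2$ intervals of length $K^{-2}$ and discarding those meeting a neighbourhood of a direction in $\cR^{sc}(K,0)$, whose number is controlled by the quadratic growth of $\cR^{sc}$.

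Uniformity on $\cM$ is then automatic: the constants $S_0$, $T_0$, the implied constants in the Dirichlet dictionary, and the constants of Corollary \ref{CorollaryMinskyWeissVeech} all depend only on the stratum, hence only on $m$; the unique orbit-dependent ingredient, $r_0$, is supplied uniformly over $\cM$ by Lemma \ref{LemmaSparseCorverVeech}. Hence the statement holds with $r_0$ depending only on $\cM$.

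The main obstacle is Corollary \ref{CorollaryMinskyWeissVeech} itself: one must verify, with constants uniform over $\cM$, that the $(C,\alpha)$-good estimate underlying Theorem \ref{TheoremMinskyWeiss}, with $\alpha=1/(3m-1)$, collapses to a $(C,1)$-good estimate once Lemma \ref{LemmaSparseCorverVeech} rules out non-parallel short saddle connections — heuristically, the inductive tree of degenerating subsurfaces in the Minsky--Weiss argument becomes trivial, of depth one; this is carried out in the Appendix. A secondary point requiring care is the calibration of the constant $c$ and of the radii in \eqref{EqDefDecayingAssumption}--\eqref{EqDefDecayingConclusion}, so that avoiding the level-$j$ balls for all $j<n$ genuinely forces $g_sr_\theta X$ to be $c\epsilon$-thick throughout the relevant time window, and not merely along the geodesic through the centre of $I$; this is exactly where the explicit value of $S_0$ and Proposition \ref{propdirichletvorobets} enter, and it is identical to the corresponding step in the proof of Proposition \ref{PropDecaying}.
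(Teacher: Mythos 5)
Your plan is exactly the paper's: the published proof of Proposition \ref{PropDecayingVeech} consists of the single remark that the argument for Proposition \ref{PropDecaying} goes through verbatim once Theorem \ref{TheoremMinskyWeiss} is replaced by Corollary \ref{CorollaryMinskyWeissVeech}, with the $\cM$-dependence of $r_0$ entering solely through Lemma \ref{LemmaSparseCorverVeech}. Two small inaccuracies in your side remarks, neither creating a gap: Proposition \ref{propdirichletvorobets} is not actually used in the proof of Proposition \ref{PropDecaying} (that proof runs through Lemmas \ref{LemmaLengthHorocycle}, \ref{LemmaDistortionArctan}, \ref{LemmaInitiationDecaying}, \ref{LemmaRecursionDecaying}, with Minsky--Weiss supplying the measure estimate); and the exponent $\beta=1/(3m-1)$ in Theorem \ref{TheoremMinskyWeiss} is not a degraded $(C,\alpha)$-good exponent — the family $\cG(X)$ is always $(2,1)$-good — but comes from the inductive non-divergence argument over families of disjoint short saddle connections, which Lemma \ref{LemmaSparseCorverVeech} collapses in the Veech case by bounding the number of simultaneously short saddle connections by $4g-4$, so that the simpler Proposition 3.2 of Minsky--Weiss applies directly with exponent $1$.
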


The proof of Proposition \ref{PropDecayingVeech} follows exactly the same lines as Proposition \ref{PropDecaying}. The only difference is that the Minsky-Weiss estimate in Theorem \ref{TheoremMinskyWeiss} below will be replaced by the stronger one in Corollary \ref{CorollaryMinskyWeissVeech}, which says that when $X$ is a Veech surface the same estimate holds as in Theorem \ref{TheoremMinskyWeiss} with $\beta=1$. For completeness, in \S~\ref{SectionProofMinskyWeissForVeech} we give a proof of Corollary \ref{CorollaryMinskyWeissVeech}, adapting the argument of \cite{minskyweiss}. All other details of the proof of Proposition \ref{PropDecayingVeech} will be omitted.

\subsubsection{Non-divergence of horocycle}

We report the statement of Theorem 6.3 in \cite{minskyweiss}, which is the main tool in the proof of Decaying property. Fix a stratum $\cH$ and let $C>1$, $\beta>0$ and $\rho_0>0$ be the constants explicated above.

\begin{theorem}
[Minsky-Weiss]
\label{TheoremMinskyWeiss}
For any translation surface $X\in\cH$ the following holds. Assume $J$ is an interval and $\rho$ is a real number with $0<\rho<S_0$ such that for any saddle connection $\gamma$ we have
$$
\sup_{\alpha\in J}\big|\hol(\gamma,u_{-\alpha}\cdot X)\big|\geq\rho.
$$
Then for any $\rho'$ with $0<\rho'\leq\rho$ we have
$$
\left|
\left\{
\alpha\in J
\textrm{ ; }
\sys(u_{-\alpha}\cdot X)\leq\rho'
\right\}
\right|
\leq
C\cdot
\bigg(
\frac{\rho'}{\rho}
\bigg)^\beta
\cdot
|J|.
$$
\end{theorem}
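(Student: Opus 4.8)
This is a Margulis-type quantitative non-divergence estimate for the shearing family $\{u_{-\alpha}\cdot X\}_{\alpha\in J}$ in the stratum $\cH$, with the reciprocal of the systole playing the role of the height function: the hypothesis $\sup_{\alpha\in J}|\hol(\gamma,u_{-\alpha}X)|\ge\rho$ for all saddle connections $\gamma$ is a ``non-escape at scale $\rho$'' condition, and the conclusion is a power-law bound for the sub-level set where $\sys$ drops below $\rho'\le\rho$. I would run the Margulis / Kleinbock--Margulis scheme in the form adapted to strata of translation surfaces by Minsky--Weiss. The elementary input is that, writing $(H_\gamma,V_\gamma):=\hol(\gamma,X)$, one has $\hol(\gamma,u_{-\alpha}X)=(H_\gamma-\alpha V_\gamma,\,V_\gamma)$, so $\alpha\mapsto|\hol(\gamma,u_{-\alpha}X)|^2$ is a nonnegative quadratic polynomial in $\alpha$; hence each $\phi_\gamma(\alpha):=|\hol(\gamma,u_{-\alpha}X)|$, and any product of boundedly many such functions, is ``$(C_0,\kappa)$-good'': for every subinterval $B\subset J$ and every $\eta\in(0,1)$,
$$
\left|\left\{\alpha\in B\ ;\ g(\alpha)<\eta\,\sup_{B}g\right\}\right|\le C_0\,\eta^{\kappa}\,|B|,
$$
with $C_0$ absolute and $\kappa$ inversely proportional to the number of factors.

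The substantive ingredient is the flat-geometric combinatorics of short saddle connections. On any $Y\in\cH$ at most $3m-1$ saddle connections can be pairwise disjoint in their interiors and all strictly shorter than $S_0$ (any $3m$ of them would form a flat triangulation, whose $3m$ edges cannot all be shorter than the longest, which is $\le S_0$). Following Minsky--Weiss I would organize the saddle connections short at a given $\alpha$ into nested ``$\alpha$-flags'', show that two short crossing saddle connections control one another's holonomy, so that a flag changes only in a controlled way as $\alpha$ moves in $J$, and observe that on each relevant piece the systole is comparable to the smallest entry of a flag of depth at most $3m-1$; equivalently, the ``height'' of such a flag is a product of at most $3m-1$ holonomy lengths, hence $(C_0,1/(3m-1))$-good by the first paragraph.

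With these in hand the estimate is the standard covering-and-induction. One covers $\{\alpha\in J:\sys(u_{-\alpha}X)\le\rho'\}$ by intervals on each of which the flag of short saddle connections stabilizes, enlarges each to a maximal subinterval $B\subseteq J$ on which the governing flag-height stays below $\rho$ (the hypothesis that every $\phi_\gamma$ has supremum at least $\rho$ over $J$ prevents the enlargement from exhausting $J$, so $\sup_B$ of the flag-height is comparable to $\rho$), applies the $(C_0,1/(3m-1))$-good bound to pick up a factor $C_0(\rho'/\rho)^{1/(3m-1)}$, and sums over a Besicovitch-type disjointified subcover; passing through the at most $3m-1$ levels of the flag and absorbing the combinatorial multiplicities into $C=C(m)$ yields
$$
\left|\left\{\alpha\in J\ ;\ \sys(u_{-\alpha}X)\le\rho'\right\}\right|\le C\Big(\frac{\rho'}{\rho}\Big)^{1/(3m-1)}|J|,
$$
which is the asserted inequality with $\beta=1/(3m-1)$.

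The genuinely hard part is the second step. In a homogeneous space this bookkeeping is the lattice of rational subgroups and is controlled by linear algebra; in a stratum one has only the flat metric, so one must build from scratch the theory of which short saddle connections can be short simultaneously, how a maximal flag varies with $\alpha$, and why the systole is dominated by boundedly many $(C,\kappa)$-good functions on each piece of the cover. Extracting the sharp exponent $1/(3m-1)$ rather than a cruder $1/N$ tied to $\dim\cH$ is exactly the point where the triangulation count $3m$ must be used precisely. For a Veech surface the picture collapses: below a scale $r_0=r_0(\sltwor\cdot X)$ no two non-parallel saddle connections are short at once, every flag has depth one, and one obtains $\beta=1$ (this is Corollary~\ref{CorollaryMinskyWeissVeech}, which the paper proves in its appendix).
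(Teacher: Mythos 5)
The paper does not prove this statement: Theorem~\ref{TheoremMinskyWeiss} is quoted verbatim as Theorem~6.3 of \cite{minskyweiss} and used as a black box. The only thing actually established in the paper's Appendix~\S\ref{SectionProofMinskyWeissForVeech} is Corollary~\ref{CorollaryMinskyWeissVeech}, the Veech case, which is the degenerate instance of the above in which the flag structure trivializes.

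With that caveat, your sketch is an accurate high-level reconstruction of the Minsky--Weiss argument. You correctly extract the elementary input that $\alpha\mapsto|\hol(\gamma,u_{-\alpha}X)|^2$ is a nonnegative quadratic and hence that each $|\hol(\gamma,u_{-\alpha}\cdot X)|$ is $(C,1)$-good (the paper's appendix records the corresponding statement for $\|\cdot\|_\infty$ as $(2,1)$-good via \cite[Prop.~4.5]{minskyweiss}); you correctly identify the combinatorial bound $3m-1$ on pairwise-disjoint saddle connections all shorter than $S_0$ as the source of the exponent $\beta=1/(3m-1)$; you correctly identify the enlargement of a stabilization subinterval to a maximal subinterval where the governing height stays below $\rho$ as the step where the hypothesis $\sup_{\alpha\in J}|\hol(\gamma,u_{-\alpha}X)|\ge\rho$ is invoked; and you correctly observe that for a Veech surface Lemma~\ref{LemmaSparseCorverVeech} makes every flag trivial and collapses the argument to the $(2,1)$-good case, which is exactly how the appendix proves the corollary. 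You also honestly flag that the hard content --- the $\ell$-system / flag combinatorics of Minsky--Weiss, i.e.\ showing that on each piece of the cover the systole is dominated by a product of at most $3m-1$ good functions and that this flag changes in a controlled way along $J$ --- is not carried out here. So what you have is a faithful outline, not a proof; but since the paper does not prove the theorem either (it only proves the Veech corollary), that is an acceptable level of detail for this statement as it sits in the paper, provided you make the citation explicit. If you wanted to actually prove the general case, the content missing in your second paragraph is precisely where you would need to reproduce the $\ell$-system arguments of \cite{minskyweiss}.
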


\begin{corollary}
\label{CorollaryMinskyWeissVeech}
Let $X$ be a Veech surface and let $\cM:=\sltwor\cdot X$ be its closed orbit. There exist constants $C>0$ depending only on the stratum of $X$ and $r_0>0$ depending only on $\cM$ such that the following holds. Assume that $J$ is an interval and $0<\rho<r_0$ is a positive real number such that for any saddle connection $\gamma$ we have
$$
\sup_{\alpha\in J}\big|\hol(\gamma,u_{-\alpha}\cdot X)\big|\geq\rho.
$$
Then for any $\rho'$ with $0<\rho'\leq\rho$ we have
$$
\left|
\left\{
\alpha\in J
\textrm{ ; }
\sys(u_{-\alpha}\cdot X)\leq\rho'
\right\}
\right|
\leq
C\cdot
\frac{\rho'}{\rho}
\cdot
|J|.
$$
\end{corollary}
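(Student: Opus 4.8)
The plan is to re-run the proof of Theorem~\ref{TheoremMinskyWeiss} (Theorem~6.3 of \cite{minskyweiss}), isolating the single place where the exponent $\beta=1/(3m-1)$ is created and replacing it by the exponent $1$. That exponent is the reciprocal of the depth of the quantitative non-divergence recursion used there: the length functions $\alpha\mapsto|\hol(\gamma,u_{-\alpha}\cdot X)|$ have squares that are degree-$2$ polynomials in $\alpha$, so they are $(C_0,1)$-good on every interval, and chaining such estimates across $d$ nested levels of short saddle connections interpolates to the exponent $1/d$; the depth $d$ is bounded by the maximal number $3m-1$ of saddle connections that can be mutually disjoint in their interiors and simultaneously shorter than $S_0$. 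I would show that on a Veech surface this recursion has depth one.

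The crucial input is the sparseness lemma for Veech surfaces, Lemma~\ref{LemmaSparseCorverVeech}: if $X$ is a Veech surface with closed orbit $\cM=\sltwor\cdot X$, there is $r_0=r_0(\cM)>0$, which we may take $<S_0$, such that no $Y\in\cM$ carries two non-parallel saddle connections both shorter than $r_0$. This holds because the function sending $Y$ to the smallest area of a parallelogram spanned by two non-parallel saddle connections of $Y$ is $\sltwor$-invariant, hence constant along the single orbit $\cM$, and is positive on a lattice surface (no arbitrarily thin virtual triangles, a standard property of Veech surfaces); two non-parallel saddle connections of length $<r_0$, with $r_0^2$ equal to that constant, would span a parallelogram of smaller area.

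Granting this, fix $0<\rho'\le\rho<r_0$ and $X\in\cM$ as in the statement, and for a saddle connection $\gamma$ put $I_\gamma:=\{\alpha\in J:|\hol(\gamma,u_{-\alpha}\cdot X)|\le\rho'\}$. If $\alpha\in I_\gamma\cap I_{\gamma'}$ with $\gamma,\gamma'$ non-parallel, then $u_{-\alpha}\cdot X$ has two non-parallel saddle connections of length $\le\rho'<r_0$, contradicting Lemma~\ref{LemmaSparseCorverVeech}; so the $I_\gamma$, one per parallelism class, are pairwise disjoint, and since $\{\alpha\in J:\sys(u_{-\alpha}\cdot X)\le\rho'\}\subset\bigcup_\gamma I_\gamma$, no coexistence of non-parallel short saddle connections ever occurs. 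In the language of \cite{minskyweiss} this means the configuration of short saddle connections on $u_{-\alpha}\cdot X$ is always a single parallelism class, which admits no proper refinement by a non-parallel short sub-collection, so their recursion stops after one step. The remaining depth-one estimate is then the bare $(C_0,1)$-good inequality: for $\gamma$ with $I_\gamma\ne\emptyset$ the hypothesis gives $\sup_{\alpha\in J}|\hol(\gamma,u_{-\alpha}\cdot X)|\ge\rho$, whence $|I_\gamma|\le C_0(\rho'/\rho)|J|$, and the union over the disjoint $I_\gamma$ is controlled by the partition-of-$J$ bookkeeping of \cite{minskyweiss}, yielding $|\{\alpha\in J:\sys(u_{-\alpha}\cdot X)\le\rho'\}|\le C(\rho'/\rho)|J|$ with $C$ depending only on the stratum. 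Proposition~\ref{PropDecayingVeech} then follows from the proof of Proposition~\ref{PropDecaying} verbatim with $\beta=1$.

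I expect the main obstacle to be making the depth-one reduction fully rigorous inside the machinery of \cite{minskyweiss}: one must check that every short sub-configuration produced by their induction is a parallel family under the standing hypothesis $\rho<r_0$, and one must absorb the transition values of $\alpha$, where the short direction jumps from one parallelism class to the next, into the covering/subadditivity argument exactly as they do. A secondary point is to keep $C$ dependent only on the stratum — which is automatic if one uses the stratum-level goodness constants and the combinatorial bound $3m$ rather than any count of saddle connections specific to $X$ — while $r_0$ depends only on $\cM$, which is precisely what Lemma~\ref{LemmaSparseCorverVeech} provides.
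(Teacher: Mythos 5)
Your approach is essentially aligned with the paper's, but two things are worth flagging.

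First, the paper does not re-run the depth-$d$ recursion behind Theorem~6.3 of \cite{minskyweiss} and identify where the exponent is created. Instead it invokes the much simpler Proposition~3.2 of \cite{minskyweiss} directly (restated and reproved in the appendix as Equation~\eqref{EqMinskyWeissAppendix}), which already has exponent $1$ but requires the extra count hypothesis that for every $\alpha\in J$ the number of saddle connections $\gamma$ with $L_\gamma(\alpha)\leq\rho$ is bounded. This hypothesis fails on general surfaces — that is precisely why the general Theorem~6.3 has the weaker exponent — but on a Veech surface it holds with bound $4g-4$ by Lemma~\ref{LemmaSparseCorverVeech}. So the paper bypasses the recursion entirely rather than showing it has depth one, which is the same conclusion reached with substantially less work.

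Second, your final assembly has a gap. Having observed that the $I_\gamma=\{\alpha:|\hol(\gamma,u_{-\alpha}X)|\leq\rho'\}$ are pairwise disjoint across parallelism classes, and that each satisfies $|I_\gamma|\leq C_0(\rho'/\rho)|J|$, you cannot conclude: disjointness gives $\sum_\gamma|I_\gamma|\leq|J|$, while the per-$\gamma$ bound gives $\sum_\gamma|I_\gamma|\leq N\cdot C_0(\rho'/\rho)|J|$ with $N$ the (unbounded) number of classes — neither yields $|\bigcup I_\gamma|\leq C(\rho'/\rho)|J|$. The correct bookkeeping, which the paper carries out, is a two-scale argument: integrate the count $\alpha\mapsto\sharp\{\gamma:L_\gamma(\alpha)\leq\rho\}$ over $J$ and bound it by $M|J|$ using Condition~(2) (this is where sparseness enters, at scale $\rho$, not $\rho'$); separately use $(2,1)$-goodness on the interval $J(\gamma,\rho)$ — where $\|L_\gamma\|_{J(\gamma,\rho)}=\rho$ by continuity — to get $|J(\gamma,\rho')|\leq 2(\rho'/\rho)|J(\gamma,\rho)|$; then compare the two expressions for $\sum_\gamma|J(\gamma,\rho)|$. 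Your write-up defers this to "the partition-of-$J$ bookkeeping of \cite{minskyweiss}" without identifying that the crucial step is applying goodness to the sublevel set $J(\gamma,\rho)$ rather than to $J$ itself.

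Finally, your proposed proof of Lemma~\ref{LemmaSparseCorverVeech} via the $\sltwor$-invariance and positivity of the minimal parallelogram area (the "no small virtual triangles" property of lattice surfaces) is a genuinely different and arguably cleaner argument than the paper's, which instead uses the finiteness of cusps of $\cM$, a uniform lower bound on cylinder areas, and a uniform comparison between the length of a core curve and its boundary saddle connections. Both are valid; yours is shorter if you are willing to cite the virtual-triangle characterization, while the paper's is self-contained within cylinder-decomposition language.
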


\subsubsection{Notation and basic facts for the horocycle}

For any saddle connection $\gamma$ on the translation surface $X$ we write 
$
\hol(\gamma,X)=\big(\re(\gamma,X),\im(\gamma,X)\big)
$. 
When there is no ambiguity on the surface $X$ we simply write
$
\big(\re(\gamma),\im(\gamma)\big)
$. 
Moreover we denote the slope of $\gamma$ by 
$$
\alpha_\gamma:=\frac{\re(\gamma)}{\im(\gamma)}.
$$
The action of $u_\alpha$ does not change the vertical part of the planar development of any geodesic segment, that is $\im(\gamma,u_\alpha\cdot X)=\im(\gamma,X)$ for any geodesic segment $\gamma$ on $X$. According to the previous remark, we write
$$
\hol(\gamma,u_{-\alpha}\cdot X)=
\big(\re(\gamma,\alpha),\im(\gamma)\big).
$$
Recall that for any $\alpha$ and $t$ we have $g_tu_\alpha=u_{e^{2t}\alpha}g_t$. Recall also that $\epsilon$ with $0<\epsilon<\sys(X)$ is fixed, and that we set $K=1/\epsilon$. In this paragraph, in order to simplify the notation, for any real number $\lambda$ we set
$$
G_\lambda:=g_{\lambda\log K}=
\begin{pmatrix}
K^\lambda & 0 \\
0 & K^{-\lambda}
\end{pmatrix}.
$$

\begin{lemma}
\label{LemmaLengthHorocycle}
Let $\gamma$ be a saddle connection for the surface $X$ and let $\alpha\in\RR$. Then for any $\lambda>0$ we have
$$
\left|
\hol(\gamma,G_\lambda\cdot u_{-\alpha}\cdot X)
\right|
=
\sqrt{
\big(K^\lambda\cdot|\im(\gamma)|\cdot\big|\alpha-\alpha_\gamma\big|\big)^2
+
\big(\frac{|\im(\gamma)|}{K^\lambda}\big)^2
}.
$$
\end{lemma}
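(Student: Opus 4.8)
The plan is a direct matrix computation based on the equivariance relation $\hol(\gamma,G\cdot X)=G\big(\hol(\gamma,X)\big)$ recalled in \S\ref{SecBoundedGeodesicsModuliSpaceIntro}, together with the two facts stated just before the lemma: that $u_{-\alpha}$ preserves the imaginary part of a development and that $G_\lambda=g_{\lambda\log K}=\mathrm{diag}(K^\lambda,K^{-\lambda})$. First I would write $\hol(\gamma,X)=\big(\re(\gamma),\im(\gamma)\big)$ and apply $u_{-\alpha}$: this fixes the vertical coordinate and sends the horizontal coordinate to $\re(\gamma)-\alpha\,\im(\gamma)$. Factoring out $\im(\gamma)$ and using the definition $\alpha_\gamma=\re(\gamma)/\im(\gamma)$, this equals $\im(\gamma)\big(\alpha_\gamma-\alpha\big)$, so that $\hol(\gamma,u_{-\alpha}\cdot X)=\big(\im(\gamma)(\alpha_\gamma-\alpha),\ \im(\gamma)\big)$.

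Next I would apply $G_\lambda$, which multiplies the horizontal and vertical coordinates by $K^\lambda$ and $K^{-\lambda}$ respectively, yielding $\hol(\gamma,G_\lambda\cdot u_{-\alpha}\cdot X)=\big(K^\lambda\im(\gamma)(\alpha_\gamma-\alpha),\ K^{-\lambda}\im(\gamma)\big)$. Taking the Euclidean norm, squaring, and noting that the two coordinates are orthogonal so there is no cross term, one gets $\big|\hol(\gamma,G_\lambda\cdot u_{-\alpha}\cdot X)\big|^2=K^{2\lambda}\im(\gamma)^2(\alpha-\alpha_\gamma)^2+K^{-2\lambda}\im(\gamma)^2=\big(K^\lambda|\im(\gamma)||\alpha-\alpha_\gamma|\big)^2+\big(|\im(\gamma)|/K^\lambda\big)^2$, and extracting the square root gives the claimed identity.

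There is essentially no obstacle here: the content is a routine two-by-two computation, and the only point deserving a remark is that the formula presupposes $\im(\gamma)\neq 0$, so that the slope $\alpha_\gamma$ is defined. This is harmless for the intended application, since the saddle connections entering the Decaying estimate of \S\ref{SecDecaying} are never vertical; alternatively, when $\im(\gamma)=0$ both sides of the asserted equality reduce to $K^\lambda|\re(\gamma)|$ (with $u_{-\alpha}$ acting trivially on the vertical coordinate), so the statement persists under the obvious convention. No further input beyond the facts recalled immediately before the lemma is required.
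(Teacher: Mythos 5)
Your proof is correct and follows essentially the same computation as the paper: identify $\hol(\gamma,u_{-\alpha}\cdot X)$ with $(\im(\gamma)(\alpha_\gamma-\alpha),\,\im(\gamma))$, apply the diagonal matrix $G_\lambda$, and take the Euclidean norm. The only difference is that you spell out the $u_{-\alpha}$ action on $\hol(\gamma,X)$ explicitly whereas the paper writes the resulting $(x,y)$ directly; the parenthetical remark on the vertical case $\im(\gamma)=0$ is a harmless bonus.
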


\begin{proof}
Let 
$
(x,y):=\hol(\gamma,u_{-\alpha}\cdot X)
$ 
and observe that  
\begin{eqnarray*}
&&
|y|=|\im(\gamma,u_{-\alpha}\cdot X)|=|\im(\gamma)|
\\
&&
|x|=|\re(\gamma,u_{-\alpha}\cdot X)|=|y|\cdot|\alpha-\alpha_\gamma|.
\end{eqnarray*}
The Lemma follows from
$$
\left|
\hol(\gamma,G_\lambda\cdot u_{-\alpha}\cdot X)
\right|^2
=
\left|
\begin{pmatrix}
K^{\lambda} & 0 \\
0 & K^{-\lambda}
\end{pmatrix}
\cdot
\left(
\begin{array}{c}
x \\
y
\end{array}
\right)
\right|^2
=
K^{2\lambda}|x|^2+K^{-2\lambda}|y|^2.
$$
\end{proof}

In order to avoid ambiguity, in this section we denote by $J\subset\RR$ intervals in the horocycle variable $u_\alpha$, whereas we denote by $I$ intervals in the circle variable, which is parametrized by $r_\theta$. The next Lemma gives an estimate on the distortion in the change of variable. The proof is immediate and thus omitted.

\begin{lemma}
\label{LemmaDistortionArctan}
For any $\alpha_1,\alpha_2$ in $[-1,1]$ we have
$$
\frac{|\alpha_1-\alpha_2|}{2}
\leq
|\arctan(\alpha_1)-\arctan(\alpha_2)|
\leq
|\alpha_1-\alpha_2|.
$$
\end{lemma}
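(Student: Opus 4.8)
The statement is an immediate consequence of elementary calculus, so the plan is short. First I would recall that $\arctan$ is differentiable on all of $\RR$ with derivative $(\arctan)'(t) = 1/(1+t^2)$, and observe that on the interval $[-1,1]$ this derivative satisfies the uniform bounds
$$
\tfrac12 \le \frac{1}{1+t^2} \le 1,
$$
the lower bound being attained at $t = \pm 1$ and the upper bound at $t = 0$. Assuming without loss of generality $\alpha_1 \neq \alpha_2$ (the equal case being trivial), I would then write
$$
\arctan(\alpha_1) - \arctan(\alpha_2) = \int_{\alpha_2}^{\alpha_1} \frac{dt}{1+t^2},
$$
and since the whole path of integration stays inside $[-1,1]$, the integrand is squeezed between $\tfrac12$ and $1$ there; integrating these inequalities and taking absolute values yields
$$
\tfrac12 \, |\alpha_1 - \alpha_2| \le |\arctan(\alpha_1) - \arctan(\alpha_2)| \le |\alpha_1 - \alpha_2|,
$$
which is exactly the claim.

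Equivalently, one could invoke the mean value theorem: there is a point $\xi$ strictly between $\alpha_1$ and $\alpha_2$ with $\arctan(\alpha_1) - \arctan(\alpha_2) = (1+\xi^2)^{-1}(\alpha_1 - \alpha_2)$, and since $\xi \in [-1,1]$ the factor $(1+\xi^2)^{-1}$ lies in $[\tfrac12,1]$, giving the same two bounds. There is essentially no obstacle here; the only thing to note is that the relevant interval (the path of integration, or the mean value point) remains inside $[-1,1]$ precisely because $\alpha_1,\alpha_2\in[-1,1]$, which is also what makes both inequalities sharp. I would present the integral version, since it makes the uniform control of the integrand completely transparent and requires no appeal to an intermediate point.
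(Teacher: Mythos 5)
Your proof is correct; the paper in fact omits the proof of Lemma \ref{LemmaDistortionArctan}, stating only that it is immediate, and the mean-value/integral argument you give is exactly the standard one-line justification the authors had in mind.
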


\subsubsection{Conditional probability along horocycle segments}

Recall that we fix a translation surface $X$ and $\epsilon>0$ such that $\epsilon<\sys(X)$, and that we set $K=1/\epsilon$.

\begin{lemma}
\label{LemmaInitiationDecaying}
There exist at least $(1-M\epsilon^\beta)\cdot K^2$ intervals $J_i\subset[-1,1]$ such that any two of them are disjoint in their interior and any of them satisfies
\begin{eqnarray*}
&&
|J_i|=\frac{2}{K^2}
\\
&&
J_i\cap
\bigcup_{|\im(\gamma)|\leq1}
B\big(\alpha_\gamma,\frac{2\epsilon^2}{|\im(\gamma)|}\big)
=\emptyset,
\end{eqnarray*}
where $M>0$ is a constant depending only on the stratum of $X$.
\end{lemma}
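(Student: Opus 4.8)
The plan is to reduce the claimed count to a Lebesgue measure estimate and then invoke the Minsky--Weiss non-divergence statement, Theorem~\ref{TheoremMinskyWeiss}. With $K=1/\epsilon$ as fixed, write $\cB:=\bigcup_{|\im(\gamma)|\le 1}B\bigl(\alpha_\gamma,2\epsilon^2/|\im(\gamma)|\bigr)$ for the union to be avoided. Partition $[-1,1]$ into $N:=\lfloor K^2\rfloor$ pairwise disjoint intervals of length $2/K^2$; since $\epsilon<1$ and $\beta\le 1$ we have $N\ge(1-\epsilon^\beta)K^2$. It therefore suffices to show that at most $O(\epsilon^\beta)\cdot K^2$ of these intervals meet $\cB$: the remaining ones then satisfy all requirements of the statement.

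The next step is to turn the condition ``$J_i$ meets $\cB$'' into a smallness condition for the systole after applying $G_1=g_{\log K}$. If $J_i$ meets $B(\alpha_\gamma,2\epsilon^2/|\im(\gamma)|)$ with $|\im(\gamma)|\le 1$, then, since $\mathrm{diam}(J_i)=2/K^2=2\epsilon^2\le 2\epsilon^2/|\im(\gamma)|$, every $\alpha\in J_i$ satisfies $|\alpha-\alpha_\gamma|<4\epsilon^2/|\im(\gamma)|$, hence $|\im(\gamma)|\,|\alpha-\alpha_\gamma|<4\epsilon^2$; by Lemma~\ref{LemmaLengthHorocycle} with $\lambda=1$ and $K=1/\epsilon$ this gives $|\hol(\gamma,G_1u_{-\alpha}X)|^2<(4\epsilon)^2+\epsilon^2=17\epsilon^2$, so $\sys(G_1u_{-\alpha}X)\le\sqrt{17}\,\epsilon$. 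Thus every $J_i$ meeting $\cB$ is contained in $\cB'':=\{\alpha\in[-1,1]:\sys(g_{\log K}u_{-\alpha}X)\le\sqrt{17}\,\epsilon\}$. Using $g_{\log K}u_{-\alpha}=u_{-K^2\alpha}g_{\log K}$ and putting $Y:=g_{\log K}X\in\cH$, the change of variables $\alpha'=K^2\alpha$ yields $|\cB''|=K^{-2}\bigl|\{\alpha'\in[-K^2,K^2]:\sys(u_{-\alpha'}Y)\le\sqrt{17}\,\epsilon\}\bigr|$.

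The core of the argument, and the step I expect to be delicate, is verifying the hypothesis of Theorem~\ref{TheoremMinskyWeiss} for $Y$ on the long interval $J':=[-K^2,K^2]$: that $\sup_{\alpha'\in J'}|\hol(\gamma,u_{-\alpha'}Y)|\ge\rho$ for all saddle connections $\gamma$, for some fixed $\rho<S_0$. Since $\hol(\gamma,Y)=\bigl(K\re(\gamma,X),\tfrac1K\im(\gamma,X)\bigr)$ and $|\hol(\gamma,X)|\ge\sys(X)>1/K$, at least one of $|\re(\gamma,X)|,|\im(\gamma,X)|$ exceeds $1/(\sqrt2 K)$. If it is $|\re(\gamma,X)|$, then already at $\alpha'=0$ we get $|\hol(\gamma,Y)|\ge K|\re(\gamma,X)|>1/\sqrt2$; if it is $|\im(\gamma,X)|$, then $|\im(\gamma,Y)|>1/(\sqrt2 K^2)$, so the real part $\re(\gamma,Y)-\alpha'\im(\gamma,Y)$ sweeps an interval of half-width $K^2|\im(\gamma,Y)|>1/\sqrt2$ as $\alpha'$ ranges over $J'$, whence again $\sup_{\alpha'\in J'}|\hol(\gamma,u_{-\alpha'}Y)|>1/\sqrt2$. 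Since $S_0<\sqrt2$ always, we may take $\rho:=S_0/2<S_0$ (so $\rho\le 1/\sqrt2$) and $\rho':=\sqrt{17}\,\epsilon$, the latter being $\le\rho$ once $\epsilon$ is below a threshold depending only on $m$ (absorbed into $r_0$). Theorem~\ref{TheoremMinskyWeiss} then gives $\bigl|\{\alpha'\in J':\sys(u_{-\alpha'}Y)\le\sqrt{17}\,\epsilon\}\bigr|\le C(2\sqrt{17}/S_0)^\beta\epsilon^\beta\cdot 2K^2$, hence $|\cB''|\le 2C(2\sqrt{17}/S_0)^\beta\epsilon^\beta$.

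Finally, because the $J_i$ are disjoint of length $2/K^2$, the number of them contained in $\cB''$ is at most $|\cB''|K^2/2\le C(2\sqrt{17}/S_0)^\beta\epsilon^\beta K^2$, so the number of ``good'' intervals is at least $N-C(2\sqrt{17}/S_0)^\beta\epsilon^\beta K^2\ge(1-M\epsilon^\beta)K^2$ with $M:=1+C(2\sqrt{17}/S_0)^\beta$; as $C$ and $\beta$ are the Minsky--Weiss constants of the stratum and $S_0$ depends only on $m$, the constant $M$ depends only on the stratum. Apart from this bookkeeping, every manipulation is the standard dictionary between slopes, real and imaginary parts, and the $g_t$-action already recorded in Lemmas~\ref{LemmaLengthHorocycle} and~\ref{LemmaDistortionArctan}; the one genuinely substantive point is the uniform lower bound on $\sup_{\alpha'\in J'}|\hol(\gamma,u_{-\alpha'}Y)|$ needed to license Theorem~\ref{TheoremMinskyWeiss} on the very long segment $J'$.
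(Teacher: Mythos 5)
Your proof is correct and follows essentially the same route as the paper: partition $[-1,1]$ into $\lfloor K^2\rfloor$ intervals of length $2/K^2$, show that any such interval meeting the exceptional union forces $\sys(g_{\log K}u_{-\alpha}X)$ to be $O(\epsilon)$ uniformly on that interval, verify the non-degeneracy hypothesis of Theorem~\ref{TheoremMinskyWeiss}, apply the theorem, and count. The only cosmetic differences are that you enlarge the bad set to a sub-level set $\cB''$ of the systole and observe that each bad $J_i$ lies entirely inside $\cB''$ (the paper instead notes that at least half of each bad $J_i$ lies inside one of the balls $B(\alpha_\gamma,2\epsilon^2/|\im(\gamma)|)$), which gives you $\sqrt{17}\epsilon$ in place of the paper's $\sqrt{8}\epsilon$ and a correspondingly different $M$; and you make the conjugation $g_{\log K}u_{-\alpha}=u_{-K^2\alpha}g_{\log K}$ and the choice $\rho=S_0/2<S_0$ explicit before invoking Minsky--Weiss, which the paper leaves implicit. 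Neither change affects the argument.
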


\begin{proof}
The first step in the proof is to show that for any saddle connection $\gamma$ for the surface $X$ we have
$$
\sup_{-1\leq \alpha\leq 1}
\left|
\hol(\gamma,G_1\cdot u_{-\alpha}\cdot X)
\right|
\geq
\frac{1}{\sqrt{2}}.
$$
For any saddle connection $\gamma$ we have either 
$
|\im(\gamma)|\geq\sys(X)/\sqrt{2}
$ 
or 
$
|\re(\gamma)|\geq\sys(X)/\sqrt{2}
$. 
Moreover, according to Lemma \ref{LemmaLengthHorocycle}, for any $\alpha\in[-1,1]$ we have
$$
\left|
\hol(\gamma,G_{1}\cdot u_{-\alpha}\cdot X)
\right|
\geq
K\cdot|\im(\gamma)|\cdot\big|\alpha-\alpha_\gamma\big|.
$$ 
If $|\im(\gamma)|\geq\sys(X)/\sqrt{2}$, choose $\alpha\in[-1,1]$ with $|\alpha-\alpha_\gamma|\geq1$. For such $\alpha$ we have
$$
\left|
\hol(\gamma,G_{1}\cdot u_{-\alpha}\cdot X)
\right|
\geq
K|\im(\gamma)|
\geq
\frac{K\sys(X)}{\sqrt{2}}\geq \frac{1}{\sqrt{2}}.
$$
Otherwise we have 
$
|\re(\gamma)|=|\im(\gamma)|\cdot|\alpha_\gamma|\geq \sys(X)/\sqrt{2}
$, 
thus for $\alpha=0$ one gets
$$
\left|
\hol(\gamma,G_1\cdot X)
\right|
\geq
K|\re(\gamma)|
\geq
\frac{K\sys(X)}{\sqrt{2}}\geq\frac{1}{\sqrt{2}}.
$$

Once the first step is proved, observe that for any saddle connection $\gamma$ with $|\im(\gamma)|\leq 1$ and any $\alpha\in[-1,1]$ such that
$
\left|\alpha-\alpha_\gamma\right|<2\epsilon^2/|\im(\gamma)|
$, 
according to Lemma \ref{LemmaLengthHorocycle} we have
$$
\sys(G_{1}\cdot u_{-\alpha}\cdot X)
\leq
\left|
\hol(\gamma,G_{1}\cdot u_{-\alpha}\cdot X)
\right|
\leq
\sqrt{8}\epsilon.
$$
According to Minsky-Weiss estimate in Theorem \ref{TheoremMinskyWeiss} we have 
$$
\left|
[-1,1]\cap
\bigcup_{|\im(\gamma)|\leq 1}
B\big(\alpha_\gamma,\frac{2\epsilon^2}{|\im(\gamma)|}\big)
\right|
<
C\cdot
\left(
\frac{\sqrt{8}\epsilon}{S_0}
\right)^\beta
\cdot
\big|[-1,1]\big|.
$$
In the union above, any interval 
$
B(\alpha_\gamma,2\epsilon^2/|\im(\gamma)|)
$ 
has length at least $4\epsilon^2$. Divide $[-1,1]$ into $[K^{2}]$ intervals $J_i$ of equal size $|J_i|=2\epsilon^2$ and a remaining set of measure less than $2\epsilon^2$. Any $J_i$ has length less than half the length of any interval in the union, then the union of those $J_i$ which do not satisfy the required property has measure at most 
$
2(\sqrt{8}\epsilon/\rho_0)^\beta\cdot\big|[-1,1]\big|
$. 
The good ones are therefore at least
$$
\left(
1-
\bigg(
\frac{\sqrt{8}\epsilon}{S_0}
\bigg)^\beta
-
\epsilon^2
\right)
\cdot
\frac{\big|[-1,1]\big|}{2\epsilon^2}
\geq
\left(
1-
\bigg(
\frac{\sqrt{8}\epsilon}{S_0}
\bigg)^\beta
-
\epsilon^2
\right)
\cdot
K^2.
$$
\end{proof}

For convenience of notation, for any $j\geq1$ let $\Gamma(X,j)$ be the set of saddle connections $\gamma$ for the surface $X$ such that 
$
K^{j-1}/\sqrt{2}<|\im(\gamma)|\leq K^{j}/\sqrt{2}
$. 
Moreover let $\Gamma(X,0)$ be the set of saddle connections $\gamma$ with $|\im(\gamma)|\leq1/\sqrt{2}$. Set
$$
S_0':=\min\left\{S_0,\frac{1}{\sqrt{8}}\right\}.
$$

\begin{lemma}
\label{LemmaRecursionDecaying}
Let $J$ be an interval such that
\begin{eqnarray*}
&&
|J|=\frac{1}{K^{2n}}
\\
&&
J\cap
\bigcup_{j=0}^{n-1}
\bigcup_{\gamma\in\Gamma(X,j)}
B\big(\alpha_\gamma,\frac{\epsilon^2}{\sqrt{2}\cdot|\im(\gamma)|\cdot K^j}\big)
=\emptyset.
\end{eqnarray*}
Then we have 
$$
\left|
J\cap
\bigcup_{|\im(\gamma)|\leq K^n}
B\big(\alpha_\gamma,\frac{2\epsilon^2}{|\im(\gamma)|\cdot K^n}\big)
\right|
<
C\cdot
\bigg(
\frac{\sqrt{5}\epsilon}{S'_0}
\bigg)^\beta
\cdot
\big|J\big|.
$$
\end{lemma}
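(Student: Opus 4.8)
The strategy is to transport the hypothesis on $J$ into a statement about the rescaled surface $G_n \cdot u_{-\alpha}\cdot X$ for $\alpha$ in a slightly inflated interval, and then apply the Minsky--Weiss non-divergence estimate (Theorem \ref{TheoremMinskyWeiss}) on that surface. First I would fix the interval $J$ satisfying the two hypotheses and set $G_n := g_{n\log K}$, which contracts the horocycle direction by $K^{-2n}$; thus $G_n$ maps a translate of $J$ (of length $|J|=K^{-2n}$) to a fixed-length interval $J'\subset[-1,1]$, say of length $1$, up to a bounded distortion controlled by Lemma \ref{LemmaDistortionArctan}. The point of this change of variables is that a saddle connection $\gamma$ that is short on $g_t r_\theta X$ for $\theta$ corresponding to some $\alpha\in J$ becomes short on $G_n u_{-\alpha}X$, with the length expressible via Lemma \ref{LemmaLengthHorocycle}.

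The key step is to verify the hypothesis of Theorem \ref{TheoremMinskyWeiss} for the surface $X$ along the horocycle segment, namely that for every saddle connection $\gamma$ one has $\sup_{\alpha}\big|\hol(\gamma, G_n u_{-\alpha}X)\big|\geq \rho$ for an appropriate $\rho$ comparable to a fixed constant like $1/\sqrt{2}$ (or $S_0'$). This is exactly where the emptiness hypothesis on $J$ is used: if $\gamma\in\Gamma(X,j)$ for some $0\leq j\leq n-1$, the hypothesis says $J$ avoids $B(\alpha_\gamma, \epsilon^2/(\sqrt 2 |\im\gamma| K^j))$, so $|\alpha-\alpha_\gamma|$ is bounded below on $J$ by that quantity; plugging into Lemma \ref{LemmaLengthHorocycle} with $\lambda=n$ one computes that $|\hol(\gamma, G_n u_{-\alpha}X)|$ is bounded below — the $K^\lambda |\im\gamma||\alpha-\alpha_\gamma|$ term dominates and, using $|\im\gamma|\leq K^j/\sqrt 2$ and the lower bound on the separation, gives something of order $K^{n-j}\epsilon^2 K^{?}$, which one checks is $\gtrsim 1$ after recalling $K=1/\epsilon$. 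For $\gamma\in\Gamma(X,j)$ with $j\geq n$, i.e.\ $|\im\gamma|\geq K^{n-1}/\sqrt 2$, instead the vertical-part term $|\im\gamma|/K^n$ in Lemma \ref{LemmaLengthHorocycle} is already $\gtrsim 1/\sqrt 2$, so one picks any $\alpha$ (or argues as in the second step of Lemma \ref{LemmaInitiationDecaying}, splitting on whether $|\re\gamma|$ or $|\im\gamma|$ is the larger of the two coordinates when $\gamma$ is very short vertically). Combining these cases gives the Minsky--Weiss hypothesis with $\rho$ comparable to $S_0'$.

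Once the hypothesis is established, Theorem \ref{TheoremMinskyWeiss} with $\rho' := \sqrt 5\,\epsilon$ (the constant $\sqrt 5$ chosen to absorb, via Lemma \ref{LemmaLengthHorocycle} and the factor $2$ in the target ball radius, the estimate $|\hol(\gamma, G_n u_{-\alpha}X)|\leq \sqrt 5\,\epsilon$ valid whenever $|\im\gamma|\leq K^n$ and $|\alpha-\alpha_\gamma|<2\epsilon^2/(|\im\gamma|K^n)$) bounds the measure of $\{\alpha\in J' : \sys(G_n u_{-\alpha}X)\leq \sqrt 5\,\epsilon\}$ by $C(\sqrt 5\,\epsilon/S_0')^\beta |J'|$. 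Pulling back through $G_n$ and the $\arctan$ change of coordinates — using Lemma \ref{LemmaDistortionArctan} to control the bounded distortion, which only affects the implied constant — yields the claimed bound on $\big|J\cap\bigcup_{|\im\gamma|\leq K^n}B(\alpha_\gamma, 2\epsilon^2/(|\im\gamma|K^n))\big|$. The main obstacle I anticipate is the bookkeeping in the first case of the key step: one must check that the lower bound on $|\alpha-\alpha_\gamma|$ coming from the hypothesis, multiplied by $K^n|\im\gamma|$, is genuinely bounded below by a constant uniformly over $0\leq j\leq n-1$ and over $\gamma\in\Gamma(X,j)$ — this requires using the \emph{worst} case $|\im\gamma|\approx K^j/\sqrt2$ and carefully tracking powers of $K$ and $\epsilon=1/K$, and it is the place where the precise exponents in Condition \eqref{EqDefDecayingAssumption} were chosen to make things work.
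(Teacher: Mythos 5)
Your overall strategy is the paper's strategy: use the emptiness hypothesis on $J$ to verify the non-degeneracy hypothesis of Theorem \ref{TheoremMinskyWeiss} for a rescaled surface, identify a radius $\rho'$ so that every point of the target union has systole at most $\rho'$, and conclude. But the specific rescaling you propose, $G_n := g_{n\log K}$, does not work; the paper uses $G_{n+1}$, and this is not a cosmetic choice.

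To see the failure in the lower bound: if $\gamma\in\Gamma(X,j)$ for $0\leq j\leq n-1$, the emptiness hypothesis gives $|\alpha-\alpha_\gamma|>\epsilon^2/\big(\sqrt 2\,|\im(\gamma)|K^j\big)=1/\big(\sqrt 2\,|\im(\gamma)|K^{j+2}\big)$, and Lemma \ref{LemmaLengthHorocycle} with $\lambda=n$ yields
$$
\big|\hol(\gamma,G_n u_{-\alpha}X)\big|\ \geq\ K^n|\im(\gamma)||\alpha-\alpha_\gamma|\ >\ \frac{K^{n-j-2}}{\sqrt 2}.
$$
Note that $|\im(\gamma)|$ cancels, so the ``worst case'' you flagged isn't the issue; what fails is that at $j=n-1$ the right-hand side is $1/(\sqrt 2\,K)=\epsilon/\sqrt 2$, which tends to zero, so you cannot take $\rho$ to be a fixed constant. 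Replacing $n$ by $n+1$ gives $K^{n-j-1}/\sqrt 2\geq 1/\sqrt 2$ uniformly in $0\leq j\leq n-1$, which is what the paper does.

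The same miscalibration sinks the upper bound. For $|\im(\gamma)|\leq K^n$ and $|\alpha-\alpha_\gamma|\leq 2\epsilon^2/(K^n|\im(\gamma)|)$, Lemma \ref{LemmaLengthHorocycle} with $\lambda=n$ gives
$$
\big|\hol(\gamma,G_n u_{-\alpha}X)\big|^2 \leq \big(2\epsilon^2\big)^2+\Big(\frac{|\im(\gamma)|}{K^n}\Big)^2,
$$
and the second term can be as large as $1$ when $|\im(\gamma)|$ is close to $K^n$, so the systole of $G_n u_{-\alpha}X$ need not be small at all; your claimed bound $\sqrt 5\,\epsilon$ is false for $G_n$. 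With $G_{n+1}$ the two terms become $4\epsilon^2$ and $\epsilon^2$, giving exactly $\sqrt 5\,\epsilon$ as stated. Finally, the appeal to Lemma \ref{LemmaDistortionArctan} inside this lemma is misplaced: the statement and the Minsky--Weiss step are entirely in the horocycle (slope) variable, and the passage $g_t u_\alpha = u_{e^{2t}\alpha}g_t$ is linear with no distortion; $\arctan$ only enters later, in the proof of Proposition \ref{PropDecaying}, when converting from slopes to directions.
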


\begin{proof}
As in the previous Lemma, the first step in the proof is to show that for any saddle connection $\gamma$ we have
$$
\sup_{\alpha\in J}
\big|\hol(\gamma,G_{n+1}\cdot u_{-\alpha}\cdot X)\big|\geq 
\min\left\{S_0,\frac{1}{\sqrt{8}}\right\}.
$$
Let $\gamma$ be any saddle connection for $X$. According to Lemma \ref{LemmaLengthHorocycle}, for any $\alpha\in J$ we have
$$
\left|
\hol(\gamma,G_{n+1}\cdot u_{-\alpha}\cdot X)
\right|
\geq
K^{n+1}\cdot|\im(\gamma)|\cdot\big|\alpha-\alpha_\gamma\big|.
$$
Suppose that $\gamma\in\Gamma(X,j)$ for some $j$ with $0\leq j\leq n-1$. For any $\alpha\in J$ we have
$$
|\alpha-\alpha_\gamma|>\frac{1}{\sqrt{2}\cdot|\im(\gamma)|\cdot K^{j+2}}
$$ 
and thus 
$$
\left|
\hol(\gamma,G_{n+1}\cdot u_{-\alpha}\cdot X)
\right|
\geq
\frac{K^{n+1}}{\sqrt{2}\cdot K^{j+2}} 
\geq \frac{1}{\sqrt{2}}\geq S_0.
$$
Otherwise, if $|\im(\gamma)|> K^{n-1}/\sqrt{2}$, choose $\alpha\in J$ such that 
$
|\alpha-\alpha_\gamma|\geq |J|/2
$. 
For such $\alpha$ we have
$$
\left|
\hol(\gamma,G_{n+1}\cdot u_{-\alpha}\cdot X)
\right|
\geq
K^{n+1}\cdot|\im(\gamma)|\cdot \frac{|J|}{2}
>
|\im(\gamma)|\cdot \frac{K^{n+1}}{2K^{2n}}\geq 
\frac{1}{\sqrt{8}}.
$$

Once the first step is completed, observe that for any saddle connection $\gamma$ such that 
$|\im(\gamma)|\leq K^n$ and for any real number $\alpha$ such that
$$
|\alpha-\alpha_\gamma|\leq\frac{2\epsilon^2}{K^n|\im(\gamma)|},
$$
according to Lemma \ref{LemmaLengthHorocycle} we have
\begin{eqnarray*}
&&
\sys\big(G_{n+1}\cdot u_{-\alpha}\cdot X\big)
\leq
\left|
\hol(\gamma,G_{n+1}\cdot u_{-\alpha}\cdot X)
\right|
=
\\
&&
\sqrt{
\big(
K^{n+1}\cdot|\im(\gamma)|\cdot\big|\alpha-\alpha_\gamma\big|
\big)^2
+
\big(
\frac{|\im(\gamma)|}{K^{n+1}}
\big)^2
}
\leq\sqrt{5}\epsilon.
\end{eqnarray*}
Then the Lemma follows according to Theorem \ref{TheoremMinskyWeiss}.
\end{proof}

\subsubsection{Proof of Proposition \ref{PropDecaying}}

Let $J_i\subset[-1,1]$ be the intervals given by Lemma \ref{LemmaInitiationDecaying}, which are at least 
$
(1-M\epsilon^\beta)\cdot K^2
$, 
and for any such $J_i$ let $I_i\subset[-\pi/4,\pi/4[$ be its image under the function 
$
\alpha\mapsto\arctan(\alpha)
$. 
Observe that if $\theta$ is a direction of a saddle connection $\gamma$ with $|\im(\gamma)|\leq1$ then we have $l(\theta)\leq1$. According to the properties of the intervals $J_i$ and to Lemma \ref{LemmaDistortionArctan}, any $I_i$ satisfies Condition \eqref{EqDefDecayingAssumption} for $n=1$. 

Consider any interval $I$ satisfying the same Condition for some $n\geq 1$, that is
$$
|I|=\frac{1}{K^{2n}}
\quad\textrm{ and }\quad
I
\cap
\bigcup_{j=0}^{n-1}
\bigcup_{\theta_\gamma\in\cR^{sc}(K,j)}
B\left(\theta_\gamma,\frac{\epsilon^2}{l(\theta_\gamma)\cdot K^j}\right)
=\emptyset
$$
Let $J$ be the image of $I$ under the function $\theta\mapsto\tan(\theta)$ and observe that $|J|\geq K^{-2n}$, since the function $\tan(\cdot)$ has derivative bigger than $1$. Consider any $\theta\in I$ and let $\alpha:=\tan(\theta)$. If there exist some $j$ with $0\leq j\leq n-1$ and some $\gamma\in\Gamma(X,j)$ such that 
$$
|\alpha-\alpha_\gamma|\leq 
\frac{\epsilon^2}{\sqrt{2}|\im(\gamma)|K^j}
$$ 
then the direction $\theta_\gamma=\arctan(\alpha_\gamma)$ of $\gamma$ satisfies  
$
l(\theta_\gamma)\leq |\gamma|\leq \sqrt{2}|\im(\gamma)|\leq K^j
$, 
so that $\theta_\gamma\in\cR^{sc}(K,i)$ for some $i\leq j$, and moreover we have
$$
|\theta-\theta_\gamma|<
|\alpha-\alpha_\gamma|\leq
\frac{\epsilon^2}{\sqrt{2}|\im(\gamma)|K^j}
\leq
\frac{\epsilon^2}{l(\theta_\gamma)K^i},
$$
which is absurd by the assumption on $I$. According to Lemma \ref{LemmaRecursionDecaying} we have
$$
\left|
J\cap
\bigcup_{|\im(\gamma)|\leq K^n}
B\big(\alpha_\gamma,\frac{2\epsilon^2}{|\im(\gamma)|\cdot K^n}\big)
\right|
<
C\cdot
\bigg(
\frac{\sqrt{5}\epsilon}{S_0'}
\bigg)^\beta
\cdot
\big|J\big|
$$
Observe that the set of directions $\cR^{sc}(K,j)$ is contained into the set of all the directions $\theta_\gamma=\arctan(\alpha_\gamma)$ of saddle connections with $|\im(\gamma)|\leq K^n$. According to Lemma \ref{LemmaDistortionArctan} we have
$$
\left|
I\cap
\bigcup_{\theta_\gamma\in\cR^{sc}(K,n)}
B\big(\theta_\gamma,\frac{\epsilon^2}{l(\theta_\gamma)\cdot K^n}\big)
\right|
<
2
C\cdot
\bigg(
\frac{\sqrt{5}\epsilon}{S'_0}
\bigg)^\beta
\cdot
\big|I\big|
$$
Proposition \ref{PropDecaying} is proved.

\section{Bounded geodesics in moduli space}
\label{BoundedGeodesicsModuliSpaceProofs}

\subsection{Proof of Theorem \ref{thmDynamicalJarnickInequality}}
\label{SecBoundedGeodesicsProofs}

Fix a translation surface $X$ and let $(\cR^{sc},l)$ be the resonant set corresponding to saddle connections of $X$ as in \S~\ref{SecIntroHolonomyResonantSets}. In this section we prove Theorem \ref{thmDynamicalJarnickInequality}. An intermediate step in the proof is an analogous statement for the set $\bad(\cR^{sc},\epsilon)$, namely Theorem \ref{ThmJarnikInequalityTranslSurf} in \S~\ref{SecProofThmJarnikInequalityTranslSurf} below, which is itself an immediate application of Theorem \ref{TheoremMetricResultsHolonomyResonantSets} and Theorem \ref{TheoremAbstractJarnickInequality}. The second step in the proof is Lemma \ref{LemmaDaniCorrespondenceFlat} below, which is an adaptation of Proposition 1.1 in \cite{HubertMarcheseUlcigrai} to the language of resonant sets and gives a relation between the sets $\bad^{dyn}(X,\epsilon)$ and $\bad(\cR^{sc},\epsilon)$. Fix a direction $\theta$ on the translation surface $X$. For any saddle connection $\gamma$ on the surface $X$ we write 
$$
\hol(\gamma,r_\theta\cdot X)=
\big(\re(\theta,\gamma),\im(\theta,\gamma)\big).
$$

\begin{lemma}
\label{LemmaDaniCorrespondenceFlat}
Let $X$ be any translation surface and $\theta$ be a direction on $X$. We have
$$
\liminf_{l(\theta_\gamma)\to\infty}|\theta-\theta_\gamma|\cdot l(\theta_\gamma)^2=
\liminf_{|\im(\theta,\gamma)|\to\infty}|\re(\theta,\gamma)|\cdot|\im(\theta,\gamma)|=
\frac{1}{2}
\liminf_{t\to+\infty}\sys(g_tr_\theta\cdot X)^2.
$$
\end{lemma}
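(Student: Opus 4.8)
The plan is to establish the two equalities separately. The first equality is a purely geometric comparison between two ways of measuring how close a saddle connection $\gamma$ is to being parallel to the direction $\theta$; the second is a direct consequence of applying the diagonal flow $g_t$ and computing which power of $e^t$ makes a given saddle connection shortest.

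First I would set up notation. Fix $\theta$ and for a saddle connection $\gamma$ write $\hol(\gamma,r_\theta\cdot X)=(\re(\theta,\gamma),\im(\theta,\gamma))$, so that $|\hol(\gamma,X)|=|\hol(\gamma,r_\theta\cdot X)|=\sqrt{\re(\theta,\gamma)^2+\im(\theta,\gamma)^2}$ is the length of $\gamma$ on $X$. The angle $\theta_\gamma$ of $\gamma$ on $X$ satisfies $\tan(\theta-\theta_\gamma)=\re(\theta,\gamma)/\im(\theta,\gamma)$ in suitable coordinates, and $l(\theta_\gamma)=|\gamma^{min}(\theta_\gamma)|\leq|\hol(\gamma,X)|$, with equality when $\gamma$ realizes the minimal length in its direction. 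The key point for the first equality is that a liminf over $l(\theta_\gamma)\to\infty$ only ever sees, for each direction, the shortest saddle connection in that direction, and for such $\gamma$ we have $l(\theta_\gamma)=|\hol(\gamma,X)|$; moreover along any sequence realizing the liminf the quantity $|\theta-\theta_\gamma|\cdot l(\theta_\gamma)^2$ stays bounded, which forces $|\theta-\theta_\gamma|\to 0$, hence $|\im(\theta,\gamma)|\to l(\theta_\gamma)$ and $|\re(\theta,\gamma)|\sim l(\theta_\gamma)\cdot|\tan(\theta-\theta_\gamma)|\sim l(\theta_\gamma)\cdot|\theta-\theta_\gamma|$. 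Therefore $|\re(\theta,\gamma)|\cdot|\im(\theta,\gamma)|\sim |\theta-\theta_\gamma|\cdot l(\theta_\gamma)^2$ along any such sequence, which gives the first equality once one checks both $\liminf$'s are taken over cofinal families of the same saddle connections (a short argument that any $\gamma$ with $|\im(\theta,\gamma)|$ large but not achieving a small product can be ignored).

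For the second equality I would use the relation $g_t r_\theta\cdot X$ and Lemma-type computations as in \S\ref{SecDecaying}: for a fixed saddle connection $\gamma$ one has $|\hol(\gamma,g_t r_\theta\cdot X)|^2=e^{2t}\re(\theta,\gamma)^2+e^{-2t}\im(\theta,\gamma)^2$, whose minimum over $t>0$ equals $2|\re(\theta,\gamma)|\cdot|\im(\theta,\gamma)|$, attained at $e^{2t}=|\im(\theta,\gamma)|/|\re(\theta,\gamma)|$. Thus $\sys(g_t r_\theta\cdot X)^2$, as a function of $t$, is governed at its recurrent small values by the saddle connections whose $|\re|\cdot|\im|$ product is small, and one shows $\liminf_{t\to\infty}\sys(g_tr_\theta\cdot X)^2 = 2\liminf_{|\im(\theta,\gamma)|\to\infty}|\re(\theta,\gamma)|\cdot|\im(\theta,\gamma)|$: the inequality $\leq$ comes from plugging in, for each $\gamma$ in a sequence realizing the right-hand liminf, the optimal time $t=t(\gamma)$, which tends to $+\infty$ because $|\im(\theta,\gamma)|\to\infty$ while the product stays bounded forces $|\re(\theta,\gamma)|\to 0$; the inequality $\geq$ comes from the fact that whenever $\sys(g_tr_\theta\cdot X)$ is small there is a saddle connection $\gamma$ with $|\hol(\gamma,g_tr_\theta\cdot X)|$ small, and then $2|\re(\theta,\gamma)|\cdot|\im(\theta,\gamma)|\leq|\hol(\gamma,g_tr_\theta\cdot X)|^2$, together with an argument (using that the systole on $r_\theta\cdot X$ is bounded below and the surface has finite area) ensuring that $|\im(\theta,\gamma)|\to\infty$ along a subsequence realizing the left-hand liminf.

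The main obstacle I expect is the bookkeeping in passing between the three liminf's, specifically making sure that the families of saddle connections indexed by "$l(\theta_\gamma)\to\infty$", by "$|\im(\theta,\gamma)|\to\infty$", and by "$t\to\infty$ with $\gamma$ the systole-realizing saddle connection at time $t$" are genuinely interchangeable for the purpose of computing the liminf; the potential trap is a saddle connection that is short on $X$ but has a large vertical component (so contributes to one liminf but not another), or a direction with several short saddle connections. Each of these is handled by the observation that a liminf is unchanged by discarding terms that stay bounded away from the infimum, plus the standard fact (used throughout the paper) that in a fixed direction there are at most $4g-4$ saddle connections and one only cares about the shortest. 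The computations themselves are elementary trigonometry and the one-variable minimization of $e^{2t}A^2+e^{-2t}B^2$.
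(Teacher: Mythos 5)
Your proof is correct and, compared with the paper's, more self-contained on one point. For the second equality,
\[
\liminf_{|\im(\theta,\gamma)|\to\infty}|\re(\theta,\gamma)|\cdot|\im(\theta,\gamma)|
=\tfrac12\liminf_{t\to+\infty}\sys(g_tr_\theta\cdot X)^2,
\]
the paper simply cites Proposition~1.1 of \cite{HubertMarcheseUlcigrai}, whereas you reprove it directly via the one-variable minimization $\min_t\big(e^{2t}A^2+e^{-2t}B^2\big)=2AB$ together with the standard discreteness argument; this is precisely the content of the cited result, so your version buys self-containedness at the cost of a paragraph. For the first equality the two arguments are morally identical: the paper uses the exact identity $|\re(\theta,\gamma)|\cdot|\im(\theta,\gamma)|=|\gamma|^2\sin|\theta-\theta_\gamma|\cos|\theta-\theta_\gamma|$ together with a clean two-sided bound parametrized by a threshold $a_0$, while you use asymptotic equivalences $|\im(\theta,\gamma)|\sim l(\theta_\gamma)$ and $|\re(\theta,\gamma)|\sim l(\theta_\gamma)|\theta-\theta_\gamma|$ along liminf-realizing sequences; the paper's version yields explicit inequalities without $\sim$-bookkeeping. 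Two small items to make your sketch airtight: observe explicitly that if any of the three liminfs equals $+\infty$ the equalities are trivial (your claim that $|\theta-\theta_\gamma|\cdot l(\theta_\gamma)^2$ ``stays bounded'' along the liminf-realizing sequence presupposes this); and in your ``$\geq$'' direction for the second equality, the step ensuring $|\im(\theta,\gamma_n)|\to\infty$ along a systole-minimizing subsequence should be spelled out as: finiteness of $\liminf_t\sys^2$ forces $e^{t_n}|\re(\theta,\gamma_n)|$ bounded hence $|\re(\theta,\gamma_n)|\to 0$, and then discreteness of the holonomy set rules out $|\im(\theta,\gamma_n)|$ remaining bounded.
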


\begin{proof}
The second equality corresponds to Proposition 1.1 in \cite{HubertMarcheseUlcigrai}. In order to see the first equality, let $\theta_\gamma\in\cR^{sc}(X)$ be the direction of a saddle connection $\gamma$ and observe that we have 
$
|\re(\theta,\gamma)|=
|\gamma|\cdot\left|\sin(\theta-\theta_\gamma)\right|
$ 
and
$
|\im(\theta,\gamma)|=
|\gamma|\cdot\left|\cos(\theta-\theta_\gamma)\right|
$, 
therefore 
$$
|\re(\theta,\gamma)|\cdot|\im(\theta,\gamma)|=
|\gamma|^2\sin\big(|\theta-\theta_\gamma|\big)
\cos\big(|\theta-\theta_\gamma|\big)=
|\gamma|^2\frac{\sin\big(2|\theta-\theta_\gamma|\big)}{2}.
$$ 
Moreover fix a constant $a_0>0$ and consider $L>0$ arbitrarily big. If $\gamma$ is a saddle connection  such that we have at the same time $|\im(\theta,\gamma)|\geq L$ and  
$
|\re(\theta,\gamma)|\cdot|\im(\theta,\gamma)|\leq a_0
$, 
then we must have obviously 
$
|\gamma|\geq|\im(\theta,\gamma)|\geq L
$ 
and 
$$
|\theta-\theta_\gamma|
\leq
\tan\big(|\theta-\theta_\gamma|\big)
=
\frac{|\re(\theta,\gamma)|}{|\im(\theta,\gamma)|}
\leq
\frac{a_0}{L^2}.
$$
On the other hand if we have at the same time $|\gamma|\geq L$ and 
$
|\theta-\theta_\gamma|\cdot |\gamma|^2\leq a_0
$, 
we must have 
$
|\im(\theta,\gamma)|\geq|\gamma|/\sqrt{2}>L/\sqrt{2}
$ 
and 
$$
|\re(\theta,\gamma)|\cdot|\im(\theta,\gamma)|
\leq
|\theta-\theta_\gamma|\cdot|\gamma|^2\leq a_0.
$$
The first equality follows. 
\end{proof}

Theorem \ref{thmDynamicalJarnickInequality} follows immediately from Theorem \ref{ThmJarnikInequalityTranslSurf} in the next subsection and from Corollary \ref{CorollaryInclusionsNotionBad} below.

\begin{corollary}
\label{CorollaryInclusionsNotionBad}
Let $X$ be any translation surface. For any $\epsilon>0$ we have
$$
\bigcup_{\epsilon'>\epsilon/\sqrt{2}}
\bad\left(\cR^{sc},\epsilon'\right)
\subset
\bad^{dyn}(X,\epsilon)
\subset
\bad\left(\cR^{sc},\frac{\epsilon}{\sqrt{2}}\right).
$$
\end{corollary}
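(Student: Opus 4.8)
The plan is to deduce both inclusions directly from Lemma \ref{LemmaDaniCorrespondenceFlat}, which is the only nontrivial ingredient; everything else is unwinding definitions. Recall that by definition
$$
\bad^{dyn}(X,\epsilon)=\Big\{\theta\ ;\ \liminf_{t\to+\infty}\sys(g_tr_\theta\cdot X)\geq\epsilon\Big\},
$$
while $\theta\in\bad(\cR^{sc},\epsilon')$ means precisely that $|\theta-\theta_\gamma|\geq (\epsilon')^2/l(\theta_\gamma)^2$ for all but finitely many $\theta_\gamma\in\cR^{sc}$, i.e. that $\liminf_{l(\theta_\gamma)\to\infty}|\theta-\theta_\gamma|\cdot l(\theta_\gamma)^2\geq(\epsilon')^2$. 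So if I set
$$
q(\theta):=\liminf_{l(\theta_\gamma)\to\infty}|\theta-\theta_\gamma|\cdot l(\theta_\gamma)^2,
$$
Lemma \ref{LemmaDaniCorrespondenceFlat} gives the clean identity
$$
q(\theta)=\tfrac12\liminf_{t\to+\infty}\sys(g_tr_\theta\cdot X)^2.
$$
Hence $\theta\in\bad^{dyn}(X,\epsilon)$ iff $q(\theta)\geq\epsilon^2/2$, and $\theta\in\bad(\cR^{sc},\epsilon')$ iff $q(\theta)\geq(\epsilon')^2$.

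Now the two inclusions are immediate comparisons of thresholds. For the right-hand inclusion: if $\theta\in\bad^{dyn}(X,\epsilon)$ then $q(\theta)\geq\epsilon^2/2=(\epsilon/\sqrt2)^2$, which is exactly the condition $\theta\in\bad(\cR^{sc},\epsilon/\sqrt2)$. For the left-hand inclusion: if $\theta\in\bad(\cR^{sc},\epsilon')$ for some $\epsilon'>\epsilon/\sqrt2$, then $q(\theta)\geq(\epsilon')^2>\epsilon^2/2$, hence $\liminf_t\sys(g_tr_\theta\cdot X)^2=2q(\theta)>\epsilon^2$, so in particular $\liminf_t\sys(g_tr_\theta\cdot X)\geq\epsilon$, i.e. $\theta\in\bad^{dyn}(X,\epsilon)$. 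Taking the union over all $\epsilon'>\epsilon/\sqrt2$ yields the claimed inclusion.

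There is really no obstacle here beyond a careful bookkeeping of the constant $\sqrt2$: one must be mindful that $\bad(\cR^{sc},\epsilon')$ is defined with the \emph{square} $(\epsilon')^2$ in the numerator (so it matches $q(\theta)\geq(\epsilon')^2$, not $q(\theta)\geq\epsilon'$), and that $\bad^{dyn}$ carries a factor $1/2$ coming from the $\sin(2x)/2\sim x$ asymptotics inside Lemma \ref{LemmaDaniCorrespondenceFlat}. The strict inequality $\epsilon'>\epsilon/\sqrt2$ on the left is what converts a non-strict $\liminf\geq$ for $q$ into the non-strict $\liminf\geq\epsilon$ defining $\bad^{dyn}(X,\epsilon)$; this is why the left-hand side is written as a union rather than a single set. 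Combining Corollary \ref{CorollaryInclusionsNotionBad} with Theorem \ref{ThmJarnikInequalityTranslSurf} (the analogue of Theorem \ref{thmDynamicalJarnickInequality} for $\bad(\cR^{sc},\epsilon)$, which follows from Theorems \ref{TheoremMetricResultsHolonomyResonantSets} and \ref{TheoremAbstractJarnickInequality}) and using that Hausdorff dimension is monotone and stable under countable unions then gives Theorem \ref{thmDynamicalJarnickInequality}, since replacing $\epsilon$ by a constant multiple of $\epsilon$ affects the bounds only through the constants $c_l,c_u$ and $\epsilon_0$.
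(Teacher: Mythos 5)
Your argument correctly identifies Lemma \ref{LemmaDaniCorrespondenceFlat} as the sole nontrivial ingredient, and the normalization constants and the role of the union on the left-hand side are handled thoughtfully. This is the route the paper intends.

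There is, however, a small but real imprecision in the passage from $q(\theta)$ to membership in $\bad(\cR^{sc},\cdot)$. You assert that $\theta\in\bad(\cR^{sc},\epsilon')$ is \emph{equivalent} to $q(\theta)\geq(\epsilon')^2$, where $q(\theta):=\liminf_{l(\theta_\gamma)\to\infty}|\theta-\theta_\gamma|\cdot l(\theta_\gamma)^2$. Unwinding the definition (with $B(\cdot,\cdot)$ an open ball), $\theta\in\bad(\cR^{sc},\epsilon')$ means there exists $L$ with $\inf_{l(\theta_\gamma)>L}|\theta-\theta_\gamma|\cdot l(\theta_\gamma)^2\geq(\epsilon')^2$, i.e.\ the bound $(\epsilon')^2$ is \emph{eventually attained}; this is strictly stronger than $q(\theta)\geq(\epsilon')^2$, since the $\liminf$ can equal $(\epsilon')^2$ while being approached only from below. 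The correct two-sided relation is
$$
\big\{\theta\;:\;q(\theta)>(\epsilon')^2\big\}\subset\bad(\cR^{sc},\epsilon')\subset\big\{\theta\;:\;q(\theta)\geq(\epsilon')^2\big\}.
$$
Your proof of the left-hand inclusion only invokes the second containment together with the strict gap $\epsilon'>\epsilon/\sqrt{2}$, so that direction is fine (and you flag the role of strictness explicitly). But the right-hand inclusion, in your rendering, passes from $q(\theta)\geq\epsilon^2/2$ to $\theta\in\bad(\cR^{sc},\epsilon/\sqrt{2})$, which is precisely the missing converse. What you can safely conclude from $q(\theta)\geq\epsilon^2/2$ is $\theta\in\bad(\cR^{sc},\delta)$ for every $\delta<\epsilon/\sqrt{2}$. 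For the intended application --- transferring the upper bound in Theorem \ref{ThmJarnikInequalityTranslSurf} to Theorem \ref{thmDynamicalJarnickInequality} --- this is entirely adequate, since one only needs $\dim(\bad^{dyn}(X,\epsilon))\leq\dim(\bad(\cR^{sc},\delta))$ and then lets $\delta\uparrow\epsilon/\sqrt{2}$ using continuity of the bound; but the ``iff'' as stated should be replaced by the one-sided implications you actually use, or the right-hand inclusion should be weakened to $\bad^{dyn}(X,\epsilon)\subset\bigcap_{\delta<\epsilon/\sqrt{2}}\bad(\cR^{sc},\delta)$.
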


\subsection{Hausdorff dimension of $\bad(\cR^{sc},\epsilon)$ for a translation surface $X$}
\label{SecProofThmJarnikInequalityTranslSurf}

Consider a translation surface $X$ with $\area(X)=1$ and total multiplicity at conical singularities $m$ and let $\cH$ be its stratum. If $X$ is a Veech surface, let $\cM:=\sltwor\cdot X$ be the its closed orbit under the action of $\sltwor$. Let $(\cR^{sc},l)$ be the resonant set corresponding to saddle connections of $X$ as in \S~\ref{SecIntroHolonomyResonantSets}. 

\begin{theorem}
\label{ThmJarnikInequalityTranslSurf}
There exist constants $\epsilon_0>0$, $0<\beta\leq 1$, $c_u>0$ and $c_l>0$ which depend only on the integer $m$, such that for any $\epsilon$ with $0<\epsilon< \min\{\epsilon_0,\sys(X)\}$ we have
$$
1-c_l\cdot\frac{\epsilon^{\beta}}{|\log\epsilon|}
\leq
\dim\big(\bad(\cR^{sc},\epsilon)\big)
\leq
1-c_u \cdot\frac{\epsilon}{|\log\epsilon|}.
$$
In general we have $\beta=(3m-1)^{-1}$. Moreover, if $X$ is a Veech surface the same estimate holds with $\beta=1$ and with some $\epsilon_0$ depending only on $\cM$.
\end{theorem}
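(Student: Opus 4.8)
The plan is to read Theorem \ref{ThmJarnikInequalityTranslSurf} off the abstract dimension estimates of Theorem \ref{TheoremAbstractJarnickInequality}, fed with the metric properties of the holonomy resonant set $\cR^{sc}$ supplied by Theorem \ref{TheoremMetricResultsHolonomyResonantSets}, and then to transfer the resulting bounds from $\bad(\cR^{sc},\cdot)$ to $\bad^{dyn}(X,\cdot)$ by means of Corollary \ref{CorollaryInclusionsNotionBad}. That corollary gives
$$
\bad(\cR^{sc},\epsilon)\subset\bad^{dyn}(X,\epsilon)\subset\bad\Big(\cR^{sc},\frac{\epsilon}{\sqrt2}\Big),
$$
so it is enough to estimate $\dim\big(\bad(\cR^{sc},\epsilon')\big)$ from above and from below for $\epsilon'$ comparable to $\epsilon$; the harmless factors $\sqrt2$ and the ratio $|\log(\epsilon/\sqrt2)|/|\log\epsilon|$ are then absorbed into the constants $c_u,c_l$, which only costs shrinking $\epsilon_0$ by an absolute amount.

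For the upper bound I would use part (2) of Theorem \ref{TheoremMetricResultsHolonomyResonantSets}: for every $0<\epsilon<1$ the set $\cR^{sc}$ has the $(\epsilon,U,\tau)$-Dirichlet property with $U=12/(m^2\epsilon^2)$ and $\tau=m\epsilon^2/\sqrt{48}$. Part (1) of Theorem \ref{TheoremAbstractJarnickInequality} then gives
$$
\dim\big(\bad(\cR^{sc},\epsilon)\big)\le 1-\frac{|\log(1-\tau)|}{|\log(\epsilon^2/(8U))|}.
$$
Here $\epsilon^2/(8U)=m^2\epsilon^4/96$, so $|\log(\epsilon^2/(8U))|\le 5|\log\epsilon|$ once $\epsilon$ is below a threshold depending only on $m$, whereas $|\log(1-\tau)|\ge\tau$. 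Combining these and simplifying the logarithmic factors (and running the argument with $\epsilon$ replaced by $\epsilon/\sqrt2$, as the inclusion above requires) yields the stated upper estimate $\dim\big(\bad^{dyn}(X,\epsilon)\big)\le 1-c_u\,\epsilon/|\log\epsilon|$ with $c_u=c_u(m)$.

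For the lower bound I would use part (1) of Theorem \ref{TheoremMetricResultsHolonomyResonantSets}: there are $M=M(m)>0$ and $r_0=r_0(m)>0$ so that for $0<\epsilon<\min\{r_0,\sys(X)\}$ the set $\cR^{sc}$ is $(\epsilon,\tau)$-decaying with $\tau=M\epsilon^\beta$ and $\beta=1/(3m-1)$. Since $\beta<4/3$, for $\epsilon$ below a further $m$-dependent threshold the technical hypothesis $\tau<1-\epsilon^{4/3}$ of part (2) of Theorem \ref{TheoremAbstractJarnickInequality} holds and $|\log(1-\tau-\epsilon^{4/3})|\le 2(\tau+\epsilon^{4/3})\le 4M\epsilon^\beta$, whence
$$
\dim\big(\bad(\cR^{sc},\epsilon)\big)\ge 1-\frac{|\log(1-\tau-\epsilon^{4/3})|}{(4/3)|\log\epsilon|}\ge 1-\frac{3M\epsilon^\beta}{|\log\epsilon|},
$$
and by the left inclusion above the same bound holds for $\bad^{dyn}(X,\epsilon)$ with $c_l=3M$. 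Taking $\epsilon_0$ to be the minimum of $r_0$ and of the finitely many thresholds used above (all depending only on $m$) proves the two inequalities. For a Veech surface $X$ one repeats the argument verbatim but invokes Proposition \ref{PropDecayingVeech} in place of part (1) of Theorem \ref{TheoremMetricResultsHolonomyResonantSets}, which upgrades the decaying exponent to $\beta=1$ at the cost of letting $r_0$, hence $\epsilon_0$, depend on the closed orbit $\cM=\sltwor\cdot X$; the upper bound and all other steps are unchanged.

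The genuine content has already been packaged into Theorems \ref{TheoremMetricResultsHolonomyResonantSets} and \ref{TheoremAbstractJarnickInequality} and into Corollary \ref{CorollaryInclusionsNotionBad}, so the delicate points here are bookkeeping ones: one must check that a single $\epsilon_0$, depending only on $m$ (respectively only on $\cM$ in the Veech case), simultaneously makes the Dirichlet property, the decaying property, the constraint $\tau<1-\epsilon^{4/3}$, and the logarithmic simplifications all valid; and one must verify that the various $\log$ factors combine to produce the asserted $1/|\log\epsilon|$ decay rate rather than a constant. I expect this $\log$-bookkeeping and the uniform choice of $\epsilon_0$ to be the main — and essentially the only — obstacle.
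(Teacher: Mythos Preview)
Your proposal is correct and follows essentially the same route as the paper: feed the Dirichlet and decaying properties of $\cR^{sc}$ from Theorem \ref{TheoremMetricResultsHolonomyResonantSets} into the abstract bounds of Theorem \ref{TheoremAbstractJarnickInequality}, then simplify the logarithms. Two clarifications are worth noting. First, the paper's own proof works directly with $\bad(\cR^{sc},\epsilon)$ rather than $\bad^{dyn}(X,\epsilon)$ (the occurrence of $\bad^{dyn}$ in the displayed inequality of the statement is a typo --- see the section title and the sentence before Corollary \ref{CorollaryInclusionsNotionBad}, which says Theorem \ref{thmDynamicalJarnickInequality} follows from Theorem \ref{ThmJarnikInequalityTranslSurf} \emph{and} that corollary); so your invocation of Corollary \ref{CorollaryInclusionsNotionBad} here is organizationally one step ahead but harmless. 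Second, your computation, like the paper's, actually yields $c_u\,\epsilon^2/|\log\epsilon|$ in the upper bound (since $\tau=m\epsilon^2/\sqrt{48}$), matching Theorem \ref{thmDynamicalJarnickInequality}; the exponent $1$ on $\epsilon$ in the stated upper bound is another typo, not a gap in your argument.
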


\begin{proof}
Fix any $\epsilon$ as in the statement. The statement follows combining Theorem \ref{TheoremMetricResultsHolonomyResonantSets} and Theorem \ref{TheoremAbstractJarnickInequality}. In order to prove the upper bound, observe that $\tau\leq|\log(1-\tau)|$ for any $\tau\geq0$ by convexity of the logarithm function. Therefore, since the resonant set $\cR^{sc}$ satisfies $(\epsilon,K,\tau)$-Decaying with constants $U=12/(m^2\epsilon^2)$ and $\tau=m\epsilon^2/\sqrt{48}$, we have
$$
\dim\big(\bad(\cR^{sc},\epsilon)\big)
\leq
1-
\frac
{|\log(1-\tau)|}
{|\log(\epsilon^2/(5U))|}
\leq
1-\frac{m\epsilon^2/\sqrt{48}}{|\log(m^2\epsilon^4/60)|}
\leq
1-\frac{m}{4\sqrt{48}}\frac{\epsilon^2}{|\log(\epsilon)|}.
$$
In order to prove the lower bound, observe that there is some universal $\tau_0>0$ such that $|\ln(1-\tau)|>\tau/2$ for $0\leq\tau\leq \tau_0$. Since the resonant set $(\cR^{sc},l)$ also satisfies $(\epsilon,\tau)$-decaying with $\tau=M\epsilon^\beta$, for $\epsilon$ small enough (in terms of $\tau_0$ and of the constants in the explicit form of $\tau$) we have
$$
\dim
\big(\bad(\cR^{sc},\epsilon)\big)
\geq
1-\frac{|\log(1-\tau-\epsilon^{4/3})|}{4/3|\log(\epsilon)|}
\geq
1-\frac{M}{2\cdot 4/3}\frac{\epsilon^\beta}{|\log(\epsilon)|}.
$$
The proof of the last inequality in case of Veech surfaces, where $\beta=1$, is similar.
\end{proof}

\section{Unbounded geodesics in moduli space}
\label{ChapterExcursionsGeodesicsParameterSpace}

Fix a translation surface $X$ and a direction $\theta$ on $X$. Recall that for any saddle connection/closed geodesic $\gamma$ on the surface $X$ we write 
$$
\hol(\gamma,r_\theta\cdot X)=
\big(\re(\theta,\gamma),\im(\theta,\gamma)\big).
$$
In this section we often consider the positive instant $t(\theta,\gamma)\in\RR$ such that
$$
e^{t(\theta,\gamma)}|\re(\gamma,\theta)|=
e^{-t(\theta,\gamma)}|\im(\gamma,\theta)|.
$$
The length 
$
|\hol(\gamma,g_tr_\theta\cdot X)|
$ 
is minimal for $t=t(\theta,\gamma)$, and the minimal value is  
$$
|\hol(\gamma,g_{t(\theta,\gamma)}r_\theta\cdot X)|
=
\sqrt{2|\re(\theta,\gamma)|\cdot|\im(\theta,\gamma)|}.
$$

\subsection{Khinchin-Jarn\'ik Theorem for cylinders and saddle connections}

\begin{theorem}
\label{ThmKhinchinJarnickTranslationSurfaces}
Let $X$ be any translation surface and consider an approximation function $\psi$ and a dimension function $f$ such that $t\mapsto tf\circ\psi(t)$ is decreasing monotone for $t>0$. Let $\cR$ denote indifferently $\cR^{sc}$ of $\cR^{cyl}$.
\begin{enumerate}
\item
If
$
\int_0^{+\infty}tf\big(\psi(t)\big)dt
$ 
converges as $t\to+\infty$, then $H^f\big(W(\cR,\psi)\big)=0$. Consequently, for any 
$
\theta\not\in W(\cR,\psi)
$ 
and for all saddle connections/closed geodesic $\gamma$ long enough we have
$$
|\re(\theta,\gamma)|>|\gamma|\psi\big(|\gamma|\big).
$$
\item
If
$
\int_0^{+\infty}tf\big(\psi(t)\big)dt
$ 
diverges as $t\to+\infty$, then 
$
H^f\big(W(\cR,\psi)\big)=H^f\big([-\pi/2,\pi/2]\big)
$. 
Consequently, for any $\theta\in W(\cR,\psi)$ there exist infinitely many saddle connections/closed geodesic $\gamma$ such that
$$
|\re(\theta,\gamma)|<
|\gamma|\psi\big(|\gamma|\big)
$$
\end{enumerate}
\end{theorem}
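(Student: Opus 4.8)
The plan is to deduce this statement from the abstract Khinchin--Jarn\'ik dichotomy, Theorem \ref{thmabstractkhinchinjarnik}, applied to the two planar resonant sets $\cR^{sc}$ and $\cR^{cyl}$ of the surface $X$, together with the metric properties collected in Theorem \ref{TheoremMetricResultsHolonomyResonantSets}. First I would verify the hypotheses of the abstract theorem for each choice of $\cR$. Quadratic growth \textbf{QG} holds for both $\cR^{sc}$ and $\cR^{cyl}$ by the Eskin--Masur estimate quoted after Equation \eqref{EqInclusionResonantSets}. So part (1), the convergent case, applies verbatim to give $H^f\big(W(\cR,\psi)\big)=0$; note the summability of $\sum_{n}nf(\psi(n))$ is equivalent to the convergence of the integral $\int_0^\infty tf(\psi(t))\,dt$ because $t\mapsto tf\circ\psi(t)$ is monotone decreasing, which is exactly the standing assumption. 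For the divergent case (part (2)) the abstract theorem also needs ubiquity \textbf{U} and isotropic quadratic growth \textbf{IQG}; by parts (3) and (4) of Theorem \ref{TheoremMetricResultsHolonomyResonantSets} these hold for $\cR^{cyl}$, so $H^f\big(W(\cR^{cyl},\psi)\big)=H^f\big([-\pi/2,\pi/2[\big)$. The inclusion $W(\cR^{cyl},\psi)\subset W(\cR^{sc},\psi)$ from \eqref{EqInclusionResonantSets} then forces the same full measure for $\cR^{sc}$, so the divergent conclusion holds for $\cR$ denoting either set, even though $\cR^{sc}$ itself need not satisfy \textbf{IQG}.

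Next I would translate the two conclusions into the stated inequalities on the horizontal development $|\re(\theta,\gamma)|$. The key geometric observation is the identity recorded at the start of the section: writing $\hol(\gamma,r_\theta\cdot X)=(\re(\theta,\gamma),\im(\theta,\gamma))$, for a saddle connection/closed geodesic $\gamma$ with direction $\theta_\gamma$ one has $|\re(\theta,\gamma)|=|\gamma|\cdot|\sin(\theta-\theta_\gamma)|$, hence for $\theta$ close to $\theta_\gamma$ the comparison $|\theta-\theta_\gamma|\asymp |\re(\theta,\gamma)|/|\gamma|$ holds up to a bounded factor (as in Lemma \ref{LemmaDaniCorrespondenceFlat}). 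Consequently $\theta\in B\big(\theta_\gamma,\psi(|\gamma|)\big)$ is, up to harmless constant adjustments of $\psi$ absorbed into the monotone class of approximation functions, equivalent to $|\re(\theta,\gamma)|<|\gamma|\psi(|\gamma|)$. In the convergent case, $\theta\notin W(\cR,\psi)$ means $\theta$ lies in only finitely many balls $B\big(\theta_\gamma,\psi(l(\theta_\gamma))\big)$; since for each direction $\theta_\gamma$ there are at most $4g-4$ parallel saddle connections, and $l(\theta_\gamma)$ is the minimal length among them, this yields $|\re(\theta,\gamma)|>|\gamma|\psi(|\gamma|)$ for all $\gamma$ with $|\gamma|$ large enough (using that $|\gamma|\geq l(\theta_\gamma)$ and $\psi$ decreasing). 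In the divergent case, $\theta\in W(\cR,\psi)$ means $\theta$ lies in infinitely many such balls, producing infinitely many $\gamma$ with $|\re(\theta,\gamma)|<|\gamma|\psi(|\gamma|)$.

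The routine but slightly delicate point — which I expect to be the main technical obstacle — is the bookkeeping between $l(\theta_\gamma)=|\gamma^{min}(\theta_\gamma)|$ and the length $|\gamma|$ of the particular saddle connection witnessing the approximation, and the analogous issue for cylinders where one further restricts to $\area(C_\sigma)>1/m$. One must check that replacing $l(\theta_\gamma)$ by $|\gamma|$ only shrinks the balls (since $l(\theta_\gamma)\le|\gamma|$ and $\psi$ is decreasing), so the convergent-case conclusion is not weakened, while in the divergent case one can simply take $\gamma=\gamma^{min}(\theta_\gamma)$ so that $|\gamma|=l(\theta_\gamma)$ exactly. The only care needed is that the monotonicity hypothesis $l\mapsto lf\circ\psi(l)$ decreasing must be preserved when one passes between the sum $\sum_n n f(\psi(n))$ and the integral $\int_0^\infty tf(\psi(t))\,dt$; this is a standard Cauchy condensation argument and causes no difficulty. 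No separate argument is needed for the full Hausdorff $f$-measure versus merely positive measure: the abstract Theorem \ref{thmabstractkhinchinjarnik} already delivers $H^f\big(W(\cR,\psi)\big)=H^f\big([-\pi/2,\pi/2[\big)$ directly.
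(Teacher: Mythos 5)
Your proposal is correct and follows essentially the same route as the paper's own proof: apply the abstract Theorem~\ref{thmabstractkhinchinjarnik} using quadratic growth for both $\cR^{sc}$ and $\cR^{cyl}$ in the convergent case, use ubiquity and isotropic quadratic growth of $\cR^{cyl}$ together with the inclusion $W(\cR^{cyl},\psi)\subset W(\cR^{sc},\psi)$ from \eqref{EqInclusionResonantSets} in the divergent case, and translate the ball-inclusion conditions into bounds on $|\re(\theta,\gamma)|$ via $|\re(\theta,\gamma)|=|\gamma|\,|\sin(\theta-\theta_\gamma)|$, the inequality $l(\theta_\gamma)\leq|\gamma|$, and monotonicity of $\psi$. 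The only point worth making explicit (which the paper does, via the replacement $\psi\mapsto 2\psi$) is that the bound $|\sin x|\geq \tfrac12 |x|$ introduces a factor of $2$ in the convergent-case estimate, which is absorbed by rescaling $\psi$ since $2\psi$ has the same convergence type; your phrase about ``harmless constant adjustments of $\psi$'' covers this, and your extra remark about the $4g-4$ bound on parallel saddle connections is not actually needed for the argument.
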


\begin{proof}
Both $\cR^{sc}$ and $\cR^{cyl}$ satisfy quadratic growth, thus in the first part of the statement we have 
$
H^f\big(W(\cR,\psi)\big)=0
$ 
both for $\cR=\cR^{sc}$ and $\cR=\cR^{cyl}$, according to Theorem \ref{thmabstractkhinchinjarnik}. For any $\theta\not\in W(\cR,\psi)$ and any saddle connection/closed geodesic $\gamma$ long enough we have
$$
|\re(\gamma,\theta)|=|\gamma|\cdot\sin\big(|\theta-\theta_\gamma|\big)\geq
0.5\cdot |\gamma|\cdot|\theta-\theta_\gamma|\geq
0.5\cdot |\gamma|\cdot\psi\big(l(\theta_\gamma)\big)\geq
0.5\cdot |\gamma|\cdot\psi\big(|\gamma|\big),
$$
where the first inequality holds because $|\sin(x)|\geq 0.5\cdot|x|$ whenever $|x|\leq \pi/2$, and the last one holds because $\psi(\cdot)$ is decreasing monotone and 
$
l(\theta_\gamma)\leq|\gamma|
$ 
for any $\gamma$. The first part of the statement follows replacing the approximation function $\psi(\cdot)$ by $2\psi(\cdot)$, which satisfies the same convergence assumption.

In order to prove the second part of the statement, observe that Theorem \ref{thmabstractkhinchinjarnik} and Theorem \ref{TheoremMetricResultsHolonomyResonantSets} imply 
$
H^f\big(W(\cR^{cyl},\psi)\big)=H^f\big([-\pi/2,\pi/2[\big)
$. 
Then according to Equation \eqref{EqInclusionResonantSets} we have also
$
H^f\big(W(\cR^{sc},\psi)\big)=H^f\big([-\pi/2,\pi/2[\big)
$. 
Both for $\cR=\cR^{sc}$ and $\cR=\cR^{cyl}$, and for any $\theta\in W(\cR,\psi)$ there exist infinitely many saddle connections/closed geodesics $\gamma$ in direction $\theta_\gamma\in\cR$ such that 
$$
|\re(\gamma,\theta)|=|\gamma|\cdot\sin\big(|\theta-\theta_\gamma|\big)\leq
|\gamma|\cdot|\theta-\theta_\gamma|\leq
|\gamma|\cdot\psi\big(l(\theta_\gamma)\big)=
|\gamma|\cdot\psi\big(|\gamma|\big).
$$
Here the last equality holds because we can assume $|\gamma|=l(\theta_\gamma)$ for all $\gamma$. The second part of the statement is proved.
\end{proof}

\subsection{Proof of Theorem \ref{thmDynamicalKhinchin}}
\label{SecProofDynamicalKhinchin}

In this subsection we use the following elementary Lemma, whose proof is left to the reader.

\begin{lemma}
\label{LemChangeVariableLogLaws}
Consider a decreasing function $\varphi:\RR_+\to\RR_+$ and $a>0$, then define a function 
$
\psi:(1,+\infty)\to\RR_+
$ 
by
$$
\psi(s):=\frac{1}{s^2}\cdot\varphi\left(\frac{\ln s}{a}\right).
$$
Then the function $s\mapsto s^2\psi(s)$ is decreasing monotone and for any $t>0$ we have
$$
\varphi(t)=e^{2at}\psi(e^{at}).
$$ 
Moreover $\int_0^{+\infty}\varphi(t)dt$ diverges at $t=+\infty$ if and only if 
$
\int_1^{+\infty}s\psi(s)ds
$ 
diverges at $s=+\infty$.
\end{lemma}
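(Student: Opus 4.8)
The plan is to verify the three assertions directly, the single key device being the change of variable $s=e^{at}$, equivalently $t=(\ln s)/a$, which is a strictly increasing bijection from $(0,+\infty)$ onto $(1,+\infty)$.

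First I would rewrite the definition of $\psi$ in the form $s^{2}\psi(s)=\varphi\big((\ln s)/a\big)$. Since $a>0$, the map $s\mapsto(\ln s)/a$ is strictly increasing on $(1,+\infty)$, while $\varphi$ is decreasing by hypothesis; hence the composition $s\mapsto s^{2}\psi(s)$ is decreasing monotone, which is the first claim. For the second claim one simply substitutes $s=e^{at}$ into the identity $s^{2}\psi(s)=\varphi\big((\ln s)/a\big)$, obtaining $e^{2at}\psi(e^{at})=\varphi(t)$ for every $t>0$.

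Finally, for the equivalence of divergence I would perform the same change of variable inside $\int_{0}^{+\infty}\varphi(t)\,dt$. Note first that $\varphi$, being decreasing, is locally bounded and measurable, so the possibly improper integral is well defined with values in $[0,+\infty]$. With $dt=\tfrac{ds}{as}$, the correspondence of limits $t=0\leftrightarrow s=1$ and $t=+\infty\leftrightarrow s=+\infty$, and $\varphi(t)=e^{2at}\psi(e^{at})=s^{2}\psi(s)$, we get
$$
\int_{0}^{+\infty}\varphi(t)\,dt=\int_{1}^{+\infty}s^{2}\psi(s)\,\frac{ds}{as}=\frac{1}{a}\int_{1}^{+\infty}s\psi(s)\,ds .
$$
Since $a$ is a fixed positive constant, the left-hand side is finite exactly when the right-hand side is, which is the last claim.

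As for difficulties: there is essentially no obstacle, this being a bookkeeping lemma. The only points deserving a word of care are that the monotonicity of $\varphi$ guarantees measurability so the integrals make sense, and that the substitution $s=e^{at}$ is legitimate (it is smooth and strictly monotone, so it applies to any integrand, in particular to the monotone one here); both are routine, which is why the statement is presented as left to the reader.
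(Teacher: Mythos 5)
Your proof is correct and is the only natural argument: the paper explicitly leaves this lemma to the reader, and your change of variable $s=e^{at}$ is precisely the intended bookkeeping. The one tiny imprecision is that the lemma speaks of divergence \emph{at} $+\infty$ (tail divergence), while your final equality compares the full integrals, which could both be infinite for an unrelated reason if $\varphi$ were non-integrable near $0$; but your substitution restricts verbatim to $[T,+\infty)\leftrightarrow[e^{aT},+\infty)$, giving $\int_T^{+\infty}\varphi(t)\,dt=\tfrac1a\int_{e^{aT}}^{+\infty}s\psi(s)\,ds$, so the tail statement follows and no real gap exists.
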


\subsubsection{Proof of convergent case}

\begin{lemma}
\label{LemmaDynamicalKhinchinConvergent}
Let $\varphi:\RR_+\to\RR_+$ be a decreasing function such that $\int_0^{+\infty}\varphi(t)dt$ converges at $t=+\infty$. Then for almost any $\theta$ we have
$$
\liminf_{t\to\infty}
\frac{\sys(g_tr_\theta\cdot X)}{\sqrt{\varphi(t)}}\geq1.
$$
\end{lemma}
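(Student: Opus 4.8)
The plan is to reduce the dynamical statement to the convergent half of the abstract Khinchin–Jarník theorem (Theorem \ref{thmabstractkhinchinjarnik}, part (1)) applied to the resonant set $\cR^{sc}$, via the change of variables in Lemma \ref{LemChangeVariableLogLaws} and the comparison between $\sys(g_tr_\theta X)$ and approximations of $\theta$ by directions of saddle connections. First I would set $a:=1$ in Lemma \ref{LemChangeVariableLogLaws} and define $\psi(s):=s^{-2}\varphi(\ln s)$; then $s\mapsto s^2\psi(s)$ is decreasing monotone, $\varphi(t)=e^{2t}\psi(e^t)$, and convergence of $\int_0^\infty\varphi(t)\,dt$ is equivalent to convergence of $\int_1^\infty s\psi(s)\,ds$, hence (comparing the integral with the series, using monotonicity of $s\mapsto s\psi(s)$) to convergence of $\sum_n n\psi(n)$. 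Taking the dimension function $f$ to be the identity $f(r)=r$, the hypothesis of Theorem \ref{thmabstractkhinchinjarnik}(1) is satisfied, so $H^1\big(W(\cR^{sc},\sqrt{2}\,\psi)\big)=0$; since $H^1$ is Lebesgue measure on $[-\pi/2,\pi/2[$, for almost every $\theta$ we have $\theta\notin W(\cR^{sc},\sqrt{2}\,\psi)$.

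Next I would translate $\theta\notin W(\cR^{sc},\sqrt{2}\,\psi)$ into a lower bound for the systole. By definition of $W$, there is $L_0=L_0(\theta)$ such that for every saddle connection $\gamma$ with $l(\theta_\gamma)>L_0$ one has $|\theta-\theta_\gamma|\geq \sqrt{2}\,\psi\big(l(\theta_\gamma)\big)$. Fix $t>0$ large and let $\gamma$ be the shortest saddle connection on $g_tr_\theta X$, so $\sys(g_tr_\theta X)=|\hol(\gamma,g_tr_\theta X)|$. Writing $\hol(\gamma,r_\theta X)=(\re(\theta,\gamma),\im(\theta,\gamma))$ with $|\re(\theta,\gamma)|=|\gamma|\sin|\theta-\theta_\gamma|$ and $|\im(\theta,\gamma)|=|\gamma|\cos|\theta-\theta_\gamma|$, one has
\begin{equation*}
\sys(g_tr_\theta X)^2 = e^{2t}\re(\theta,\gamma)^2 + e^{-2t}\im(\theta,\gamma)^2 \geq 2|\re(\theta,\gamma)|\,|\im(\theta,\gamma)| = |\gamma|^2\sin\big(2|\theta-\theta_\gamma|\big).
\end{equation*}
For $t$ large the minimizing $\gamma$ has $|\gamma|\to\infty$ and $|\theta-\theta_\gamma|\to 0$ (otherwise $\sys$ would not go below $S_0$; here I use that $\sys(g_tr_\theta X)$ is small for the relevant $t$, and if it is not small the desired inequality is trivial), so $|\gamma|\geq l(\theta_\gamma)>L_0$ eventually, and $\sin(2x)\geq 2x$ is false — rather $\sin(2x)\geq x$ for $x$ small — giving $\sys(g_tr_\theta X)^2 \geq |\gamma|^2|\theta-\theta_\gamma| \geq |\gamma|^2\cdot\sqrt{2}\,\psi\big(l(\theta_\gamma)\big) \geq l(\theta_\gamma)^2\cdot\sqrt{2}\,\psi\big(l(\theta_\gamma)\big)$, where the last step uses $l(\theta_\gamma)\leq|\gamma|$ together with monotonicity of $s\mapsto s^2\psi(s)$. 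One also has the crude bound $e^{-t}\leq |\hol(\gamma,g_tr_\theta X)|/\im$, i.e. $|\gamma|\cos|\theta-\theta_\gamma|\geq e^{-t}\cdot(\text{something})$; more precisely, since $e^{-t}|\im(\theta,\gamma)|\leq\sys(g_tr_\theta X)$ is small, $|\gamma|\geq|\im(\theta,\gamma)|$ forces $|\im(\theta,\gamma)|$ comparable to $|\gamma|e^{t}\cdot\text{(small)}$...

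The cleaner route, which I would actually carry out, is to use the instant $t(\theta,\gamma)$ introduced at the start of \S\ref{ChapterExcursionsGeodesicsParameterSpace}: for each saddle connection $\gamma$, $|\hol(\gamma,g_tr_\theta X)|$ is minimized at $t=t(\theta,\gamma)$ with minimal value $\sqrt{2|\re(\theta,\gamma)|\,|\im(\theta,\gamma)|}$, and $|\im(\theta,\gamma)|\sim|\gamma|$. Given $t$, the saddle connection $\gamma$ realizing $\sys(g_tr_\theta X)$ satisfies $|t-t(\theta,\gamma)|$ bounded and $e^{-|t-t(\theta,\gamma)|}\sqrt{2|\re||\im|}\leq \sys(g_tr_\theta X)$, so up to absorbing bounded factors into the $\liminf$ (which does not affect the value $\geq 1$ after rescaling $\psi$ by a constant) it suffices to show $2|\re(\theta,\gamma)|\,|\im(\theta,\gamma)|\geq \varphi(t(\theta,\gamma))$ for all $\gamma$ long enough, and then use $\varphi(t(\theta,\gamma))=e^{2t(\theta,\gamma)}\psi(e^{t(\theta,\gamma)})$ with $e^{t(\theta,\gamma)}$ comparable to $l(\theta_\gamma)$. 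This is exactly $|\re(\theta,\gamma)|\geq |\gamma|\psi(|\gamma|)$ up to constants, which is the conclusion of Theorem \ref{ThmKhinchinJarnickTranslationSurfaces}(1) applied with this $\psi$. Thus I would simply invoke Theorem \ref{ThmKhinchinJarnickTranslationSurfaces}(1): for almost every $\theta$, $\theta\notin W(\cR^{sc},\psi)$, hence for all saddle connections $\gamma$ long enough $|\re(\theta,\gamma)|>|\gamma|\psi(|\gamma|)$; combining with $\sys(g_tr_\theta X)^2\geq 2|\re(\theta,\gamma)||\im(\theta,\gamma)|$ for $\gamma$ the minimizer and the estimates $|\im(\theta,\gamma)|\geq|\gamma|/\sqrt2\geq l(\theta_\gamma)/\sqrt2$, $e^{t(\theta,\gamma)}\asymp l(\theta_\gamma)$, and $\varphi(t)=e^{2t}\psi(e^t)$, one obtains $\liminf_{t\to\infty}\sys(g_tr_\theta X)^2/\varphi(t)\geq c$ for an absolute constant $c>0$; rescaling $\varphi$ by $c$ (legitimate since the integral condition is scale-invariant) upgrades this to $\geq 1$. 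The main obstacle is the bookkeeping of the change of variables and the uniform control of the bounded factor $|t-t(\theta,\gamma)|$ relating the continuous-time systole to the discrete family of minimizing instants; this is handled by the observation that $\sys(g_tr_\theta X)$ as a function of $t$ is controlled above and below by $e^{\pm|t-t(\theta,\gamma)|}$ times its value at the nearest relevant $t(\theta,\gamma)$, so only a bounded distortion enters, which disappears in the scale-invariant $\liminf$.
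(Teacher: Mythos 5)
Your overall plan---apply the change of variables in Lemma~\ref{LemChangeVariableLogLaws} with $a=1$, feed the resulting $\psi$ into the convergent case of Theorem~\ref{ThmKhinchinJarnickTranslationSurfaces} (equivalently Theorem~\ref{thmabstractkhinchinjarnik}(1) for $\cR^{sc}$ with $f$ the identity), and then convert the diophantine conclusion into a lower bound for the systole---is the same as the paper's, and the first half of your argument is correct. The paper, however, argues in the contrapositive direction: it lets $\cW$ be the set of $\theta$ for which $\sys(g_tr_\theta\cdot X)^2\le\varphi(t)$ at arbitrarily large $t$, and for such a $t$ and the minimizing saddle connection $\gamma$ it observes that $|\gamma|\le e^t$ (since $e^{-t}|\gamma|\le|\hol(\gamma,g_tr_\theta\cdot X)|\le\sqrt{\varphi(t)}<1$), whence
$$
|\re(\theta,\gamma)|\,|\gamma|<2|\re(\theta,\gamma)|\,|\im(\theta,\gamma)|\le\sys(g_tr_\theta\cdot X)^2\le\varphi(t)=e^{2t}\psi(e^t)\le|\gamma|^2\psi(|\gamma|).
$$
After noting by discreteness of the holonomy vectors that $\gamma$ is forced to be arbitrarily long as $t\to\infty$, this shows $\cW$ lies in a Lebesgue-null set via Theorem~\ref{ThmKhinchinJarnickTranslationSurfaces}(1). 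The inequality $|\gamma|\le e^t$ is the key piece of bookkeeping, and in this direction it comes essentially for free.

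Your ``cleaner route'' contains a genuine gap. The assertion $e^{t(\theta,\gamma)}\asymp l(\theta_\gamma)$ is false in general: $e^{2t(\theta,\gamma)}=|\im(\theta,\gamma)|/|\re(\theta,\gamma)|$, and since $|\im(\theta,\gamma)|\asymp|\gamma|$, this is $\asymp|\gamma|^2$ only when $|\re(\theta,\gamma)|\asymp 1/|\gamma|$, which fails exactly when $\theta_\gamma$ approximates $\theta$ unusually well---the regime you most need to control. Likewise, the claim that $|t-t(\theta,\gamma)|$ stays uniformly bounded over systole minimizers is unjustified: the only a~priori control is $e^{|t-t(\theta,\gamma)|}\sqrt{2|\re||\im|}\le S_0$, and $\sqrt{2|\re||\im|}$ has no positive lower bound. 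The repair is to drop $t(\theta,\gamma)$ and reason directly at time $t$: whenever $\sys(g_tr_\theta\cdot X)<1$ the minimizer $\gamma$ satisfies $|\gamma|\le e^t$, and then
$$
\sys(g_tr_\theta\cdot X)^2\ge 2|\re(\theta,\gamma)|\,|\im(\theta,\gamma)|\ge|\re(\theta,\gamma)|\,|\gamma|>|\gamma|^2\psi(|\gamma|)\ge e^{2t}\psi(e^t)=\varphi(t),
$$
valid once $t$ is large enough that $\gamma$ is long (discreteness again), $|\re(\theta,\gamma)|<\sqrt 3\,|\im(\theta,\gamma)|$, and $|\re(\theta,\gamma)|>|\gamma|\psi(|\gamma|)$ from Theorem~\ref{ThmKhinchinJarnickTranslationSurfaces}(1). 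With this fix your route gives $\liminf\ge 1$ directly, and the final rescaling of $\varphi$ becomes unnecessary---although that rescaling observation is itself correct and harmless.
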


\begin{proof}
Consider the function $\psi$ associated to $\varphi$ and to the parameter $a=1$ by Lemma \ref{LemChangeVariableLogLaws}. Since
$
\int_0^{+\infty}\varphi(t)dt
$
converges at $t=+\infty$ then
$
\int_1^{+\infty}s\psi(s)ds
$ 
converges at $s=+\infty$. Let $\cW$ be the set of directions $\theta$ such that there exist arbitrarily big instants $t>0$ with
$$
\sys(g_tr_\theta\cdot X)^2\leq\varphi(t).
$$
Fix any $\theta\in\cW$. For any $t$ as above, let $\gamma$ be the saddle connection for the surface $X$ such that
$$
\sys(g_tr_\theta\cdot X)=|\hol(\gamma,g_tr_\theta\cdot X)|.
$$
For $t>0$ big enough we have $\varphi(t)<1$, thus it follows that $e^t>|\gamma|$, indeed we have 
$$
1>\sqrt{\varphi(t)}\geq|\hol(\gamma,g_tr_\theta\cdot X)|\geq|\gamma|e^{-t}.
$$
Fix $t$ and $\gamma$ as above. Recalling the minimality property of the instant $t(\theta,\gamma)$, we have
\begin{eqnarray*}
&&
|\re(\theta,\gamma)|\cdot |\gamma|
<
2|\re(\theta,\gamma)|\cdot |\im(\theta,\gamma)|
=
|\hol(\gamma,g_{t(\theta,\gamma)}r_\theta\cdot X)|^2
\leq
\\
&&
|\hol(\gamma,g_tr_\theta\cdot X)|^2\leq\varphi(t)
=
e^{2t}\psi(e^t)
\leq
|\gamma|^2\psi(|\gamma|),
\end{eqnarray*}
where the last inequality holds because $e^t>|\gamma|$ and the function $s\mapsto s\varphi(s)$ is decreasing monotone. Observe finally that for any $\theta$ and $\gamma$ as above we have 
$
|\re(\theta,\gamma)|\leq \varphi(t)/|\gamma|
$. 
Thus, since $t$ is arbitrarily big, the saddle connection $\gamma$ must be arbitrarily long, by discreteness of the set of values $\hol(\gamma,r_\theta\cdot X)$. It follows that for any $\theta$ as above there exists infinitely many saddle connections $\gamma_n$ such that
$
|\re(\theta,\gamma_n)|<|\gamma_n|\cdot\psi(|\gamma_n|)
$. 
Theorem \ref{ThmKhinchinJarnickTranslationSurfaces} implies $\leb(\cW)=0$. The Lemma is proved.
\end{proof}

Here we finish the proof of the convergent case of Theorem \ref{thmDynamicalKhinchin}. Let $\varphi$ be a function as in Lemma \ref{LemmaDynamicalKhinchinConvergent} and for any integer $n\geq 1$ consider the function $\varphi_n:=n\cdot\varphi$, which also satisfies the assumption of the Lemma. It follows that for any $n$ there exists a full measure set of directions $\theta$ such that 
$$
\frac{1}{n}
\liminf_{t\to\infty}
\frac{\sys(g_tr_\theta\cdot X)}{\sqrt{\varphi(t)}}=
\liminf_{t\to\infty}
\frac{\sys(g_tr_\theta\cdot X)}{\sqrt{\varphi_n(t)}}\geq1.
$$
The convergent case of Theorem \ref{thmDynamicalKhinchin} follows because the countable intersection of full measure sets has full measure.

\subsubsection{Proof of divergent case}

\begin{lemma}
\label{LemmaDynamicalKhinchinDivergent}
Let $\varphi:\RR_+\to\RR_+$ be a decreasing function such that $\int_0^{+\infty}\varphi(t)dt$ diverges at $t=+\infty$. Then for almost any $\theta$ we have
$$
\limsup_{t\to\infty}
\frac{\sys(g_tr_\theta\cdot X)}{\sqrt{\varphi(t)}}\leq\sqrt{2}.
$$
\end{lemma}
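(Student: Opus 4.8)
The plan is to deduce the estimate from Theorem~\ref{ThmKhinchinJarnickTranslationSurfaces} by an explicit change of variables, after removing a Lebesgue--null set of exceptionally well approximable directions so as to pin down the time at which a given saddle connection becomes short. Concretely, using Lemma~\ref{LemChangeVariableLogLaws} with parameter $a=1/2$ I would set
$$
\psi(s):=\frac{\varphi(2\log s)}{s^{2}}\qquad\text{and}\qquad\psi'(s):=\frac{\pi}{2}\cdot\frac{1}{s^{4}},
$$
so that $\varphi(t)=e^{t}\psi(e^{t/2})$, the maps $s\mapsto s^{2}\psi(s)=\varphi(2\log s)$ and $s\mapsto s\psi(s)$ are decreasing, $\int_{1}^{\infty}s\psi(s)\,ds$ diverges by Lemma~\ref{LemChangeVariableLogLaws} (because $\int_{0}^{\infty}\varphi$ does), while $\int_{1}^{\infty}s\psi'(s)\,ds=\pi/4<\infty$. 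Taking the identity dimension function $f(r)=r$, for which $H^{f}$ is Lebesgue measure, Theorem~\ref{ThmKhinchinJarnickTranslationSurfaces}(2) applied to $\psi$ gives that $W(\cR^{sc},\psi)$ has full measure, and Theorem~\ref{ThmKhinchinJarnickTranslationSurfaces}(1) applied to $\psi'$ gives $\leb\big(W(\cR^{sc},\psi')\big)=0$. Hence $\Theta:=W(\cR^{sc},\psi)\setminus W(\cR^{sc},\psi')$ has full Lebesgue measure, and it suffices to prove the bound for every $\theta\in\Theta$.

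Next, I would fix $\theta\in\Theta$. From $\theta\notin W(\cR^{sc},\psi')$ there is $L'>0$ with $|\theta-\theta_\gamma|\geq\psi'\big(l(\theta_\gamma)\big)$ for every saddle connection $\gamma$ with $l(\theta_\gamma)>L'$; from $\theta\in W(\cR^{sc},\psi)$ there are infinitely many saddle connections $\gamma$, which we may take with $|\gamma|=l(\theta_\gamma)\to\infty$ and $|\theta-\theta_\gamma|<\psi(|\gamma|)$. For all such $\gamma$ with $|\gamma|>L'$ one then has $\psi'(|\gamma|)\leq|\theta-\theta_\gamma|<\psi(|\gamma|)$, so in particular $|\theta-\theta_\gamma|\to 0$. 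Using $|\re(\theta,\gamma)|=|\gamma|\sin|\theta-\theta_\gamma|$, $|\im(\theta,\gamma)|=|\gamma|\cos|\theta-\theta_\gamma|$ together with $\tfrac{2}{\pi}x\leq\sin x\leq x$ and $\cos x\geq\tfrac12$ for $x$ small, I would record
$$
|\gamma|^{-3}\leq\tfrac{2}{\pi}\,|\gamma|\,\psi'(|\gamma|)\leq|\re(\theta,\gamma)|<|\gamma|\,\psi(|\gamma|)=\frac{\varphi(2\log|\gamma|)}{|\gamma|},\qquad \tfrac12|\gamma|\leq|\im(\theta,\gamma)|\leq|\gamma|.
$$
Consequently $t(\theta,\gamma)=\tfrac12\log\frac{|\im(\theta,\gamma)|}{|\re(\theta,\gamma)|}\leq\tfrac12\log\frac{|\gamma|}{|\gamma|^{-3}}=2\log|\gamma|$, while also $t(\theta,\gamma)\geq\tfrac12\log\frac{|\gamma|/2}{\varphi(2\log|\gamma|)/|\gamma|}\geq\log|\gamma|-\tfrac12\log\big(2\varphi(0)\big)\to+\infty$. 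Since $\gamma$ is a saddle connection of $g_{t(\theta,\gamma)}r_\theta\cdot X$,
$$
\sys\big(g_{t(\theta,\gamma)}r_\theta\cdot X\big)^{2}\leq\big|\hol(\gamma,g_{t(\theta,\gamma)}r_\theta\cdot X)\big|^{2}=2|\re(\theta,\gamma)|\,|\im(\theta,\gamma)|<2|\gamma|\,\psi(|\gamma|)\cdot|\gamma|=2\,\varphi(2\log|\gamma|),
$$
and, as $t(\theta,\gamma)\leq 2\log|\gamma|$ and $\varphi$ is decreasing, $\varphi\big(t(\theta,\gamma)\big)\geq\varphi(2\log|\gamma|)$; hence $\sys\big(g_{t(\theta,\gamma)}r_\theta\cdot X\big)^{2}<2\,\varphi\big(t(\theta,\gamma)\big)$.

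Letting $\gamma$ run over this infinite family, the instants $t_{n}:=t(\theta,\gamma_{n})$ tend to $+\infty$ and satisfy $\sys(g_{t_{n}}r_\theta\cdot X)/\sqrt{\varphi(t_{n})}<\sqrt2$, which yields $\liminf_{t\to\infty}\sys(g_{t}r_\theta\cdot X)/\sqrt{\varphi(t)}\leq\sqrt2$ for every $\theta\in\Theta$, hence for almost every $\theta$. (Applying this with $\varphi$ replaced by $\varphi/n$, whose integral still diverges, and intersecting the resulting full-measure sets then upgrades the bound to $\liminf_{t}\sys(g_{t}r_\theta\cdot X)/\sqrt{\varphi(t)}=0$ a.e., i.e.\ to the divergent case of Theorem~\ref{thmDynamicalKhinchin}.) The one delicate point, and the reason for introducing $\psi'$, is exactly the comparison between $t(\theta,\gamma)$ and the scale $2\log|\gamma|$ appearing in $\varphi$: a priori $t(\theta,\gamma)$ can be arbitrarily large compared to $\log|\gamma|$ when $\gamma$ is an unusually good approximation, and since $\varphi$ is only assumed decreasing it may have arbitrarily steep drops, so no estimate relating $\varphi$ at two different scales is available; excising the null set $W(\cR^{sc},\psi')$ forces $|\re(\theta,\gamma)|$ to be bounded below by a fixed power of $1/|\gamma|$, which confines $t(\theta,\gamma)$ to the correct scale.
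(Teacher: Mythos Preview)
Your proof is correct and follows the same strategy as the paper's: remove a Lebesgue-null set of over-well-approximable directions so as to pin $t(\theta,\gamma)$ to the scale $\log|\gamma|$, then invoke the divergent case of Theorem~\ref{ThmKhinchinJarnickTranslationSurfaces} via Lemma~\ref{LemChangeVariableLogLaws}; the only differences are the numerical choices (the paper uses $|\re(\theta,\gamma)|>|\gamma|^{-1.02}$ and parameter $a=1/1.02$, while you use $\psi'(s)\sim s^{-4}$ and $a=1/2$), which are immaterial. As you correctly record, what both arguments actually establish is the bound on the $\liminf$, not the $\limsup$ appearing in the statement---this is a typo in the lemma, and only the $\liminf$ is needed for Theorem~\ref{thmDynamicalKhinchin}.
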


\begin{proof}
Observe first that according to the convergent case of Theorem \ref{ThmKhinchinJarnickTranslationSurfaces}, for almost any $\theta$ and for any saddle connection $\gamma$ long enough we have
$
|\re(\theta,\gamma)|>|\gamma|^{-1.02}
$. 
Fix any such $\theta$, let $\gamma$ be a saddle connection long enough and consider the instant $t(\theta,\gamma)$. We have
$$
e^{2t(\theta,\gamma)}=
\frac{|\im(\theta,\gamma)|}{|\re(\theta,\gamma)|}<
|\im(\theta,\gamma)|\cdot|\gamma|^{1.02}\leq \big(|\gamma|^{1.01}\big)^2.
$$
Consider the function $\psi$ associated to $\varphi$ and to the parameter $a:=1.02^{-1}\sim0.98$ by Lemma \ref{LemChangeVariableLogLaws}. Since
$
\int_0^{+\infty}\varphi(t)dt
$
diverges at $t=+\infty$ then
$
\int_1^{+\infty}s\psi(s)ds
$
diverges at $s=+\infty$. According to Theorem \ref{ThmKhinchinJarnickTranslationSurfaces}, for almost any $\theta$ there exist infinitely many saddle connections $\gamma$ such that
$$
|\re(\theta,\gamma)|<|\gamma|\cdot\psi\big(|\gamma|\big).
$$
According to the discussion at the beginning of the proof, we can also assume that for any such $\theta$ and $\gamma$, at the instant $t(\theta,\gamma)$ we have
$$
|\gamma|\geq e^{at(\theta,\gamma)}.
$$
For any such $\theta$ and $\gamma$, recalling that the function $s\mapsto s^2\psi(s)$ is decreasing monotone, we have
\begin{eqnarray*}
&&
\sys(g_{t(\theta,\gamma)}r_\theta\cdot X)^2\leq
|\hol(\gamma,g_{t(\theta,\gamma)}r_\theta\cdot X)|^2
=
2|\re(\theta,\gamma)|\cdot |\im(\theta,\gamma)|<
\\
&&
2|\gamma|^2\psi(|\gamma|)<
2e^{2at(\theta,\gamma)}\varphi(e^{at(\theta,\gamma)})=
2\varphi\big(t(\theta,\gamma)\big).
\end{eqnarray*}
Finally, observe that since $\gamma$ is arbitrarily long, then $|\re(\theta,\gamma)|$ is arbitrarily small, thus $t(\theta,\gamma)$ is arbitrarily big. The Lemma is proved.
\end{proof}

Here we finish the proof of the divergent case of Theorem \ref{thmDynamicalKhinchin}. Let $\varphi$ be a function as in Lemma \ref{LemmaDynamicalKhinchinDivergent} and for any integer $n\geq 1$ consider the function $\varphi_n:=n^{-1}\cdot\varphi$, which also satisfies the assumption of the Lemma. It follows that for any $n$ there exists a full measure set of directions $\theta$ such that 
$$
\sqrt{n}\cdot
\liminf_{t\to\infty}
\frac{\sys(g_tr_\theta\cdot X)}{\sqrt{\varphi(t)}}=
\liminf_{t\to\infty}
\frac{\sys(g_tr_\theta\cdot X)}{\sqrt{\varphi_n(t)}}\leq\sqrt{2}.
$$
The divergent case of Theorem \ref{thmDynamicalKhinchin} follows because the countable intersection of full measure sets has full measure.

\subsection{Proof of Theorem \ref{thmLogLaws(Diophantine)}}
\label{SecProofGenLogLaws}

In this section we follow \S~3.1 of \cite{velaniubiquity}. Fix a translation surface $X$. Fix $\tau>2$ and $\epsilon\geq0$ and consider the set of directions $W(\tau,\epsilon)$ defined by
$$
W(\tau,\epsilon)=
W(\cR^{sc},\psi_{\tau,\epsilon})
$$
for the approximation function
$$
\psi_{\tau,\epsilon}(t):=
\frac{1}{t^\tau(\ln t)^{(1+\epsilon)\tau/2}}.
$$
In particular denote $W(\tau):=W(\tau,\epsilon=0)$. Consider also the dimension function $f(r):=r^{2/\tau}$, so that $H^f=H^{2/\tau}$, that is the standard Hausdorff measure of parameter $2/\tau$, and moreover
$$
r\cdot f\circ\psi_{\tau,\epsilon}(r)=\frac{1}{r(\ln r)^{1+\epsilon}},
$$
so that 
\begin{eqnarray*}
&&
\int_0^{+\infty}r\cdot f\circ\psi_{\tau,\epsilon}(r)dr
\quad
\textrm{ diverges at }
\quad
r=+\infty
\quad
\textrm{ for any }
\epsilon>0
\\
&&
\int_0^{+\infty}r\cdot f\circ\psi_{\tau,\epsilon=0}(r)dr
\quad
\textrm{ converges at }
\quad
r=+\infty.
\\
\end{eqnarray*}

\begin{lemma}
\label{LemLenghtTimeLowLaws}
If $\theta\not\in W(\tau,\epsilon)$ then for any saddle connection $\gamma$ long enough we have
$$
2t(\theta,\gamma)
<
\tau\ln|\gamma|
+
(1+\epsilon)\frac{\tau}{2}\ln\big(\ln|\gamma|\big).
$$
\end{lemma}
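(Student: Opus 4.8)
The plan is to read the desired bound directly off the definition of the complement of $W(\tau,\epsilon)$, the only genuine work being to convert the Diophantine separation estimate into a bound on $t(\theta,\gamma)$ and to handle a small bookkeeping point concerning the length function $l$.

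Since $\theta\notin W(\tau,\epsilon)=W(\cR^{sc},\psi_{\tau,\epsilon})$, unwinding the definition of $W(\cR,\psi)$ from \S\ref{SecIntroResonantSet} produces a threshold $L_0>1$ with
$$
|\theta-\theta_\gamma|\geq\psi_{\tau,\epsilon}\big(l(\theta_\gamma)\big)
\qquad\textrm{for every saddle connection }\gamma\textrm{ with }l(\theta_\gamma)>L_0.
$$
In particular no saddle connection with $l(\theta_\gamma)>L_0$ is parallel to $\theta$; discarding the (at most countably many, hence negligible for the intended applications) exceptional directions, we may assume $\theta$ is not a saddle-connection direction at all, so that $t(\theta,\gamma)$ is finite for every $\gamma$.

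Next I would compute $t(\theta,\gamma)$ explicitly. Writing $\hol(\gamma,r_\theta\cdot X)=(\re(\theta,\gamma),\im(\theta,\gamma))$ one has $|\re(\theta,\gamma)|=|\gamma|\,|\sin(\theta-\theta_\gamma)|$ and $|\im(\theta,\gamma)|=|\gamma|\,|\cos(\theta-\theta_\gamma)|$, so the defining relation of $t(\theta,\gamma)$ gives
$$
e^{2t(\theta,\gamma)}=\frac{|\im(\theta,\gamma)|}{|\re(\theta,\gamma)|}=\big|\cot(\theta-\theta_\gamma)\big|<\frac{1}{|\theta-\theta_\gamma|},
$$
the inequality being strict since $0<|\theta-\theta_\gamma|<\pi/2$ and $|\tan x|>|x|$ on $(0,\pi/2)$. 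For a saddle connection $\gamma$ with $l(\theta_\gamma)>L_0$ we have $l(\theta_\gamma)\leq|\gamma|$ and $\psi_{\tau,\epsilon}$ is decreasing, hence $|\theta-\theta_\gamma|\geq\psi_{\tau,\epsilon}(l(\theta_\gamma))\geq\psi_{\tau,\epsilon}(|\gamma|)$, and therefore
$$
e^{2t(\theta,\gamma)}<\frac{1}{\psi_{\tau,\epsilon}(|\gamma|)}=|\gamma|^{\tau}\big(\ln|\gamma|\big)^{(1+\epsilon)\tau/2}.
$$
Taking logarithms yields exactly the asserted inequality.

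It remains to dispose of the saddle connections whose direction satisfies $l(\theta_\gamma)\leq L_0$. There are only finitely many such directions (by the defining finiteness property of a planar resonant set), and none of them equals $\theta$ after the reduction above, so $c:=\min\{|\theta-\theta_\gamma| : l(\theta_\gamma)\leq L_0\}>0$, and the first display gives $2t(\theta,\gamma)\leq-\log c$, a fixed constant; since the right-hand side of the target inequality tends to $+\infty$ as $|\gamma|\to\infty$, the inequality holds for all such $\gamma$ with $|\gamma|$ large. The only subtlety worth flagging — and the reason the statement quantifies over "$\gamma$ long enough" rather than over "$l(\theta_\gamma)$ large" — is precisely that a long saddle connection may have a short parallel representative, which forces this two-case split; everything else is a one-line computation.
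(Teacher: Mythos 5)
Your proof is correct and takes essentially the same route as the paper. The paper deduces the key bound $e^{2t(\theta,\gamma)}=|\im(\theta,\gamma)|/|\re(\theta,\gamma)|<|\gamma|/|\re(\theta,\gamma)|<1/\psi_{\tau,\epsilon}(|\gamma|)$ by invoking the ``in particular'' clause of Theorem \ref{ThmKhinchinJarnickTranslationSurfaces}, which gives $|\re(\theta,\gamma)|>|\gamma|\psi_{\tau,\epsilon}(|\gamma|)$ for $\gamma$ long enough; you instead unwind the definition of $W(\cR^{sc},\psi_{\tau,\epsilon})$ directly and use $e^{2t(\theta,\gamma)}=|\cot(\theta-\theta_\gamma)|<1/|\theta-\theta_\gamma|\leq 1/\psi_{\tau,\epsilon}(l(\theta_\gamma))\leq 1/\psi_{\tau,\epsilon}(|\gamma|)$, which produces the identical chain (and in fact sidesteps the factor-of-$2$ slippage hidden in the paper's citation). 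Your bookkeeping of the finitely many directions with $l(\theta_\gamma)\leq L_0$ and of $\theta$ itself possibly lying in $\cR^{sc}$ is more explicit than the paper's terse ``for any $\gamma$ long enough,'' but it is not a different argument, merely a more careful statement of the same one.
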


\begin{proof}
According to the definition of $W(\tau,\epsilon)$ and to Theorem \ref{ThmKhinchinJarnickTranslationSurfaces}, for any saddle connection $\gamma$ long enough we have
$$
|\re(\gamma,\theta)|>
\frac{1}{|\gamma|^{\tau-1}(\ln|\gamma|)^{(1+\epsilon)\tau/2}}.
$$
therefore the Lemma follows observing that for such $\gamma$ we have
$$
e^{2t(\theta,\gamma)}
=
\frac
{|\im(\theta,\gamma)|}
{|\re(\theta,\gamma)|}
<
\frac
{|\gamma|}
{|\re(\theta,\gamma)|}
<
|\gamma|^{\tau}(\ln|\gamma|)^{(1+\epsilon)\tau/2}.
$$
\end{proof}

\begin{corollary}
\label{CoroLenghtTimeLogLaw}
If
$
\theta\not\in W(\tau,\epsilon)
$
then for any $\gamma$ long enough we have
$$
\ln|\gamma|
>
\frac{t(\theta,\gamma)}{\tau}.
$$
\end{corollary}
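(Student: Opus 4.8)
The plan is to deduce this directly from Lemma \ref{LemLenghtTimeLowLaws}, which already gives us the estimate
$$
2t(\theta,\gamma)
<
\tau\ln|\gamma|
+
(1+\epsilon)\frac{\tau}{2}\ln\big(\ln|\gamma|\big)
$$
for every saddle connection $\gamma$ long enough. Dividing by $2\tau$, this rewrites as
$$
\frac{t(\theta,\gamma)}{\tau}
<
\frac{1}{2}\ln|\gamma|
+
(1+\epsilon)\frac{1}{4}\ln\big(\ln|\gamma|\big).
$$
So it suffices to show that for $|\gamma|$ large enough the right-hand side is strictly smaller than $\ln|\gamma|$, i.e. that
$$
\frac{1}{2}\ln|\gamma|
>
(1+\epsilon)\frac{1}{4}\ln\big(\ln|\gamma|\big),
$$
equivalently $2\ln|\gamma|>(1+\epsilon)\ln(\ln|\gamma|)$.

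This last inequality is elementary: setting $u:=\ln|\gamma|$, it reads $2u>(1+\epsilon)\ln u$, and since $u/\ln u\to+\infty$ as $u\to+\infty$, it holds for all $u$ large enough, hence for all $|\gamma|$ with $\ln|\gamma|$ large enough, i.e. for all $\gamma$ long enough (enlarging, if necessary, the length threshold coming from Lemma \ref{LemLenghtTimeLowLaws}). Combining, for $\gamma$ long enough we get $t(\theta,\gamma)/\tau<\ln|\gamma|$, which is exactly the claimed inequality.

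There is essentially no obstacle here: the corollary is a routine consequence of the preceding lemma together with the fact that $\ln|\gamma|$ dominates $\ln\ln|\gamma|$. The only point requiring a word of care is that the "long enough" threshold may have to be taken larger than the one in Lemma \ref{LemLenghtTimeLowLaws}, so that the comparison $2\ln|\gamma|>(1+\epsilon)\ln(\ln|\gamma|)$ is also in force; since both conditions exclude only finitely many (in fact, only a bounded-length family of) saddle connections, this causes no trouble.
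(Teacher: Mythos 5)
Your proof is correct and is essentially the same as the paper's: both divide the inequality of Lemma \ref{LemLenghtTimeLowLaws} by $2\tau$ and then use the elementary domination $\ln|\gamma| > \frac{1+\epsilon}{2}\ln\ln|\gamma|$ (equivalently $2\ln|\gamma|>(1+\epsilon)\ln\ln|\gamma|$) for $|\gamma|$ large. Your remark about possibly enlarging the length threshold is a slight improvement in precision over the paper, which asserts this last inequality for ``any saddle connection,'' but the argument is otherwise identical.
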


\begin{proof}
We have
$
\ln|\gamma|>(1+\epsilon)/2\ln(\ln|\gamma|)
$
for any saddle connection $\gamma$. According to Lemma \ref{LemLenghtTimeLowLaws}, for any saddle connection $\gamma$ long enough we have
$$
\ln|\gamma|
>
\frac{1}{2}
\bigg(
\ln|\gamma|+\frac{1+\epsilon}{2}\ln(\ln|\gamma|)
\bigg)=
\frac{1}{2\tau}
\bigg(
\tau\ln|\gamma|+(1+\epsilon)\frac{\tau}{2}\ln(\ln|\gamma|)
\bigg)
>
\frac{t(\theta,\gamma)}{\tau}.
$$
\end{proof}

\subsubsection{End of the proof}

Recall that according to Theorem \ref{ThmKhinchinJarnickTranslationSurfaces} we have
$
H^{1-\alpha}\big(W(\tau)\big)=+\infty
$
and
$
H^{1-\alpha}\big(W(\tau,\epsilon)\big)=0
$
for any $\epsilon>0$, where we introduce the parameter
$$
\alpha:=1-\frac{2}{\tau}.
$$
Theorem \ref{thmLogLaws(Diophantine)} follows from Proposition \ref{PropLogLawsDivergent} and Proposition \ref{PropLogLawsConvergent} below.

\begin{proposition}
\label{PropLogLawsDivergent}
Consider
$
\theta\in
W(\tau)
\setminus
\bigcup_{\epsilon>0}W(\theta,\epsilon)
$.
We have
$$
\limsup_{t\to\infty}
\frac{-\ln\big(\sys(g_tr_\theta\cdot X)\big)-(1-2/\tau)t}
{\ln t}
\geq\frac{1}{2}.
$$
\end{proposition}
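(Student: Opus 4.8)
The plan is to extract the $\limsup$ along the sequence of times $t_\gamma := t(\theta,\gamma)$ associated to the saddle connections $\gamma$ that realize $\theta \in W(\tau)$, using the two hypotheses $\theta \in W(\tau)$ and $\theta \notin W(\tau,\epsilon)$ in opposite directions. Since $\theta \in W(\tau)=W(\cR^{sc},\psi_{\tau,0})$, part (2) of Theorem \ref{ThmKhinchinJarnickTranslationSurfaces} supplies infinitely many saddle connections $\gamma$, which I take with $|\gamma|=l(\theta_\gamma)\to\infty$, satisfying $|\re(\theta,\gamma)| < |\gamma|\psi_{\tau,0}(|\gamma|)$. At the instant $t_\gamma$ of minimal length one has $\sys(g_{t_\gamma}r_\theta\cdot X)^2 \le |\hol(\gamma,g_{t_\gamma}r_\theta\cdot X)|^2 = 2|\re(\theta,\gamma)|\cdot|\im(\theta,\gamma)| \le 2|\gamma|^2\psi_{\tau,0}(|\gamma|)$, using $|\im(\theta,\gamma)|\le|\gamma|$; unwinding $\psi_{\tau,0}(t)=t^{-\tau}(\ln t)^{-\tau/2}$ this gives
$$
-\ln\sys(g_{t_\gamma}r_\theta\cdot X) \ge \frac{\tau-2}{2}\ln|\gamma| + \frac{\tau}{4}\ln\ln|\gamma| - \frac{\ln 2}{2}.
$$
Since $|\re(\theta,\gamma)|\to 0$ and $|\im(\theta,\gamma)|\to\infty$, one checks that $t_\gamma\to\infty$, so $\limsup_{t\to\infty}$ of the target quantity dominates its $\limsup$ along the sequence $(t_\gamma)$.

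Next I would fix $\epsilon$ with $0<\epsilon<2/(\tau-2)$. As $\theta\notin W(\tau,\epsilon)$, Lemma \ref{LemLenghtTimeLowLaws} gives, for $|\gamma|$ large, $2t_\gamma < \tau\ln|\gamma| + (1+\epsilon)\tfrac{\tau}{2}\ln\ln|\gamma|$, and in particular (or directly via Corollary \ref{CoroLenghtTimeLogLaw}) $t_\gamma < \tau\ln|\gamma|$, hence $\ln t_\gamma \le \ln\ln|\gamma| + \ln\tau$. Writing $\alpha = 1 - 2/\tau = (\tau-2)/\tau$, so that $\tfrac{\alpha\tau}{2} = \tfrac{\tau-2}{2}$, I subtract $\alpha t_\gamma$ from the lower bound above to get
$$
-\ln\sys(g_{t_\gamma}r_\theta\cdot X) - \alpha t_\gamma \;\ge\; \frac{\tau-(\tau-2)(1+\epsilon)}{4}\ln\ln|\gamma| - \frac{\ln 2}{2} \;=\; \frac{2-(\tau-2)\epsilon}{4}\ln\ln|\gamma| - \frac{\ln 2}{2},
$$
which is positive once $|\gamma|$ is large. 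Dividing by $\ln t_\gamma \le \ln\ln|\gamma| + \ln\tau$ and letting $|\gamma|\to\infty$ along the chosen sequence yields
$$
\limsup_{t\to\infty} \frac{-\ln\sys(g_tr_\theta\cdot X) - \alpha t}{\ln t} \;\ge\; \frac{2-(\tau-2)\epsilon}{4}.
$$

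Finally, since this holds for every $\epsilon\in(0,2/(\tau-2))$ while the left-hand side does not depend on $\epsilon$, letting $\epsilon\to 0^+$ gives the desired bound $\ge 1/2$. The computation is entirely routine; I do not expect a genuine obstacle. The two points that need care are: first, that the phrase ``for $\gamma$ long enough'' in Lemma \ref{LemLenghtTimeLowLaws} is applied to the \emph{approximating} saddle connections coming from $\theta\in W(\tau)$, which is legitimate precisely because those have lengths tending to infinity; and second, the order of quantifiers — one must fix $\epsilon$, pass to the $\limsup$ to obtain the constant $\tfrac{2-(\tau-2)\epsilon}{4}$, and only afterwards send $\epsilon\to 0$.
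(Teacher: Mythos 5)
Your argument is correct and follows the paper's proof essentially line for line: use $\theta\in W(\tau)$ to pick a sequence of long saddle connections $\gamma$ giving a lower bound on $-\ln\sys(g_{t_\gamma}r_\theta\cdot X)$, use $\theta\notin W(\tau,\epsilon)$ via Lemma~\ref{LemLenghtTimeLowLaws} and Corollary~\ref{CoroLenghtTimeLogLaw} to trade $\ln|\gamma|$ for $t_\gamma$ and $\ln\ln|\gamma|$ for $\ln t_\gamma$, and then let $\epsilon\to 0^+$. Your explicit restriction $\epsilon<2/(\tau-2)$ is the right quantitative care (it makes the residual coefficient $\tfrac{2-(\tau-2)\epsilon}{4}$ positive), and in fact your formula matches the correct algebra: the paper's displayed constant $\tfrac12-\tfrac{\epsilon}{2}+\tfrac{\tau\epsilon}{4}$ has the $\epsilon$-terms with transposed signs, but this is a harmless typo as the $\epsilon\to 0$ limit is unaffected.
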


\begin{proof}
Since $\theta\in W(\tau)$ then, according to Theorem \ref{ThmKhinchinJarnickTranslationSurfaces}, there exists an arbitrarily long saddle connection $\gamma$ such that
$$
|\re(\gamma,\theta)|<
\frac{1}{|\gamma|^{\tau-1}(\ln|\gamma|)^{\tau/2}}.
$$
Since $\gamma$ is arbitrarily long, then $t(\theta,\gamma)$ is also arbitrarily big. Moreover the minimizing property of the instant $t=t(\theta,\gamma)$ gives
$$
\frac{1}{2}|\hol(\gamma,g_tr_\theta\cdot X)|^2=
|\re(\theta,\gamma)|\cdot |\im(\theta,\gamma)|<
\frac{1}{|\gamma|^{\tau-2}(\ln|\gamma|)^{\tau/2}}.
$$
Fix $\epsilon>0$ and recall that $\theta\not\in W(\tau,\epsilon)$. Without loss of generality $\gamma$ can be assumed to be long enough to satisfy part (2) of Theorem \ref{ThmKhinchinJarnickTranslationSurfaces}. Then, according to the previous inequality and to Lemma \ref{LemLenghtTimeLowLaws}, for $t=t(\theta,\gamma)$ we get
\begin{eqnarray*}
&&
-2\ln\big(|\hol(\gamma,g_tr_\theta\cdot X)|\big)
>
(\tau-2)\ln|\gamma|+\frac{\tau}{2}\ln(\ln|\gamma|)-\ln2
\geq
\\
&&
\frac{\tau-2}{\tau}
\bigg(
2t(\theta,\gamma)-(1+\epsilon)\frac{\tau}{2}\ln(\ln|\gamma|)
\bigg)
+
\frac{\tau}{2}\ln(\ln|\gamma|)-\ln2=
\\
&&
\left(1-\frac{2}{\tau}\right)2t(\theta,\gamma)
+
\bigg(
1+\epsilon-\frac{\tau\epsilon}{2}
\bigg)
\ln(\ln|\gamma|)-\ln2.
\end{eqnarray*}
Finally, according to Corollary \ref{CoroLenghtTimeLogLaw}, for $t=t(\theta,\gamma)$ we obtain
$$
-\ln\big(|\hol(\gamma,g_tr_\theta\cdot X)|\big)
>
\left(1-\frac{2}{\tau}\right)t
+
\bigg(
\frac{1}{2}-\frac{\epsilon}{2}+\frac{\tau\epsilon}{4}
\bigg)
\ln t+c,
$$
where $c$ is a constant depending only on $\theta$ and $\tau$. Since $t=t(\theta,\gamma)$ is arbitrarily big we have
$$
\limsup_{t\to+\infty}
\frac{-\ln\big(\sys(g_tr_\theta\cdot X)\big)-(1-2/\tau)t}
{\ln t}
\geq
\frac{1}{2}-\frac{\epsilon}{2}+\frac{\tau\epsilon}{4}.
$$
The Proposition follows because the last estimate holds for all $\epsilon>0$.
\end{proof}

\begin{proposition}
\label{PropLogLawsConvergent}
Fix $\tau\geq2$ and $\epsilon>0$. Let $\theta$ be a direction such that
\begin{equation}
\label{Eq1PropProofLogLawUpperEstimate}
\limsup_{t\to\infty}
\frac
{-\ln\big(\sys(g_tr_\theta\cdot X)\big)-\big(1-2/\tau\big)t}
{\ln t}
>\frac{1+\epsilon}{2}.
\end{equation}
Then we have
$$
\theta\in W(\tau,\epsilon).
$$
\end{proposition}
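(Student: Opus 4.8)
The plan is to argue by contraposition, or rather directly: assuming the $\limsup$ condition \eqref{Eq1PropProofLogLawUpperEstimate}, I will produce infinitely many saddle connections $\gamma$ witnessing $\theta\in W(\tau,\epsilon)=W(\cR^{sc},\psi_{\tau,\epsilon})$, i.e.\ with $|\theta-\theta_\gamma|<\psi_{\tau,\epsilon}(l(\theta_\gamma))$ for a sequence of $\gamma$ with $l(\theta_\gamma)\to\infty$. First I would translate the hypothesis into the language of short saddle connections: by \eqref{Eq1PropProofLogLawUpperEstimate} there is a sequence of times $t_k\to\infty$ along which
$$
-\ln\big(\sys(g_{t_k}r_\theta\cdot X)\big)-\big(1-2/\tau\big)t_k>\tfrac{1+\epsilon}{2}\ln t_k.
$$
For each such $t_k$, pick the saddle connection $\gamma_k$ realizing the systole of $g_{t_k}r_\theta\cdot X$, so that $|\hol(\gamma_k,g_{t_k}r_\theta\cdot X)|=\sys(g_{t_k}r_\theta\cdot X)=:s_k$ with $-\ln s_k>(1-2/\tau)t_k+\tfrac{1+\epsilon}{2}\ln t_k$. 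Since $s_k\to 0$, the $\gamma_k$ exhaust an infinite set of saddle connections.

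Next I would compare $t_k$ with the optimal time $t(\theta,\gamma_k)$ and estimate $|\gamma_k|$, $|\re(\theta,\gamma_k)|$, $|\im(\theta,\gamma_k)|$. Using $s_k^2=|\hol(\gamma_k,g_{t_k}r_\theta X)|^2=e^{2t_k}|\re(\theta,\gamma_k)|^2+e^{-2t_k}|\im(\theta,\gamma_k)|^2$, both terms on the right are at most $s_k^2$, giving $|\re(\theta,\gamma_k)|\le s_k e^{-t_k}$ and $|\im(\theta,\gamma_k)|\le s_k e^{t_k}$; hence $|\gamma_k|\le\sqrt{2}\,|\im(\theta,\gamma_k)|\le\sqrt{2}\,s_k e^{t_k}$. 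Then
$$
|\gamma_k|\psi_{\tau,\epsilon}(|\gamma_k|)=\frac{1}{|\gamma_k|^{\tau-1}(\ln|\gamma_k|)^{(1+\epsilon)\tau/2}},
$$
and since $|\theta-\theta_{\gamma_k}|\le\tan|\theta-\theta_{\gamma_k}|=|\re(\theta,\gamma_k)|/|\im(\theta,\gamma_k)|\le e^{-2t_k}$ (as $|\re|/|\im|=e^{-2t(\theta,\gamma_k)}$ and $t_k$ may be taken $\ge t(\theta,\gamma_k)$, or one argues directly), it suffices to check
$$
e^{-2t_k}\le\frac{1}{|\gamma_k|^{\tau-1}(\ln|\gamma_k|)^{(1+\epsilon)\tau/2}},
$$
possibly after passing to a subsequence. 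Feeding $|\gamma_k|\le\sqrt{2}\,s_k e^{t_k}$ and the bound $-\ln s_k>(1-2/\tau)t_k+\tfrac{1+\epsilon}{2}\ln t_k$ into the right-hand side — so $\ln|\gamma_k|\le t_k+O(\ln t_k)$ and $|\gamma_k|^{\tau-1}\le (\sqrt 2)^{\tau-1}(s_k e^{t_k})^{\tau-1}$, with $s_k^{\tau-1}\le e^{-(\tau-1)(1-2/\tau)t_k - (\tau-1)(1+\epsilon)(\ln t_k)/2}$ — one computes that the exponent of $e$ on the right is $-(\tau-1)(1-2/\tau)t_k-(\tau-1)t_k+\ldots$ up to logarithmic corrections; collecting terms this should dominate $-2t_k$ for $\tau\ge 2$, with the $(\ln t_k)$ powers comfortably absorbing the $(\ln|\gamma_k|)^{(1+\epsilon)\tau/2}$ factor.

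The main obstacle I anticipate is purely bookkeeping: making the exponent comparison tight enough, since $\tau=2$ is the borderline case where $(1-2/\tau)=0$ and the whole gain must come from the logarithmic factor $(\ln t_k)^{(1+\epsilon)\tau/2}$ against $(\ln|\gamma_k|)^{(1+\epsilon)\tau/2}$ — here one must be careful that $\ln|\gamma_k|$ is not much larger than $\ln t_k$, which follows because $\ln|\gamma_k|\le t_k+\tfrac12\ln 2+\ln s_k\le t_k$ up to a constant, so in fact $\ln|\gamma_k|=\Theta(t_k)$ and $(\ln|\gamma_k|)^{(1+\epsilon)\tau/2}=\Theta(t_k^{(1+\epsilon)\tau/2})$, which is a \emph{power} of $t_k$, not of $\ln t_k$. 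This means I should instead keep $t_k$ explicit throughout rather than re-expressing in terms of $\ln|\gamma_k|$: rewrite the target inequality $|\theta-\theta_{\gamma_k}|<\psi_{\tau,\epsilon}(l(\theta_{\gamma_k}))$ using $l(\theta_{\gamma_k})\le|\gamma_k|$ and $\psi_{\tau,\epsilon}$ decreasing, then bound everything by functions of $t_k$ and $s_k$ alone, invoking the hypothesis on $-\ln s_k$ exactly once. With that reorganization the estimate closes for all $\tau\ge 2$, and since there are infinitely many such $\gamma_k$, we conclude $\theta\in W(\tau,\epsilon)$.
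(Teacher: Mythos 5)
There is a genuine gap at the angle bound. You claim
$|\theta-\theta_{\gamma_k}|\le |\re(\theta,\gamma_k)|/|\im(\theta,\gamma_k)|\le e^{-2t_k}$,
justifying the second inequality by noting $|\re|/|\im|=e^{-2t(\theta,\gamma_k)}$ and suggesting "$t_k$ may be taken $\ge t(\theta,\gamma_k)$." But this is backwards: $t_k\ge t(\theta,\gamma_k)$ gives $e^{-2t(\theta,\gamma_k)}\ge e^{-2t_k}$, not $\le$. You would need $t_k\le t(\theta,\gamma_k)$, and there is no reason this holds; the systole-realizing saddle connection $\gamma_k$ at time $t_k$ can be one that has already passed its optimal time $t(\theta,\gamma_k)<t_k$ (a "plateau" in the systole function), and then $|\re|/|\im|>e^{-2t_k}$. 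The fallback "one argues directly" is exactly where the missing idea lives. The one-sided estimate $|\re|\le e^{-t_k}s_k$ on its own also does not close: it yields $|\theta-\theta_{\gamma_k}|\lesssim e^{-t_k}s_k/|\gamma_k|$, and feeding $|\gamma_k|\le\sqrt2\,s_ke^{t_k}$ and the hypothesis into this leaves a multiplicative constant $(\sqrt2)^{\tau-1}\cdot(\pi/2)\cdot(2/\tau)^{(1+\epsilon)\tau/2}$ which exceeds $1$ at and near $\tau=2$, so you cannot beat $\psi_{\tau,\epsilon}$.

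The paper's proof sidesteps the ratio $|\re|/|\im|$ entirely. It uses the product bound
$|\re(\theta,\gamma)|\,|\im(\theta,\gamma)|\le\tfrac12|\hol(\gamma,g_tr_\theta X)|^2=\tfrac12 s_k^2$,
valid at \emph{every} $t$ (AM--GM on $s_k^2=e^{2t}|\re|^2+e^{-2t}|\im|^2$; equivalently, minimality of $t(\theta,\gamma)$), together with $|\im(\theta,\gamma)|>|\gamma|/2$ for $t_k$ large. This gives $|\re|\lesssim s_k^2/|\gamma|$ and hence $|\theta-\theta_\gamma|\lesssim s_k^2/|\gamma|^2$, a bound that holds regardless of whether $t_k$ is before or after $t(\theta,\gamma_k)$. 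The remaining algebra — deriving $\frac{2t}{\tau}-\ln|\gamma|>\frac{1+\epsilon}{2}\ln t$ and $\ln|\gamma|<t$ from $|\hol|\ge e^{-t}|\gamma|$ and $|\hol|<1$, then manipulating to get $s_k^2<|\gamma|^{-(\tau-2)}(\ln|\gamma|)^{-(1+\epsilon)\tau/2}$ — is close to what you attempt, but the product bound is what makes the constant factors come out right.

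A secondary but real error: you write "it suffices to check $e^{-2t_k}\le |\gamma_k|^{-(\tau-1)}(\ln|\gamma_k|)^{-(1+\epsilon)\tau/2}$." The right-hand side is $|\gamma_k|\psi_{\tau,\epsilon}(|\gamma_k|)$, not $\psi_{\tau,\epsilon}(|\gamma_k|)$; since you are bounding the \emph{angle} (not $|\re|$), the exponent should be $\tau$, not $\tau-1$. With the bound $|\theta-\theta_{\gamma_k}|\le e^{-2t_k}$ (even if it were true) and the target as written, you would only obtain $|\theta-\theta_{\gamma_k}|<|\gamma_k|\psi_{\tau,\epsilon}(|\gamma_k|)$, which is weaker by a factor $|\gamma_k|\to\infty$ and does not put $\theta$ in $W(\tau,\epsilon)$.
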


\begin{proof}
Consider a direction $\theta$ such that Equation \eqref{Eq1PropProofLogLawUpperEstimate} holds. According to the assumption, there exists $t$ arbitrarily big such that
\begin{equation}
\label{Eq2PropProofLogLawUpperEstimate}
-\ln\big(\sys(g_tr_\theta\cdot X)\big)>
\bigg(1-\frac{2}{\tau}\bigg)t+\frac{1+\epsilon}{2}\ln t.
\end{equation}
Consider a saddle connection $\gamma=\gamma(t)$ such that
$
\sys(g_tr_\theta\cdot X)=|\hol(\gamma,g_tr_\theta\cdot X)|
$.
Observe that for such saddle connection $\gamma$ we have
$$
|\hol(\gamma,g_tr_\theta\cdot X)|
\geq
e^{-t}|\gamma|,
$$
that is
$
\ln\big(|\hol(\gamma,g_tr_\theta\cdot X)|\big)>\ln|\gamma|-t
$,
therefore we get
$$
\frac{2t}{\tau}-\ln|\gamma|
>
\frac{1+\epsilon}{2}\ln t.
$$
In particular we have $e^t>|\gamma|$, since $|\hol(\gamma,g_tr_\theta\cdot X)|<1$. It follows that
\begin{eqnarray*}
&&
(\tau-2)
\bigg(
\frac{2t}{\tau}-\ln|\gamma|
\bigg)
+
(1+\epsilon)\ln t
-
(1+\epsilon)\frac{\tau}{2}\ln\ln|\gamma|>
\\
&&
\frac{(\tau-2)(1+\epsilon)}{2}\ln t
+
(1+\epsilon)\ln t
-
\frac{\tau+\epsilon\tau}{2}\ln\ln|\gamma|
=
\frac{\tau+\epsilon\tau}{2}\big(\ln t-\ln\ln|\gamma|\big)>0.
\end{eqnarray*}
Resuming we have
\begin{equation}
\label{Eq3PropProofLogLawUpperEstimate}
\bigg(1-\frac{2}{\tau}\bigg)2t
+
(1+\epsilon)\ln t
>
(\tau-2)\ln|\gamma|
+
(1+\epsilon)\frac{\tau}{2}\ln\ln|\gamma|.
\end{equation}
Therefore, for a direction $\theta$ and an instant $t$ as in Equation \eqref{Eq2PropProofLogLawUpperEstimate} and for a saddle connection $\gamma$ such that
$
|\hol(\gamma,g_tr_\theta\cdot X)|=\sys(g_tr_\theta\cdot X)
$,
according to Equation \eqref{Eq3PropProofLogLawUpperEstimate} above, we have
$$
-2\ln|\hol(\gamma,g_tr_\theta\cdot X)|
>
(\tau-2)\ln|\gamma|
+
(1+\epsilon)\frac{\tau}{2}\ln\ln|\gamma|
$$
which implies
$$
|\im(\theta,\gamma)|\cdot|\re(\theta,\gamma)|
\leq
\frac{|\hol(\gamma,g_tr_\theta\cdot X)|^2}{2}
<
\frac{1}{2}
\frac{1}
{|\gamma|^{\tau-2}
\big(\ln|\gamma|\big)^{(1+\epsilon)\tau/2}},
$$
that is, observing that $|\im(\theta,\gamma)|>|\gamma|/2$, we have
$$
|\re(\theta,\gamma)|
\leq
\frac{1}{4}
\frac{1}
{|\gamma|^{\tau-1}
\big(\ln|\gamma|\big)^{(1+\epsilon)\tau/2}}.
$$
The last condition holds for a saddle connection $\gamma$ which can be chosen arbitrarily long, therefore we have $\theta\in W(\tau,\epsilon)$.
\end{proof}

\section{Fast recurrence in rational billiard: proof of Theorem \ref{ThmRecurrenceBilliads}}
\label{ChapterRecurrencePhaseSpace}

\subsection{The recurrence rate function}
\label{SecRecurrenceRateFunction}

Let $X$ be a translation surface and $\Sigma$ be the set of its conical singularities. Let $\theta$ be a direction on the surface $X$ such that there are not saddle connections in direction $\theta$, and thus nor closed geodesics. The recurrence rate function 
$
\omega_\theta:X\to[0,+\infty]
$ 
is defined for any $p\in X$ by
$$
\omega_\theta(p):=
\liminf_{r\to0}
\frac{\log\big(R_\theta(p,r)\big)}{-\log r}
$$
where
$
R_\theta(p):=\min\{t>r\textrm{ ; }|\phi^t_\theta(p)-p|<r\}
$.
More precisely, the function $\omega_\theta$ is defined on points which are recurrent for $\phi^t_\theta$ with $t>0$, and the set of such points is equal to the set of points such that $\phi^t_\theta(p)$ is defined for all $t>0$ (see \S~3 in \cite{yoccozIET}). In particular $\omega_\theta(p)$ is defined on an open subset of $X$ with full Lebesgue measure, which is invariant under the flow $\phi^t_\theta$. Moreover, fix $t>0$ and consider a point $p\in X$ such that $\omega_\theta(p)$ is defined. Since the domain of $\omega_\theta$ is open, then there exists $r>0$ such that $\phi^t_\theta$ acts as a translation on the ball $B(p,r)$, that is
$
\phi^s_\theta\big(B(p,r)\big)
$
does not contain any conical singularity for $0\leq s\leq t$.  Therefore we have
$
R_\theta(\phi^t_\theta(p),r)=R_\theta(p,r)
$
and thus
\begin{equation}
\label{EqFlowInvarianceRecurrenceRate}
\omega_\theta\big(\phi_\theta^t(p)\big)
=
\liminf_{r\to0}
\frac
{\log\big(R_\theta(\phi^t_\theta(p),r)\big)}
{-\log r}
=
\liminf_{r\to0}
\frac
{\log\big(R_\theta(p,r)\big)}
{-\log r}
=
\omega_\theta(p).
\end{equation}

\subsection{Recurrence rate and diophantine approximations}

Let $\cR^{sc}$ and $\cR^{cyl}$ be the resonant sets defined in \S~\ref{SecIntroHolonomyResonantSets} for the translation surface $X$. For $\tau>2$ consider the function $\psi_\tau(r):=r^{-\tau}$ and define the sets of directions
$$
\cW^{sc}(\tau):=W\big(\cR^{sc},\psi_\tau\big)
\quad
\textrm{ and }
\quad
\cW^{sc}(\tau):=W\big(\cR^{sc},\psi_\tau\big).
$$

\begin{lemma}
\label{lem1relationnontypes}
Fix a direction $\theta$ on the surface $X$. Consider a point $p\in X\setminus\Sigma$ and an instant $T>0$ such that $\phi^T_\theta(p)$ is connected to $p$ by an horizontal segment $H$ with length $|H|\leq \sys(X)$. Then there exists a saddle connection $\gamma$ such that
$$
|\re(\theta,\gamma)|\leq |H|
\quad
\textrm{ and }
\quad
|\im(\theta,\gamma)|<T.
$$
Moreover, if $T_n\to+\infty$ is a sequence of instants as above and $\gamma_n$ is the sequence of the corresponding saddle connections, we have
$$
|\im(\theta,\gamma_n)|\to+\infty.
$$
\end{lemma}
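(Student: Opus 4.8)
The plan is to construct the saddle connection $\gamma$ by a cut-and-paste argument on the flat surface, tracking the planar development of the flow trajectory. First I would consider the trajectory segment $\{\phi^s_\theta(p) : 0 \le s \le T\}$ together with the horizontal segment $H$ joining $\phi^T_\theta(p)$ back to $p$; this gives a closed loop $\delta$ on $X$ whose holonomy is $\hol(\delta, r_\theta \cdot X) = (|H|, -T')$ or similar, where $T'$ is the vertical displacement accumulated along the flow, which is exactly $T$ in the rotated coordinates since the flow is vertical in direction $\theta$. More precisely, in coordinates where direction $\theta$ is vertical, the flow segment has holonomy $(0, T)$ and the horizontal return segment has holonomy $(|H|, 0)$ with $|H| \le \sys(X)$, so the loop $\delta$ has holonomy with real part at most $|H|$ and imaginary part $T$. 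The point is that $\delta$ need not be a geodesic, but it is homotopic to a concatenation of saddle connections; the key is to extract one saddle connection $\gamma$ whose holonomy is controlled by that of $\delta$.

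The cleanest way to do this is the standard argument: since $|H| \le \sys(X)$, the horizontal segment $H$ together with a sub-arc of the flow trajectory bounds an embedded flat region (a parallelogram or thin strip) that must contain a singularity on its boundary or else be a genuine cylinder; in either case one finds a saddle connection $\gamma$ inside the convex hull of the development of $\delta$, so that $|\re(\theta,\gamma)| \le |H|$ and $|\im(\theta,\gamma)| \le T$ (strictly less, since $\gamma$ connects singularities while the trajectory segment need not reach the full vertical extent). I would make this precise by developing the trajectory in the plane, observing that the region swept is an immersed rectangle $[0,|H|] \times [0,T]$, and invoking the fact that an immersed rectangle on a translation surface of width $\le \sys(X)$ either embeds (forcing a saddle connection on its boundary of the stated holonomy bounds) or wraps around a cylinder (again producing boundary saddle connections). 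This is essentially the classical observation behind the Dani-type correspondence, and the inequality $|\im(\theta,\gamma)| < T$ rather than $\le T$ follows because the endpoints of $\gamma$ are singularities strictly interior to the time interval.

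For the second assertion, suppose $T_n \to +\infty$ but $|\im(\theta,\gamma_n)|$ stays bounded along a subsequence. Since also $|\re(\theta,\gamma_n)| \le |H_n| \le \sys(X)$ is bounded, the holonomy vectors $\hol(\gamma_n, r_\theta \cdot X)$ lie in a bounded region of $\RR^2$, hence by discreteness of the set of holonomy vectors of saddle connections (which is a locally finite set), only finitely many distinct $\gamma_n$ occur. But a fixed saddle connection $\gamma$ has $|\im(\theta,\gamma)|$ a fixed positive number (positive because direction $\theta$ has no saddle connections, so $\gamma$ is not vertical, wait — rather: if $|\im(\theta,\gamma)| = 0$ then $\gamma$ is horizontal and $\re$ could be anything, but the trajectory returns only after the prescribed time, forcing $|\im|>0$ for the relevant $\gamma$), and the bound $|\im(\theta,\gamma_n)| < T_n$ together with $T_n \to \infty$ gives no contradiction directly — instead one argues that a fixed $\gamma$ can serve only for $T_n$ in a bounded range, since the return geometry at a fixed scale is rigid. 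I expect the main obstacle to be this last step: pinning down precisely why the same saddle connection cannot recur for arbitrarily large $T_n$, which should follow from the fact that the first return of $p$ to within distance $|H|$ along an embedded horizontal segment is governed by $|\im(\theta,\gamma)|$ up to a bounded multiplicative error, so $T_n$ comparable to $|\im(\theta,\gamma_n)|$; hence $T_n \to \infty$ forces $|\im(\theta,\gamma_n)| \to \infty$.
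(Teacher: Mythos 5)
The first part of your argument has the right geometric intuition (develop the flow segment and the horizontal return segment in the plane, then extract a saddle connection from the region they cut out), but the region is misidentified. The flow trajectory and $H$ form a "hook" in the development, not two sides of a rectangle, so there is no immersed rectangle $[0,|H|]\times[0,T]$ canonically attached to the data; the natural region is triangular. The paper's proof makes this precise by introducing a second horizontal segment $H'$ (of the same length, on the other side of $p$), flowing each point $p'\in H$ \emph{backward} for a time proportional to its distance from $p$ (so time $0$ at $p$, time $T$ at $\phi^T_\theta(p)$), and symmetrically flowing $H'$ \emph{forward}. This sweeps two triangles, and the proof is a three-way case analysis: if the backward triangle is singularity-free, $p$ lies on a closed geodesic whose cylinder boundary provides the saddle connection; same if the forward triangle is singularity-free; otherwise both triangles contain singularities, and those two singularities are joined by the desired saddle connection. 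Your "embedded or contains a singularity" dichotomy is the right guiding principle, but to turn your proposal into a proof you would essentially need to reconstruct this triangle-based case split, since the rectangle picture does not close up (the top edge of your rectangle, which is $\phi^T_\theta(H')$, is not $H$ in general).

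For the second assertion you correctly spot a real difficulty: if $|\im(\theta,\gamma_n)|$ stayed bounded then discreteness of holonomy vectors pins $\gamma_n$ to finitely many values, but a single fixed $\gamma$ trivially satisfies $|\im(\theta,\gamma)| < T_n$ for all large $T_n$, so there is no immediate contradiction. Your attempted fix (``return geometry at a fixed scale is rigid'') is a plausible heuristic but is not developed into an argument. The paper's own proof of the second assertion is a single sentence appealing to discreteness of holonomy vectors, i.e. exactly the argument you were rightly suspicious of; the missing ingredient is to note that the direction $\theta_{\gamma_n}$ of the \emph{constructed} saddle connection tends to $\theta$ (since $|\re(\theta,\gamma_n)|\le|H_n|$ is bounded while the construction forces the vertical extent to dominate as $T_n\to\infty$), and since the direction $\theta$ carries no saddle connection, discreteness of $\hol(\Gamma(X))$ then forces $|\gamma_n|\to\infty$, hence $|\im(\theta,\gamma_n)|\to\infty$. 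In the applications (e.g. Lemma~\ref{lem2relationnontypes}) one also has $|H_n|\to0$, which makes the discreteness argument immediate. So you were right to flag a gap in the naive discreteness route; the cure is to use the structure of the construction or the shrinking of $|H_n|$, not a vague rigidity claim.
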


\begin{proof}
Let $H$ be an horizontal segment as in the first part of the statement and assume without loss of generality that $\phi_\theta^T(p)$ is its left endpoint and $p$ is its right endpoint. Then let $H'$ be the horizontal segment with length $|H'|=|H|$ and left endpoint $p$. Assume that for any point $p'\in H$ and any $t$ with
$
0\leq t\leq T\cdot|p'-p|/|H|
$
we have $\phi^{-t}(p')\not\in\Sigma$, where $|p'-p|$ denotes the distance on $H$ from $p'$ to $p$. In this case $p$ belongs to a closed geodesic $\sigma$ whose direction $\theta_\sigma$ satisfies
$
|\theta-\theta_\sigma|=\arcsin\big(|H|/T\big)
$,
thus the boundary of the cylinder $C_\sigma$ is union of saddle connection satisfying the required property. Similarly, if $\phi^{t}(p')\not\in\Sigma$ for any point $p'\in H'$ and for any $t$ with
$
0\leq t\leq T\cdot|p'-p|/|H|
$, 
where $|p'-p|$ denotes the distance in $H'$ from $p'$ to $p$, then again $p$ belongs to a closed geodesic $\sigma$ in direction $\theta_\sigma$ as above, and the same argument gives a saddle connection with the required properties. In the only remaining case we have two conical singularities $p_i$ and $p_j$ of $\Sigma$ and instants $0\leq s\leq T$ and $0\leq t\leq T$ such that $\phi_\theta^t(p_i)\in H$ and $\phi_\theta^{-s}(p_j)\in H'$. Then $p_i$ and $p_j$ can be connected by a saddle connection satisfying the required property. The first part of the Lemma is proved. The second part just holds because the set of vectors $\hol(\gamma)$ for $\gamma$ saddle connection is a discrete subset of $\RR^2$.
\end{proof}

\begin{lemma}
\label{lem2relationnontypes}
Let $\theta$ be any direction without saddle connections on the surface $X$. Fix $\eta>2$ and suppose that there exists a point $p\in X$ such that
$$
\omega_\theta(p)<\frac{1}{\eta-1}.
$$
Then we have
$
\theta\in \cW^{sc}(\eta)
$.
\end{lemma}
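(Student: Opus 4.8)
The plan is to convert the fast-recurrence hypothesis at $p$ into a sequence of saddle connections witnessing $\theta\in\cW^{sc}(\eta)=W(\cR^{sc},\psi_\eta)$, by feeding nearby returns of $p$ into Lemma \ref{lem1relationnontypes}. Since $\omega_\theta(p)<\frac{1}{\eta-1}$, fix $\sigma$ with $\omega_\theta(p)<\sigma<\frac{1}{\eta-1}$ and choose a sequence $r_n\to 0$ realising the $\liminf$ in the definition of $\omega_\theta(p)$; then for all large $n$ one has $T_n:=R_\theta(p,r_n)<r_n^{-\sigma}$ together with $|\phi^{T_n}_\theta(p)-p|<r_n$. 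I also record that $T_n\to+\infty$: since $\theta$ carries no saddle connection it carries no periodic orbit, and for small $T$ one has $d_X(\phi^{T}_\theta(p),p)=T$, so $R_\theta(p,r)$ is bounded below away from $0$ for small $r$; if $R_\theta(p,r)$ stayed bounded along $r\to 0$, a subsequential limit of the return times would be a positive period of $p$, which is excluded. Hence $R_\theta(p,r)\to+\infty$ as $r\to 0$. Flowing for an extra time of absolute value at most $r_n$, I adjust $T_n$ to some $T'_n\in(T_n-r_n,T_n+r_n)$ so that $\phi^{T'_n}_\theta(p)$ and $p$ are joined by a horizontal segment $H_n$ with $|H_n|<r_n\le\sys(X)$ (the last inequality holding for $n$ large).

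Applying Lemma \ref{lem1relationnontypes} to the pair $(p,T'_n)$ yields a saddle connection $\gamma_n$ with $|\re(\theta,\gamma_n)|\le|H_n|<r_n$ and $|\im(\theta,\gamma_n)|<T'_n<2T_n<2r_n^{-\sigma}$; since $T'_n\to+\infty$, the last clause of that Lemma gives $|\im(\theta,\gamma_n)|\to+\infty$, hence $|\gamma_n|\to+\infty$. By the triangle inequality on the components, $|\gamma_n|<r_n+2r_n^{-\sigma}<3r_n^{-\sigma}$ for $n$ large, so $l(\theta_{\gamma_n})\le|\gamma_n|<3r_n^{-\sigma}$; and, using $|\sin x|\ge\tfrac12|x|$ on $[-\tfrac\pi2,\tfrac\pi2]$, $|\theta-\theta_{\gamma_n}|\le 2\sin|\theta-\theta_{\gamma_n}|=2\,|\re(\theta,\gamma_n)|/|\gamma_n|<2r_n/|\gamma_n|$. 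The role of the strict inequality $\sigma<\frac1{\eta-1}$ is precisely that $\delta:=1-\sigma(\eta-1)>0$, so $|\gamma_n|^{\eta-1}<3^{\eta-1}r_n^{-\sigma(\eta-1)}=3^{\eta-1}r_n^{\delta-1}$, whence $2r_n\,|\gamma_n|^{\eta-1}<2\cdot 3^{\eta-1}r_n^{\delta}<1$ for $n$ large. This gives $2r_n/|\gamma_n|<|\gamma_n|^{-\eta}\le l(\theta_{\gamma_n})^{-\eta}=\psi_\eta\big(l(\theta_{\gamma_n})\big)$, i.e. $\theta\in B\big(\theta_{\gamma_n},\psi_\eta(l(\theta_{\gamma_n}))\big)$.

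Finally I must check that arbitrarily long resonant directions occur among the $\theta_{\gamma_n}$, so that indeed $\theta\in W(\cR^{sc},\psi_\eta)=\bigcap_{L>0}\bigcup_{l(\theta_\gamma)>L}B(\theta_\gamma,\psi_\eta(l(\theta_\gamma)))$. If $l(\theta_{\gamma_n})$ stayed bounded along some subsequence, then $\theta_{\gamma_n}$ would take only finitely many values there, while $|\theta-\theta_{\gamma_n}|<2r_n/|\gamma_n|\to 0$; hence $\theta_{\gamma_n}=\theta$ for $n$ large along that subsequence, contradicting the hypothesis that $\theta$ is the direction of no saddle connection. Therefore $l(\theta_{\gamma_n})\to+\infty$, and for every $L$ one can pick $n$ with $l(\theta_{\gamma_n})>L$ and $\theta\in B(\theta_{\gamma_n},\psi_\eta(l(\theta_{\gamma_n})))$, so $\theta\in\cW^{sc}(\eta)$. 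The only step needing real care is the constant bookkeeping of the second paragraph, which is arranged so that all dimension-free constants are absorbed into the factor $r_n^{\delta}$ supplied by $\sigma<\frac1{\eta-1}$; the horizontal-segment adjustment and the limit $R_\theta(p,r)\to+\infty$ are soft.
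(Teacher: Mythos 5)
Your proof is correct and follows the same route as the paper's: convert the fast return of $p$ into saddle connections via Lemma \ref{lem1relationnontypes} and verify the resulting approximation inequalities. The paper's version is terser---it stops at $|\re(\theta,\gamma)|<|\im(\theta,\gamma)|^{-(\eta-1)}$ and leaves to the reader both the ``WLOG horizontal segment'' adjustment and the conversion to the angular form $|\theta-\theta_\gamma|<l(\theta_\gamma)^{-\eta}$---whereas you spell out the $T'_n$ adjustment and the constant bookkeeping, correctly observing that the slack $\sigma<\frac{1}{\eta-1}$ from the strict inequality on $\omega_\theta(p)$ absorbs the dimension-free constants, which is exactly the point the paper's proof relies on implicitly.
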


\begin{proof}
According to the definition of $\omega_\theta(p)$ there exists $r$ arbitrarily small with
$
R_\theta(p,r)^{\eta-1}<1/r
$.
Set $T:=R_\theta(p,r)$, so that we have $|\phi_\theta^T(p)-p|<r$, and assume without loss of generality that $\phi_\theta^T(p)$ is connected to $p$ by an horizontal segment of length less than $r$. According to Lemma \ref{lem1relationnontypes} there exists a saddle connection $\gamma$ such that $|\re(\theta,\gamma)|<r$ and $|\im(\theta,\gamma)|<T$, that is
$$
|\re(\theta,\gamma)|<
r<
\frac{1}{R_\theta(p,r)^{\eta-1}}=
\frac{1}{T^{\eta-1}}<
\frac{1}{|\im(\theta,\gamma)|^{\eta-1}}.
$$
The Lemma follows observing that, since $r$ is arbitrarily small, $\gamma$ is arbitrarily long, that is there exist infinitely many saddle connections satisfying the condition above.
\end{proof}

\begin{lemma}
\label{LemRecBilliard(UpperBound)}
Let $\theta$ be a direction without saddle connections on the surface $X$. Assume that $\theta$ is uniquely ergodic and that $\theta\in \cW^{cyl}(\eta)$. Then for almost every $p\in X$ we have
$$
\omega_\theta(p)\leq\frac{1}{\eta-1}.
$$
\end{lemma}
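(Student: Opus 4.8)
The plan is to combine the diophantine hypothesis $\theta\in\cW^{cyl}(\eta)$ — which produces a sequence of long cylinders whose core directions lie extremely close to $\theta$ — with a direct geometric estimate on the return time of a trajectory that winds once around such a cylinder, and then to promote the resulting ``one cylinder, one scale'' bound to a genuine $\liminf$ by a reverse Fatou argument together with the ergodicity coming from unique ergodicity.

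First I would unpack the hypothesis. Since $\cW^{cyl}(\eta)=W(\cR^{cyl},\psi_\eta)$ with $\psi_\eta(r)=r^{-\eta}$, part (2) of Theorem \ref{ThmKhinchinJarnickTranslationSurfaces} yields infinitely many closed geodesics $\sigma_n$, each of minimal length in its direction $\theta_{\sigma_n}\in\cR^{cyl}$, hence with $\area(C_{\sigma_n})>1/m$, such that $|\re(\theta,\sigma_n)|<|\sigma_n|\psi_\eta(|\sigma_n|)=|\sigma_n|^{1-\eta}$; local finiteness of $l$ on $\cR^{cyl}$ forces $|\sigma_n|\to+\infty$, while $|\theta-\theta_{\sigma_n}|<|\sigma_n|^{-\eta}\to0$. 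Now the geometric core: set $\delta_n:=|\theta-\theta_{\sigma_n}|$ and $\rho_n:=|\sigma_n|\tan\delta_n=|\re(\theta,\sigma_n)|/\cos\delta_n$, so $\rho_n\le 2|\sigma_n|^{1-\eta}\to0$ for $n$ large, and let $C'_{\sigma_n}\subset C_{\sigma_n}$ be the sub-cylinder of points at transverse distance $>\rho_n$ from $\partial C_{\sigma_n}$. For $p\in C'_{\sigma_n}$ the $\phi_\theta$-trajectory makes angle $\delta_n$ with the core direction, so its transverse coordinate drifts monotonically by at most $\rho_n$ before the longitudinal coordinate winds once around; the trajectory therefore stays inside $C_{\sigma_n}$ up to time $t_1(n):=|\sigma_n|/\cos\delta_n\in[\,|\sigma_n|,2|\sigma_n|\,]$, at which point $\phi^{t_1(n)}_\theta(p)$ lies at distance $\rho_n$ from $p$. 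Hence, with $r_n:=2\rho_n\le 4|\sigma_n|^{1-\eta}$, we get $R_\theta(p,r_n)\le t_1(n)\le 2|\sigma_n|$. Since the band removed from $C_{\sigma_n}$ has area $2|\sigma_n|\rho_n\le 4|\sigma_n|^{2-\eta}\to0$ (this is where $\eta>2$ enters), $\area(C'_{\sigma_n})>1/m-4|\sigma_n|^{2-\eta}>1/(2m)$ for $n$ large.

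To assemble: by the reverse Fatou lemma on the probability space $(X,\leb)$, the set $P:=\limsup_{n}C'_{\sigma_n}$ satisfies $\leb(P)\ge\limsup_{n}\leb(C'_{\sigma_n})\ge 1/(2m)>0$. For $\leb$-a.e. $p\in P$ the index set $N_p:=\{n:p\in C'_{\sigma_n}\}$ is infinite (and $|\sigma_n|\to+\infty$ along it), and along $N_p$ the sequence $r_n$ tends to $0$, so
\[
\omega_\theta(p)\le\liminf_{n\in N_p}\frac{\log R_\theta(p,r_n)}{-\log r_n}\le\liminf_{n\in N_p}\frac{\log(2|\sigma_n|)}{(\eta-1)\log|\sigma_n|-\log 4}=\frac{1}{\eta-1}.
\]
On the other hand $\omega_\theta$ is $\phi_\theta$-invariant by \eqref{EqFlowInvarianceRecurrenceRate}, and unique ergodicity of $\phi_\theta$ makes it $\leb$-a.e. equal to a constant $\omega_0$; since $P$ has positive measure it meets the full-measure set $\{\omega_\theta=\omega_0\}$, whence $\omega_0\le 1/(\eta-1)$, and therefore $\omega_\theta(p)\le 1/(\eta-1)$ for $\leb$-a.e. $p\in X$.

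The step I expect to be the main obstacle is the geometric estimate on the near-return inside an almost-periodic cylinder: one must verify that a point of the chosen sub-cylinder genuinely stays inside $C_{\sigma_n}$ for the full winding time and returns to within a distance comparable to $|\re(\theta,\sigma_n)|$ (and not merely to $|\theta-\theta_{\sigma_n}|$, which would be far too weak), and that the sub-cylinder retains definite area — this last point being exactly what restricts the clean argument to $\eta>2$. The boundary case $\eta=2$ would require either a sharper lower bound on $\area(C_\sigma)$ or a separate treatment, and is in any event subsumed by the trivial dimension estimate at $\tau=2$.
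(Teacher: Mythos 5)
Your proof is correct and follows essentially the same strategy as the paper's: extract from $\theta\in\cW^{cyl}(\eta)$ a sequence of long cylinders $C_{\sigma_n}$ of definite area whose core directions are $|\sigma_n|^{-\eta}$--close to $\theta$, exhibit inside each a subset of points whose $\phi_\theta$--trajectory returns at the scale $r_n\asymp|\re(\theta,\sigma_n)|\asymp|\sigma_n|^{1-\eta}$ after time $\asymp|\sigma_n|$, take the $\limsup$ of those subsets (reverse Fatou, exactly the paper's $\bigcap_N\bigcup_{n>N}\textrm{Rec}(C_{\sigma_n})$), and upgrade from positive measure to a.e.~via flow-invariance of $\omega_\theta$ plus unique ergodicity. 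The only cosmetic difference is that you trim a sub-cylinder $C'_{\sigma_n}$ so the trajectory provably stays in $C_{\sigma_n}$, whereas the paper works with the return set $\textrm{Rec}(C_{\sigma_n})=\{p:\phi_\theta^{T}(p)\in C_{\sigma_n}\}$; the measure estimate and the role of $\eta>2$ (to make the trimmed band small) are the same in both.
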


\begin{proof}
It is not a loss of generality to assume that $\area(X)=1$. According to the definition of $\cW^{cyl}(\eta)$ there exists an arbitrarily long closed geodesic $\sigma$, whose corresponding cylinder $C_\sigma$ satisfies $\area(C_\sigma)>a$, such that
$
|\re(\theta,\sigma)|<|\im(\theta,\sigma)|^{-(\eta-1)}
$.
Set $T:=|\im(\theta,\sigma)|$ and let $\textrm{Rec}(C_\sigma)$ be the set of points $p\in C_\sigma$ such that
$
\phi_\theta^T(p)\in C_\sigma
$.
Since $\area(C_\sigma)>a$ then the horizontal transversal $H_\sigma$ to $C_\sigma$ has length $|H_\sigma|>a/T$. Without loss of generality we can assume that $aT^{\eta-2}>2$, thus we have
$$
\leb\big(\textrm{Rec}(C_\sigma)\big)>
\big(1-\frac{|\re(u)|}{|H_\sigma|}\big)
\cdot
\leb\big(C_\sigma\big)>
\big(1-\frac{1}{aT^{\eta-2}}\big)
\cdot
\leb\big(C_\sigma\big)>a/2.
$$
Moreover, setting
$
r:=T^{-(\eta-1)}
$
and observing that for any
$
p\in \textrm{Rec}(C_\sigma)
$
we have
$$
|\phi^T(p)-p|=
|\re(\theta,\sigma)|<
|\im(\theta,\sigma)|^{-(\eta-1)}=
T^{-(\eta-1)}=r
$$
we get
$
R_\theta(p,r)^{\eta-1}=T^{\eta-1}=r^{-1}
$
and thus
\begin{equation}
\label{EqLemRecBilliard(UpperBound)}
\frac
{\log\big(R_\theta(p,r)\big)}
{-\log r}=
\frac{1}{\eta-1}
\textrm{ for any }
p\in \textrm{Rec}(C_\sigma).
\end{equation}
Since $\theta\in \cW^{cyl}(\eta)$, repeat the construction for a sequence of closed geodesics $\sigma_n$ whose corresponding cylinder $C_{\sigma_n}$ satisfies $\area(C_{\sigma_n})>a$ and such that
$
|\re(\theta,\sigma_n)|<|\im(\theta,\sigma_n)|^{-(\eta-1)}
$.
Equation \eqref{EqLemRecBilliard(UpperBound)} is satisfied for any $
p\in \textrm{Rec}(C_{\sigma_n})
$
and for
$
r_n:=|\im(\theta,\sigma_n)|^{-(\eta-1)}
$.
If follows that
$$
\liminf_{r\to0}
\frac
{\log\big(R_\theta(p,r)\big)}
{-\log r}\leq
\frac{1}{\eta-1}
\textrm{ for any }
p\in\bigcap_{N\in\NN}
\bigcup_{n>N}\textrm{Rec}(C_{\sigma_n}).
$$
Finally observe that
$$
\leb\bigg(
\bigcap_{N\in\NN}
\bigcup_{n>N}\textrm{Rec}(C_{\sigma_n})
\bigg)\geq
\limsup_{n\to\infty}
\leb\bigg(
\textrm{Rec}(C_{\sigma_n})
\bigg)
\geq\frac{a}{2}.
$$
The Lemma follows because $\omega_\theta:X\to\RR_+$ is constant almost everywhere, since it is a invariant under $\phi^t_\theta$, and there is a set of positive measure where
$
\omega_\theta(p)\leq 1/(\eta-1)
$.
\end{proof}

\subsection{End of the proof}
\label{SecEndProofRecurrenceBilliards}

Here we finish the proof of Theorem \ref{ThmRecurrenceBilliads}. Of course it is enough to prove the analogous statement for the flow $\phi_\theta$ on a translation surface $X$.

\medskip

Let $\textrm{NUE}(X)$ be the set of non-uniquely ergodic directions $\theta$ on the translation surface $X$ and let
$
\lambda:=\dim\big(\textrm{NUE}(X)\big)
$.
Recall that we have $0\leq\lambda\leq 1/2$ and consider $\tau$ with $2<\tau<2/\lambda$, that is $1>2/\tau>\lambda$. Set
$$
S_\tau:=
\cW^{cyl}(\tau)
\setminus
\bigg(\textrm{NUE}(X)
\cup
\bigcup_{\eta'>\tau}\cW^{sc}(\eta')
\bigg)
$$
According to Theorem \ref{ThmKhinchinJarnickTranslationSurfaces} we have
$
H^{2/\tau}\big(\cW^{cyl}(\tau)\big)=+\infty
$
and for any $\eta$ with $\eta>\tau$ we have
$
H^{2/\tau}\big(\cW^{sc}(\eta)\big)=0
$.
Moreover, for any $\eta_1$ and $\eta_2$ with $\eta_1>\eta_2>\tau$ we have
$
\cW^{sc}(\eta_1)\subset\cW^{sc}(\eta_2)
$,
and thus
$
H^{2/\tau}
\big(\bigcup_{\eta>\tau}\cW^{sc}(\eta)\big)
=0
$.
Since $2/\tau>\lambda$ we have
$
H^{2/\tau}\big(\textrm{NUE}(X)\big)=0
$.
It follows that
$
H^{2/\tau}\big(S_\tau\big)=+\infty.
$
Applying again Theorem \ref{ThmKhinchinJarnickTranslationSurfaces}, one gets
$
H^s\big(\cW^{cyl}(\tau)\big)=0
$
for any $s>2/\tau$, therefore
$
\dim\big(S_\tau\big)=2/\tau
$.
Lemma \ref{LemRecBilliard(UpperBound)} implies
$$
\omega_\theta(p)\leq\frac{1}{\tau-1}
\textrm{ for any }
\theta\in S_\tau
\textrm{ and for almost any }
p\in X.
$$
Finally, if there exists $p\in X$ and some $\eta'>\tau$ such that
$
\omega_\theta(p)<1/(\eta'-1)
$
then Lemma \ref{lem2relationnontypes} implies
$
\theta\in\cW^{sc}(\eta')
$
and thus $\theta\not\in S_\tau$. Theorem \ref{ThmRecurrenceBilliads} is proved.

\appendix

\section{Proof of Corollary \ref{CorollaryMinskyWeissVeech}}
\label{SectionProofMinskyWeissForVeech}

Let $X$ be any translation surface and let $\Gamma(X)$ be the set of its saddle connections. For any $\gamma\in\Gamma(X)$ consider the function 
$$
L_\gamma:\RR\to\RR_+
\quad;\quad
\alpha\mapsto L_\gamma(\alpha):=\|\hol(\gamma,u_{-\alpha}\cdot X)\|_\infty,
$$
where $\|(x,y)\|_\infty:=\max\{|x|,|y|\}$ for any $(x,y)\in\RR^2$. Let $\cG(X)$ be the family of functions 
$
\cG(X):=\{L_\gamma(\cdot);\gamma\in\Gamma(X)\}
$. 
Consider any $\gamma\in\Gamma(X)$, any interval $J\subset\RR$ and any $\lambda>0$, then let $J(\gamma,\lambda)$ be the subinterval of $J$ defined by
$$
J(\gamma,\lambda):=
\{\alpha\in I;L_\gamma(\alpha)\leq\lambda\}.
$$
Define also 
$
J(X,\lambda):=\bigcup_{\gamma\in\Gamma(X)}J(\gamma,\lambda)
$, 
that is the set of those $\alpha\in J$ such that there exists some $\gamma\in\Gamma(X)$ with 
$
L_\gamma(\alpha)\leq\lambda
$. 
For any interval $J$ and any $\gamma\in\Gamma(X)$ set also 
$$
\|L_\gamma\|_J:=\sup_{\alpha\in J}L_\gamma(\alpha).
$$ 

For any Borel set $E\subset\RR$ denote by $|E|$ its Lebesgue measure. According to Proposition 4.5 in \cite{minskyweiss}, for any translation surface $X$ the family of functions $\cG(X)$ is \emph{$(2,1)$-good}, that is for any $\lambda>0$, any interval $J\subset\RR$ and any $\gamma\in\Gamma(X)$ we have
$$
\frac{|J(\gamma,\lambda)|}{|J|}
\leq
2\cdot\frac{\lambda}{\|L_\gamma\|_J},
$$
where $1$ in $(2,1)$-good refers to the exponent of the term $\lambda/\|L_\gamma\|_J$, which in the general definition of $(C,\beta)$-good families of functions is allowed to be smaller. The general Proposition 3.2 in \cite{minskyweiss}, adapted in our setting to the family of functions $\cG(X)$, says that if there exists constants $\rho>0$ and $M>0$ such that for any interval $J\subset \RR$ we have 
\begin{enumerate}
\item
$\|L_\gamma\|_J\geq \rho$ for any $\gamma\in\Gamma(X)$
\item
$
\sharp\{\gamma\in\Gamma(X);L_\gamma(\alpha)\leq \rho\}\leq M
$
for any $\alpha\in J$
\end{enumerate}
then for any $0<\epsilon\leq \rho$ we have
\begin{equation}
\label{EqMinskyWeissAppendix}
\frac{|J(X,\epsilon)|}{|J|}
\leq2M\cdot \frac{\epsilon}{\rho}.
\end{equation}
For completeness, we give a proof of Equation \eqref{EqMinskyWeissAppendix}. Observe first that Condition (2) implies
$$
\cI:=
\int_J\sharp\{\gamma\in\Gamma(X);L_\gamma(\alpha)\leq \rho\}d\alpha\leq M|J|.
$$
On the other hand, since the family $\cG(X)$ is $(2,1)$-good, Condition (1) implies that for any $\gamma\in\Gamma(X)$ we have
$$
|J(\gamma,\epsilon)|\leq 
2|J(\gamma,\rho)|\frac{\epsilon}{\rho}.
$$
Therefore Equation \eqref{EqMinskyWeissAppendix} follows observing that
$$
\cI=\sum_{\gamma\in\Gamma(X)}|J(\gamma,\rho)|
\geq
\left(2\frac{\epsilon}{\rho}\right)^{-1}\sum_{\gamma\in\Gamma(X)}|J(\gamma,\epsilon)|
\geq
\left(2\frac{\epsilon}{\rho}\right)^{-1}|J(X,\epsilon)|.
$$

In general, Condition (2) is not satisfied for any translation surface $X$. When $X$ is a Veech surface Condition (2) is satisfied according to Lemma \ref{LemmaSparseCorverVeech} below. In order to prove Corollary \ref{CorollaryMinskyWeissVeech}, let $X$ be a Veech surface. Fix any interval $J\subset\RR$ and some $\rho>0$. Assume that for any $\gamma\in\Gamma(X)$ we have
$$
\sup_{\alpha\in J}|\hol(\gamma,u_{-\alpha}\cdot X)|\geq \rho.
$$ 
It follows that 
$
\|L_\gamma\|_J\geq\rho/\sqrt{2}
$, 
according to the comparison between the norm $\|\cdot\|_\infty$ and the euclidian norm $|\cdot|$ on $\RR^2$. For any $0<\epsilon<\rho$ Equation \eqref{EqMinskyWeissAppendix} implies 
$$
\frac{|J(X,\epsilon)|}{|J|}
\leq2\sqrt{2}M\cdot \frac{\epsilon}{\rho}.
$$
Finally the comparison between the norms $\|\cdot\|_\infty$ and $|\cdot|$ gives
$$
|\{\alpha\in J;\sys(u_{-\alpha}\cdot X)\leq \epsilon\}|
\leq
4M\cdot\frac{\epsilon}{\rho}\cdot|J|.
$$

We complete the proof of Corollary \ref{CorollaryMinskyWeissVeech} stating and proving Lemma \ref{LemmaSparseCorverVeech} below.

\begin{lemma}
\label{LemmaSparseCorverVeech}
Let $X$ be a Veech surface and let $\cM:=\sltwor\cdot X$ be its closed orbit under the action of $\sltwor$. Then there exists some $r_0>0$, depending only on $\cM$, such that for any $G\in\sltwor$ we have
$$
\sharp\{\gamma\in\Gamma(X);|\hol(\gamma,G\cdot X)\leq r_0|\}\leq 4g-4.
$$
\end{lemma}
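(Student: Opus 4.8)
The plan is to use the structure of the closed orbit $\cM$ near infinity. Since $X$ is a Veech surface, $\veech(X)$ is a lattice in $\sltwor$, so $\cM\cong\sltwor/\veech(X)$ is a finite-volume homogeneous space with only finitely many cusps. I would fix a compact core $\cK_0\subset\cM$ and pairwise disjoint horoball-type cusp neighbourhoods $\cU_1,\dots,\cU_N$ with $\cM=\cK_0\cup\bigcup_i\cU_i$. On the compact set $\cK_0$ the function $\sys$ attains a positive minimum $\epsilon_*$, so no $Y\in\cK_0$ carries a saddle connection of length $\leq\epsilon_*$. For the cusps I would invoke the Veech dichotomy: on a Veech surface saddle connections occur only in completely periodic directions, each of which is fixed by a parabolic element of the Veech group, and there are only finitely many such directions modulo $\veech(X)$; each cusp $\cU_i$ is associated to one of them. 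Choosing a representative $X_i\in\cM$ with that direction made horizontal, one parametrises a deep enough cusp neighbourhood as $\cU_i=\{\,g_{-t}u_s\cdot X_i : t\geq T_i,\ 0\leq s<s_i\,\}$, with $T_i$ large and $s_i$ bounded.

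The key claim would be: for $T_i$ large enough there is $\delta_i>0$ such that every $Y\in\cU_i$ has all of its saddle connections of length $\leq\delta_i$ parallel to each other. Granting this, set $r_0:=\min\{\epsilon_*,\delta_1,\dots,\delta_N\}$, which depends only on $\cM$. If $G\in\sltwor$ then $G\cdot X\in\cM$; if it lies in $\cK_0$ it has no saddle connection of length $\leq r_0$, and if it lies in some $\cU_i$ then its saddle connections of length $\leq r_0$ are all parallel, hence at most $4g-4$ of them by the bound on saddle connections in a fixed direction recalled in \S\ref{SecIntroHolonomyResonantSets}. Since $\hol(\gamma,G\cdot X)=G(\hol(\gamma,X))$, this is precisely the assertion of the lemma.

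I expect the claim to be the main obstacle. Writing $\hol(\gamma,X_i)=(x,y)$, for $Y=g_{-t}u_s X_i$ one has $\hol(\gamma,Y)=(e^{-t}(x+sy),\,e^{t}y)$, so $|\hol(\gamma,Y)|\geq e^{t}|y|\geq e^{T_i}|y|$; thus a non-horizontal saddle connection of $X_i$ (one with $y\neq0$) cannot be short on $Y$ once $e^{T_i}|y|>\delta_i$, and the claim reduces to showing that the vertical components of non-horizontal saddle connections of the \emph{fixed} surface $X_i$ are bounded away from $0$, i.e. $c_i:=\inf\{\,|\im\hol(\gamma,X_i)|\,\}>0$, the infimum over saddle connections $\gamma$ of $X_i$ with $\im\hol(\gamma,X_i)\neq0$; one then picks $T_i$ with $e^{T_i}c_i>\delta_i$. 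To prove $c_i>0$ I would argue by contradiction: an accumulation of nonzero vertical components at $0$ produces, by discreteness of the holonomy set of $X_i$, a sequence of arbitrarily long, nearly horizontal saddle connections of $X_i$; applying $g_{-\tau}$ with $e^{-\tau}$ comparable to their length yields surfaces of $\cM$ lying arbitrarily deep in the cusp associated with $X_i$ (the horizontal cylinders acquiring unbounded modulus) yet carrying a short saddle connection transverse to the horizontal direction, contradicting that the $\cU_i$ are embedded and mutually disjoint --- once one knows that the degeneration at each cusp of $\cM$ occurs along a single direction. This last structural fact, that every cusp of the Teichm\"uller curve $\cM$ is ``rank one'' with a unique collapsing direction, is the heart of the matter; I would quote it from the structure theory of such curves, in the spirit of Vorobets' Theorem~1.3 already used in \S\ref{SecDirichletTheorem}, the remainder being bookkeeping.

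Alternatively, one could bypass the explicit cusp computation by showing that $\phi(Y):=\inf\{\ell : Y\text{ has two non-parallel saddle connections of length }\leq\ell\}$ is lower semicontinuous on $\cM$ and proper, i.e. $\phi(Y)\to+\infty$ as $Y$ leaves every compact set; then $\inf_{\cM}\phi=\min_{\cM\setminus\bigcup_i\cU_i}\phi>0$ and $r_0$ can be taken to be this minimum. This route again rests on the rank-one nature of the cusps.
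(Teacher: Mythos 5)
Your proposal takes a genuinely different organizational route from the paper: you decompose the orbit $\cM$ into a compact core plus finitely many cusp neighbourhoods and argue separately in each piece, whereas the paper argues directly and uniformly on $\cM$ through the cylinder decomposition. Concretely, the paper uses the Veech dichotomy to say that every saddle connection direction on $G\cdot X$ is completely periodic, that there are only finitely many cylinder decompositions up to the Veech group, and hence that (i) every cylinder on every $G\cdot X$ has area at least some $a^2>0$, and (ii) there is a uniform $M>1$ with $|\sigma|\leq M|\gamma|$ whenever $\sigma$ is a core curve and $\gamma$ a saddle connection parallel to it -- both invariants of the $\sltwor$-action since $\sltwor$ preserves area and ratios of lengths of parallel vectors. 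Taking $r_0:=a/M$, a saddle connection $\gamma$ with $|\hol(\gamma,G\cdot X)|\leq r_0$ forces every cylinder in its direction to have core length $\leq a$, hence width $\geq a$, so any non-parallel saddle connection must cross such a cylinder and has length $\geq a$; the $4g-4$ bound on parallel saddle connections finishes the count. This is shorter and avoids your key claim altogether.

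Two remarks on your argument. First, your contradiction argument for $c_i>0$ does not quite close: if $\hol(\gamma_n,X_i)=(x_n,y_n)$ with $|y_n|\to0$ and $|x_n|\to\infty$, applying $g_{-\tau_n}$ with $e^{-\tau_n}\sim1/|x_n|$ gives vertical component $\sim|x_n||y_n|$, which need not be small, so you do not actually produce a short transverse saddle connection deep in the cusp. But $c_i>0$ has a one-line proof that you do not need the cusp geometry for: any saddle connection of $X_i$ with nonzero vertical component crosses a horizontal cylinder of $X_i$, so $|\im\hol(\gamma,X_i)|$ is at least the minimum of the finitely many cylinder heights. Second, the ``rank-one cusp'' fact you propose to quote is precisely what the paper's combination of Veech dichotomy, finiteness of cusps, and the area/length-ratio bounds makes explicit and elementary, so the black box can be dispensed with. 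With those repairs your thick-thin route would also work, but the paper's route requires less machinery.
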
 

\begin{proof}
Let $\theta_\gamma$ be the direction of any saddle connection. It is well-known (see \cite{ForniMatheus}
) that there exists a decomposition of $X$ into cylinders $C_1,\dots,C_n$ in direction $\theta_\gamma$, where $n=n(\theta_\gamma)$. Moreover there exists a finite number of saddle connection directions $\theta_1\dots,\theta_N$, where $N$ is the number of cusps of $\cM$, such that the cylinder decomposition in any saddle connection direction $\theta_\gamma$ is the affine image of the cylinder decomposition in one of the saddle connection directions $\theta_1\dots,\theta_N$ under some element of the Veech group of $X$. It follows that there exists some $a=a(\cM)>0$ such for any $G\in\sltwor$, any cylinder $C_\sigma$ for $G\cdot X$ has area
$$
\area(C_\sigma)\geq a^2.
$$
Moreover there exists some $M=M(\cM)>1$ such that is $\sigma$ is a closed geodesic and $\gamma$ is a saddle connection parallel to $\sigma$, then we have
$$
|\sigma|\leq M\cdot|\gamma|,
$$ 
where the last condition obviously holds for any affine deformation $G\cdot X$ of the surface $X$, where $G\in\sltwor$. We will prove the statement with 
$$
r_0:=\frac{a}{M}.
$$
Let $\gamma$ be a saddle connection with
$
\displaystyle{|\hol(\gamma,G\cdot X)|\leq\frac{a}{M}}
$. 
Let $\sigma$ be a closed geodesic parallel to $\gamma$ and let $C_\sigma$ be the corresponding cylinder. Since $|\sigma|\leq a$ then $C_\sigma$ must have transversal component $W_\sigma\geq a$. Therefore any saddle connection $\gamma'$ which crosses $C_\sigma$ must have length $|\hol(\gamma',G\cdot X)|\geq a$. The Lemma follows observing that for any saddle connection $\gamma$ there are at most $4g-4$ saddle connections parallel to $\gamma$, and all the other must cross at least one of the cylinders parallel to $\gamma$. 
\end{proof}

\section{Isotropic quadratic growth fails for saddle connections}
\label{SectionIsotropicQuadraticGrowthFails}

In this appendix we show that isotropic quadratic growth fails for saddle connections directions.

\begin{lemma}
\label{LemmaIsotropicQuadraticGrowthFails}
Let $X$ be a translation surface whose orbit under $\sltwor$ is dense in the connected component of its stratum $\cH$. Then the set $\cR^{sc}(X)$ does not satisfy isotropic quadratic growth.
\end{lemma}

\begin{proof}
Consider a translation surface $X$ and let $\hol(X)$ be the discrete set of all holonomy vectors 
$v=\hol(\gamma,X)$, where $\gamma$ varies among the set of all saddle connections of $X$. If $A\subset \RR^2$ is a bounded open subset set $N(X,A):=\sharp \big(A\cap\hol(X)\big)$. More generally, if $f:\RR^2\to\RR$ is a bounded function with compact support its \emph{Siegel-Veech} transform is the map $\widehat{f}:\cH\to \RR$ 
defined by 
$$
\widehat{f}(X):=\sum_{v\in \hol(X)}f(v).
$$
In particular we have $N(X,A)=\widehat{f_A}(X)$, where $f_A$ denotes the indicatrix function of $A$, that is $f_A(v)=1$ if $v\in A$ and $f_A(v)=0$ otherwise. Let 
$
B:=\{v\in\RR^2\textrm{ ; }|v|<1\}
$ 
be the unit euclidian ball and let $f_B$ its indicatrix function. According to Corollary~5.11 in \cite{athreyachaika}, the Siegel-Veech transform $\widehat{f_B}$ is not in $L^3(\cH,\mu)$, where $\mu$ is the absolutely continuous $\sltwor$-invariant measure on $\cH$ (see also Line 4, page 3 in \cite{AthreyaCheungMasur}). In particular $\widehat{f_B}$ is not bounded. Now let $\Delta=\Delta_0$ be the equilateral triangle with vertices at $(0,0)$, $(3^{-1/4},3^{1/4})$ and $(-3^{-1/4},3^{1/4})$ and let 
$f_\Delta$ be its characteristic function. Observe that $\area(\Delta)=1$. Let also $\Delta_1,\dots,\Delta_5$ the rotated copies of $\Delta$, so that the union gives an hexagon containing $B$. Fix any $N\in\NN$. Since $\widehat{f_B}$ is not bounded, modulo a rotation, the pigeonhole principle implies that there exists some $X_0\in\cH$ such that 
$
\widehat{f_{B\cap\Delta}}(X_0)>N
$. 
Considering a smooth approximation $g$ of $f_\Delta$ and using the continuity of $\widehat{g}$, one can see that there exists an open set $\cV\subset\cH$ with $X_0\in\cV$ such that 
$$
N(X,B\cap\Delta)\geq N/2
\quad
\textrm{ for any }
\quad
X\in\cV.
$$
Now let $X\in\cH$ be surface as in the statement, so that $SL(2,R)\cdot X$ is dense in $\cH$. Then by \cite{ChaikaEskin} there exists a direction $\theta$ and $t>>1$ such that $g_t r_{-\theta} X \in \cV$; so that 
$$
N(g_t r_{-\theta} X,B\cap\Delta)>N/2
$$
The isosceles triangle
$$
\Delta':=(g_t r_{-\theta})^{-1}(\Delta)=r_\theta g_{-t}\Delta
$$
has shortest side with length $e^{-t}\cdot 2\cdot 3^{-1/4}$, while the altitude with respect to such shortest side is $e^t\cdot 3^{1/4}$. Let $I \subset S^1$ be the angular sector spanned by $\Delta'$ and set $R:=e^{t}\cdot 3^{1/4}$. If $t>>1$ is big enough we have 
$$
1=\area(\Delta)=\area(\Delta')\leq |I|\cdot R^2\leq 2
$$
but on the other hand
\begin{align*}
&
\sharp
\{\theta\in I\cap\cR^{sc}(X)\textrm{ ; }l(\theta)\leq R\}
\geq 
\frac{1}{3m}
\sharp\{v\in\hol(X)\textrm{ ; }|v|<R\textrm{ ; }\theta_v\in I\}
\geq
\\
&
\frac{1}{3m}N\big(X,r_\theta g_{-t}(B\cap\Delta)\big)=
\frac{1}{3m}N(g_t r_{-\theta} X,B\cap\Delta)>
\frac{N}{6m}>\frac{|I|\cdot R^2}{12m},
\end{align*}
where the first inequality holds because on any translation surface $X\in\cH$ there are at most $3m$ parallel saddle connections, the second holds because for $t>>1$ big enough we have 
$
g_{-t}(B\cap\Delta)\subset \{v\in\RR^2\textrm{ ; }|v|<R\textrm{ ; }\theta_v\in I\}
$, 
and the last holds because $|I|\cdot R^2<2$. The statement follows because $N$ is arbitrarily big.
\end{proof}

\end{document}